\newtheorem{theorem}{Theorem}
\newtheorem{claim}{Claim}
\newtheorem{problem}{Problem}
\newtheorem{remark}{Remark}
\newtheorem{lemma}[theorem]{Lemma}
\newtheorem{proposition}[theorem]{Proposition}
\newtheorem{corollary}[theorem]{Corollary}
\newtheorem{example}{Example}
\newtheorem{definition}{Definition}
\newtheorem{conjecture}{Conjecture}
\newcommand{\NN}{{\mathbb N}}
\newcommand{\ZZ}{{\mathbb Z}}
\DeclareMathOperator{\ainc}{Inc}
\DeclareMathOperator{\comp}{{Comp}}
\DeclareMathOperator{\suff}{suff}
\DeclareMathOperator{\pref}{pref}
\DeclareMathOperator{\prim}{Prim}
\DeclareMathOperator{\age}{Age}
\DeclareMathOperator{\inc}{Inc}
\DeclareMathOperator{\fac}{Fac}
\title[Minimal prime ages]{Minimal prime ages, words and permutation graphs}
\author[D.Oudrar] {Djamila Oudrar}
\address{Faculty of Mathematics, USTHB, Algiers, Algeria}
\email {dabchiche@usthb.dz}
\author [M.Pouzet]{Maurice Pouzet}
\address{Univ. Lyon, Universit\'e Claude-Bernard Lyon1, CNRS UMR 5208, Institut Camille Jordan, 43, Bd. du 11 Novembre 1918, 69622
Villeurbanne, France et Department of Mathematics and Statistics, University of Calgary, Calgary, Alberta, Canada}
\email{pouzet@univ-lyon1.fr }
\author[I.Zaguia]{Imed Zaguia*}\thanks{*Corresponding author. Supported by Canadian Defence Academy Research Program, NSERC and LABEX MILYON (ANR-10-LABX-0070) of Universit\'e de Lyon within the program ''Investissements d'Avenir (ANR-11-IDEX-0007'' operated by the French National Research Agency (ANR)}
\address{Department of Mathematics \& Computer Science, Royal Military College of Canada,
P.O.Box 17000, Station Forces, Kingston, Ontario, Canada K7K 7B4}
\email{zaguia@rmc.ca}
\date{\today}
\begin{document}
\subjclass[2000] {05C30, 06F99, 05A05, 03C13.}

\keywords {ordered set; relational structure; indecomposability; primality; graph; permutation; permutation graph; age; hereditary class; well-quasi-order, uniformly recurrent sequences}

\begin{abstract}
 This paper is a contribution to the study of hereditary classes of finite graphs.
We  classify these classes according to the number of prime structures they contain.
We consider such classes that are \emph{minimal prime}: classes that contain infinitely many primes but every proper hereditary subclass contains only   finitely many primes. We give a complete description of such classes. In fact, each one of these classes is a well-quasi-ordered age and there are uncountably many of them. Eleven  of these ages are almost multichainable; they    remain well-quasi-ordered when labels in a well-quasi-ordering are added, hence have finitely many bounds. Five ages among  them are exhaustible. Among the remaining ones, only countably many remain well-quasi-ordered when one  label is added, and these have finitely many bounds (except for the age of the infinite path and its complement). The others have infinitely many bounds.

Except for six examples, members of these ages we characterize are permutation graphs. In fact, every age which is not among the eleven ones is the age of a graph associated to a uniformly recurrent  word  on  the integers.

A description of minimal prime classes of posets and bichains is also provided.

Our  results  support the conjecture that if a hereditary class of finite graphs does not remain  well-quasi-ordered by adding labels in a well-quasi ordered set to these graphs, then it is not well-quasi-ordered if we add just two constants to each of these graphs

Our description  of minimal prime classes uses a description of minimal prime graphs \cite {pouzet-zaguia2009}  and previous work by Sobrani \cite{sobranithesis, sobranietat}  and the authors \cite{oudrar, pouzettr}  on properties of uniformly recurrent words and the associated graphs. The completeness of our description is based on classification results   of Chudnovsky, Kim, Oum and Seymour \cite{chudnovsky} and  Malliaris and Terry \cite {malliaris}.
\end{abstract}

\maketitle

\section{Introduction and presentation of the results}
This paper is a contribution to the study of hereditary classes of finite graphs.  We classify these classes according to their proper subclasses. With this idea, our simplest classes are those who contain finitely many proper subclasses, hence these classes are finite. At the next level, there are the classes who contain infinitely many proper subclasses, but every proper subclass contains only finitely many.  That is such classes are infinite but  proper subclasses are   finite. It is a simple exercise based on Ramsey's theorem that there are only two such classes: the class of finite cliques and the class of their complements.
Pursuing this idea further, we would like to attach  a rank to each class, preferably an ordinal. If we do this, it turns out that a class has a rank if and only if the set of its proper subclasses ordered by set inclusion is well founded.  This latter condition amounts to the class being well-quasi-ordered (this follows from Higman's characterization of well-quasi-orders \cite{higman}). This puts forward the importance of  well-quasi-ordered hereditary classes.

A basic construction of well-quasi-ordered hereditary classes of finite graphs and more generally of finite structures goes as follows: chose a finite hereditary class  of finite binary structures and take its closure under lexicographical sums over elements of the class. The fact that this latter class is well-quasi-ordered is a  consequence of a theorem  of Higman \cite{higman} . An important property of such a class is that it contains only finitely many prime structures (see Definition \ref{def:module}). A concrete example of such a class is the class of finite cographs (prime structures in this class have cardinality at most two). A natural question then arises: under what conditions a class that contains infinitely many primes is well-quasi-ordered?

Among  hereditary classes which contain  infinitely many prime members, we show that there are  minimal ones with respect to set inclusion (Theorem \ref{thm:minimalprime}). Furthermore, we show that the minimal ones are well-quasi-ordered ages (Theorem \ref{minimal}).
We obtain some general results that we are able to refine in some special cases like  graphs, ordered sets,  and bichains. We give a complete description of minimal prime ages of graphs (Theorem \ref{thm:charact-minimal-prime-ages}),  of posets, and bichains (Corollary \ref{cor:thm:charact-minimal-prime-ages-bichains}). It turns out that there are $2^{\aleph_0}$ such ages (Corollary \ref {thm:minprimeages}). Eleven  of these ages are almost multichainable; they  remain well-quasi-ordered when labels in a well-quasi-ordering are added, five  being exhaustible. Among the remaining ones, countably many remain well-quasi-ordered when one  label is added  and these have finitely many bounds (except for the age of the infinite path and its complement). The others have infinitely many bounds (Theorem \ref{thm:bound-uniform}).

Except for six examples, members of these ages we characterize are permutation graphs. In fact, every age which is not among the eleven ones is the age of a graph associated to a uniformly recurrent word  on  the integers (this is a consequence of Theorems \ref{thm:permutation-graph}, \ref{thm:recurrent-word} and \ref{thm:uniformly-ages}). This result  supports the conjecture that if a hereditary class of finite graphs does not remain  well-quasi-ordered by adding labels in a well-quasi ordered set to these graphs, then it is not well-quasi-ordered if we add just two constants to each of these graphs.

Our description  of minimal prime classes uses a description of minimal prime graphs \cite {pouzet-zaguia2009}  and previous work by Sobrani \cite{sobranithesis, sobranietat}  and the authors \cite{oudrar, pouzettr}  on properties of uniformly recurrent words and the associated graphs. The completeness of our description is based on classification results   of Chudnovsky, Kim, Oum and Seymour \cite{chudnovsky} and  Malliaris and Terry \cite {malliaris}.

\section{Organisation of the paper}
 In section \ref{sec:preq} we present some prerequisites on graphs, posets and words. In section \ref{section:min-hered-class} we consider binary relational structures with a finite signature,  we give the definition of a minimal prime hereditary class of binary structures and prove their  existence. Section  \ref{section:min-hered-class} contains also the proof of Theorem \ref{minimal} (see subsection \ref{subsection:infinitely-prime}) and a  proof of Theorem \ref{thm:main1} (see subsection \ref{sec:proof-thm:main1}). In Section \ref{section:minimalprimesgraphs} we start with the  classification results   of Chudnovsky, Kim, Oum and Seymour \cite{chudnovsky} and  Malliaris and Terry \cite {malliaris}. Then, we present   our  main results on minimal prime ages.
In Section \ref{section:bornes} we look at the number of bounds of our minimal prime ages. In section \ref{sec:proof-thm:permutation-graph} we provide a proof  of Theorem \ref{thm:permutation-graph} and a characterization of order types of realizers of transitive orientations of $0$-$1$ graphs. In section \ref{sec:modules} we characterize the modules of a $0$-$1$ graph. We prove among other things, that if $G_\mu$ is not prime, then $\mu$ contains large factors of $0$'s or $1$'s. Section \ref{section:embeddings} is devoted to the study of the relation between embeddings of $0$-$1$ words and their corresponding graphs. Results obtained in this section will be used in the proof of Theorem \ref{thm:recurrent-word}. In section \ref{sec:proof-thm:recurrent-word} we give a proof of Theorem \ref{thm:recurrent-word}. Theorem \ref{thm:uniformly-ages} is proved in section \ref{sec:proof-thm:uniformly-ages}. In section \ref{sec:proof-thm:bound-uniform} we investigate bounds  of $0$-$1$ graphs and give a proof of Theorems \ref{thm:bound-uniform}.

\section{Prerequisites}\label{sec:preq}

\subsection{Graphs, posets and relations}
This paper is mostly about  graphs and posets. Sometimes, we will need  to consider binary relational structures, that is ordered pairs $R:=(V,(\rho_{i})_{i\in I})$ where each $\rho_i$ is a binary relation or a unary relation on $V$. The set $V$, sometimes denoted by $V(R)$,  is the  \emph{domain} or \emph{base} of $R$. The sequence $s:= (n_i)_{i\in I}$ of arity  $n_i$ of $\rho_i$ is the \emph{signature} of $R$ (this terminology is justified since we may identify a unary relation on $V$, that is a subset $U$ of $V$,  with the binary relation  made of pairs $(u,u)$ such that  $u\in U$). We denote by $\Omega_s$ the collection of finite structures of signature $s$. In the sequel we will suppose the signature finite, i.e. $I$ finite. For example, we will consider \emph{bichains}, i.e., relational structures $R:= (V, (\leq', \leq''))$ made of a set $V$ and two linear orders $\leq'$ and $\leq''$ on $V$.

The framework of our study is the theory of relations as developed by Fra\"{\i}ss\'e and subsequent investigators.  At the core  is the notion of embeddability, a quasi-order between relational structures. We recall that a relational structure $R$ is \emph{embeddable} in a relational structure $R'$,  and we set $R\leq R'$,   if $R$ is isomorphic to an induced substructure of $R'$. Several  important notions in the study of these structures, like hereditary classes, ages, bounds, derive from this quasi-order. For example, a class $\mathcal C$ of relational structures, of signature $s$, is \emph{hereditary} if it contains every relational structure that embeds into a member of $\mathcal C$. The \emph{age} of a relational structure $R$ is the class $\age(R)$  of all finite relational structures, considered up to isomorphy, which embed into $R$. This is an \emph{ideal} of $\Omega_s$ that is  a  nonempty,  hereditary  and  \emph{up-directed} class $\mathcal{C}$ (any pair of members of $\mathcal C$ are embeddable in some element of $\mathcal C$). A characterization of ages was given by Fra\"{\i}ss\'e (see chapter 10 of  \cite{fraissetr}). Namely, a class $\mathcal C$ of finite relational structures is the age of some  relational structure  if and only if $\mathcal C$  is an ideal of $\Omega_s$. We recall that a \emph{bound} of a hereditary class $\mathcal C$ of finite relational structures (e.g. graphs, ordered sets) is any relational structure $R\not \in \mathcal C$ such that every proper induced substructure of $R$ belongs to $\mathcal C$.
For a wealth of information on these notions see \cite{fraissetr}.

\subsubsection{Graphs}Unless otherwise stated, the graphs we consider are undirected, simple and have no loops. That is, a {\it graph} is a
pair $G:=(V, E)$, where $E$ is a subset of $[V]^2$, the set of $2$-element subsets of $V$. Elements of $V$ are the {\it vertices} of
$G$ and elements of $ E$ its {\it edges}. The {\it complement} of $G$ is the graph $\overline{G}$ whose vertex set is $V$ and edge set
${\overline { E}}:=[V]^2\setminus  E$. If $A$ is a subset of $V$, the pair $G_{\restriction A}:=(A,  E\cap [A]^2)$ is the \emph{graph
induced by $G$ on $A$}.
A \emph{path} is a graph $\mathrm P$ such that there exists a one-to-one map $f$ from the set $V(\mathrm P)$ of its vertices into an
interval $I$ of the chain $\ZZ$ of integers in such a way that $\{u,v\}$ belongs to $E(\mathrm P)$, the set of edges of $\mathrm P$,  if and only if $|f(u)-f(v)|=1$ for every $u,v\in V(\mathrm P)$.  If $I=\{1,\ldots,n\}$, then we denote that path by $\mathrm P_n$; its \emph{length} is $n-1$ (so, if $n=2$, $\mathrm P_2$ is made of a single edge, whereas if $n=1$, $\mathrm P_1$ is a single vertex. 

%
\subsubsection{Posets} \label{subsubsection:posets} Throughout, $P :=(V, \leq)$ denotes an ordered set (poset), that is
a set $V$ equipped with a binary relation $\leq$ on $V$ which is
reflexive, antisymmetric and transitive. We say that two elements $x,y\in V$ are \emph{comparable} if $x\leq y$ or $y\leq x$, otherwise,  we say they are \emph{incomparable}. The \emph{dual} of $P$ denoted $P^{*}$ is the order defined on $V$ as follows: if $x,y\in
V$, then $x\leq y$ in $P^{*}$ if and only if $y\leq x$ in $P$. 


 According to  Szpilrajn \cite{szp},  every order $\leq$ on  a set  $V$ has a \emph{linear extension}, that is a linear (or total) order $\preceq$ on the $V$ such that $x\preceq y$ whenever $x\leq y$, for all $x,y\in V$. Let $P:=(V,\leq)$ be a poset.  A \emph{realizer} of $P$ is a family $\mathcal{L}$ of linear extensions of the order of $P$ whose intersection is the order of $P$. Observe that the set of all linear extensions of $P$ is a realizer of $P$. The \emph{dimension} of $P$, denoted $dim(P)$, is the least cardinal $d$ for which there exists a realizer of cardinality $d$ \cite{dushnik-miller}. It follows from the  Compactness Theorem of First Order Logic that an order is intersection of  at most $n$ linear orders ($n\in \NN$) if and only if every finite restriction of the order has this property. Hence, the class of posets with dimension at most $n$ is determined by a set of finite obstructions, each obstruction is a poset $Q$  of dimension $n+1$  such that  the deletion of any vertex of $Q$ leaves a poset of dimension $n$; such a poset is said \emph{critical}.  For $n\geq 2$ there are infinitely many critical posets of dimension $n+1$. For $n=2$,  critical posets of dimension three (and hence finite comparability graphs of critical posets  of dimension three) were characterized by Kelly \cite{kelly77}. Beyond, the task is considered as hopeless.

\subsubsection{Comparability and incomparability graphs}The \emph{comparability graph}, respectively the \emph{incomparability graph}, of a poset $P:=(V,\leq)$ is the graph, denoted by $\comp(P)$, respectively $\ainc(P)$, with vertex set $V$ and edges the pairs $\{u,v\}$ of comparable distinct vertices (that is, either $u< v$ or $v<u$) respectively incomparable vertices. A graph $G:= (V, E)$ is a \emph{comparability graph} if the edge set is the set of comparabilities of some order on $V$. From the Compactness Theorem of First Order Logic, it follows that a graph is a comparability graph if and only if  every finite induced subgraph is a comparability graph. Hence, the class of comparability graphs is determined by a set of finite obstructions. The complete list of minimal obstructions was determined by Gallai \cite{gallai} (see \cite{maffray} for an English translation). The list can also be found in \cite{trotter-moore} Figures 4(a) and 4(b).

\subsubsection{Permutation graphs} A graph $G:= (V, E)$ is a \emph{permutation graph} if there is a linear order $\leq $ on $V$ and a permutation $\sigma$ of $V$ such that the edges of $G$ are the pairs  $\{x, y\}\in [V]^2$ which are reversed by $\sigma$.

Denoting by $\leq_{\sigma}$ the set of oriented pairs $(x, y)$ such that $\sigma(x) \leq \sigma (y)$, the graph is the comparability graph of the poset whose order is the intersection of $\leq$ and the dual of $\leq_{\sigma}$.  Hence, a permutation graph is the comparability graph of  an order intersection of two linear orders, that is the comparability graph of an order of dimension at most two \cite{dushnik-miller}. The converse  holds  if the graph is finite. As it is well known, a finite graph $G$ is a permutation graph if and only if $G$ and $\overline G$ are comparability graphs \cite{dushnik-miller}; in particular, a finite graph  is a permutation graph if and only if its complement is a permutation graph.

 The comparability graph of  an infinite order which is intersection of two linear orders is not necessarily a permutation graph.
A one way infinite path is a permutation graph, but the complement of this infinite path is not a permutation graph. There are examples of infinite posets which are intersection of two linear orders and whose comparability and incomparability graphs  are not  permutation graphs. For an example see Figure \ref{fig:omega-z}.
However, via the Compactness Theorem of First Order Logic, an infinite graph is the comparability graph  of  a poset intersection of two linear orders if an only if each finite induced subgraph is a permutation graph (sometimes these graphs are called permutation graphs, while there is no possible permutation involved).
For more about permutation graphs, see \cite{klazar}, \cite{vatter}.
\subsubsection{Initial segment, ideal} An \emph{initial segment} of a
poset $P:= (V, \leq)$ is any subset $I$ of $V$ such that $x\in V$, $y\in I$ and $x\leq y$ imply $x\in I$. An \emph{ideal} is
any nonempty  initial segment $J$ of $P$ which is up-directed (that is $x, y\in J$ implies  $x,y\leq z$ for some $z\in J$). If $X$ is a
subset of $V$, the set $\downarrow X:=\{y\in V: y\leq x \; \text{for some}\; x\in X\}$ is the least initial segment containing $X$, we
say that it is \emph{generated} by $X$. If $X$ is a singleton, say $X=\{x\}$, we denote by $\downarrow x$, instead of $\downarrow X$,
this initial segment and say that it is \emph{principal}.
We denote by $\mathbf {I}(P)$, resp. $\mathbf {Id}(P)$,  the set of initial segments, respectively ideals,  of $P$, ordered by set inclusion.

\subsection{Well-quasi-order}\label{section:w.q.o. -thm2-thm6}
We present the notion of well-quasi-order and introduce the notion of better-quasi-order; we refer to \cite{milner}.
A poset is \emph{well-founded} if every nonempty subset has some minimal element. Such a poset has a \emph{level decomposition} $(P_{\alpha})_{\alpha< h(P)}$ indexed by ordinal numbers. Level $P_{\alpha}$ is the set of minimal elements of $P\setminus \bigcup \{P_{\beta}: \beta< \alpha\}$ and $h(P)$, the \emph{height} of $P$, is  the least ordinal $\alpha$ such that $P_{\alpha}= \emptyset$. The poset is \emph{level-finite} if each level $P_{\alpha}$ is finite.   A quasi-ordered-set  (quoset) $Q$  is \emph{well-quasi-ordered} (w.q.o.), if every infinite sequence of elements of $Q$ contains an infinite increasing subsequence. If $Q$ is an ordered set, this amounts to say that every nonempty subset of $Q$ contains finitely many minimal elements (this number being non zero). Equivalently, $Q$ is w.q.o. if and only if it contains no infinite descending chain and no infinite antichain.
\subsubsection{Better-quasi-order}\label{subsubsection:bqo}  Proofs that some classes of countable  structures  are w.q.o.   under embeddability may require a strengthening of  that notion, e.g; the notion of \emph{better-quasi-order} (b.q.o) (see Subsection \ref{subsection:finitelymanyprimes}). We just recall that   b.q.o.'s are w.q.o.s.  As  for w.q.o.'s, finite sets and  well-ordered sets are b.q.o.'s,   finite unions, finite products, subsets and images of b.q.o.s by order preserving maps are b.q.o.'s.  (see \cite{fraissetr} for more).  Nash-Williams 1965 \cite{nashwilliams1} p.700,  asserted  that "one  is inclined to  conjecture  that  most  w.q.o. sets which  arise in a reasonably  'natural'  manner  are likely to be b.q.o." It is not known if the answer is positive for hereditary classes of finite graphs. The first classes to consider are probably those which are minimal prime. Due to the description of these classes,  the answer is positive.%
\subsubsection{Labelled classes} Among classes of structures which are w.q.o. under the embeddability quasi-order some remain  w.q.o. when the structures are labelled by the elements of a quasi-order. Precisely, let $\mathcal{C}$ be a class of relational structures, e.g.,  graphs, posets, etc., and $Q$ be  a quasi-ordered set or a poset. If $R\in \mathcal C$, a \emph{labelling of $R$ by $Q$} is any map $f$ from  the domain of $R$ into $Q$. Let   $\mathcal{C}\cdot Q$ denotes the collection of $(R,f)$ where $R\in \mathcal{C}$ and $f: R\rightarrow Q$ is  a labelling. This class is quasi-ordered  by $(R,f)\leq (R',f')$ if there exists an embedding $h: R\rightarrow R'$ such that $f(x)\leq (f'\circ h)(x)$ for all $x\in R$. We say that $\mathcal{C}$ is \emph{very well-quasi-ordered} (vw.q.o. for short) if for every finite $Q$, the class $\mathcal{C}\cdot Q$ is w.q.o. The class $\mathcal{C}$ is \emph{hereditary w.q.o.} if $\mathcal{C}\cdot Q$ is w.q.o. for every w.q.o. $Q$. The class $\mathcal{C}$ is $n$-w.q.o. if for every $n$-element poset $Q$, the poset $\mathcal{C}\cdot Q$ is w.q.o.  The class $\mathcal{C}$ is $n^{-}$-w.q.o. if the class $\mathcal C_{n^-}$ of $(R,a_1,\ldots,a_n)$ where $R\in \mathcal {C}$ and $a_1,\ldots,a_n\in R$ is w.q.o.

We do not know if these four notions are different. In the case of posets covered by two chains (that is of width at most two) we proved that they are identical \cite{pouzet-zaguia-wqo-2022}.

We will use the notion of hereditary well-quasi-ordering  in  Theorems \ref{thm:finite-prime} and  \ref{thm:main2} and the notion of $1^-$-well-quasi-ordering in Lemma \ref{lem:noyau1}.
We recall the following result (Proposition 2.2 of \cite{pouzet72}).

\begin{theorem}\label{thm:bounds} Provided that the signature $s$ is bounded, the cardinality of bounds of  every hereditary and hereditary w.q.o. subclass of  $\Omega_{s}$ is bounded.
\end{theorem}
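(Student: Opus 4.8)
The plan is to argue by contradiction, turning a hypothetical infinite family of bounds into an infinite antichain inside a labelled class $\mathcal{C}\cdot Q$, which is w.q.o. by hypothesis. I read the statement as: there is an integer $N$ such that every bound of a hereditary, hereditary w.q.o. class $\mathcal{C}\subseteq\Omega_s$ has at most $N$ vertices (since $s$ is bounded, this is equivalent to $\mathcal{C}$ having finitely many bounds). First I record the basic fact that the bounds of $\mathcal{C}$ form an antichain for embeddability: if $R$ and $R'$ are distinct bounds and $R\leq R'$, then $R$ is isomorphic to a \emph{proper} induced substructure of $R'$ (properness because $R\not\cong R'$), hence $R\in\mathcal{C}$ by minimality of the bound $R'$, contradicting $R\notin\mathcal{C}$. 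Now suppose the sizes of the bounds are unbounded, and choose bounds $R_1,R_2,\dots$ with strictly increasing number of vertices; they are then pairwise non-isomorphic and, by the previous remark, pairwise incomparable. From each $R_n$ I delete one vertex $v_n$; the structure $R'_n:=R_n-v_n$ is a proper induced substructure of the bound $R_n$, hence $R'_n\in\mathcal{C}$, and $|V(R'_n)|$ still strictly increases with $n$.

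Next I encode how $v_n$ attaches to the rest by a labelling. Let $Q$ be the finite set of \emph{$1$-types}: since $s$ is bounded and, in the present setting, binary, the way a vertex $v$ relates to a single vertex $u$ is recorded by the truth values of $\rho_i(v,u)$ and $\rho_i(u,v)$ for each binary $\rho_i$ together with the unary predicates at $v$, and there are only finitely many such patterns. I equip $Q$ with the trivial (antichain) order and define $f_n\colon R'_n\to Q$ by letting $f_n(u)$ be the $1$-type of $v_n$ over $u$ in $R_n$, so that $(R'_n,f_n)\in\mathcal{C}\cdot Q$. Because $\mathcal{C}$ is hereditary w.q.o. and $Q$ is finite (hence w.q.o.), the class $\mathcal{C}\cdot Q$ is w.q.o.; thus there are indices $m<l$ with $(R'_m,f_m)\leq(R'_l,f_l)$, that is, an embedding $h\colon R'_m\to R'_l$ with $f_m(u)=f_l(h(u))$ for every $u$ (equality, since $Q$ is an antichain).

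It then remains to lift $h$ to the bounds. I extend $h$ to $\tilde h\colon V(R_m)\to V(R_l)$ by setting $\tilde h(v_m)=v_l$. Relations among old vertices are preserved because $h$ embeds $R'_m$ into $R'_l$, and relations involving $v_m$ are preserved because $f_m(u)=f_l(h(u))$ says precisely that $v_m$ relates to $u$ exactly as $v_l$ relates to $h(u)$. Hence $\tilde h$ witnesses $R_m\leq R_l$. As $|V(R_m)|<|V(R_l)|$, this embedding is proper, so $R_m\in\mathcal{C}$ by minimality of the bound $R_l$, contradicting $R_m\notin\mathcal{C}$. Therefore the sizes of the bounds are bounded, as claimed.

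The main obstacle, and the only place where more than mere w.q.o. of $\mathcal{C}$ is used, is the faithfulness of the label encoding: a label-preserving embedding of $R'_m$ must force an embedding of the full $R_m$. For a binary signature this is guaranteed by the finite set of $1$-types above. For a signature of arity up to $k\geq 3$ the attachment type of the deleted vertex is no longer a vertex colouring but a relational structure of arity up to $k-1$ on $V(R'_n)$; one then records it by adjoining these relations and appeals to the corresponding closure property of hereditary w.q.o. classes, which is exactly why the hypothesis ``hereditary w.q.o.'' (rather than just ``w.q.o.'') is needed. The particular choice of the deleted vertex $v_n$ is immaterial: any fixed choice yields the same contradiction.
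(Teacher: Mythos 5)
Your argument is correct. Note that the paper does not actually prove Theorem \ref{thm:bounds}; it only recalls it from Proposition 2.2 of \cite{pouzet72}, so there is no in-text proof to compare against. That said, your reconstruction is the standard one and uses exactly the device the paper itself employs elsewhere (Lemma \ref{lem:extensionwqo1}): delete one vertex from each bound, record its attachment by labelling the remaining vertices with the finitely many $1$-types $(\rho_i(a,x),\rho_i(x,a),\rho_i(a,a))_{i\in I}$, invoke the w.q.o.\ of $\mathcal{C}\cdot Q$ for the finite antichain $Q$, and lift the resulting label-preserving embedding back to an embedding of one bound into a strictly larger one, contradicting the fact that bounds form an antichain. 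All the small points are handled (properness of the lifted embedding, injectivity, the self-relations of the deleted vertex being carried by the constant part of the label once the remaining structure is nonempty). Your closing remark about signatures of arity $k\geq 3$ is only a sketch, but since the paper works throughout with binary structures this does not affect the claim as used here.
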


%
\subsubsection{J\'onsson posets}
\begin{definition}
A poset $P$ is a \emph{J\'onsson} poset if it is infinite and every proper initial segment has a strictly smaller cardinality than $P$.
\end{definition}

J\'onsson posets were introduced by Oman and Kearnes \cite{kearnes}. Countable J\'onsson posets were studied and described in \cite{pouzettr, pouzet-sauer, assous-pouzet}. We recall (see Proposition 3.1 \cite{assous-pouzet}):

\begin{theorem}\label{minimalposet}Let $P$ be a countable poset. The following propositions are equivalent.
\begin{enumerate}[$(i)$]
  \item $P$ is J\'onsson;
  \item $P$ is well-quasi-ordered and each ideal distinct from $P$ is finite;
  \item $P$ is level-finite,  has height  $\omega$, and  for each $n<\omega$, there is  $m<\omega$ such that each element of height at most $n$ is below every element of height at least $m$.
\end{enumerate}
\end{theorem}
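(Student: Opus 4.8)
The plan is to prove the two equivalences $(i)\Leftrightarrow(ii)$ and $(i)\Leftrightarrow(iii)$, which together yield the statement. Since a J\'onsson poset is infinite by definition and condition $(iii)$ forces $h(P)=\omega$ and hence $P$ infinite, I read all three conditions under the standing assumption that $P$ is infinite; being countable, this makes ``strictly smaller cardinality than $P$'' in $(i)$ mean ``finite'', so $(i)$ says exactly that every proper initial segment of $P$ is finite. Two elementary devices will do most of the work. The first is a ``forbidden point'' trick: for an antichain $\{b_i\}$ the set $\downarrow\{b_1,b_2,\dots\}$ is a proper initial segment (it omits $b_0$ since $b_0\le b_i$ would violate the antichain condition), and for any $x\in P$ the set $U_x:=\{y: y\not\ge x\}$ is a proper initial segment (if $z\le y$ and $z\ge x$ then $x\le y$, so $U_x$ is downward closed, and $x\notin U_x$). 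The second is the lemma that every infinite subset $S$ of a w.q.o.\ poset contains an infinite strictly ascending chain $c_0<c_1<\cdots$: extract an infinite $\le$-ascending subsequence and upgrade to strict inequalities by antisymmetry, the terms being distinct; the downward closure $\downarrow\{c_0,c_1,\dots\}$ of such a chain is then an infinite ideal, as any two of its elements lie below a common $c_k$.

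For $(i)\Rightarrow(ii)$ I would check w.q.o.\ via its two defining failures. An infinite descending chain $a_0>a_1>\cdots$ makes $\downarrow a_1$ an infinite proper initial segment omitting $a_0$, and an infinite antichain makes $\downarrow\{b_1,b_2,\dots\}$ an infinite proper initial segment; both contradict $(i)$. Hence $P$ is w.q.o., and every ideal distinct from $P$, being a proper initial segment, is finite. For the converse $(ii)\Rightarrow(i)$, suppose a proper initial segment $I\ne P$ were infinite. The w.q.o.\ poset $I$ then carries an infinite ascending chain by the chain lemma, whose downward closure is an infinite ideal contained in $I$, hence an infinite ideal distinct from $P$, contradicting $(ii)$.

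For $(i)\Rightarrow(iii)$ I first note that $(i)$ rules out infinite descending chains, so $P$ is well-founded and the level decomposition $(P_\alpha)_{\alpha<h(P)}$ exists. Each level $P_k$ is an antichain, and every antichain is finite (else $\downarrow(\,\text{antichain}\setminus\{\text{point}\}\,)$ is an infinite proper initial segment), so $P$ is level-finite. If $h(P)$ were finite, $P$ would be a finite union of finite levels, contradicting infiniteness; and an element $x$ of infinite height would make $\{y:y<x\}$ an infinite proper initial segment, again contradicting $(i)$, so $h(P)=\omega$. For the last clause, fix $n$; for each of the finitely many $x$ of height $\le n$ the set $U_x$ is a proper, hence finite, initial segment, so taking the union of these finitely many finite sets and letting $m$ exceed every height occurring in it, every element of height $\ge m$ lies above every element of height $\le n$. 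Conversely $(iii)\Rightarrow(i)$: level-finiteness with $h(P)=\omega$ gives $P$ infinite; given a proper initial segment $I$ with a witness $z\notin I$ of height $n$, the clause supplies $m$ so that every element of height $\ge m$ lies above $z$, and none of these can belong to the downward closed $I$, whence $I\subseteq\bigcup_{k<m}P_k$ is finite.

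The hard part will be $(ii)\Rightarrow(i)$: from w.q.o.\ alone one must manufacture an infinite ideal distinct from $P$ inside any infinite proper initial segment, and this rests entirely on the chain lemma together with the fact that the downward closure of an ascending chain is an ideal. The remaining implications are bookkeeping with the level decomposition and the proper initial segments $U_x$, $\{y:y<x\}$ and $\downarrow\{b_1,b_2,\dots\}$; the only other point demanding care is the height analysis in $(i)\Rightarrow(iii)$, where one locates the offending element of least infinite height and exhibits an infinite set strictly below it.
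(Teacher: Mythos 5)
Your proof is correct. Note that the paper itself does not prove this statement: it imports it verbatim as Proposition~3.1 of the cited work of Assous and Pouzet, so there is no in-paper argument to compare against. Your self-contained proof is sound in all four implications: the ``forbidden point'' initial segments $\downarrow\{b_1,b_2,\dots\}$ and $U_x=\{y:y\not\ge x\}$ are indeed proper initial segments and do the work you ask of them; the extraction of a strictly ascending chain from an infinite subset of a w.q.o.\ (increasing subsequence plus antisymmetry on distinct elements) is valid and its downward closure is an infinite ideal contained in the given proper initial segment, which is exactly what $(ii)\Rightarrow(i)$ needs; and the height analysis in $(i)\Rightarrow(iii)$ (finiteness of antichains gives level-finiteness, an element of least infinite height would dominate an infinite proper initial segment, and the finite union $\bigcup_x U_x$ over the finitely many $x$ of height $\le n$ yields the threshold $m$) is complete. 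You were also right to flag that $(ii)$ holds vacuously for finite posets while $(i)$ and $(iii)$ fail there, so the standing assumption that $P$ is infinite is genuinely needed to read the equivalence as stated; this is an imprecision inherited from the source, not a gap in your argument.
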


\begin{lemma} \label{lem:contains minimal}Every infinite well-founded poset $P$ which is level finite contains an  initial segment which is J\'onsson.
\end{lemma}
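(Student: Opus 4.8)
The plan is to obtain the Jónsson initial segment as a minimal element, for inclusion, of the family $\mathcal{G}$ of \emph{infinite} initial segments. The guiding observation is that minimality already forces the Jónsson property: if $I$ is infinite and no proper initial segment of $I$ is infinite, then every proper initial segment of $I$ is finite, hence of cardinality strictly smaller than $|I|$, which is exactly the definition of a Jónsson poset. So the whole problem reduces to producing a \emph{minimal infinite} initial segment.

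Before doing so I would make two routine reductions. Since $P$ is downward closed one has $h(x)\le h(y)$ whenever $x\le y$, and for any initial segment $J$ the height of an element computed in $J$ coincides with its height in $P$; thus the levels of $J$ are the sets $J\cap P_{n}$, so $J$ is again well founded and level finite. An infinite such $J$ cannot be contained in finitely many levels, so it has height $\omega$ and is countable. Replacing $P$ by $Q:=\bigcup_{n<\omega}P_{n}$, an infinite initial segment of $P$, I may assume the ambient poset is countable of height $\omega$; and since an initial segment of an initial segment of $P$ is again an initial segment of $P$, a Jónsson initial segment found inside $Q$ serves for $P$.

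The key step is the claim that \emph{every} infinite initial segment $J$ meets \emph{every} level $P_{n}$. Indeed $J$ is infinite and the levels are finite, so $J$ contains elements of arbitrarily large height; from an element $x\in J$ of height $m\ge n$, well-foundedness produces a chain $x_{0}<x_{1}<\dots<x_{m}=x$ with $x_{i}$ of height $i$, and downward closure of $J$ gives $x_{n}\in J\cap P_{n}$. This is precisely what makes Zorn's Lemma applicable to $(\mathcal{G},\supseteq)$: for a chain $\mathcal{C}\subseteq\mathcal{G}$ the intersection $I_{\ast}:=\bigcap\mathcal{C}$ is an initial segment, and for each $n$ the traces $\{J\cap P_{n}:J\in\mathcal{C}\}$ form a chain of nonempty subsets of the \emph{finite} set $P_{n}$, so they have a least member, equal to $I_{\ast}\cap P_{n}$ and nonempty. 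Hence $I_{\ast}$ meets every level and is infinite, providing a lower bound of $\mathcal{C}$ inside $\mathcal{G}$. Since $\mathcal{G}$ is nonempty ($Q\in\mathcal{G}$), Zorn's Lemma then delivers a minimal element $I$ of $\mathcal{G}$, and by the first paragraph $I$ is Jónsson.

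The only genuine difficulty is the limit step, namely keeping the intersection infinite, and this is exactly where level finiteness is indispensable: a totally ordered chain of nonempty subsets of a finite level can never intersect down to the empty set, so infiniteness cannot be ``lost at the limit''. Everything else is bookkeeping, and once $I$ is produced its Jónsson property is immediate from the definition (alternatively one may read it off from the equivalence $(i)\Leftrightarrow(ii)$ of Theorem \ref{minimalposet}).
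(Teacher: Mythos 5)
Your proof is correct and follows essentially the same route as the paper's: Zorn's Lemma applied (downward) to the family of infinite initial segments contained in the first $\omega$ levels, with level finiteness guaranteeing that the intersection of a chain still meets every level and hence remains infinite. You merely make explicit two points the paper leaves implicit, namely that every infinite initial segment of $\bigcup_{n<\omega}P_n$ meets every level, and that a minimal infinite initial segment is automatically J\'onsson.
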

\begin{proof}
We  apply Zorn's Lemma to the set  $\mathcal J$ of infinite initial segments of $P$ included in the first $\omega$-levels. For that, we prove that $\mathcal J$ is closed under intersections of nonempty chains. Indeed,  let $\mathcal C$ be a nonempty chain (with respect to set inclusion) of  members of $\mathcal J$. Set $J:= \cap\, \mathcal C$. Let $n<\omega$, let $P_n$ be the $n$-th level of $P$ and $\mathcal {C}_n:=  \{C\cap P_n: C \in \mathcal C\}$. The members of $\mathcal {C}_n$ are finite, nonempty and linearly ordered by set inclusion. Hence,  $J_n:=\cap\, \mathcal {C}_n$ is nonempty. Since $J=\cup \{ J_n: n\in \NN\}$, $J\in \mathcal J$.
\end{proof}

J\'onsson posets are behind  the study of minimal prime hereditary classes (See Theorem \ref{thm:minimalprime} in Section \ref{section:min-hered-class}).

\subsection{Words}Let $\Sigma$ be a finite set. A  $\Sigma$-sequence is any map $u$ from an interval  $I$  of the set $\ZZ$ of integers in $\Sigma$. The set $I$ is the \emph{domain} of $u$. Two $\Sigma$-sequences $u$ and $u'$ are \emph{isomorphic} if there is a translation $t$ on $\ZZ$ mapping the domain $I$ of $u$ onto the domain $I'$ of $u'$ so that $u(i)= u'(t(i))$ for all $i\in I$.  If the domain of a $\Sigma$-sequence $u$ is $\{0, \dots, n-1\}$,  $\NN$, $\NN^*:= \{0, -1, \ldots, -n \ldots \}$ or $\ZZ$, the sequence is a \emph{word}. Words appear as representatives of equivalence classes  of sequences. Except if their  domain is $\ZZ$,   the representatives are unique. The elements of $\Sigma$ are called \emph{letters} and $\Sigma$ is the \emph{alphabet}. When the  alphabet is  $\{0, 1\}$, we use the terminology  $0$-$1$ sequences or $0$-$1$ word. If $u$ is a $0$-$1$ sequence with domain $I$ and  if $I'$ is a subset of $I$, the \emph{restriction} of $u$ to $I'$ is denoted by  $u_{\restriction I'}$. If $I$ is finite, the sequence $u$ is finite and  the \emph{length} of $u$, denoted $\vert u \vert$, is the number of elements of $I$. We denote by $\Box$ the empty sequence. If $u$ is  a finite word and $v$ is a word, finite or infinite with domain $\NN$, the \emph{concatenation} of $u$ and $v$ is the word $uv$ obtained by writing $v$ after $u$. If $v$ has domain $\NN^*$, the word $vu$ is defined similarly. A word $v$ is a \emph{factor} of $u$ if $u= u_1vu_2$. This defines an order on the collection $\Sigma^*$ of finite words, the \emph{factor ordering}.
\subsubsection{Hereditary classes of words}
A subset $\mathcal C$ of $\Sigma^{*}$ is \emph{hereditary} if it contains every factor of every member of $\mathcal C$. In other words, this is an initial segment of $\Sigma^*$ ordered with the factor ordering. The \emph{age} of a word $u$ is the set $\fac (u)$ of all its finite factors endowed with the factor ordering. This is a hereditary subset of $\Sigma^{*}$. In fact,   if the alphabet is at most countable,  a set $\mathcal C$ of finite words is the age of a word $u$ if and only if  $\mathcal C$ is an \emph{ideal}  for the factor ordering. Note that the domain of $u$ is not necessarily $\NN$.

A  nonempty subset $\mathcal C$ of $\Sigma^{*}$   is \emph{inexhaustible} if it is not reduced to the empty word  and if for every $v\in \mathcal C$ there is some $w$ such that $vwv\in \mathcal C$.

%

\begin{lemma}\label{lem:rec-class-age}A hereditary class $\mathcal C$ of finite words
 is inexhaustible if and only if $\mathcal{C}$ is an union of inexhaustible ages.

\end{lemma}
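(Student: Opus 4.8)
The plan is to prove both implications of the equivalence by unwinding the definitions of \emph{inexhaustible} and \emph{age} (ideal for the factor ordering).

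First I would establish the easy direction: if $\mathcal{C}$ is a union of inexhaustible ages, then $\mathcal{C}$ is inexhaustible. Suppose $\mathcal{C} = \bigcup_{j} \mathcal{A}_j$ where each $\mathcal{A}_j$ is an inexhaustible age. Since at least one $\mathcal{A}_j$ is nonempty and not reduced to $\Box$, the union $\mathcal{C}$ is not reduced to the empty word. For the extension property, take any $v \in \mathcal{C}$. Then $v \in \mathcal{A}_j$ for some $j$, and since $\mathcal{A}_j$ is inexhaustible there is a word $w$ with $vwv \in \mathcal{A}_j \subseteq \mathcal{C}$. Hence $\mathcal{C}$ is inexhaustible. (One should check $\mathcal{C}$ is indeed hereditary, but this is immediate since a union of initial segments for the factor ordering is again an initial segment.)

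The substantive direction is the converse: assume $\mathcal{C}$ is a hereditary inexhaustible class, and exhibit it as a union of inexhaustible ages. The natural candidate is to write $\mathcal{C} = \bigcup_{v \in \mathcal{C}} \mathcal{A}_v$, where for each $v$ one builds an inexhaustible age $\mathcal{A}_v \subseteq \mathcal{C}$ containing $v$. To construct $\mathcal{A}_v$, I would use the inexhaustibility of $\mathcal{C}$ iteratively: starting from $v$, repeatedly apply the defining property to obtain words $v = v_0, v_1, v_2, \dots$ in $\mathcal{C}$ with each $v_{n+1} = v_n w_n v_n$ for suitable $w_n$, so that $v_n$ is a factor of $v_{n+1}$. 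This produces an increasing (for the factor ordering) sequence whose set of factors, $\mathcal{A}_v := \bigcup_n \fac(v_n)$, is an ideal of $\Sigma^*$ (it is hereditary by construction and up-directed because any two factors lie in some common $\fac(v_n)$). By the characterization of ages recalled just above the lemma, $\mathcal{A}_v$ is the age of a word, and it is inexhaustible because for any $u \in \mathcal{A}_v$ we have $u \in \fac(v_n)$ for some $n$, and then $v_{n+1} = v_n w_n v_n$ contains both copies of $v_n$ and hence a factor of the form $u w' u$ lying in $\mathcal{A}_v$.

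The main obstacle is precisely this last inexhaustibility check for $\mathcal{A}_v$: from the fact that $\mathcal{C}$ repeats the \emph{whole} word $v_n$ via $v_n w_n v_n$, one must extract that an \emph{arbitrary} factor $u$ of $v_n$ also admits a repetition $u w' u$ inside $\mathcal{A}_v$. The point is that $v_{n+1}=v_n w_n v_n$ contains two disjoint occurrences of $v_n$, so writing $v_n = a\, u\, b$ for the occurrence of $u$ within $v_n$, the factor of $v_{n+1}$ beginning at the first occurrence of $u$ and ending at the second occurrence of $u$ has the form $u\, b\, w_n\, a\, u$, which is of the desired shape $u w' u$ with $w' = b w_n a$, and this factor lies in $\fac(v_{n+1}) \subseteq \mathcal{A}_v$. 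Once this is in hand, the equivalence follows by combining the two directions.
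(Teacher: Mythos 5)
Your proof is correct and follows essentially the same route as the paper: the iterative construction $v_{n+1}=v_nw_nv_n$ is the paper's Claim 2, and your factorization $v_n=a\,u\,b$ yielding the factor $u\,b\,w_n\,a\,u$ of $v_{n+1}$ is precisely the paper's Claim 1 (that the downward closure of an inexhaustible set is inexhaustible), applied to the set $E=\{v_n : n\in\NN\}$ whose downward closure is your $\mathcal{A}_v$.
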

\begin{proof}
$\Rightarrow$
\begin{claim}\label{claim1} If $\mathcal{D}$ is an inexhaustible subset of $\Sigma^*$  then $\downarrow \mathcal{D}$  is inexhaustible.
\end{claim}

\noindent{\bf Proof of Claim \ref{claim1}.}  Let $u\in \downarrow \mathcal{D}$. There exists $u'\in D$ such that $u$ is a factor of $u'$. We write $u':= u'_1uu'_2$. Since $\mathcal{D}$ is inexhaustible  there exists $w'$ such that $u'w'u'\in D$. Let $w:= u'_2w'u'_1$. The word $uwu$ is a factor of $u'w'u'$ hence is in $\downarrow \mathcal{D}$. \hfill                  $\Box$
\begin{claim}\label{claim2}  If $\mathcal{D}$ is   an inexhaustible subset of $\Sigma^*$, then for all $u\in \mathcal{D}$ there is a sequence $u_0, \ldots u_n\ldots $ satisfying   $u_0= u$ , $u_{n+1}:= u_n v_n u_n$, $u_n\in \mathcal{D}$ and $u_{n+1}\in \mathcal{D}$. By construction, the set $E:= \{u_n: n\in \NN\}$ is an  inexhaustible set.
\end{claim}
\begin{claim}\label{claim3} If $\mathcal{C}$ is inexhaustible and hereditary, then for all $u\in \mathcal{C}$ there exists a inexhaustible age $\mathcal{A}$  such that $u\in \mathcal{A} \subseteq \mathcal{C}$. \end{claim}
\noindent {\bf Proof of Claim \ref{claim3}.} We apply Claim \ref{claim2} with $\mathcal D:= \mathcal C$. The set $E$ defined in Claim \ref{claim2} is inexhaustible. The set   $\downarrow E$ is the  age of the word $u_{\infty}$ having the words $u_n$ as prefixes for $n\geq 0$.

$\Leftarrow$ Obvious. \end{proof}


 A  word $u$  is \emph{recurrent} if every finite factor occurs infinitely often.  This amounts to the fact that $\fac(u)$ is inexhaustible. In fact:


%
%
%


\begin{theorem}\label{thm:rec-age-word} Let $\mu$ be a nonempty $0$-$1$ sequence on an interval of $\ZZ$. The following are equivalent.
\begin{enumerate}[$(i)$]
\item $\fac(\mu)$ is inexhaustible;
\item $\mu$ is recurrent;
\item There exists a word $\nu$ on $\ZZ$ so that
$\fac(\mu) =\fac (\nu)=\fac (\nu_{\restriction \NN})=\fac (\nu_{\restriction \NN^*})$.
\end{enumerate}
\end{theorem}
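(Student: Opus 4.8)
The plan is to establish the cycle $(ii)\Rightarrow(i)\Rightarrow(iii)\Rightarrow(i)$ together with the routine implication $(i)\Rightarrow(ii)$, so that the three conditions become equivalent.

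First I would dispatch the equivalence $(i)\Leftrightarrow(ii)$. For $(ii)\Rightarrow(i)$, given a factor $v$ of the recurrent sequence $\mu$, recurrence furnishes two occurrences of $v$ at positions $i<j$ with $j\geq i+|v|$: infinitely many occurrences lie in an interval of $\ZZ$, hence are unbounded in at least one direction, so two non-overlapping ones exist. Reading $\mu$ from the start of the first to the end of the second yields a factor $vwv$, whence $\fac(\mu)$ is inexhaustible. For $(i)\Rightarrow(ii)$, I iterate inexhaustibility from a factor $v$: setting $u_0:=v$ and $u_{k+1}:=u_kw_ku_k\in\fac(\mu)$, the factor $u_k$ contains at least $2^k$ occurrences of $v$; as each $u_k$ is a factor of $\mu$, the pattern $v$ occurs arbitrarily often, hence infinitely often, so $\mu$ is recurrent.

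For $(iii)\Rightarrow(i)$, I would show that the one-sided restriction $w:=\nu_{\restriction \NN}$ is recurrent and then invoke $(ii)\Rightarrow(i)$ together with $\fac(\mu)=\fac(w)$. Given a prefix $p:=\nu_{\restriction[0,n]}$ of $w$ and any $m\geq 1$, the factor $\nu_{\restriction[-m,n]}$ lies in $\fac(\nu)=\fac(w)$, so it occurs in $w$ at some position $j_m\geq 0$; the copy of $p$ sitting at relative offset $m$ inside it then gives an occurrence of $p$ in $w$ at position $j_m+m\geq m$. Letting $m\to\infty$ shows $p$ occurs at arbitrarily large positions, so $w$ is recurrent, and thus $\fac(\mu)=\fac(\nu_{\restriction \NN})$ is inexhaustible.

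The substantial direction is $(i)\Rightarrow(iii)$, which requires building a two-sided word $\nu$ on $\ZZ$ realizing $\fac(\mu)$ on both halves. The key tool is a co-occurrence lemma: for all $a,b\in\fac(\mu)$ there is $z$ with $azb\in\fac(\mu)$. Indeed $\fac(\mu)$ is an age, hence up-directed, so $a,b$ have a common upper bound $d\in\fac(\mu)$; by inexhaustibility $dwd\in\fac(\mu)$ for some $w$, and the occurrence of $a$ in the first copy of $d$ precedes that of $b$ in the second, so the intervening factor has the form $azb$ and lies in $\fac(\mu)$ by heredity. Iterating this (as in the proof of Lemma \ref{lem:rec-class-age}, but now inserting a prescribed factor in the middle as well), and enumerating $\fac(\mu)=\{f_0,f_1,\dots\}$, I construct a nested family of finite factors $q_n=\nu_{\restriction[a_n,b_n]}$ with $0\in[a_n,b_n]$, $a_n\to-\infty$, $b_n\to+\infty$, each $q_n\in\fac(\mu)$ positioned coherently inside $q_{n+1}$, where at step $n$ I attach a copy of $f_n$ strictly to the right of $b_n$ and a copy strictly to the left of $a_n$. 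The limit defines $\nu\colon\ZZ\to\{0,1\}$ with $\fac(\nu)=\bigcup_n\fac(q_n)\subseteq\fac(\mu)$, while each $f_k$ occurs both at positive and at negative positions, giving $\fac(\mu)\subseteq\fac(\nu_{\restriction \NN})$ and $\fac(\mu)\subseteq\fac(\nu_{\restriction \NN^*})$. The chain $\fac(\mu)\subseteq\fac(\nu_{\restriction \NN})\subseteq\fac(\nu)\subseteq\fac(\mu)$ and its $\NN^*$ analogue then force all four factor sets to coincide. The main obstacle is exactly this bidirectional bookkeeping---guaranteeing simultaneously unbounded growth on both sides and correct placement of each prescribed factor in the intended half---whereas the co-occurrence lemma reduces the combinatorial content to a single clean step.
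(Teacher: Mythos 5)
Your proof is correct. Note that the paper does not actually prove Theorem \ref{thm:rec-age-word}; it only cites Proposition 2 of section II-2.3 of \cite{pouzettr}, so there is no internal argument to compare against. Your argument is a sound self-contained substitute: the equivalence $(i)\Leftrightarrow(ii)$ is handled exactly as one would expect (non-overlapping occurrences in one direction, doubling of occurrences in the other), the implication $(iii)\Rightarrow(i)$ via recurrence of the one-sided restriction is clean, and the substantial direction $(i)\Rightarrow(iii)$ rests on a correct co-occurrence lemma (up-directedness of the age $\fac(\mu)$ plus one application of inexhaustibility yields $azb\in\fac(\mu)$ for any $a,b\in\fac(\mu)$). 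Your two-sided limit construction is essentially the bilateral analogue of what the paper does prove in Claims 1--3 of Lemma \ref{lem:rec-class-age}, where a one-sided word $u_\infty$ is built as a nested union of factors $u_{n+1}=u_nv_nu_n$; you extend that device by alternately appending a prescribed factor $f_n$ on each side while keeping the origin fixed, which is exactly the bookkeeping needed to make both restrictions $\nu_{\restriction\NN}$ and $\nu_{\restriction\NN^*}$ realize all of $\fac(\mu)$. The only point worth making explicit in a final write-up is that every factor of the limit word $\nu$ lies in some $q_n$ (because $a_n\to-\infty$ and $b_n\to+\infty$), which closes the chain of inclusions; you state this and it is correct.
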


This is a variation of  Proposition 2 in section II-2.3 page 40 of \cite{pouzettr}. For results along this lines, see \cite{beauquier-nivat}.

A word $u$  is \emph{uniformly recurrent} if for every $n\in \NN$ there exists $m\in \NN$ such that each factor $u(p)\ldots u(p+n)$ of length $n$ occurs as a factor of every factor of length $m$.


 The fact that a word is uniformly recurrent can be expressed in terms of properties of its age ordered by the factor ordering.
\begin{theorem}\label{unif-recurrent} Let $u$ be a word with domain $\NN$ over a finite alphabet. The following properties are equivalent:
\begin{enumerate}[$(i)$]
\item $u$ is uniformly recurrent;
\item $\fac (u)$ is inexhaustible and well-quasi-ordered;
\item $\fac(u)$  is a countable J\'onsson poset.
\end{enumerate}
\end{theorem}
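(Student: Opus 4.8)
The plan is to prove Theorem~\ref{unif-recurrent} by establishing the cycle of implications $(i)\Rightarrow(ii)\Rightarrow(iii)\Rightarrow(i)$, using the already-available Theorem~\ref{minimalposet} to connect the J\'onsson condition with the combined well-quasi-order/finite-ideal condition, and using the definitions of inexhaustibility and recurrence from the preceding subsection. The underlying idea is that uniform recurrence is exactly the finitary ``bounded gap'' strengthening of recurrence, and this bounded gap translates directly into the uniformity clause $(iii)$ of Theorem~\ref{minimalposet} (that elements of low height lie below all elements of sufficiently large height in the factor order).

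First I would prove $(i)\Rightarrow(ii)$. Inexhaustibility is immediate: uniform recurrence implies recurrence, and by the remark preceding the theorem, recurrence is equivalent to $\fac(u)$ being inexhaustible. For well-quasi-ordering, the key observation is that uniform recurrence forbids the usual obstructions: if $\fac(u)$ contained an infinite antichain or infinite descending chain in the factor order, one could extract factors of a fixed length $n$ that fail to embed uniformly, contradicting the existence of the window length $m$ guaranteed by uniform recurrence. Concretely, I would argue that over a finite alphabet the set of factors of each fixed length $n$ is finite, and uniform recurrence forces every length-$n$ factor to reappear within every sufficiently long factor; this bounded recurrence is enough to rule out infinite antichains, since an infinite antichain would have to use unboundedly long incomparable words while the short factors are all shared.

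Next I would handle $(ii)\Leftrightarrow(iii)$. Here the work is mostly to invoke Theorem~\ref{minimalposet}: condition $(iii)$ of the present theorem asserts that $\fac(u)$ is a countable J\'onsson poset, and by the equivalence $(i)\Leftrightarrow(ii)$ of Theorem~\ref{minimalposet} this is the same as saying $\fac(u)$ is well-quasi-ordered with every proper ideal finite. So I must show that, for $\fac(u)$, inexhaustibility together with w.q.o. is equivalent to w.q.o. together with ``every ideal distinct from the whole poset is finite.'' The forward direction needs that inexhaustibility prevents any proper initial segment from exhausting all long factors; since $u$ is recurrent, every factor extends (on both sides) within $\fac(u)$, so a proper down-closed up-directed subset cannot contain arbitrarily long words and must be finite. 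The reverse direction uses that an infinite J\'onsson poset forces each factor to have proper extensions, giving inexhaustibility back. I expect this passage to require careful bookkeeping about which words count as ideals in the factor order, since the factor order is not as simple as a tree.

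The main obstacle I anticipate is the quantitative heart of $(iii)\Rightarrow(i)$, namely translating the abstract uniformity clause of Theorem~\ref{minimalposet}(iii)---for each height $n$ there is a height $m$ with every height-$\le n$ element below every height-$\ge m$ element---back into the explicit ``window length'' formulation of uniform recurrence. I would identify the height of a factor in $\fac(u)$ with (a function of) its length, so that height-$\le n$ elements are the short factors and height-$\ge m$ elements are the long ones; the clause then says every short factor embeds in every long enough factor, which is precisely uniform recurrence. The delicate point is that height in the factor order need not equal length on the nose (two factors of different lengths can sit at related heights), so I would need a lemma, or a direct estimate using finiteness of the alphabet, showing height and length are comparable up to the relevant monotone bound. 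Once that correspondence is pinned down, the three conditions close up into the desired cycle.
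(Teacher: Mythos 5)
The heart of this theorem is the implication $(ii)\Rightarrow(iii)$ --- the paper itself calls it ``the only nontrivial implication'' and defers its proof to \cite{pouzettr} and \cite{pouzet-zaguia-dmtcs21} --- and that is exactly where your argument has a genuine gap. To pass from ``inexhaustible and w.q.o.'' to ``J\'onsson'' via Theorem \ref{minimalposet}$(ii)$ you must show that every ideal of $\fac(u)$ distinct from $\fac(u)$ is finite. Your justification is that ``since $u$ is recurrent, every factor extends (on both sides) within $\fac(u)$, so a proper down-closed up-directed subset cannot contain arbitrarily long words and must be finite.'' This is a non sequitur, and note that it does not use the well-quasi-order hypothesis at all: if it were valid, it would show that recurrence alone forces all proper ideals of $\fac(u)$ to be finite, hence that every recurrent word is uniformly recurrent, which is false. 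Indeed, for a recurrent but not uniformly recurrent $u$, some factor $v$ occurs with unbounded gaps, so $\fac(u)\setminus(\uparrow v)$ is an infinite proper initial segment of $\fac(u)$, and it contains an infinite proper ideal (the age of an infinite $v$-avoiding word extracted from the long gaps). The fact that every factor of $u$ extends on both sides \emph{inside $\fac(u)$} says nothing about extendability \emph{inside the ideal}, so it cannot bound the lengths of its members.

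To close the gap you must use w.q.o.\ essentially. One standard route: if $u$ is recurrent but not uniformly recurrent, some factor $v$ occurs infinitely often but with unbounded gaps; for each $k$ pick consecutive occurrences of $v$ separated by a gap $y_k$ with $\vert y_k\vert\geq k$ and no intervening occurrence of $v$. The words $z_k:=vy_kv$ form an infinite antichain for the factor ordering (an occurrence of $v$ inside $z_l$ must align with one of its two extremal copies of $v$, so $z_k\leq z_l$ forces $\vert y_k\vert=\vert y_l\vert$), contradicting $(ii)$. The remaining implications of your cycle are essentially fine: $(i)\Rightarrow(ii)$ works as you sketch it (every sufficiently long factor contains any fixed short factor, killing antichains); $(iii)\Rightarrow(i)$ goes through clause $(iii)$ of Theorem \ref{minimalposet} exactly as you propose, and the ``delicate point'' you anticipate there is vacuous, since the $n$-th level of $\fac(u)$ is precisely the set of factors of length $n+1$, so height equals length minus one on the nose. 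Finally, in $(iii)\Rightarrow(ii)$, ``each factor has proper extensions'' is too weak to yield inexhaustibility; argue instead that if some factor occurred only finitely often, the factors of a late enough suffix of $u$ would form an infinite proper initial segment, contradicting the J\'onsson property.
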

The only nontrivial implication is $(ii)\Rightarrow (iii)$ (see Lemma II-2.5 of Ages belordonn\'es in \cite{pouzettr} p. 47). See Theorem 5 in \cite{pouzet-zaguia-dmtcs21}.

Let $u$ and $v$ be two words.  The word  $v$ is a \emph{prefix} of $u$ if $u=vu'$. This is a \emph{suffix} of $v$ if $v= u''v$. These relations  define two orders on the collection $\Sigma^*$ of finite words, the \emph{prefix} and the \emph{suffix} orders. The prefix order, as well as the suffix order are (ordered) trees (for each $u\in \Sigma^*$, the set of elements below is a chain). A first consequence is that every ideal is a chain. A more significant consequence, is that  if  $\mathcal C$ is an initial segment of $\Sigma^*$ for one of these orders, $\mathcal C$ is  a  w.q.o. if and only if it is an union of finitely many chains (indeed, if $\mathcal C$ is a w.q.o. then as every w.q.o.  this is a finite union of ideals (see \cite{milner}).  From this we deduce:

\begin{theorem}\label{thm:ulti-periodic} Let $u$ be a word with domain $\NN$ over a finite alphabet. Then $\fac(u)$ is well-quasi-ordered for the prefix order, respectively, the  suffix order,  if and only if $u$ is ultimately periodic, respectively, periodic.
\end{theorem}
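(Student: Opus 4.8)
The plan is to reduce both equivalences to the criterion recalled just above the statement: since $\fac(u)$ is closed under taking factors, it is an initial segment of $\Sigma^*$ for the factor order, hence \emph{a fortiori} for the prefix order and for the suffix order (a prefix, resp. a suffix, of a word is one of its factors). By the criterion, for each of these two orders $\fac(u)$ is w.q.o. if and only if it is a union of finitely many chains. So it suffices to prove that $\fac(u)$ is a finite union of chains for the prefix order exactly when $u$ is ultimately periodic, and for the suffix order exactly when $u$ is periodic. First I would record the common numerical constraint: for either order the words of length $n$ form the same set, namely the $p_u(n)$ distinct factors of $u$ of length $n$, and two words of equal length are comparable (for prefix, or for suffix) only if they are equal. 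Thus a covering by $k$ chains forces $p_u(n)\leq k$ for all $n$, so being a finite union of chains entails bounded factor complexity in both cases.

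Next I would exploit the tree structure. View $\fac(u)$ as a finitely branching forest, the parent of a word being obtained by deleting its last letter (prefix order) or its first letter (suffix order). For the prefix order this forest has no maximal element: every factor $u_{[i,i+\ell)}$ extends to $u_{[i,i+\ell]}$ because $\mathrm{dom}(u)=\NN$. For the suffix order a factor $w$ is maximal exactly when $w$ has no left extension in $u$, i.e.\ when $w$ occurs in $u$ only at position $0$, i.e.\ when $w$ is a prefix of $u$ occurring nowhere else. The structural lemma I would prove is: a finitely branching forest whose levels are bounded and which has only finitely many maximal elements is a finite union of chains. Indeed, past the last level carrying a maximal element every node has a child, so the parent maps between consecutive levels are onto; bounded and nondecreasing level sizes are eventually constant, whence these maps are eventually bijective, each high node has a unique child, and the forest splits into finitely many chains above a fixed level together with finitely many nodes below it. For the converse direction the maximal elements form an antichain, hence number at most $k$. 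Applying this: for the prefix order (no maximal elements) finite union of chains is equivalent to bounded complexity; for the suffix order it is equivalent to bounded complexity together with finitely many prefixes of $u$ occurring only at position $0$.

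It then remains to identify these conditions combinatorially. For bounded complexity $\Leftrightarrow$ ultimately periodic I would argue directly, without invoking Morse--Hedlund: since the prefix forest has no maximal element, bounded complexity forces the level sizes to stabilize, so there are finitely many one-sided infinite words all of whose prefixes are factors of $u$ (the infinite branches); this set is closed under the shift and contains $u$, so the shift-orbit of $u$ is finite and $u$ is ultimately periodic. Conversely an ultimately periodic $u=wv^{\infty}$ has at most $|w|+|v|$ factors of each length. This settles the prefix case. For the suffix case, among ultimately periodic words I must compare finiteness of the set of non-recurring prefixes with pure periodicity; if $u=v^{\infty}$, every prefix recurs at the multiples of $|v|$, so no prefix occurs only at position $0$ and there are no maximal elements, giving w.q.o.

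The main obstacle is the converse, that an ultimately-but-not-purely periodic $u$ has infinitely many prefixes occurring only at position $0$. I would argue by contraposition: assume all prefixes of length $\geq\ell_0$ recur, so for each large $\ell$ there is $i_\ell\geq 1$ with $u_{[0,\ell)}=u_{[i_\ell,i_\ell+\ell)}$, equivalently $u(j)=u(i_\ell+j)$ for $0\leq j<\ell$. Writing $u=wv^{\infty}$ with preperiod $N=|w|$ and period $p=|v|$, I split into two cases. If the $i_\ell$ stay bounded, some value $i^{*}$ occurs for infinitely many $\ell$, and letting $\ell\to\infty$ gives $u(j)=u(i^{*}+j)$ for all $j$, so $u$ is purely periodic of period $i^{*}$. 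If instead $i_\ell\geq N$ for arbitrarily large $\ell$, then the window $u_{[i_\ell,i_\ell+\ell)}$ lies entirely in the periodic tail and is a shift of $v^{\infty}$, so $u_{[0,\ell)}$ is $p$-periodic; letting $\ell\to\infty$ forces $u(j)=u(j+p)$ for all $j$, and again $u$ is purely periodic. Either way we contradict non-periodicity. This Fine--Wilf-flavoured case analysis, where the preperiod/period decomposition must be handled with care, is the delicate step; combined with the three preceding identifications and the criterion, it yields both equivalences.
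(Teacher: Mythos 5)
Your proposal is correct, and it reaches the conclusion by a genuinely different route from the paper's. Both proofs start from the same recalled criterion (a w.q.o.\ initial segment of a tree order is a finite union of chains/ideals), but the paper then argues directly on the ideals: for the prefix order it observes that each $\pref_n(u)$ is an ideal, that only finitely many distinct such ideals can occur, and that $\pref_n(u)=\pref_{n'}(u)$ forces the shifts of $u$ at $n$ and $n'$ to coincide; for the suffix order it first deduces that $u$ is recurrent, replaces $u$ by a bi-infinite word with the same factors via Theorem \ref{thm:rec-age-word}, and repeats the argument with $\suff_n$. You instead translate ``finite union of chains'' into two concrete combinatorial conditions on the factor forest --- bounded levels (i.e.\ bounded factor complexity, since equal-length words are comparable only if equal) and finitely many maximal elements --- prove a small structural lemma showing these conditions are also sufficient, and then identify them: no maximal elements and a finite shift-orbit in the prefix case, and in the suffix case a Fine--Wilf-flavoured analysis showing that an ultimately periodic word all of whose long prefixes recur is purely periodic. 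Your treatment of the suffix case is entirely self-contained and avoids the detour through recurrence and bi-infinite words, and as a by-product it makes explicit the Morse--Hedlund-type equivalence between bounded complexity and ultimate periodicity; the price is a longer argument with more case analysis than the paper's pigeonhole on the ideals $\pref_n(u)$ and $\suff_n(w)$. Two cosmetic remarks: your two cases for the recurrence positions $i_\ell$ (bounded, versus $i_\ell\geq N$ for arbitrarily large $\ell$) are not mutually exclusive but do cover all possibilities, which is all that is needed; and since a non-recurring prefix has all its extensions non-recurring, ``finitely many maximal elements'' in the suffix forest actually collapses to ``none'', which you could have used to shorten the last step but were not obliged to.
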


\begin{proof}Suppose that $u$ is w.q.o. for the prefix order.  Then $\fac(u)$ is a finite union of ideals. For each integer $n\in \NN$, the set $\pref_n (u):= \{u_{\restriction [n, m[}: n\leq m\}$ is an ideal. This ideal been included in a finite union of initial segments is included in one of them, and in fact equal. Thus there are only finitely many sets of the form $\pref_n (u)$. If for $n<n'$, $\pref_n(u)=\pref_{n'}(u)$ then the sequences $\pref_n (u)$  and $\pref_{n'} (u)$ give the same word. It follows that  $u$ is ultimately periodic.  If  $u$ is w.q.o. for the suffix order, we observe first that $u$ is recurrent. Then we apply Theorem \ref {thm:rec-age-word}: there is a word $w$ on $\ZZ$ such that $\fac(w)= \fac(u)$. With the same argument as above,  the set  for $n\in \ZZ$ of $\suff_n (w):= \{w_{\restriction [m, n[}: m\leq n\}$  is finite. This ensures that $w$ is periodic. It follows that $u$ is periodic too.  The converses of these implications are obvious. \end{proof}

\subsubsection{Bounds of hereditary classes of words}

Let $\mathcal C$ be a hereditary class of finite words. A \emph{bound} of $\mathcal C$ is any finite word $v\not \in \mathcal C$ such that every proper factor of $v$ belongs to $\mathcal C$. Equivalently, if $v:=v_0\dots v_{n-1}$,  then $v$ is a bound of $\mathcal{C}$ if and only if $v\not \in \mathcal C$ and the words $v_0\dots v_{n-2}$ and $v_1\dots v_{n-2}$ belong to $\mathcal C$.

Let $u$ be a word and  $p$ be a nonnegative integer. The word  $u$ is \emph{periodic} and $p$ is a \emph{period} if $u(i)= u(i+p)$ whenever  $i$ and $i+p$ belong to the domain of $u$.

The following result is Proposition 3 in section II-2.6 page 54 of \cite{pouzettr}. This result was never published. For completeness we include its proof here.

\begin{theorem}\label{thm:periodic-bounds}Let $\mu$ be an infinite  periodic word of period $p>0$. Then the bounds of $\fac(\mu)$ have length at most $p$.
\end{theorem}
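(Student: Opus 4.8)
The plan is to argue by contradiction: assume $v = v_0 v_1 \cdots v_{n-1}$ is a bound of $\fac(\mu)$ with $n \geq p+1$, and derive that $v$ is in fact a factor of $\mu$. Since $\mu$ is periodic of period $p$ and its factors recur, $\fac(\mu)$ coincides with the factor set of the bi-infinite $p$-periodic word with the same period, so I may normalize $\mu(i) = w_{i \bmod p}$ for all $i\in\ZZ$, where $w = w_0\cdots w_{p-1}$ is a fixed period. With this normalization, a finite word $u = u_0\cdots u_{m-1}$ lies in $\fac(\mu)$ precisely when there is an integer $r$ with $u_j = w_{(r+j)\bmod p}$ for all $0\le j\le m-1$.

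By the characterization of bounds recalled above, the length-$(n-1)$ prefix $a = v_0\cdots v_{n-2}$ and the length-$(n-1)$ suffix $b = v_1\cdots v_{n-1}$ both belong to $\fac(\mu)$, while $v\notin\fac(\mu)$. Writing down occurrences of $a$ and of $b$ produces integers $r,r'$ with $v_j = w_{(r+j)\bmod p}$ for $0\le j\le n-2$ and $v_k = w_{(r'-1+k)\bmod p}$ for $1\le k\le n-1$. Comparing the two expressions for $v_k$ on the overlap $1\le k\le n-2$ and setting $\delta := (r'-1-r)\bmod p$ yields the relations $w_m = w_{m+\delta}$ (indices mod $p$) for every residue $m = (r+k)\bmod p$ with $1\le k\le n-2$. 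Because $n\ge p+1$, the index $k$ runs over at least $p-1$ consecutive values, so these residues exhaust $\ZZ/p\ZZ$ with at most one residue missing, namely $r\bmod p$.

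The crux is to upgrade ``$w_m = w_{m+\delta}$ for all but at most one residue'' to ``$w_m = w_{m+\delta}$ for \emph{all} residues''. This is a short cyclic-closure argument: the shift $m\mapsto m+\delta$ decomposes $\ZZ/p\ZZ$ into cycles; on any cycle disjoint from the exceptional residue every relation is available, so $w$ is constant there, while on the single cycle containing the exceptional residue the relations on the remaining elements chain around the cycle and force the one missing relation as well. Hence $w$ is $\delta$-periodic.

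Finally, $\delta$-periodicity of $w$ supplies the one undetermined letter: since $r'\equiv r+\delta+1 \pmod p$,
\[
v_{n-1} = w_{(r'+n-2)\bmod p} = w_{(r+n-1+\delta)\bmod p} = w_{(r+n-1)\bmod p}.
\]
Together with $v_j = w_{(r+j)\bmod p}$ for $0\le j\le n-2$, this shows $v_j = w_{(r+j)\bmod p}$ for all $0\le j\le n-1$ with the single shift $r$, so $v\in\fac(\mu)$, contradicting that $v$ is a bound. The only delicate point, and the step I expect to require the most care, is the cyclic-closure argument of the third paragraph: it is exactly what absorbs the boundary case $n=p+1$, where the overlap covers only $p-1$ residues and the naive periodicity extension of $a$ to the right gives no information about the last letter of $v$.
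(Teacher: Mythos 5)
Your proof is correct, but it follows a genuinely different route from the paper's. The paper invokes a remark of Assous: a word is $p$-periodic if and only if all of its length-$p$ factors are abelian equivalent (contain the same letters with the same multiplicities). Since every length-$p$ factor of a putative bound $v$ with $|v|\geq p+1$ is a proper factor of $v$, hence a factor of the $p$-periodic word $\mu$, and all length-$p$ factors of $\mu$ are cyclic rotations of one another, the remark forces $v$ itself to be $p$-periodic; $v$ is then a prefix of $w^{n+1}$ for its length-$p$ prefix $w$, which occurs in $\mu$, so $v\in\fac(\mu)$. You instead use only the two length-$(n-1)$ proper factors guaranteed by the definition of a bound, extract the two phases $r,r'$ of their occurrences, and run an overlap argument in the style of Fine--Wilf on the cyclic word $w$: the $p-1$ (or more) coincidences on the overlap give $w_m=w_{m+\delta}$ for all but at most one residue, and your cycle-chasing step correctly recovers the missing relation (on the cycle of the exceptional residue the $L-1$ available equalities chain around to force the $L$-th), after which $\delta$-periodicity of $w$ pins down the last letter $v_{n-1}$ and exhibits $v$ as a factor of $\mu$ at phase $r$. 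Both arguments are sound; the paper's is shorter once Assous's criterion is granted and uses the full strength of ``every proper factor lies in $\fac(\mu)$'', whereas yours is self-contained, uses only the minimal prefix-and-suffix characterization of a bound, and makes explicit exactly where the boundary case $n=p+1$ is absorbed.
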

\begin{proof} The proof is based on the following remark due to Roland Assous. Namely, a word $v$ is periodic, with  period  $p>0$, if and only if every two factors  $w'$ and $w''$ of $v$, both having length $p$, contain the same letters and each of these letters occur the same number of times in $w'$ and $w''$.\\
We now prove the theorem. Let $v$ be a finite word of length at least $p+1$ so that each factor of length $p$ is a factor of the word $\mu$.  It follows from   the above remark  that the word $v$ is periodic with period $p$,  hence $v$ is of the form $w\dots ww'= (wn)w'$ where $w'$ is a prefix of $w$. Since $\mu$ is periodic of period $p$ and $w$ is a factor of $\mu$ we infer that  $w(n+1)$ is  a factor of $\mu$, hence  $v$ is a  factor of $\mu$. Thus $v$ is not a bound of $\fac(\mu)$.
\end{proof}


The following is a consequence of Proposition 6 in II-2.6. page 60 of  \cite{pouzettr}. This result was never published. For completeness we include a  proof (in fact two) here.

\begin{theorem}\label{thm:inf-bound-uniform-seq}Let $\mu$ be a uniformly recurrent and non periodic word. Then $\fac(\mu)$ has infinitely many bounds.
\end{theorem}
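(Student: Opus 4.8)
The plan is to argue by contradiction: I will assume that $\fac(\mu)$ has only finitely many bounds and deduce that $\mu$ must be periodic. The first step is the elementary reformulation of bounds as \emph{minimal forbidden factors}: a word $v$ is a bound of $\fac(\mu)$ precisely when $v\notin\fac(\mu)$ while both of its maximal proper factors (delete the first, respectively the last, letter) lie in $\fac(\mu)$. A consequence I will record is that whenever $w\notin\fac(\mu)$, some factor of $w$ is already a bound: simply take a factor of $w$ of minimal length that is not in $\fac(\mu)$, and observe that all of its proper factors then belong to $\fac(\mu)$.

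Now suppose there are finitely many bounds, and let $N\ge 2$ exceed the length of every bound. The key consequence is a \emph{finite type} property: for every word $w$ with $|w|\ge N$ one has $w\in\fac(\mu)$ if and only if every factor of $w$ of length $N$ lies in $\fac(\mu)$. The forward implication is trivial; conversely, if $w\notin\fac(\mu)$ then $w$ contains a bound, which has length at most $N$ and hence is a factor of some length-$N$ factor of $w$, contradicting (via heredity of $\fac(\mu)$) that all length-$N$ factors of $w$ belong to $\fac(\mu)$.

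Next I would manufacture a genuine periodic word all of whose factors belong to $\fac(\mu)$. Consider the directed graph $G$ whose vertices are the factors of $\mu$ of length $N-1$, with an edge $as\to sb$ for each length-$N$ factor $asb\in\fac(\mu)$ (here $a,b$ are letters and $s$ a word of length $N-2$). Since $\mu$ is recurrent, every vertex occurs infinitely often in $\mu$, so $G$ is strongly connected and each vertex has out-degree at least $1$; being finite, $G$ contains a directed cycle. Reading the letters along an infinite traversal of this cycle yields an infinite periodic word $y$ with domain $\NN$. By construction each length-$N$ factor of $y$ is an edge of $G$, i.e.\ a length-$N$ factor of $\mu$, so the finite type property just established forces every finite factor of $y$ into $\fac(\mu)$; that is, $\fac(y)\subseteq\fac(\mu)$.

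Finally I would close with uniform recurrence. If $\fac(y)=\fac(\mu)$, then since $y$ is periodic, $\fac(\mu)=\fac(y)$ is well-quasi-ordered for the suffix order, whence $\mu$ is periodic by Theorem \ref{thm:ulti-periodic} (applied to a representative of $\mu$ with domain $\NN$, which exists by Theorem \ref{thm:rec-age-word}), contradicting the hypothesis. Hence $\fac(y)\subsetneq\fac(\mu)$, so I may fix a factor $g$ of $\mu$ with $g\notin\fac(y)$. As $g\in\fac(\mu)$ and $\mu$ is uniformly recurrent, there is an $m$ such that every factor of $\mu$ of length $m$ contains $g$. Choosing any factor $h$ of $y$ of length $m$ gives $h\in\fac(y)\subseteq\fac(\mu)$, so $h$ contains $g$ and therefore $g\in\fac(y)$, a contradiction. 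I expect the main obstacle to be the middle step: converting ``finitely many, hence length-bounded, bounds'' into an honest infinite periodic word whose factors all occur in $\mu$. The strong connectivity of $G$ together with the finite-type window property is exactly what makes this passage rigorous, whereas the concluding uniform-recurrence contradiction is short.
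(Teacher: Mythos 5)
Your proof is correct, and its overall skeleton coincides with the paper's first proof: assume finitely many bounds, manufacture an infinite periodic word $y$ with $\fac(y)\subseteq\fac(\mu)$, force $\fac(y)=\fac(\mu)$, and conclude that $\mu$ is periodic. What differs is the device used in the middle step. The paper gets its periodic word from inexhaustibility: it picks $v\in\fac(\mu)$ longer than every bound, uses recurrence to find $vwv\in\fac(\mu)$, and observes that since any bound would have length at most $\vert v\vert$, all powers $(vw)^k$ lie in $\fac(\mu)$. You instead prove a ``finite type'' window property (a word of length $\ge N$ is a factor of $\mu$ iff all its length-$N$ windows are) and extract a cycle from the Rauzy/de Bruijn graph on length-$(N-1)$ factors; this is really the mechanism behind the paper's \emph{second} proof (regular languages and the pumping lemma), made explicit and self-contained, and it only needs recurrence at that stage. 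For the final step the paper invokes the J\'onsson property of $\fac(\mu)$ (any infinite hereditary subset equals the whole), while you use uniform recurrence directly; by Theorem \ref{unif-recurrent} these are the same fact in different clothing. One small point common to your argument and the paper's: the inference from $\fac(\mu)=\fac(y)$ with $y$ periodic of period $p$ to ``$\mu$ is periodic'' deserves a word (e.g.\ via Assous's remark in the proof of Theorem \ref{thm:periodic-bounds}, all length-$p$ factors of $\mu$ then carry the same letter multiset, so $\mu$ is $p$-periodic); your detour through Theorems \ref{thm:ulti-periodic} and \ref{thm:rec-age-word} lands you at the same place and is fine.
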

\begin{proof} We give two proofs.\\
1) Let $\mu$ be a uniformly recurrent and non periodic word. Suppose for a contradiction that $\fac(\mu)$ has finitely many bounds and let $m$ be the maximum length  of bounds of $\fac(\mu)$. Let $v\in \fac(\mu)$ of length at least $m$. Since $\mu$ is uniformly recurrent, $\fac(\mu)$ is inexhaustible (Theorem \ref{unif-recurrent}) it contains a word of the form $vwv$. Since the bounds of $\fac(\mu)$ have lengths at most $m$ we infer that $\fac(\mu)$ contains all periodic words of the form $(vw)\ldots (vw)$ and hence contains the set of factors of the infinite periodic word $\mu':=(vw)vw\ldots$. Since $\fac(\mu)$ is J\'onsson (Theorem \ref{unif-recurrent}) and $\fac(\mu')$ is infinite,  $\fac(\mu)=\fac(\mu')$. It follows  that $\mu$ is periodic. A contradiction.\\

\noindent 2) Our second  proof is based on the following properties of regular languages.

\noindent {\bf Claim}. \label{lem:reg}Let $\mathcal{C}$ be a hereditary class of finite words (ordered by the factor relation). If $\mathcal{C}$  has a finite number of bounds, then $\mathcal{C}$ is a regular language. And if $\mathcal C$ is an infinite regular language it contains $\fac(\mu)$ where $\mu$ is an infinite periodic word.

 \noindent{\bf Proof of the Claim.}
The first implication is immediate. Indeed, for every finite word $v$, the set $\uparrow v:=\{w : v\;  \mbox{is a factor of } w\}$ is regular language (in fact $\uparrow v=\Sigma^*v\Sigma^*$, where $\Sigma$ is the alphabet). It follows from Kleene's Theorem that the complement of $\uparrow v$, that is $\mathcal{C}\setminus \uparrow v$ is a regular language. If $\mathcal{C}$ has a finite number of bounds, then
\[\mathcal{C}=\bigcup \{\Sigma^* \setminus \uparrow v : v \mbox{ bound of } \mathcal{C}\}\]
is a finite intersection of regular languages and is therefore regular. \\For the second implication we use the Pumping Lemma for regular languages \cite{Bar-Hillel-Perles-Shamir}. Since $\mathcal{C}$ is a regular language, there are finite words $u,v_1,v_2$ such that for all $n\in \NN$, $v_1u^nv_2\in \mathcal{C}$. Let $\mu:=uuuu\ldots$. Then  $\fac(\mu)\subseteq \mathcal{C}$.
\hfill $\Box$

The existence of infinitely many bounds to a non periodic and uniformly recurrent word follows from the Claim. Indeed, let $\mu$ be a non periodic uniformly recurrent word. Then $\fac(\mu)$ cannot be regular. Otherwise,  it follows from the second part of the Claim that $\fac(\mu)$ contains the set of factor of an infinite periodic word $w$. Since $\mu$ is uniformly recurrent $\fac(\mu)=\fac(w)$ and therefore $\mu$ is periodic, contradicting our assumption. The required conclusion now follows from the first part of the Claim.
\end{proof}

 \section{Minimal prime hereditary classes}\label{section:min-hered-class}

 In this section we present the definition and properties of minimal prime hereditary classes of finite binary structures. We introduce  the notion of minimal prime structure and we conclude with the notion of almost chainability.  Results for graphs, given in the subsequent sections, are more precise. Most of the results presented here were included in Chapter 5 of the thesis of the first author \cite{oudrar}.

We start  with the  notion of a module.

\begin{definition}\label{def:module}
Let  $R:=(V,(\rho_i)_{i\in I})$ be a binary relational structure. A  \emph{module} of $R$ is any subset $A$ of $V$ such that  $$(x\rho_i a \Leftrightarrow x\rho_i a')  \; \text{and} \; (a\rho_i x \Leftrightarrow a'\rho_i x) \; \text{for all} \; a,a'\in A \;\text{and}\;
x\notin A \; \text{and} \; i\in I.$$
\end{definition}

The empty set, the singletons in $V$ and the whole set $V$ are modules and are called \textit{trivial}. (sometimes in the literature, modules  are called \emph{interval},
 \emph{autonomous} or \emph{partitive sets}). If $R$ has no nontrivial module, it is called \emph{prime} or \textit{indecomposable}.

\noindent For example, if $R:=(V,\leq)$ is a chain, its modules  are the ordinary intervals of the chain. If $R:=(V,(\leq,\leq'))$ is a bichain then $A$ is a  module of $R$ if and only if $A$ is an  interval of $(V,\leq)$ and $(V,\leq')$.

The notion of module  goes back to Fra\"{\i}ss\'e  \cite{fraisse3} and Gallai  \cite{gallai}, see also \cite{fraisse84}. A fundamental decomposition result of a binary structure into modules was obtained by Gallai \cite{gallai} for finite binary relations (see \cite{ehren} for further extensions).
We recall the  compactness result of Ille \cite{ille}.

\begin{theorem}\label{ille-theorem}
A binary structure $R$  is prime if and only if every finite subset $F$ of its domain extends to a finite set $F'$ such that  $R_{\restriction F'}$ is prime.
\end{theorem}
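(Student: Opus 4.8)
The statement is a biconditional whose two implications have very different flavours, so the plan is to treat them separately; the backward implication is routine and the forward one carries all the weight. I would first dispose of the easy direction (``extension property $\Rightarrow$ prime'') by contraposition. Assume $R$ is not prime, so it has a nontrivial module $A$ with $|A|\ge 2$ and $A\ne V$; choose distinct $a,a'\in A$ and some $x\in V\setminus A$ and set $F:=\{a,a',x\}$. The key fact is that the restriction of a module is a module: for any finite $F'$ with $F\subseteq F'\subseteq V$, the set $A\cap F'$ satisfies the defining condition of a module of $R_{\restriction F'}$, since the relations between a point of $F'\setminus A$ and two points of $A\cap F'$ are inherited from $R$. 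As $A\cap F'\supseteq\{a,a'\}$ has at least two elements and omits $x\in F'$, it is a nontrivial module of $R_{\restriction F'}$. Hence no finite $F'\supseteq F$ gives a prime restriction, which is exactly the failure of the extension property.

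For the forward implication, assume $R$ is prime and fix a finite $F$; the goal is a finite prime $F'\supseteq F$. If $V$ is finite one simply takes $F'=V$, and the cases $|F|\le 1$ are trivial, so I assume $V$ infinite and $|F|\ge 2$. My plan is to destroy nontrivial modules one at a time, using witnesses that \emph{primality of $R$ itself supplies}. Starting from $X:=F$, as long as $R_{\restriction X}$ is not prime it has a nontrivial module $M$, which I take minimal under inclusion. Since $2\le|M|$ and $M\subseteq X$ is finite while $R$ is prime, $M$ cannot be a module of $R$, so some $w\in V$ splits two elements of $M$. Crucially, because $M$ \emph{is} a module of $R_{\restriction X}$, no element of $X\setminus M$ can split $M$; hence $w\in V\setminus X$, and adjoining $w$ to $X$ destroys $M$.

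The delicate point, and the step I expect to be the main obstacle, is \textbf{termination}: adjoining a splitter may create new nontrivial modules, so I must guarantee that finitely many adjunctions suffice. The primary way I would close this gap is to invoke the classical one-or-two-point extension theorem for finite indecomposable structures, valid inside an arbitrary ambient prime $R$: any proper finite prime restriction can be enlarged, by adjoining one or two vertices, to a strictly larger prime restriction. Running this from the (vacuously prime) two-element restriction $R_{\restriction\{c,d\}}$ with $c,d\in F$ lets me grow a prime restriction, and whenever the current set omits a point of $F$ I would force that point in, repairing primality by the bounded further adjunctions the theorem provides; iterating absorbs all of $F$ into a finite prime set.

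As a more self-contained alternative I would argue the forward implication by contraposition through a compactness argument. Suppose no finite $F'\supseteq F$ were prime; choose a nontrivial module $M_X$ in each finite $R_{\restriction X}$ with $F\subseteq X\subseteq V$, and fix an ultrafilter $\mathcal U$ on the directed set of such $X$ refining the tail filter (for each finite $S$, the set $\{X:X\supseteq S\}$ lies in $\mathcal U$). Then $A:=\{v:\{X:v\in M_X\}\in\mathcal U\}$ is a module of $R$: for $a,a'\in A$ and $x\notin A$, the family of $X$ containing $a,a',x$ with $a,a'\in M_X$ and $x\notin M_X$ is $\mathcal U$-large, hence nonempty, and on any such $X$ the point $x$ fails to split $a,a'$ — a property of $R$ alone. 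Since $R$ is prime, $A$ is trivial, and the genuinely delicate remaining task is to force $A$ to be nontrivial, i.e.\ to prevent the witnessing modules $M_X$ from dispersing. This I would attempt by anchoring them — choosing the $M_X$ minimal and, when possible, generated by a fixed pair via the module closure of $\{c,d\}$ inside $R_{\restriction X}$ — so that two fixed elements persist in $\mathcal U$-many $M_X$; pinning down this non-dispersion is the crux of this route.
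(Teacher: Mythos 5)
The paper itself offers no proof of this statement: it is recalled from Ille's article \cite{ille}, so there is no in-paper argument to compare against. Measured on its own terms, your proposal is sound and complete for the easy direction (if $R$ has a nontrivial module $A$, then for $F:=\{a,a',x\}$ with $a,a'\in A$, $x\notin A$, every finite $F'\supseteq F$ inherits the nontrivial module $A\cap F'$), but the forward direction — which is the entire content of the theorem — is not established by either of your two routes, and you have correctly located where each one breaks.

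In Route 1, the lemma you invoke is not the lemma you need. The Ehrenfeucht--Rozenberg/Ille extension theorem, in its standard form, says that a finite prime restriction $R_{\restriction X}$ (with $|X|\geq 3$ or $4$ and $|V\setminus X|\geq 2$) extends to a prime restriction on $X\cup\{x,y\}$ for \emph{some} $x,y\in V\setminus X$; it gives you no control over which vertices are adjoined, so it cannot by itself "force a point of $F$ in". What is actually required is the targeted version: for \emph{every} $v\in V\setminus X$ there is $Y$ with $X\cup\{v\}\subseteq Y$ and $|Y\setminus X|\leq 2$ such that $R_{\restriction Y}$ is prime. That statement is proved via the partition of $V\setminus X$ into the classes $\mathrm{Ext}(X)$, $\langle X\rangle$ and $X_u$ ($u\in X$) and a case analysis on where $v$ and its separating witness land; it is the real substance of Ille's theorem, and assuming it here is essentially circular. (There is also a base-case issue: a two-element restriction is vacuously prime but too small for the extension lemma; for graphs one must first produce a prime restriction on four vertices meeting $F$.) In Route 2, the non-dispersion problem you flag is fatal as stated: the chosen modules $M_X$ may be pairwise disjoint two-element sets, so the $\mathcal U$-limit $A$ can be empty or a singleton, and then $A$ being a (trivial) module of $R$ yields no contradiction. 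Your proposed anchor $\langle c,d\rangle_X$ does not repair this, because the module of $R_{\restriction X}$ generated by a fixed pair $\{c,d\}\subseteq F$ may well equal all of $X$ even when $R_{\restriction X}$ has other nontrivial modules, so it need not be a usable witness of non-primality. In short: the strategy of growing $F$ by controlled prime extensions is the right one and matches Ille's, but the key extension lemma is misquoted and unproved, so the forward implication remains open in your write-up.
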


We consider the class $\prim_{s}:=\prim (\Omega_s)$ of finite  binary structures of signature $s$ which are prime. We set $\prim (\mathcal C):= \prim_s\cap \, \mathcal C$ for every $\mathcal C\subseteq \Omega_s$.

We say that a subclass $\mathcal{D}$ of $\prim_{s}$ is \emph{hereditary} if it contains every member of $\prim_{s}$  which can be embedded into some member of $\mathcal{D}$.

\subsection{Hereditary classes containing finitely many prime structures}\label{subsection:finitelymanyprimes}

The following result (see Proposition 5.2  of \cite{oudrar-pouzet2016}) improves  a result of  \cite{albert-atkinson} for  hereditary classes of finite permutations.

\begin{theorem}\label{thm:finite-prime}Let $\mathcal {C}$ be a hereditary class of finite binary structures containing only finitely many prime structures. Then $\mathcal {C}$ is hereditarily well-quasi-ordered. In particular, $\mathcal {C}$ has finitely many bounds.
\end{theorem}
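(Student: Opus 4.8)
The plan is to reduce the statement, via the modular (Gallai) decomposition, to the well-known fact that the closure of a \emph{finite} hereditary class under lexicographic sums is hereditarily well-quasi-ordered, and then to read off the finiteness of the bounds from Theorem~\ref{thm:bounds}. First I would fix notation: set $\mathcal{P}:=\prim(\mathcal{C})$, which is finite by hypothesis, and let $\mathcal{D}$ be the hereditary class obtained by closing $\mathcal{P}$ together with all $2$-element structures of signature $s$ under induced substructures. Since $s$ is finite there are only finitely many $2$-element structures, so $\mathcal{D}$ is finite. Let $\widehat{\mathcal{D}}$ denote the closure of $\mathcal{D}$ under lexicographic sums indexed by members of $\mathcal{D}$.

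Second, I would show that $\mathcal{C}\subseteq\widehat{\mathcal{D}}$. Take $R\in\mathcal{C}$ and consider its canonical decomposition into strong modules (Gallai \cite{gallai}, with \cite{ehren} for the extension to binary structures). At each internal node the quotient is either prime or \emph{degenerate} (every subset a module, i.e.\ ``complete'' or ``linear''). A prime quotient is obtained by choosing one vertex in each child module, hence is an induced prime substructure of $R$; as $\mathcal{C}$ is hereditary it lies in $\prim(\mathcal{C})=\mathcal{P}\subseteq\mathcal{D}$. A degenerate quotient is itself a lexicographic sum of $2$-element structures and therefore belongs to $\widehat{\mathcal{D}}$. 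Reading the decomposition tree from the leaves upward, $R$ is exhibited as a lexicographic sum of members of $\mathcal{D}$, so $R\in\widehat{\mathcal{D}}$.

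Third --- and this is the crux --- I would argue that $\widehat{\mathcal{D}}$ is hereditarily well-quasi-ordered. Each element of $\widehat{\mathcal{D}}$ is coded by a finite rooted tree whose internal nodes carry labels from the \emph{finite} set $\mathcal{D}$ and whose leaves are single vertices; a labelling by a w.q.o.\ $Q$ amounts to attaching an element of $Q$ to each leaf. The key point is that an embedding of labelled structures corresponds to a \emph{topological} embedding of the associated trees, since a small module of the smaller structure may be absorbed into a large module of the larger one. Matching children at a degenerate node is governed by Higman's theorem \cite{higman} on finite sequences and multisets over a w.q.o.; matching children at a prime node involves only the bounded number of children dictated by the finite family $\mathcal{P}$; and the recursion over the finite height is closed by a minimal-bad-sequence argument (a Kruskal-style induction resting on Higman's theorem). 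Because our trees are finite, validity for w.q.o.\ labels is all that is required --- no appeal to better-quasi-order is needed here. Hence $\widehat{\mathcal{D}}\cdot Q$ is w.q.o.\ for every w.q.o.\ $Q$.

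Finally, since $\mathcal{C}$ is hereditary and $\mathcal{C}\subseteq\widehat{\mathcal{D}}$, we have $\mathcal{C}\cdot Q\subseteq\widehat{\mathcal{D}}\cdot Q$ for every $Q$, and a subset of a w.q.o.\ is w.q.o.; thus $\mathcal{C}$ is hereditarily well-quasi-ordered. The ``in particular'' then follows directly from Theorem~\ref{thm:bounds}: the signature $s$ being finite, hence bounded, the cardinalities of the bounds of the hereditary, hereditarily w.q.o.\ class $\mathcal{C}$ are bounded, and over a finite signature there are only finitely many isomorphism types of a given bounded size, so $\mathcal{C}$ has finitely many bounds. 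The main obstacle is the third step: setting up a faithful dictionary between induced-substructure embeddings of the structures and topological embeddings of their decomposition trees --- in particular handling the absorption of a leaf (a single vertex) into an internal degenerate node --- cleanly enough that Higman's theorem and the minimal-bad-sequence induction apply.
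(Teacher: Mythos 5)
The paper does not actually prove Theorem~\ref{thm:finite-prime}; it imports it from Proposition~5.2 of \cite{oudrar-pouzet2016}, and the introduction signals the intended mechanism (close a finite hereditary class under lexicographic sums; apply Higman). Your proposal reconstructs exactly that route --- modular decomposition over the finite set $\prim(\mathcal C)$ plus the degenerate ($2$-element) quotients, a term/tree encoding, Higman's theorem with a Nash--Williams minimal-bad-sequence induction, and then Theorem~\ref{thm:bounds} for the finiteness of bounds --- and it is sound. One remark on the step you flag as the main obstacle: you do not need a \emph{faithful} (two-way) dictionary between structure embeddings and tree embeddings. It suffices that the map from $\mathcal D$-labelled terms to structures is surjective onto $\widehat{\mathcal D}$ and order-preserving in the single direction ``term embedding implies structure embedding''; the image of a w.q.o.\ under an order-preserving surjection is w.q.o., so the delicate converse (absorbing a leaf of the smaller tree into a degenerate internal node of the larger one) never has to be analysed. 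With that simplification the crux step is literally Higman's theorem on ordering by divisibility in abstract algebras, with one finite-arity operation per prime quotient and two sequence-valued operations for the complete and linear quotients.
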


The following result, due independently to  Delhomm\'e \cite{delhomme1} and McKay
 \cite{mckay1} extends Thomass\'e's result on the well-quasi-order   character of the class of countable series-parallel posets \cite{thomasse},  which extends the  famous Laver's theorem \cite{laver} on the  well-quasi-order  character of the class of countable chains.

\begin{theorem}\label{thm:delhomme-mckay}
Let $\mathcal C$ be a hereditary class of $\Omega_{s}$. If $\prim (\mathcal C)$ is finite, then  the collection  ${\mathcal C^{\leq \omega}}$   of countable $R$ such that $\age(R)\subseteq \mathcal C$ is well-quasi-ordered  by embeddability.
\end{theorem}

In fact, Delhomm\'e and McKay obtain a  stronger conclusion of  Theorem \ref{thm:delhomme-mckay}. If $\prim(\mathcal C)$ is finite, and $Q$ is a better-quasi-order then, the class of members of ${\mathcal C^{\leq \omega}}$ labelled by $Q$ is better-quasi-ordered (this implication is false if b.q.o. is replaced by w.q.o). In particular, if $Q$ is finite, this class is w.q.o. This case follows from Theorem \ref {thm:delhomme-mckay} above. Indeed, we may view structures labelled by $Q$ as binary structures. In this new class, say $\mathcal D$, modules are unchanged, hence there are only finitely many primes and thus the class  $\mathcal D^{\leq \omega} $ is w.q.o. We will use this observation in the proof of Theorem \ref{thm:noninexhaustible} below.

\subsection{Hereditary classes containing infinitely many prime structures}\label{subsection:infinitely-prime}

In this subsection, we report some results included in \cite{oudrar}. We consider hereditary classes containing infinitely many prime structures. We show that each  such a class contains one which is minimal with respect to inclusion.

\begin{definition} A hereditary class $\mathcal C$ of $\Omega_{\mu}$ is \emph{minimal prime} if it contains infinitely many prime structures, while every  proper  hereditary subclass contains only finitely  many prime structures.
\end{definition}
This notion appears in the thesis of the first author \cite{oudrar} (see Theorem 5.12, p. 92, and Theorem 5.15, p. 94 of  \cite{oudrar}).

Due to their definition, minimal prime ages ordered by set inclusion form an antichain with respect to set inclusion.

%
%

We have immediately (cf. Th\'eor\`eme 5.14 p.93 of \cite{oudrar}).

\begin{theorem} \label{thm:minimalprime}A hereditary class $\mathcal C$ of $\Omega_s$ is minimal prime if and only if $\prim  (\mathcal C)$
is a J\'onsson poset which is cofinal in $\mathcal C$.
\end{theorem}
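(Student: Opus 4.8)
The plan is to reduce everything to two elementary observations and then manufacture, from each proper initial segment of $\prim(\mathcal{C})$ and from each potential witness to non-cofinality, a proper hereditary subclass of $\mathcal{C}$ whose set of primes I can compute. First, because the signature $s$ is finite, $\Omega_s$ is countable, so $\prim(\mathcal{C})$ is at most countable; when $\mathcal{C}$ has infinitely many primes it has cardinality exactly $\aleph_0$. Second, $\prim(\mathcal{C})$ ordered by embeddability is a genuine poset, since mutual embeddability of finite structures forces isomorphism. With these in hand, the definition of a Jónsson poset specializes to the single requirement that $\prim(\mathcal{C})$ be infinite with every proper initial segment finite, and I never need the deeper characterization of Theorem \ref{minimalposet}.

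For the direction $(\Leftarrow)$, assume $\prim(\mathcal{C})$ is Jónsson and cofinal in $\mathcal{C}$. Being Jónsson it is infinite, so $\mathcal{C}$ contains infinitely many primes. Let $\mathcal{D}\subsetneq\mathcal{C}$ be a proper hereditary subclass. Then $\prim(\mathcal{D})=\prim(\mathcal{C})\cap\mathcal{D}$ is an initial segment of $\prim(\mathcal{C})$: if $Q\in\prim(\mathcal{D})$ and $Q'\leq Q$ with $Q'\in\prim(\mathcal{C})$, heredity of $\mathcal{D}$ gives $Q'\in\mathcal{D}$, so $Q'\in\prim(\mathcal{D})$. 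Cofinality makes this segment proper: pick $R\in\mathcal{C}\setminus\mathcal{D}$ and a prime $P\geq R$ in $\mathcal{C}$; heredity forces $P\notin\mathcal{D}$, so $P\in\prim(\mathcal{C})\setminus\prim(\mathcal{D})$. A proper initial segment of a Jónsson poset of cardinality $\aleph_0$ is finite, so $\prim(\mathcal{D})$ is finite, and $\mathcal{C}$ is minimal prime.

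For $(\Rightarrow)$, assume $\mathcal{C}$ is minimal prime, so $\prim(\mathcal{C})$ is countably infinite. To show it is Jónsson I must prove every proper initial segment $S$ is finite, and the idea is to realize $S$ as $\prim(\mathcal{D})$ for a proper hereditary $\mathcal{D}$. Take $\mathcal{D}:=\{R\in\mathcal{C}:R\leq P \text{ for some } P\in S\}$, the downward closure of $S$ inside $\mathcal{C}$; this is hereditary, and since $S$ is an initial segment of $\prim(\mathcal{C})$ one checks $\prim(\mathcal{D})=S$, while any prime $P_0\in\prim(\mathcal{C})\setminus S$ lies outside $\mathcal{D}$, making $\mathcal{D}$ proper. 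Minimality then forces $S=\prim(\mathcal{D})$ finite. For cofinality, suppose some $R_0\in\mathcal{C}$ embeds into no prime of $\mathcal{C}$, and set $\mathcal{D}:=\{R\in\mathcal{C}:R_0\not\leq R\}$; this is hereditary and proper (it omits $R_0$), yet the assumption on $R_0$ gives $\prim(\mathcal{D})=\prim(\mathcal{C})$, which is infinite, contradicting minimality. Hence $\prim(\mathcal{C})$ is cofinal in $\mathcal{C}$.

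The entire substance lies in the two auxiliary constructions and in verifying their prime structures, which is where I expect the only real care to be needed: confirming that $\prim(\mathcal{D})=\prim(\mathcal{C})\cap\mathcal{D}$ is an initial segment of the prime poset (resting on heredity applied to prime induced substructures) and that the downward-closure construction recovers exactly $S$ rather than something larger. Since the bare cardinality definition of Jónsson suffices, the argument stays short, consistent with the statement being an immediate consequence of the definitions.
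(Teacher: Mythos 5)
Your proof is correct and follows essentially the same route as the paper's: both directions reduce to producing a proper hereditary subclass whose primes can be identified (the downward closure of a proper initial segment of $\prim(\mathcal C)$ for the Jónsson property, and a proper hereditary subclass containing all of $\prim(\mathcal C)$ for cofinality). The only cosmetic difference is that for cofinality the paper works with $\downarrow\prim(\mathcal C)$ directly, whereas you use the avoidance class $\{R\in\mathcal C: R_0\not\leq R\}$ of a witness $R_0$; the underlying contradiction is identical.
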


\begin{proof} Let $\mathcal  C$ be a minimal prime class. By definition, $\prim  (\mathcal C)$ is infinite.
Let $\mathcal I$ be a proper hereditary subclass of $\prim  (\mathcal C)$. The initial segment $\downarrow \mathcal I$ in $\Omega_s$ is a proper subclass of $\mathcal C$. Hence $\mathcal I$ is finite. Thus $\prim (\mathcal C)$ is J\'onsson. Let $\mathcal C':= \downarrow \prim  (\mathcal C)$. If $\mathcal C'\not =\mathcal C$ then since $\mathcal C$ is minimal prime, $\prim  (\mathcal C')= \prim  (\mathcal C)$ is finite, which is impossible. This proves that the forward implication holds

 Conversely,  suppose that $\prim  (\mathcal C)$ is a J\'onsson poset which is cofinal in $\mathcal C$. Then
$\mathcal C$ is  infinite. If $\mathcal C$ is not  minimal prime there is a	proper hereditary subclass $\mathcal C'$  of $\mathcal C$ such that $\prim  (\mathcal C')$ is infinite. Since $\prim (\mathcal C)$  is J\'onsson, $\prim  (\mathcal C')= \prim  (\mathcal C)$. Since $\mathcal C'= \downarrow \prim  (\mathcal C')$ and $\prim  (\mathcal C)$ is cofinal in $\mathcal C$, this yields $\mathcal C'= \mathcal C$.  A contradiction.
\end{proof}

We have:

\begin{theorem}\label {minimal}
Every   hereditary class of   finite binary  structures (with a given finite signature), which contains  infinitely many prime structures   contains a minimal prime hereditary subclass.
\end{theorem}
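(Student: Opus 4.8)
The plan is to manufacture the desired subclass by feeding $\prim(\mathcal C)$ into Lemma~\ref{lem:contains minimal} and then recognizing the output via Theorem~\ref{thm:minimalprime}. Set $P:=\prim(\mathcal C)$, the collection of finite prime structures of $\mathcal C$ considered up to isomorphism and ordered by embeddability; since two structures of equal cardinality can embed one into the other only if they are isomorphic, $P$ really is a poset, and it is infinite by hypothesis. I first check that $P$ meets the hypotheses of Lemma~\ref{lem:contains minimal}, namely that it is well-founded and level-finite. Well-foundedness is immediate: a strict embedding $R<R'$ forces $|R|<|R'|$, so an infinite strictly descending chain in $P$ would yield an infinite strictly decreasing sequence of natural numbers. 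In particular each element has finite height, so $h(P)\le\omega$.

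The delicate point, which I expect to be the main obstacle, is level-finiteness. Because the signature $s$ is finite there are only finitely many structures of each cardinality up to isomorphism, so it suffices to bound the cardinality of the primes occurring in a fixed level $P_m$. The natural tool here is the classical theorem of Schmerl and Trotter: there is a constant $c=c(s)$ such that every prime binary structure of cardinality $n\ge c$ has a prime substructure obtained by deleting one or two elements. Iterating this, any $R\in P$ with $|R|=n$ sits atop a chain of prime substructures whose cardinalities drop by at most $2$ at each step until they fall below $c$; hence $h(R)\ge (n-c)/2$, that is $|R|\le 2\,h(R)+c$. Therefore $P_m$ contains only primes of cardinality at most $2m+c$, of which there are finitely many, so $P$ is level-finite. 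Note that without primality this step would be trivial, since deleting a single point of an arbitrary structure already gives a proper substructure; the content of Schmerl--Trotter is exactly that one can descend while \emph{preserving} primality, which is what forbids an infinite antichain of minimal primes sitting at level $0$.

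With $P$ now known to be infinite, well-founded, and level-finite, Lemma~\ref{lem:contains minimal} produces an initial segment $J$ of $P$ that is J\'onsson; moreover $J$ is countable, $P$ itself being countable. I then set $\mathcal D:=\,\downarrow J$, the downward closure of $J$ taken in $\Omega_s$. Since $J\subseteq \prim(\mathcal C)\subseteq\mathcal C$ and $\mathcal C$ is hereditary, $\mathcal D$ is a hereditary subclass of $\mathcal C$. The last verification is that $\prim(\mathcal D)=J$: the inclusion $J\subseteq\prim(\mathcal D)$ is clear, while conversely, if $R$ is prime and $R\in\mathcal D$ then $R\le R'$ for some $R'\in J$; being prime, $R$ lies in $P=\prim(\mathcal C)$, and since $J$ is an initial segment of $P$ with $R\le R'\in J$, we get $R\in J$. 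Thus $\prim(\mathcal D)=J$ is J\'onsson and, because $\mathcal D=\,\downarrow J=\,\downarrow\prim(\mathcal D)$, it is cofinal in $\mathcal D$. By Theorem~\ref{thm:minimalprime}, $\mathcal D$ is minimal prime, and $\mathcal D\subseteq\mathcal C$, which is exactly the required conclusion.
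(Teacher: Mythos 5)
Your proof is correct and follows essentially the same route as the paper: establish that the poset of primes is level-finite, extract a J\'onsson initial segment via Lemma~\ref{lem:contains minimal}, and recognize its downward closure as minimal prime via Theorem~\ref{thm:minimalprime}. Your level-finiteness argument via Schmerl--Trotter is precisely the paper's second proof of Lemma~\ref{lem:levelfinite-prims} (the inequality $\vert V(R)\vert\le 2(h(R)-1)$), and your explicit verification that $\prim(\downarrow J)=J$ fills in a detail the paper leaves implicit.
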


 For the proof of Theorem $\ref{minimal}$ we will need the following lemma which is a special case of Theorem 4.6 of \cite{assous-pouzet}.

\begin{lemma}\label{lem:levelfinite-prims}$\prim_{s}$ is level finite.
\end{lemma}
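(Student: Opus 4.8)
The plan is to exploit two facts: that embeddability on finite structures is well-founded, so that $\prim_s$ genuinely admits the level decomposition from the preliminaries, and a classical decrement theorem for prime structures that lets one descend in the embeddability order on primes by small steps. The well-foundedness controls the \emph{shape} of the decomposition, and the decrement theorem controls the \emph{size} of structures occurring at each level.

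First I would record that the embeddability quasi-order, restricted to the finite structures of $\Omega_s$ and hence to $\prim_s$, is well-founded: if $R\leq R'$ then $|V(R)|\leq |V(R')|$, and if moreover $|V(R)|=|V(R')|$ then any embedding is a bijection, so $R\cong R'$. Thus a strict inequality $R<R'$ forces $|V(R)|<|V(R')|$, and there is no infinite strictly descending chain. Consequently $\prim_s$ admits a level decomposition $(P_\alpha)_\alpha$. Along any strict chain $R_0<R_1<\cdots<R_t$ in $\prim_s$ the cardinalities strictly increase, so $t\leq |V(R_t)|$; hence the level of every $R$ is finite, the height is at most $\omega$, and each level is a $P_n$ with $n<\omega$. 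It therefore suffices to prove each $P_n$ is finite, and for this it is enough to bound, in terms of $n$, the cardinality of the structures occurring in $P_n$, since for a finite signature $s$ there are, up to isomorphism, only finitely many members of $\Omega_s$ of any prescribed finite cardinality.

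The heart of the argument is a downward step inside $\prim_s$. I would invoke the decrement theorem for indecomposable binary structures (the Schmerl--Trotter/Ille phenomenon, which is the special case of Theorem 4.6 of \cite{assous-pouzet} referred to in the statement): there is an integer $k_0$ such that every $R\in\prim_s$ with $|V(R)|\geq k_0$ has a prime induced substructure on exactly $|V(R)|-1$ or $|V(R)|-2$ vertices. Applying this repeatedly from a prime $R$ with $|V(R)|=k$ produces a strictly descending chain $R=R_t>R_{t-1}>\cdots>R_0$ in $\prim_s$ with $|V(R_0)|<k_0$ and $|V(R_{i+1})|-|V(R_i)|\leq 2$, so its length satisfies $t\geq (k-k_0)/2$. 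Since a strict chain of length $t$ below $R$ witnesses that the level of $R$ is at least $t$, we obtain $|V(R)|\leq 2\,(\text{level of }R)+k_0$.

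Combining the two parts closes the argument: every $R\in P_n$ satisfies $|V(R)|\leq 2n+k_0$, and as $\Omega_s$ has only finitely many isomorphism types of cardinality at most $2n+k_0$, the level $P_n$ is finite; the finitely many primes of cardinality below $k_0$ lie in levels with index below $k_0$ and so contribute nothing to the remaining levels. As this holds for all $n<\omega$, the poset $\prim_s$ is level-finite. The one genuinely non-trivial ingredient, and the step I would expect to be the main obstacle, is the decrement theorem; everything else is the bookkeeping of well-foundedness and cardinality counting. In a self-contained write-up one would either cite Schmerl--Trotter directly or extract the needed step from Theorem 4.6 of \cite{assous-pouzet}.
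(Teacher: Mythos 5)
Your proof is correct and is essentially the paper's own second proof of this lemma (the one in the subsection ``Another proof of Lemma \ref{lem:levelfinite-prims}''): it uses the Schmerl--Trotter decrement theorem (Theorem \ref{theo:indec}, with $k_0=7$) to derive the bound $\vert V(R)\vert \leq 2\,h(R)+O(1)$ and then counts isomorphism types of bounded cardinality over a finite signature. The paper's primary proof is different (it argues by contradiction via Theorem \ref{thm:finite-prime}, using that a hereditary class with finitely many primes has finitely many bounds), but your route is exactly the alternative the authors themselves record.
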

\begin{proof}Suppose for a contradiction that there exists an integer $n\geq 0$ such that the level ${\prim_s}(n)$ of $\prim_s$ is infinite and choose $n$ smallest with this property. Define
$$\mathcal{C}:=\{R\in \Omega_{s}: R< S \text{ for some } S\in {\prim}_{s}(n)\}.$$
Then $\mathcal{C}$ is a hereditary class of $\Omega_{s}$ containing only finitely many prime structures. It follows from Theorem \ref{thm:finite-prime} that $\mathcal{Cer}$ is hereditary well-quasi-ordered and hence has finitely many bounds. This is not possible since the elements of ${\prim_s}(n)$ are  bounds of $\mathcal{C}$.
\end{proof}

The proof of Theorem $\ref{minimal}$ goes as follows. Let $\mathcal C$ be a hereditary class of $\Omega_s$ such that $J:= \prim_s(\mathcal C)$ is infinite. Since $\prim_{s}$ is level finite,   Lemma \ref {lem:contains minimal} ensures that $J$ contains an initial segment $D$ which is J\'onsson. According to Theorem \ref{thm:minimalprime}, $\downarrow D$ is minimal prime. This completes the proof.\hfill $\Box$

\subsubsection{Another proof of Lemma \ref{lem:levelfinite-prims}}
We prove the finiteness of the levels of $\prim_{s}$ via the properties of critical primality. A binary structure $R$ is \textit{critically prime} if it is prime and   $R_{\restriction V(R)\setminus \{x\}}$ is not prime for every $x\in V(R)$.  Note that $\vert V(R)\vert$ has at least four elements.
This notion of critical primality was introduced by Schmerl and Trotter \cite{S-T}. Among results given in their paper, we have the following theorem (this is Theorem 5.9, page 204):

\begin{theorem}\label{theo:indec}
 Let $R$ be a prime binary structure of order $n\geqslant 7$. Then there are distinct $c, d\in V(R)$ such that $R_{\restriction V(R) \setminus \{c,d\}}$ is prime.
\end{theorem}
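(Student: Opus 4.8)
The plan is to avoid induction on $n$ and argue directly from a maximal prime induced substructure, exploiting the classical fact that primality propagates \emph{upward} by adjoining at most two vertices. Concretely I would rely on the standard propagation lemma for indecomposable binary structures (Ehrenfeucht--Harju--Rozenberg; Ille \cite{ille}): if $R$ is prime, $X\subsetneq V(R)$ with $R_{\restriction X}$ prime and $|X|\ge 3$, then there exist $u,v\in V(R)\setminus X$, not necessarily distinct, such that $R_{\restriction X\cup\{u,v\}}$ is prime. In words, any proper prime induced substructure can be enlarged while staying prime by adding one or two new points. Since $R$ is prime and $n\ge 7$, it contains proper prime induced substructures (e.g.\ a minimal one, of size $3$ or $4$), so a maximal one exists and this lemma applies to it.

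Let $X\subsetneq V:=V(R)$ be \emph{maximal} among proper subsets with $R_{\restriction X}$ prime. Applying the propagation lemma to $X$ yields a prime $R_{\restriction X\cup\{u,v\}}$ with $|\{u,v\}|\in\{1,2\}$; were $X\cup\{u,v\}$ still a proper subset of $V$, it would be a prime induced substructure strictly containing $X$, contradicting maximality. Hence $X\cup\{u,v\}=V$ and $|V\setminus X|\le 2$. If some maximal $X$ has $|V\setminus X|=2$, then setting $\{c,d\}:=V\setminus X$ gives $R_{\restriction V\setminus\{c,d\}}$ prime and we are done. This disposes of the generic situation in one stroke: a maximal proper prime substructure can never omit three or more vertices.

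The residual case is that \emph{every} maximal proper prime substructure has the form $V\setminus\{x\}$. If for some such $x$ the structure $R_{\restriction V\setminus\{x\}}$ (of order $n-1\ge 6$) is not critically prime, then some $y\ne x$ gives $R_{\restriction V\setminus\{x,y\}}$ prime and again we are done. The genuinely hard case, and the place where I expect the real work to lie, is when $R$ is critically prime, or when every primality-preserving single deletion lands in a critically prime structure: here upward propagation leaves no maneuvering room and one must use the \textbf{structure} of critically prime binary structures. My plan for this step is to invoke the Schmerl--Trotter classification \cite{S-T} of critically prime (critically indecomposable) structures---up to complementation and duality these are rigid, essentially ``linear'' objects built from linear orders---and then, family by family, simply exhibit a pair $\{c,d\}$ whose deletion restores primality (for the linearly structured families one typically removes two consecutive elements, or the two extremities). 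The hypothesis $n\ge 7$ is exactly what excludes the finitely many small sporadic critically prime structures admitting no such pair, so this structural case analysis is the main obstacle; everything preceding it is soft and uniform.
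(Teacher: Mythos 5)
The paper does not prove this statement: it is imported verbatim from Schmerl and Trotter (\cite{S-T}, Theorem 5.9, quoted in the subsection on critical primality), so there is no internal proof to compare yours with, and I can only assess your argument on its own terms. Your soft part is correct and is the standard opening move: granting that every prime structure of order at least five has a prime restriction on three or four vertices, and granting the Ehrenfeucht--Rozenberg propagation lemma, a maximal proper prime restriction $X$ with $|X|\ge 3$ must satisfy $|V\setminus X|\le 2$, and if $|V\setminus X|=2$ you are done. Note that this already disposes of the subcase ``$R$ is critically prime'' that you list among the hard cases: there no $V\setminus\{x\}$ induces a prime structure, so every maximal $X$ omits exactly two vertices and the soft argument finishes. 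What genuinely remains after negating the conclusion is: some $x$ has $R_{\restriction V\setminus\{x\}}$ prime, and every such restriction is critically prime of order $n-1\ge 6$.

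The gap is that this remaining case is the entire content of the theorem, and your plan for it does not engage with the right object. You propose to run through the Schmerl--Trotter classification and, family by family, ``exhibit a pair $\{c,d\}$ whose deletion restores primality.'' But the structure to be analysed is not a critically prime structure on its own; it is a critically prime $C:=R_{\restriction V\setminus\{x\}}$ of order $n-1$ together with the extra vertex $x$, attached in whatever way makes the whole of $R$ prime, and you must produce two vertices $c,d$ of $C$ such that $R_{\restriction V\setminus\{c,d\}}$ --- which still contains $x$ --- is prime. That requires controlling all the ways a vertex can be adjoined to each critically indecomposable structure while preserving primality of the extension, which is precisely the work carried out in \cite{S-T} and is absent from your sketch. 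A second difficulty is the scope of the classification you invoke: \cite{S-T} classify critically indecomposable posets and graphs, but the theorem is stated (and used in this paper) for arbitrary binary structures of finite signature, for which no finite list of ``families'' is available to quantify your case analysis over. So the argument as written correctly reduces the theorem to its hard core and then stops; the reduction is sound, but the theorem is not proved.
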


%
In their paper, Schmerl and Trotter give examples of critically prime structures within the class of graphs, posets, tournaments, oriented graphs and binary relational structures. The set of critical prime structures within each of these classes  is a finite union of chains.

Decompose $Prim_s$  into levels;  in level $i$, with $i\leq 2$,
are the structures of order zero, one or two.

For structures $R$ in $Prim_s$ of order at least $2$, we have the following  relationship between the height $h(R)$ in $Prim_s$   and its order, $\vert V(R) \vert$ (which is the height of $R$ in $\Omega_s$).

\begin{equation} \label{ineq:1}
    h(R) \leqslant \vert V(R) \vert  \leqslant 2(h(R)-1).
      \end{equation}

The first inequality is obvious. For the second, we use induction on $n:= h(R)\geq 2$. The basis step $n=2$ is trivially true. Suppose $n>2$. Let $S$ be prime such that $S$ embeds in $R$ with $h(S)=n-1$. From the induction hypothesis, $\vert V(S)\vert \leq 2(h(S)-1)= 2(n-2)$. According to Theorem \ref {theo:indec}, $\vert V(R)\vert -2 \leq \vert V(S) \vert $. Hence $\vert V(R)\vert -2 \leq 2(n-2)$. Therefore $\vert V(R)\vert \leq 2(n-1)$.

Lemma \ref{lem:levelfinite-prims} follows from the second inequality in (\ref{ineq:1}) since there are only finitely many structures of a given order.

With Theorem \ref{ille-theorem} and \ref{minimal}, one gets:
\begin{corollary} The age of any infinite prime structure contains a minimal prime age.
\end{corollary}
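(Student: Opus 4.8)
The plan is to realize $\age(R)$ as a hereditary class carrying infinitely many prime structures and then invoke Theorem \ref{minimal}. So let $R$ be an infinite prime binary structure with domain $V$, and consider its age $\age(R)$, a hereditary class of finite structures of the same (finite) signature. First I would show that $\age(R)$ contains prime structures of arbitrarily large cardinality. Starting from any finite $F_0\subseteq V$, Ille's compactness result (Theorem \ref{ille-theorem}) produces a finite $F_0'\supseteq F_0$ with $R_{\restriction F_0'}$ prime. Since $V$ is infinite I can pick a point $v\notin F_0'$, apply Theorem \ref{ille-theorem} again to $F_0'\cup\{v\}$ to obtain a prime $R_{\restriction F_1'}$ with $|F_1'|>|F_0'|$, and iterate. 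This yields finite prime induced substructures of strictly increasing size; as there are only finitely many isomorphism types of each cardinality, $\prim(\age(R))$ is infinite.

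Once infinitude of the primes is established, Theorem \ref{minimal} applies directly and hands me a minimal prime hereditary subclass $\mathcal D$. By Theorem \ref{thm:minimalprime} one has $\mathcal D=\downarrow \prim(\mathcal D)$, and since $\prim(\mathcal D)\subseteq \prim(\age(R))\subseteq \age(R)$ with $\age(R)$ hereditary, this gives $\mathcal D\subseteq \age(R)$, as required for containment. It then remains to check that $\mathcal D$ is genuinely an age, i.e.\ up-directed. For this I would again use Theorem \ref{thm:minimalprime}, which tells us $\prim(\mathcal D)$ is a J\'onsson poset cofinal in $\mathcal D$, together with characterisation $(iii)$ of Theorem \ref{minimalposet}: for each level bound $n$ there is $m$ such that every element of height at most $n$ lies below every element of height at least $m$. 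Since $\prim(\mathcal D)$ has height $\omega$, any two of its members are dominated by a common member of height at least the relevant $m$, so $\prim(\mathcal D)$ is up-directed; combined with cofinality this makes $\mathcal D$ up-directed, hence an ideal of $\Omega_s$ and thus an age by Fra\"{\i}ss\'e's characterisation.

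The substantive content is the first step, where Ille's theorem converts the mere infinitude of $R$ into infinitely many finite prime substructures; I expect this to be the only point requiring genuine care, since without compactness an infinite prime structure need not obviously carry large finite prime witnesses. The passage from a minimal prime hereditary class to an age is then a routine directedness verification powered by the J\'onsson description supplied by Theorems \ref{thm:minimalprime} and \ref{minimalposet}.
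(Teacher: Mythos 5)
Your proposal is correct and follows exactly the route the paper intends: the paper's entire justification is the phrase ``With Theorem \ref{ille-theorem} and \ref{minimal}, one gets,'' i.e.\ use Ille's compactness theorem to extract finite prime restrictions of unbounded size (so $\prim(\age(R))$ is infinite) and then apply Theorem \ref{minimal}; your iteration argument and the final directedness check via the J\'onsson characterisation are faithful expansions of what the paper leaves implicit (the ``age'' part is also covered by Theorem \ref{thm:main1}). No gaps.
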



With  Lemma \ref{lem:contains minimal} and Theorem \ref{thm:finite-prime} we get:

\begin{theorem} \label{thm:main1} Every  minimal prime hereditary class is the age of some prime structure; furthermore this age is well-quasi-ordered.
\end{theorem}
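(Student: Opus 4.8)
The plan is to prove the two assertions separately. For the first, I would extract from the minimal prime hypothesis the exact structural data that Theorem \ref{thm:minimalprime} and Theorem \ref{minimalposet} provide, build an increasing chain of primes, and pass to a limit whose primality is forced by Ille's compactness criterion (Theorem \ref{ille-theorem}). For the second, I would rule out infinite antichains by a short contradiction argument resting on Theorem \ref{thm:finite-prime}.

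To realize $\mathcal C$ as an age of a prime structure, set $J := \prim(\mathcal C)$. By Theorem \ref{thm:minimalprime}, $J$ is a J\'onsson poset cofinal in $\mathcal C$; since $J \subseteq \Omega_s$ it is countable, so Theorem \ref{minimalposet} applies and gives that $J$ is level-finite, of height $\omega$, and strongly up-directed, in the sense that for each height $n$ there is a threshold $m$ beyond which every element dominates all elements of height at most $n$. (Alternatively, one reaches the same point by invoking Lemma \ref{lem:levelfinite-prims} and Lemma \ref{lem:contains minimal}.) Using this I would construct a strictly increasing cofinal sequence $R_0 < R_1 < \cdots$ in $J$: start anywhere and repeatedly jump to an element of height exceeding the threshold attached to the current one, which exists because $J$ has height $\omega$. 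Fixing induced-substructure embeddings $R_n \hookrightarrow R_{n+1}$, let $R := \bigcup_n R_n$ be the resulting direct limit, a countable structure in which each $R_n$ sits as an induced substructure. Cofinality of $(R_n)$ in $J$ together with cofinality of $J$ in $\mathcal C$ shows every member of $\mathcal C$ embeds into some $R_n$, while any finite subset of $R$ lies in some $R_n \in \mathcal C$; hence $\age(R) = \mathcal C$. Primality of $R$ is then immediate from Theorem \ref{ille-theorem}: any finite $F \subseteq V(R)$ is contained in $V(R_n)$ for some $n$, and $R_{\restriction V(R_n)} \cong R_n$ is prime, so $F$ extends to a finite set inducing a prime substructure.

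For the well-quasi-ordering, note first that $\mathcal C$ is well-founded under embeddability, since a proper embedding of finite structures strictly decreases cardinality; thus it suffices to exclude infinite antichains. Suppose $A = \{a_0, a_1, \ldots\}$ were an infinite antichain in $\mathcal C$, and put $A' := A \setminus \{a_0\}$. Then $\downarrow A'$ is a hereditary subclass of $\mathcal C$ omitting $a_0$ (an embedding $a_0 \le a_n$ with $n \geq 1$ would violate the antichain property), hence a \emph{proper} subclass. By the definition of minimal prime, $\prim(\downarrow A')$ is finite, so Theorem \ref{thm:finite-prime} makes $\downarrow A'$ well-quasi-ordered, contradicting that it contains the infinite antichain $A'$. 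Therefore $\mathcal C$ has no infinite antichain and is well-quasi-ordered.

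The main obstacle lies in the first half: guaranteeing that the limit $R$ is genuinely \emph{prime}, and not merely a structure all of whose finite restrictions happen to be prime members of $\mathcal C$. This is precisely the content supplied by Ille's compactness theorem, so once the cofinal chain is chosen strictly increasing the primality is automatic; the remaining verifications (that the chain exists and that $\age(R)$ equals $\mathcal C$) are routine. The well-quasi-ordering half is then a one-line reduction, the only clever point being the device of deleting a single element of a putative antichain to manufacture a proper subclass.
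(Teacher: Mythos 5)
Your proof is correct and follows essentially the same route as the paper: a cofinal increasing chain of primes in the J\'onsson poset $\prim(\mathcal C)$ whose limit is prime (you make explicit the appeal to Ille's compactness theorem, which the paper leaves implicit), and then a reduction of the w.q.o.\ claim to Theorem~\ref{thm:finite-prime} by excising one element of a putative antichain to obtain a proper hereditary subclass. Your device of taking $\downarrow(A\setminus\{a_0\})$ is just a repackaging of the paper's $\mathcal C\setminus(\uparrow\{a_0\})$; the two arguments are interchangeable.
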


\begin{proof}\label{sec:proof-thm:main1}  Let $\mathcal{C}$ be a minimal prime hereditary class. We first prove that it is the age of a prime structure. It follows from Theorem \ref{thm:minimalprime} that $\mathcal{C}=\downarrow D$ where $D$ is J\'{o}nsson. Since $D$ is J\'{o}nsson, it is up-directed. Thus $\mathcal{C}$ is an age. Since $D$ is up-directed and countable,  it contains a cofinal sequence $R_0\leq R_1\leq \ldots <R_n\leq \ldots$. We may define the limit $R$ of these $R_n$. Since the $R_n$'s are prime,  $R$ is prime and $\age(R)=\mathcal{C}$.

Next we prove that $\mathcal{C}$ is w.q.o.  Since $D$ is J\'{o}nsson,  it is w.q.o. To prove that $\mathcal{C}$ is w.q.o., let $R\in \mathcal{C}$ and consider $\mathcal{C}\setminus (\uparrow \{R\}$). In order to prove that $\mathcal{C}$ is w.q.o.  it is enough to prove that $\mathcal{C}\setminus (\uparrow \{R\}$)  is w.q.o.  by embeddability. Indeed, an antichain that contains $R$ must be in $\mathcal{C}\setminus (\uparrow \{R\}$). Now to prove that $\mathcal{C}\setminus (\uparrow \{R\}$) is w.q.o.  we note that since $\mathcal{C}\setminus (\uparrow \{R\}$) is a proper hereditary class in $\mathcal{C}$,  it contains only finitely many primes. It follows from Theorem \ref{thm:finite-prime}  that $\mathcal{C}\setminus (\uparrow \{R\})$  is w.q.o. . \end{proof}

As mentioned in subsection \ref{subsubsection:bqo},  it is not known if a hereditary class of finite graphs which is w.q.o. is b.q.o.

\begin{problem}
Is every minimal prime hereditary class of finite binary structures b.q.o.?
\end{problem}

It is known that J\'onsson posets are b.q.o. \cite{pouzettr, carroy-pequignot} but the argument in the proof  of Theorem \ref{thm:main1}  does no give the b.q.o. character of the class $\mathcal C$.
In the case of graphs, minimal prime hereditary classes  divide into two types. Those which are almost multichainable and those which are the ages of some special graphs. The b.q.o. character of these classes can be obtained by an extension of Higman's theorem to b.q.o.
(see Remark \ref{rem:b.q.o.}).
We give below an improvement  of Theorem \ref{thm:main1} based on properties  of the  kernel of a relational structure.

\subsection{Inexhaustibility, kernel and minimality}

The \emph{kernel} of  a relational structure $R$ with domain $V$ is the set
\[\ker(R): \{x\in V : \age(R_ {\restriction {V\setminus \{x\}}})\not = \age (R)\}.\]
The kernel is an invariant of the age in the sense that if $R$ and $R'$ have the same age then there is an isomorphism $f$ from $\ker(R)$ onto $\ker(R')$ such that $(a)$ every restriction of $f$ to every finite subset $F$ of $\ker(R)$ extends to every finite superset  $\overline F$ of $F$ to an embedding of $R_{\restriction \overline F}$ in $R'$  and $(b)$ the same property holds for $f^{-1}$.
An age $\mathcal A$ is \emph{inexhaustible},  or has the  \emph{disjoint embedding property}, if two arbitrary members of the age can be embedded into a third member in such a way that their domains are disjoint. As it is easy to see, the kernel of a relational structure $R$ is empty if and only if $\age (R)$ is inexhaustible. We say that an age $\mathcal C$ which is not inexhaustible is \emph{exhaustible}. It is \emph{almost inexhaustible} if the kernel of some $\emph{R} $ with age $\age (R)= \mathcal C$ is finite.

The notion of inexhaustibility was introduced by Fra\"{\i}ss\'e in the sixties.
The  notion of kernel was  introduced in \cite{pouzettr} and studied in several papers \cite{pouzet-minimale}, \cite{pouzet.81}, and \cite{pouzet-impartible} (see Lemme IV-3.1 p. 37), first for structures with finite signature. The general case was considered in \cite{pouzet-sobranisa}.

We prove:
\begin{theorem} \label{thm:main2}If $\mathcal C$ is a minimal prime class of binary structures,  then $\mathcal C$ is almost inexhaustible.
\end{theorem}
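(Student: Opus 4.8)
The plan is to argue by contradiction, feeding on the structural information already established. By Theorem~\ref{thm:main1} I may write $\mathcal C=\age(R)$ for a prime structure $R$ with domain $V$, where moreover $\mathcal C$ is well-quasi-ordered; and by Theorem~\ref{thm:minimalprime} the poset $\prim(\mathcal C)$ is J\'onsson and cofinal in $\mathcal C$. Since the kernel is an invariant of the age, $|\ker(R)|$ depends only on $\mathcal C$, so it suffices to show that $\ker(R)$ is finite for this particular $R$. Suppose, for a contradiction, that $K:=\ker(R)$ is infinite.

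First I would manufacture minimal witnesses. For each $x\in K$ the age $\mathcal C_x:=\age(R_{\restriction V\setminus\{x\}})$ is a \emph{proper} hereditary subclass of $\mathcal C$, hence contains only finitely many primes; pick $D_x$ to be a minimal element of $\mathcal C\setminus\mathcal C_x$. Then every embedding of $D_x$ into $R$ meets $x$, while every proper substructure of $D_x$ lies in $\mathcal C_x$ (so $D_x$ is a bound of $\mathcal C_x$ lying in $\mathcal C$). A fixed finite structure $D$ can equal $D_x$ for at most $|V(D)|$ values of $x$, for then every embedding of $D$ must contain all of them and an embedding has image of size $|V(D)|$; consequently $\{D_x:x\in K\}$ is infinite. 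As $\mathcal C$ is w.q.o.\ I can choose distinct $x_1,x_2,\dots\in K$ with $D_{x_1}\le D_{x_2}\le\cdots$. Fixing embeddings $D_{x_i}\hookrightarrow D_{x_j}$ and composing with an arbitrary embedding of $D_{x_j}$ into $R$ shows that every embedding of $D_{x_j}$ into $R$ contains $\{x_1,\dots,x_j\}$. Replacing each $D_{x_j}$ by a prime $P_j\ge D_{x_j}$ (cofinality of $\prim(\mathcal C)$) and invoking the J\'onsson absorption property (Theorem~\ref{minimalposet}$(iii)$, that bounded-height primes sit below all sufficiently tall ones), I obtain that every prime of large enough height embeds $P_j$ and therefore carries $\{x_1,\dots,x_j\}$ inside each of its embeddings into $R$. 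Thus arbitrarily large primes possess an unbounded ``forced core'' of kernel points.

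The decisive step is to convert this growing forced core into a genuine contradiction, and this is where I expect the real difficulty to lie. The natural extra input is the invariance of the kernel: taking $R'=R$ and $f=\id$ on $K$, every finite $F\subseteq K$ admits, for each finite $\overline F\supseteq F$, an embedding of $R_{\restriction\overline F}$ into $R$ fixing $F$ pointwise, so the forced core points are not merely unavoidable but pointwise rigid. The hard part is that the obvious contradictions are not available: deleting kernel points \emph{shrinks} the set of primes, so one cannot produce a proper hereditary subclass with infinitely many primes (that is exactly what minimality forbids, hence it is consistent rather than contradictory), and passing to a hereditarily w.q.o.\ proper subclass $\mathcal C_{x_0}$ (Theorem~\ref{thm:finite-prime}) and marking finitely many forced vertices only yields sequences in a still-w.q.o.\ pointed class. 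The contradiction must therefore be extracted at the level of pointed (constant-enriched) structures, where an infinite rigid kernel obstructs well-quasi-ordering; concretely, I would read off from the reduced witnesses $D_{x_j}$, each stripped of its forced $x_0$-vertex, a bad sequence in an enriched version of $\mathcal C_{x_0}$, using the relationship between the kernel and $1^-$-well-quasi-ordering recorded in Lemma~\ref{lem:noyau1}. Making this last extraction rigorous, i.e.\ pinning the exhaustible behaviour to a bounded initial region, is the crux of the proof.
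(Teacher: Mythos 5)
Your proposal does not close the argument: you correctly identify Lemma~\ref{lem:noyau1} as the tool that converts ``$\age(R)_{1^-}$ is w.q.o.'' into ``$\ker(R)$ is finite'', but you never establish the hypothesis of that lemma, and you say explicitly that making the final extraction rigorous is still open. That missing step is precisely the content of the paper's proof, and it is supplied by a one\mbox{-}point extension lemma you do not invoke. The paper's argument is: pick a single $a\in\ker(R)$; then $\mathcal C_a:=\age(R_{\restriction V\setminus\{a\}})$ is a proper hereditary subclass, hence has finitely many primes, hence is \emph{hereditarily} w.q.o.\ by Theorem~\ref{thm:finite-prime}; next, $\age(R)\subseteq \mathcal C_a^{+1}$, and Lemma~\ref{lem:extensionwqo1} (via the coding of a structure with one distinguished extra point as a structure of $\mathcal C_a$ labelled by the finite set $(2\times 2\times 2)^{I}$ of adjacency types) shows that $\mathcal C_a^{+1}$, and therefore $\age(R)$ itself, is hereditarily w.q.o.\ (Corollary~\ref{cor:extensionwqo}). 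In particular $\age(R)_{1^-}$ is w.q.o., and Lemma~\ref{lem:noyau1} gives $\ker(R)$ finite. One kernel point suffices; no contradiction from an infinite kernel needs to be engineered directly.

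By contrast, your ``forced core'' construction (the minimal witnesses $D_x$, the increasing chain $D_{x_1}\le D_{x_2}\le\cdots$, and the conclusion that every embedding of $D_{x_j}$ into $R$ meets $\{x_1,\dots,x_j\}$) is sound as far as it goes, but it does not by itself produce a bad sequence: as you observe, the classes $\mathcal C_{x_0}$ and their pointed enrichments remain w.q.o., so nothing yet contradicts anything. The obstruction you are circling around --- that an infinite kernel is incompatible with $1^-$-well-quasi-ordering --- only bites once you know that $\age(R)$ (not merely a proper subclass) is $1^-$-w.q.o., and the only route to that in this paper is the $\mathcal C^{+1}$ lemma. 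Without it, the crux you flag remains a genuine gap.
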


In order to prove Theorem \ref {thm:main2} we recall two facts below. The first one is in  \cite{pouzet-minimale} see III.1.3, p. 323.

\begin{lemma}\label{lem:0} An element $a\in V(R)$ belongs to $\ker (R)$ if and only if  there is some finite subset $A$ of $V(R)$ containing all the images of $a$ by  the local automorphisms defined on $A$. \end{lemma}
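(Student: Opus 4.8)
The plan is to rephrase the confinement condition through orbits and reduce the lemma to a statement about the finiteness of a single orbit. Recall that $a\in\ker(R)$ means exactly that $\age(R_{\restriction V\setminus\{a\}})\subsetneq\age(R)$, i.e. some finite $R_{\restriction F}$ with $a\in F$ embeds in $R$ but in no way avoiding $a$ in its image. A local automorphism \emph{defined on $A$} is a partial isomorphism $f$ of $R$ with domain $A$; for $a\in A$ I write $O_A(a):=\{f(a): f \text{ a local automorphism with domain } A\}$ for the orbit of $a$. The hypothesis ``$A$ contains all the images of $a$ by the local automorphisms defined on $A$'' is precisely $O_A(a)\subseteq A$. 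First I would record two elementary facts: the monotonicity $O_{A'}(a)\subseteq O_A(a)$ whenever $A\subseteq A'$ (restrict a local automorphism of domain $A'$ to $A$, noting it fixes the value at $a$), and the absorption trick, namely that if $O_A(a)$ is finite then $A':=A\cup O_A(a)$ satisfies $O_{A'}(a)\subseteq O_A(a)\subseteq A'$. Consequently the lemma is equivalent to the cleaner assertion: $a\in\ker(R)$ if and only if $O_A(a)$ is finite for some finite $A\ni a$.

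For the implication from a finite orbit to membership in the kernel, set $C:=O_A(a)$, a finite set. Every embedding $g$ of $R_{\restriction A}$ into $R$ is a local automorphism with domain $A$, so $g(a)\in C$ and hence $g(A)\cap C\neq\emptyset$; thus no copy of $R_{\restriction A}$ in $R$ avoids $C$, which gives $\age(R\setminus C)\subsetneq\age(R)$. Removing a finite set disjoint from the kernel preserves the age (a standard property of the kernel, which I would quote or re-derive from the disjoint-embedding argument), so $C$ must meet $\ker(R)$. Finally, all points of $C$ lie in a single local-automorphism class — for each $c\in C$ there is a local automorphism with domain $A$ sending $a$ to $c$ — so using that the kernel is an invariant of the age (the property recalled just before the statement) the points of $C$ are all essential or all inessential together; since one of them is essential, $a\in\ker(R)$. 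This transport of essentiality along the orbit is the delicate point of this direction.

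For the converse I would start from a finite witness $R_{\restriction F}$, $a\in F$, embeddable in $R$ but never avoiding $a$. Each embedding $e$ of $R_{\restriction F}$ marks the unique vertex $e^{-1}(a)\in F$, and there are only finitely many possible marks; fixing a mark $t$ and an embedding $e_0$ with $e_0(t)=a$ produces a copy $A:=e_0(F)\ni a$ of the witness in which $a$ plays the role of $t$. The naive hope that $A$ already has finite orbit fails in general, and this is where the main obstacle lies: knowing that every copy of $R_{\restriction F}$ contains $a$ controls the \emph{preimage} of $a$, not the \emph{image} $f(a)$, which may still wander over infinitely many vertices. I expect to close this image/preimage gap with a compactness (König's lemma) argument: were $O_A(a)$ infinite for \emph{every} finite $A\ni a$, one could build a strictly increasing chain $A_0\subseteq A_1\subseteq\cdots$ together with local automorphisms pushing $a$ to ever-new vertices, and assemble from it — using level-finiteness, i.e. that there are only finitely many structures of each size, to prune the tree of partial isomorphisms — an embedding of $R_{\restriction F}$ whose image avoids $a$, contradicting the choice of $F$. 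The same compactness machinery is what actually forces the orbit to be finite in the first direction, and it is where the real work of the lemma is concentrated.
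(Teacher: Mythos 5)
First, a point of context: the paper does not prove this lemma at all; it quotes it from Pouzet's \emph{Relation minimale pour son \^age} (the reference \cite{pouzet-minimale}, III.1.3). So your proposal cannot be compared with an in-paper proof, only judged on its own merits; and on its own merits both directions have genuine gaps.

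For the direction ``finite orbit $\Rightarrow$ $a\in\ker(R)$'', your argument rests on two external principles, neither of which is available. (a) The statement ``removing a finite set disjoint from the kernel preserves the age'' is not in the paper and is not elementary; in particular it cannot be ``re-derived from the disjoint-embedding argument'', because that argument characterizes structures with \emph{empty} kernel (inexhaustible age), and here $\ker(R)$ may be nonempty, even infinite. The only easy proof I know of this principle goes through the age-invariance of the kernel and works when $\ker(R)$ is finite; in general it is a piece of kernel theory of essentially the same depth as the lemma you are proving, so invoking it is close to circular. (b) The ``transport of essentiality along the orbit'' is simply not a valid principle: in $K_{1,\omega}$ the one-point map sending a leaf to the center is a local automorphism, the center is in the kernel, and the leaf is not. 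The age-invariance property you cite compares the kernels of two structures \emph{with the same age}; it says nothing about two elements of one structure related by a local automorphism, and you give no argument for why the confinement hypothesis would restore this transport (the only reason it holds under confinement is the lemma itself). A correct proof of this direction can in fact be given directly and avoids both principles: iterate embeddings $g_n$ of larger and larger sets $D_n\supseteq A$ whose images avoid $a$ (these exist if $a\notin\ker(R)$), color each pair $m<n$ by $(g_{n-1}\circ\dots\circ g_m)(a)\in A\setminus\{a\}$, and apply Ramsey's theorem for pairs; on a monochromatic triple $m_1<m_2<m_3$ the injective map $g_{m_3-1}\circ\dots\circ g_{m_2}$ sends both $a$ and the color $b$ to $b$, forcing $a=b$, a contradiction.

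For the converse direction you correctly isolate the image/preimage gap, but the proposed remedy does not close it. Composing local automorphisms $f_n$ that push $a$ out of ever larger sets $A_n$ never prevents the composed copies of $R_{\restriction F}$ from containing the \emph{vertex} $a$: the preimage of $a$ can migrate among the finitely many points of $F$ (it is moved away at one step and recreated at a later step, since $f_{n+1}$ is only constrained at $a$, not at the displaced point $f_n(a)$), and pigeonhole or a K\"onig-type pruning of the tree of partial isomorphisms yields only compositions that keep fixing $a$, with no contradiction. What is actually needed here is a different combinatorial idea, for instance: among the pairs $(B,K)$, $a\in K\subseteq B$, such that every local automorphism $g$ defined on $B$ satisfies $a\in g(K)$ (the kernel witness gives $(B,B)$), take one with $|K|$ minimal; if the family $\{g(K)\}$ is finite then the orbit of $a$ is finite and one concludes by your absorption trick, while if it is infinite the sunflower lemma produces two copies $g_1(K), g_2(K)$ whose intersection is a strictly smaller core still forced to contain $a$, contradicting minimality (and when that core is $\{a\}$, the set $g_1(B)\cup g_2(B)$ confines $a$ outright, since any local automorphism $h$ satisfies $a\in h(g_1(K))\cap h(g_2(K))=\{h(a)\}$). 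Your sketch contains neither this idea nor any substitute for it, so the ``real work of the lemma'', as you yourself put it, remains undone in both directions.
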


We extract the second  fact from \cite {pouzettr} Corollaire p.6 in  "Caract\'erisation combinatoire et topologique des \^ages les plus simples". For reader's convenience, we give a proof.

\begin{lemma}\label{lem:noyau1}Let $R:= (V, (\rho_i)_{i\in I})$ be a relational structure made of finitely many binary relations. If $\age (R)_{1^-}:= \{(S, a): S\in \age \mathcal (R),  a\in V(S) \}$ is well-quasi-ordered,  then $\ker(R)$ is finite.
\end{lemma}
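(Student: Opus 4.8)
The plan is to argue by contradiction: suppose that $\age(R)_{1^-}$ is well-quasi-ordered yet $\ker(R)$ is infinite, and derive a contradiction. Recall that $\age(R)_{1^-}$ is ordered by declaring $(S,a)\leq (S',a')$ whenever there is an embedding $h\colon S\to S'$ with $h(a)=a'$. Fix a sequence $(x_n)_{n\in\NN}$ of pairwise distinct elements of $\ker(R)$. The idea is to encode each $x_n$ as a pointed finite substructure of $R$ in such a way that the order on $\age(R)_{1^-}$ forces the base points to be carried to one another by local automorphisms of $R$; Lemma \ref{lem:0} will then trap all these base points inside a single finite set, which is absurd since they are infinitely many.

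The first step is to invoke Lemma \ref{lem:0}: since $x_n\in\ker(R)$, there is a finite subset $A_n\subseteq V(R)$ with $x_n\in A_n$ such that every local automorphism of $R$ whose domain is $A_n$ maps $x_n$ into $A_n$. Form the pointed structures $P_n:=(R_{\restriction A_n},x_n)$, which all belong to $\age(R)_{1^-}$.

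The second step uses the hypothesis. Since $\age(R)_{1^-}$ is well-quasi-ordered, the sequence $(P_n)_{n\in\NN}$ admits an infinite increasing subsequence $P_{n_0}\leq P_{n_1}\leq\cdots$. For each $j$ the relation $P_{n_0}\leq P_{n_j}$ is witnessed by an embedding $h_j\colon R_{\restriction A_{n_0}}\to R_{\restriction A_{n_j}}$ with $h_j(x_{n_0})=x_{n_j}$. Viewing $R_{\restriction A_{n_j}}$ as an induced substructure of $R$, each $h_j$ is an isomorphism of $R_{\restriction A_{n_0}}$ onto an induced substructure of $R$, that is, a local automorphism of $R$ with domain $A_{n_0}$. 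By the defining property of $A_{n_0}$ we conclude $x_{n_j}=h_j(x_{n_0})\in A_{n_0}$ for every $j$. Since the $x_{n_j}$ are pairwise distinct while $A_{n_0}$ is finite, this is impossible, giving the desired contradiction. Hence $\ker(R)$ is finite.

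The finiteness of the signature (and the binary relational setting) enters only through the applicability of Lemma \ref{lem:0}; the remainder of the argument is purely order-theoretic. I expect the only delicate point to be the identification of the witnessing embeddings $h_j$ with local automorphisms of $R$ in the precise sense required by Lemma \ref{lem:0}: one must check that ``defined on $A_{n_0}$'' means exactly ``partial isomorphism of $R$ with domain $A_{n_0}$'', so that the trapping conclusion $h_j(x_{n_0})\in A_{n_0}$ legitimately applies. Once this matching of definitions is in place, the contradiction is immediate.
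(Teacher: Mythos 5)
Your proof is correct, and it reaches the contradiction by a genuinely (if mildly) different mechanism than the paper's. The paper also starts from Lemma \ref{lem:0}, but it builds a specific infinite antichain: the trapping sets are chosen nested, $A_m\supseteq\bigcup_{n<m}A_n$, each new base point $a_m$ is chosen outside $\bigcup_{n<m}A_n$, and one then argues that no two of the pointed structures $(R_{\restriction A_n},a_n)$ admit a common extension in $\age(R)_{1^-}$, so that they form an infinite antichain contradicting w.q.o. You instead take an arbitrary infinite family of kernel elements with their trapping sets, extract an infinite increasing subsequence (using the increasing-subsequence formulation of w.q.o.), and observe that every embedding witnessing $P_{n_0}\leq P_{n_j}$ is a local automorphism of $R$ with domain $A_{n_0}$, so all the pairwise distinct base points $x_{n_j}$ get trapped in the single finite set $A_{n_0}$ --- a pigeonhole contradiction. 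Your route avoids the nested construction and the two-sided incomparability check (and the slightly delicate ``no common extension'' claim), at the price of using the increasing-subsequence characterization of w.q.o.\ rather than the no-infinite-antichain one; the two are of course equivalent. The one point you flag --- that an embedding of $R_{\restriction A_{n_0}}$ into an induced substructure of $R$ is exactly a local automorphism of $R$ defined on $A_{n_0}$ in the sense required by Lemma \ref{lem:0}, and that the trapping set there may be taken to contain the point itself --- is indeed the only thing to verify, and the paper's own argument rests on precisely the same identification. So there is no gap.
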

\begin{proof}Suppose that $\ker (R)$ is infinite. We built a sequence  $(R_{\restriction A_n}, a_n)$ of elements  of $\age (R)_{1^-}$ such that no two members of the sequence have a common extension belonging to $\age (R)_{1^-}$.  In particular these  members form an infinite antichain of  $\age (R)_{1^-}$.   We pick  $a_0\in \ker(R)$ and $A_0$ given by Lemma \ref{lem:0}. Suppose $(A_n, a_n)$ defined for $n<m$,  pick $a_m\in V\setminus \cup \bigcup_{n<m} A_m$, select $A$ given by Lemma \ref{lem:0} and set $A_{m}:= A\cup \bigcup_{n<m} A_n$.  \end{proof}

%
Let $\mathcal C$ be a class of finite binary structures $\mathcal S: = (F, (\rho_i)_{i\in I})$ with a finite signature $s$. Denote by $\mathcal C^{+1}$ the class of $S: = (F, (\rho_i)_{i\in I})$ such that there is some $a\in F$ such that $S_{\restriction F\setminus \{a\}}\in \mathcal C$.

The following   lemma is Proposition 5.32 p. 105 of \cite{oudrar} and Theorem 4.5 page 20 of \cite{brignal-vatter}. A similar fact, but non explicit, appears in the proof of Theorem 4.24 p.267 of \cite{pouzet.2006}. For reader's convenience,  we give a proof.

\begin{lemma}\label{lem:extensionwqo1} Let $\mathcal C$ be a hereditary class of binary structures. If the members of $\mathcal C$ are not necessarily finite and  if these members  when labelled  by any better-quasi-order form a  well-quasi-order,   then $\mathcal C^{+1}$ has the same property. If $\mathcal C$ is made of finite structures and is hereditarily well-quasi-ordered,  then $\mathcal C^{+1}$ is hereditarily well-quasi-ordered.  \end{lemma}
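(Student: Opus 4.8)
The plan is to absorb the extra vertex into finitely many labels, exploiting that the signature is finite. For a structure $S:=(F,(\rho_i)_{i\in I})\in\mathcal C^{+1}$ and a distinguished vertex $a\in F$, the way $a$ relates to another vertex $x\in F\setminus\{a\}$ is completely recorded by the finite datum
\[
\lambda_a(x):=\bigl((a\rho_i x)_{i\in I},(x\rho_i a)_{i\in I}\bigr)\in L,\qquad L:=\{0,1\}^I\times\{0,1\}^I,
\]
and the loop type of $a$ by $\tau(a):=(a\rho_i a)_{i\in I}\in T:=\{0,1\}^I$; since $I$ is finite, $L$ and $T$ are finite sets, which I order by equality. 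Fixing for each $S\in\mathcal C^{+1}$ a vertex $a_S\in F$ with $S_{\restriction F\setminus\{a_S\}}\in\mathcal C$, I would encode a labelled member $(S,g)$ of $\mathcal C^{+1}\cdot Q$ by
\[
\Phi(S,g):=\Bigl(\bigl(S_{\restriction F\setminus\{a_S\}},\,x\mapsto(\lambda_{a_S}(x),g(x))\bigr),\ (\tau(a_S),g(a_S))\Bigr)\in\bigl(\mathcal C\cdot(L\times Q)\bigr)\times(T\times Q).
\]

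First I would verify that the target is well-quasi-ordered. As $L$ and $T$ are finite, $L\times Q$ and $T\times Q$ are better-quasi-orders whenever $Q$ is one, and well-quasi-orders whenever $Q$ is one; hence, under either hypothesis of the lemma, $\mathcal C\cdot(L\times Q)$ is w.q.o. Since the product of two well-quasi-orders is well-quasi-ordered, the product $\bigl(\mathcal C\cdot(L\times Q)\bigr)\times(T\times Q)$ is w.q.o. Consequently, from any infinite sequence $(S_n,g_n)$ in $\mathcal C^{+1}\cdot Q$ (write $F_n:=V(S_n)$ and $a_n:=a_{S_n}$), applying this to $\Phi(S_n,g_n)$ produces indices $m<n$ with $\Phi(S_m,g_m)\le\Phi(S_n,g_n)$.

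The heart of the argument is to lift this inequality to a labelled embedding of $S_m$ into $S_n$. Componentwise it yields an embedding $h\colon (S_m)_{\restriction F_m\setminus\{a_m\}}\to (S_n)_{\restriction F_n\setminus\{a_n\}}$ with $\lambda_{a_m}(x)=\lambda_{a_n}(h(x))$ and $g_m(x)\le g_n(h(x))$ for all $x\ne a_m$, together with $\tau(a_m)=\tau(a_n)$ and $g_m(a_m)\le g_n(a_n)$. I would extend $h$ to $\bar h$ on $F_m$ by setting $\bar h(a_m):=a_n$. The equalities of finite labels are exactly what makes $\bar h$ an embedding: $\lambda_{a_m}(x)=\lambda_{a_n}(h(x))$ forces every pair $(a_m,x)$ and $(x,a_m)$ to be respected, and $\tau(a_m)=\tau(a_n)$ handles the loop at $a_m$, while $h$ already respects all relations among the remaining vertices. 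The label inequalities then give $g_m\le g_n\circ\bar h$, so $(S_m,g_m)\le(S_n,g_n)$, proving that $\mathcal C^{+1}\cdot Q$ is w.q.o.

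I expect the lifting step to be the only delicate point, and in particular the observation that $L$ and $T$ must be ordered by \emph{equality}: a coarser order would not force the relations incident to the deleted vertex to agree, and $\bar h$ would fail to be an embedding. Both assertions of the lemma follow from this single argument, by letting $Q$ range over better-quasi-orders for the first and over well-quasi-orders for the second.
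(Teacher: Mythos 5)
Your proof is correct and follows essentially the same route as the paper's: absorb the relations incident to the deleted vertex into a finite label set ordered by equality, form the product with $Q$, and lift a good pair of coded structures back to a labelled embedding by sending $a_m$ to $a_n$. The only difference is cosmetic — you carry the loop type and the label of the deleted vertex as a separate finite product factor $(T\times Q)$, which makes explicit a detail the paper's coding leaves implicit, but the argument is the same.
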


\begin{proof}
Let $I$ be such that each $\mathcal S\in \mathcal C$  is of the form $\mathcal S: = (F, (\rho_i)_{i\in I})$. Let W be a w.q.o. By hypothesis, the set $(2\times 2\times 2)^{I}$  is finite, hence with the equality ordering it is wqo.  The direct product $W'$  of $W$ with  $(2\times 2\times 2)^{I}$  is w.q.o.  We code members of $\mathcal C^{+1}$ labelled by $W$ by members of $\mathcal C$ labelled by $W'$. Indeed, for each $\mathcal S: = (F, (\rho_i)_{i\in I})\in \mathcal C^{+1}$ we select $a\in F$ such that $\mathcal S_{\restriction F\setminus \{a\}}\in \mathcal C$ and we label $\mathcal S_{\restriction F\setminus \{a\}}$ by the map $g_a$ defined for $x\in F\setminus \{a\}$ by $g_a(x):=(\rho_i(a,x), \rho_i(x,a), \rho_i(a, a))_{i\in I}$. Now if $f$ is a labelling of $F$  in $W$,  we  associate the labelling $f'$ of $F\setminus \{a\}$  by setting $f':= (f_{\restriction F\setminus \{a\}}, g_a)$. By construction, if $\mathcal S, \mathcal S'\in \mathcal C$, an embedding $h$ from the labelled structure  $\mathcal S_{\restriction F\setminus \{a\}}$ in the  labelled $\mathcal S'_{\restriction F'\setminus \{a'\}}$ will extend to  an embedding of the labelled structure  $\mathcal S$  in the labelled structure $\mathcal S'$ with $a$ mapped to $a'$. The conclusion follows. \end{proof}

We deduce:
\begin{corollary}\label {cor:extensionwqo}
 Let $R:= (V, (\rho_i)_{i\in I})$ be a relational structure made of finitely many binary relations and let $a\in V$. If $\age( R_{\restriction V\setminus \{a\}})$  is hereditarily well-quasi-ordered,  then $\age( R)$ is hereditarily well-quasi-ordered.  \end{corollary}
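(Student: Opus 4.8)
The plan is to reduce the corollary to the finite part of Lemma \ref{lem:extensionwqo1} by realizing $\age(R)$ as a subclass of $\mathcal C^{+1}$ for a suitable $\mathcal C$. Concretely, I would set $\mathcal C := \age(R_{\restriction V\setminus\{a\}})$. This is a hereditary class of finite binary structures of signature $s$, and by hypothesis it is hereditarily well-quasi-ordered. Hence the second assertion of Lemma \ref{lem:extensionwqo1} applies verbatim and yields that $\mathcal C^{+1}$ is hereditarily well-quasi-ordered.

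The next step is to verify the inclusion $\age(R)\subseteq \mathcal C^{+1}$. Let $S\in \age(R)$ be nonempty; by definition $S$ is isomorphic to $R_{\restriction F}$ for some finite $F\subseteq V$. If $a\notin F$, then $R_{\restriction F}=(R_{\restriction V\setminus\{a\}})_{\restriction F}$, so $S\in \mathcal C$; since $\mathcal C$ is hereditary, deleting any one point of $S$ leaves a member of $\mathcal C$, whence $\mathcal C\subseteq \mathcal C^{+1}$ and in particular $S\in \mathcal C^{+1}$. If instead $a\in F$, let $a'\in V(S)$ be the vertex corresponding to $a$ under the chosen isomorphism; then $S_{\restriction V(S)\setminus\{a'\}}\cong R_{\restriction F\setminus\{a\}}\in \mathcal C$, so $S\in \mathcal C^{+1}$ directly from the definition of $\mathcal C^{+1}$. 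In either case $S\in \mathcal C^{+1}$ (the empty structure being harmless for what follows).

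Finally I would invoke the elementary closure of hereditary well-quasi-ordering under passing to subclasses: if $\mathcal D\subseteq \mathcal E$, then $\mathcal D\cdot Q\subseteq \mathcal E\cdot Q$ for every quasi-order $Q$, and a subset of a well-quasi-order is well-quasi-ordered, so $\mathcal E$ hereditarily well-quasi-ordered forces $\mathcal D$ to be hereditarily well-quasi-ordered. Applying this with $\mathcal D=\age(R)$ and $\mathcal E=\mathcal C^{+1}$ gives that $\age(R)$ is hereditarily well-quasi-ordered, as required.

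I do not anticipate a genuine obstacle: the entire mathematical content is supplied by Lemma \ref{lem:extensionwqo1}, and the corollary is essentially that lemma read with $\mathcal C=\age(R_{\restriction V\setminus\{a\}})$, the operation $(-)^{+1}$ being organized around the single deleted vertex $a$. The only point demanding a little care is the bookkeeping in the inclusion $\age(R)\subseteq \mathcal C^{+1}$, namely separating the case in which a representing embedding of $S$ into $R$ avoids $a$ from the case in which it uses $a$; both are immediate once $\mathcal C$ is taken to be hereditary.
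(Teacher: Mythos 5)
Your proposal is correct and follows exactly the paper's route: the paper's own proof sets $\mathcal C:=\age(R_{\restriction V\setminus\{a\}})$, observes $\age(R)\subseteq\mathcal C^{+1}$, and applies the second assertion of Lemma \ref{lem:extensionwqo1}. You merely spell out the inclusion and the (trivial) closure of hereditary well-quasi-ordering under subclasses, which the paper leaves implicit.
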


\begin{proof}
If $a\not \in \ker (R)$, there is nothing to prove. If $a\in \ker ( R)$, we set $\mathcal C:= \age( R_{\restriction V\setminus \{a\}})$. We observe that $\age (R) \subseteq \mathcal C^{+}$ and we apply Lemma \ref{lem:extensionwqo1}.
\end{proof}

\noindent{\bf Proof of Theorem \ref {thm:main2}.}
Let $\mathcal C$ be a minimal prime class and  $R$ such that $\age (R)=\mathcal C$. Suppose that $\ker (R)$ is nonempty. Let $a\in \ker (R)$. Then $\age(R_{\restriction V\setminus \{a\}})\not = \age (R)= \mathcal C$. Since $\mathcal C$ is minimal prime, $\age(R_{\restriction V\setminus \{a\}})$ contains only finitely many primes. Theorem \ref{thm:finite-prime} asserts that $\age (R_{\restriction V\setminus \{a\}})$ is hereditarily wqo. Corollary \ref{cor:extensionwqo} asserts that $\age(R)$ is hereditarily w.q.o. Lemma \ref{lem:noyau1}  asserts that $\ker (R)$ is finite. With that the proof is complete. \hfill $\Box$


Since each hereditary well-quasi-ordered  class has finitely many bounds (Theorem \ref{thm:bounds}) we have only countably many exhaustible minimal prime classes.

\begin{corollary} There are at most countably many minimal prime classes $\mathcal C$  such that $\mathcal C$ is exhaustible.
\end{corollary}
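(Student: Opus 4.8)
The plan is to prove the statement in two movements: first, to show that every \emph{exhaustible} minimal prime class is hereditarily well-quasi-ordered, and second, to count such classes through their bounds. For the first movement I would essentially reuse the argument already carried out in the proof of Theorem \ref{thm:main2}. Fix an exhaustible minimal prime class $\mathcal C$ and pick a structure $R$ with $\age(R)=\mathcal C$. Being exhaustible means $\mathcal C$ is not inexhaustible, so $\ker(R)$ is nonempty; choose $a\in\ker(R)$. By the definition of the kernel, $\age(R_{\restriction V\setminus\{a\}})$ is a proper hereditary subclass of $\mathcal C$, hence it contains only finitely many primes since $\mathcal C$ is minimal prime. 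Theorem \ref{thm:finite-prime} then yields that $\age(R_{\restriction V\setminus\{a\}})$ is hereditarily well-quasi-ordered, and Corollary \ref{cor:extensionwqo} propagates this property upward to $\age(R)=\mathcal C$.

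For the counting movement I would invoke Theorem \ref{thm:bounds}: since the signature $s$ is finite (hence bounded) and $\mathcal C$ is hereditary and hereditarily well-quasi-ordered, its set of bounds is finite. A hereditary class is recovered from its set of bounds, because a finite structure lies in $\mathcal C$ exactly when none of the bounds of $\mathcal C$ embeds into it; thus the assignment sending $\mathcal C$ to its (finite) set of bounds is injective on the family of exhaustible minimal prime classes. As $s$ is finite, $\Omega_s$ is countable, so it has only countably many finite subsets. Consequently there are only countably many possible sets of bounds, and therefore at most countably many exhaustible minimal prime classes.

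I expect no genuine obstacle here, since the substance is already contained in Theorem \ref{thm:main2} (the implication that a nonempty kernel forces hereditary well-quasi-ordering) together with Theorem \ref{thm:bounds}. The one point deserving an explicit word is the claim that a hereditary class is determined by its bounds, so that the finiteness of the bound set, combined with the countability of $\Omega_s$, delivers the countable count. This is precisely the remark recorded just before the statement, and the argument above simply unpacks it.
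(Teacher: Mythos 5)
Your proof is correct and follows essentially the same route as the paper: the remark preceding the corollary relies on exactly the argument from the proof of Theorem \ref{thm:main2} (nonempty kernel forces hereditary well-quasi-ordering via Theorem \ref{thm:finite-prime} and Corollary \ref{cor:extensionwqo}), combined with Theorem \ref{thm:bounds} and the fact that a hereditary class of finite structures is determined by its finite set of bounds. You have simply made explicit the steps the paper leaves implicit.
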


\begin{problem}
\begin{enumerate}[$(1)$]
\item Is it true that $\vert \ker(\mathcal{R})\vert \leq 2$ if  $\age (R)$ minimal prime?
\item Is the number of exhaustible minimal prime ages finite?
\end{enumerate}
\end{problem}

As we will see, the  answers are positive if one considers minimal prime classes of graphs. In this case, there are only five examples with a nonempty kernel.
\subsection{Links with an other notion of minimality}

\begin{definition} A binary relational structure $R$ is \emph{minimal prime} if $R$ is prime and $R$ embeds in every induced indecomposable substructure with the same cardinality.  \end{definition}
Several examples of graphs and posets are given in \cite{pouzet-zaguia2009}.

\begin{problem} Is it true that  the age of a minimal prime binary structure is necessarily minimal prime? \end{problem}

Even in the case of graphs  we do not know the answer. The converse is false in the sense that there are minimal prime ages of graphs such that no graph with that age is minimal prime.

We prove:
\begin{theorem}\label{thm:noninexhaustible} If $\mathcal C$  is minimal prime and  exhaustible,  then every binary prime structure $R$ with $\age (R)=\mathcal C$ embeds a minimal prime  structure. \end{theorem}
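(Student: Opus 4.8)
The plan is to produce the desired minimal prime structure as a minimal element, for embeddability, of the family of infinite prime induced substructures of $R$, and then to observe that any such minimal element is automatically minimal prime in the structural sense. We may assume $R$ is countable: by Theorem \ref{ille-theorem} one builds inside $R$ an increasing chain of finite prime induced substructures realizing every member of the cofinal set $\prim(\mathcal C)$, whose union is a countable prime induced substructure of $R$ with age $\mathcal C$; it suffices to argue inside it.

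Set $K:=\ker(R)$. Since $\mathcal C$ is exhaustible, $K\neq\emptyset$, and by Theorem \ref{thm:main2} $K$ is finite. Let $\mathcal F$ be the set of induced substructures $R_{\restriction A}$ with $A\subseteq V(R)$ infinite and $R_{\restriction A}$ prime, quasi-ordered by embeddability; note $R\in\mathcal F$. First I would check that every member of $\mathcal F$ contains $K$. Indeed, if some $a\in K$ were missing from $A$, then $R_{\restriction A}\leq R_{\restriction V\setminus\{a\}}$, so $\age(R_{\restriction A})\subseteq \age(R_{\restriction V\setminus\{a\}})$; the latter is a proper hereditary subclass of the minimal prime class $\mathcal C$, hence contains only finitely many primes, whereas an infinite prime structure has infinitely many primes in its age by Theorem \ref{ille-theorem} --- a contradiction. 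In particular, writing $\mathcal C_K:=\age(R_{\restriction V\setminus K})$, we have $\mathcal C_K\subseteq \age(R_{\restriction V\setminus\{a\}})$ for any $a\in K$, so $\mathcal C_K$ has only finitely many primes.

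The heart of the argument is to show that $\mathcal F$ is well-quasi-ordered, hence well founded. To each $R_{\restriction A}\in\mathcal F$ I associate the structure $R_{\restriction A\setminus K}$, whose age lies in $\mathcal C_K$, labelled by the finite set $Q$ of connection types to $K$: each $x\in A\setminus K$ receives the tuple $(\rho_i(x,k),\rho_i(k,x))_{i\in I,\,k\in K}$. Since $\mathcal C_K$ has finitely many primes, the class of countable structures with age in $\mathcal C_K$ labelled by the finite (hence better-quasi-ordered) set $Q$ is well-quasi-ordered; this is precisely the labelled form of Theorem \ref{thm:delhomme-mckay} recorded after its statement. Hence, given any sequence in $\mathcal F$, there are indices $m<n$ admitting a label-preserving embedding of the $m$-th associated labelled structure into the $n$-th. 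As the labels record exactly the relations to $K$ and $R_{\restriction K}$ is the same fixed structure throughout, extending this embedding by the identity on $K$ yields an embedding $R_{\restriction A_m}\hookrightarrow R_{\restriction A_n}$. Thus every sequence in $\mathcal F$ has a good pair, so $\mathcal F$ is well-quasi-ordered.

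Being well founded, $\mathcal F$ has a minimal element $R'$, and $R'\leq R$. It remains to see that $R'$ is minimal prime: it is prime by construction, and if $B\subseteq V(R')$ is infinite with $R'_{\restriction B}$ indecomposable, then $R'_{\restriction B}\in\mathcal F$ and $R'_{\restriction B}\leq R'$, so minimality of $R'$ gives $R'\leq R'_{\restriction B}$. Therefore $R'$ embeds into every infinite indecomposable induced substructure of itself, i.e. $R'$ is minimal prime, and $R$ embeds it. The step I expect to be the main obstacle is the well-quasi-ordering of $\mathcal F$: beyond invoking Delhomm\'e--McKay in its labelled form for the class $\mathcal C_K$ with finitely many primes, one must verify that labelling by $K$-connection types is faithful enough that label-preserving embeddings of the kernel-deleted structures lift back to embeddings of the original substructures fixing $K$ pointwise.
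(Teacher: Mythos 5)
Your proof is correct and follows essentially the same strategy as the paper: exploit the finiteness of the kernel (Theorem \ref{thm:main2}), reduce to a class with finitely many primes by deleting kernel elements, invoke the labelled form of Theorem \ref{thm:delhomme-mckay} to get well-quasi-ordering, and then extract a minimal element, which is automatically minimal prime. The only real divergence is in the packaging: the paper proves the stronger Lemma \ref{lem:well-founded} that all of $\mathcal C^{\leq \omega}$ is w.q.o., removing a single kernel element and re-attaching it via the abstract one-point-extension Lemma \ref{lem:extensionwqo1}, whereas you restrict attention to the infinite prime induced substructures of the fixed (countable) $R$, remove the whole finite kernel $K$ at once, and re-attach it by hand via the connection-type labelling. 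Your version requires the extra observation that every infinite prime induced substructure of $R$ must contain $\ker(R)$ — which you justify correctly using Theorem \ref{ille-theorem} and the minimality of $\mathcal C$ — and in exchange it sidesteps the paper's somewhat terse justification of the inclusion $\mathcal C^{\leq \omega}\subseteq(\mathcal D^{\leq\omega})^{+1}$; what you lose is the independently useful global statement that $\mathcal C^{\leq\omega}$ itself is well-quasi-ordered.
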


The proof relies on Theorem  \ref{thm:delhomme-mckay} and Lemma \ref{lem:extensionwqo1}.

We prove first the following.

\begin{lemma}\label{lem:well-founded}
If $\mathcal C$ is minimal prime and  exhaustible then $\mathcal C^{\leq \omega} $ is well-quasi-ordered.
\end{lemma}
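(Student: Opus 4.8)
The plan is to realise $\mathcal C^{\leq\omega}$ as a sub-quasi-order of a class obtained from a well-quasi-ordered class by adjoining a bounded number of points, and then to quote Lemma \ref{lem:extensionwqo1}. First I would fix the data. By Theorem \ref{thm:main2} the class $\mathcal C$ is almost inexhaustible, so I may choose $R$ with $\age(R)=\mathcal C$ whose kernel $K:=\ker(R)$ is finite; since $\mathcal C$ is exhaustible, $K\neq\emptyset$, say $k:=|K|\geq 1$. Put $\mathcal D:=\age(R_{\restriction V(R)\setminus K})$. For any $a\in K$ one has $\age(R_{\restriction V(R)\setminus\{a\}})\subsetneq\age(R)$, whence $\mathcal D\subseteq\age(R_{\restriction V(R)\setminus\{a\}})\subsetneq\mathcal C$; so $\mathcal D$ is a \emph{proper} hereditary subclass of the minimal prime class $\mathcal C$, and therefore $\prim(\mathcal D)$ is finite. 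By Theorem \ref{thm:delhomme-mckay}, together with the better-quasi-order strengthening recorded just after it, $\mathcal D^{\leq\omega}$ is w.q.o.\ and in fact stays w.q.o.\ after labelling by any better-quasi-order.

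Next I would add the $k$ deleted points back. Since $\mathcal D^{\leq\omega}$ is a hereditary class of (possibly infinite) binary structures that remains w.q.o.\ under better-quasi-order labellings, Lemma \ref{lem:extensionwqo1} applies and gives the same property for $(\mathcal D^{\leq\omega})^{+1}$. The operation $\mathcal E\mapsto\mathcal E^{+1}$ preserves both hereditariness and this labelled-w.q.o.\ property, so iterating it $k$ times shows that $(\mathcal D^{\leq\omega})^{+k}$ remains w.q.o.\ under any better-quasi-order labelling; taking the one-element labelling set, $(\mathcal D^{\leq\omega})^{+k}$ is in particular w.q.o.

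It remains to prove the containment $\mathcal C^{\leq\omega}\subseteq(\mathcal D^{\leq\omega})^{+k}$, which is the heart of the matter. Fix a countable $S$ with $\age(S)\subseteq\mathcal C$ and call a finite $W\subseteq V(S)$ \emph{bad} if $S_{\restriction W}\notin\mathcal D$. It suffices to find one set $X\subseteq V(S)$ with $|X|\leq k$ meeting every bad $W$: then every finite subset of $V(S)\setminus X$ is good, so $\age(S_{\restriction V(S)\setminus X})\subseteq\mathcal D$, i.e.\ $S_{\restriction V(S)\setminus X}\in\mathcal D^{\leq\omega}$ and $S\in(\mathcal D^{\leq\omega})^{+k}$. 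The crucial point—and the tempting wrong turn to resist—is that $X$ must \emph{not} be taken to be $\ker(S)$: a large prime of $\mathcal C$ is a finite structure whose kernel is its entire vertex set, so the kernels of members of $\mathcal C^{\leq\omega}$ are unbounded. Instead I would obtain $X$ by compactness. Given finitely many bad sets $W_1,\dots,W_m$, the finite structure $S_{\restriction W_1\cup\cdots\cup W_m}$ lies in $\age(S)\subseteq\age(R)$, so it embeds into $R$ by some $g$; badness of $W_i$ forces $g(W_i)\cap K\neq\emptyset$, for otherwise $R_{\restriction g(W_i)}$ would be an induced substructure of $R_{\restriction V(R)\setminus K}$ and $S_{\restriction W_i}$ would lie in $\mathcal D$. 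Hence $X_0:=g^{-1}(K\cap g(W_1\cup\cdots\cup W_m))$ is a common transversal with $|X_0|\leq k$. Thus every finite family of bad sets admits a transversal of size at most $k$, and propositional compactness—variables $x_v$ for $v\in V(S)$, a clause $\bigvee_{v\in W}x_v$ for each bad $W$, and a clause forbidding any $k+1$ of the variables from being simultaneously true—produces a global transversal $X$ with $|X|\leq k$.

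Finally, since a sub-quasi-order of a w.q.o.\ is again w.q.o., the containment $\mathcal C^{\leq\omega}\subseteq(\mathcal D^{\leq\omega})^{+k}$ established above immediately yields that $\mathcal C^{\leq\omega}$ is well-quasi-ordered. The only genuinely nonroutine step is this containment, and within it the observation that one should delete a bounded transversal of the bad finite substructures rather than the (unbounded) kernel; everything else is assembled directly from Theorems \ref{thm:main2} and \ref{thm:delhomme-mckay} and Lemma \ref{lem:extensionwqo1}.
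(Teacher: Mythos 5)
Your proof is correct, and its skeleton --- reduce to a subclass with finitely many primes, apply Theorem \ref{thm:delhomme-mckay} together with its b.q.o.-labelled strengthening, and reattach the deleted points via Lemma \ref{lem:extensionwqo1} --- is the same as the paper's. Where you genuinely diverge is in the containment step. The paper deletes a \emph{single} kernel element $a$, sets $\mathcal D:=\age(R_{\restriction V(R)\setminus\{a\}})$, and justifies $\mathcal C^{\leq\omega}\subseteq(\mathcal D^{\leq\omega})^{+1}$ by embedding an arbitrary member of $\mathcal C^{\leq\omega}$ into a countable extension $R''$ of $R$ having the same age and appealing to the invariance of the kernel; this leans on nontrivial facts about rich extensions of countable structures. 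You instead delete the whole finite kernel $K$ (finite by Theorem \ref{thm:main2}, whose proof precedes this lemma, so there is no circularity), iterate the $+1$ operation $|K|$ times, and prove the containment by an elementary finite-transversal-plus-propositional-compactness argument: each finite union of ``bad'' sets embeds into $R$, each bad set must meet $K$ under that embedding, so finite families of bad sets have transversals of size at most $|K|$, and compactness produces a global one. This is more self-contained, and its only cost is the harmless iteration of Lemma \ref{lem:extensionwqo1}. One small remark: had you kept the paper's $\mathcal D$ with a single deleted kernel element $a$, your own argument would show that every bad set must contain $g^{-1}(a)$, so a transversal of size $1$ suffices and a single application of $+1$ is enough; the two proofs would then coincide except for the justification of the containment. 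Your warning that one must not take $X=\ker(S)$ is also well taken, since finite members of $\mathcal C$ have full kernel.
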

\begin{proof} Let $R$ with $\age (R)= \mathcal C$. Pick $a\in \ker (R)$. Let $\mathcal D:= \age( R_{\restriction V(R) \setminus \{a\}})$. This age contains only finitely many primes. From Theorem \ref{thm:delhomme-mckay}, $\mathcal D^{\leq \omega}$ is well-quasi-ordered. Furthermore,  members of $\mathcal D^{\leq \omega}$ when labelled by any  finite set form a well-quasi-ordered set. According to Lemma \ref{lem:extensionwqo1},  $(\mathcal D^{\leq \omega})^{+1} $ has the same property. Next,   $\mathcal C^{\leq \omega} \subseteq (\mathcal D^{\leq \omega})^{+1}$. Indeed, every member   of $\mathcal C^{\leq \omega}$ has  a copy  $R'$ in a countable  extension $R''$ of  $R$ having the same age as $R$ hence $\age (R'_{\restriction V(R') \setminus \{a\}})\subseteq \mathcal C$.    Hence,   $\mathcal  C^{\leq \omega} $ is well-quasi-ordered.
\end{proof}

Next,
\begin{lemma}\label{lem:well-founded-prime} Let $\mathcal C$ be a hereditary class of $\Omega_{s}$. If $\mathcal C^{\leq \omega}$ is well founded then every prime member of  $\mathcal C^{\leq \omega}$, if any, embeds  a minimal one.
\end{lemma}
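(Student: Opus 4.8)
The plan is to single out, inside the given structure $R$, a prime substructure that is smallest possible among prime substructures \emph{of the same cardinality as itself}, and then to read minimal primality straight off this choice. The hypothesis that $\mathcal C^{\leq\omega}$ is well founded is exactly what guarantees that such a smallest element exists.

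First I would fix a prime $R\in\mathcal C^{\leq\omega}$ and set $\kappa:=\vert V(R)\vert$ (so $\kappa$ is finite or $\aleph_0$). I would then consider the family
$$\mathcal P:=\{S : S\leq R,\ S\ \text{prime},\ \vert V(S)\vert=\kappa\}.$$
Every such $S$ is countable and satisfies $\age(S)\subseteq\age(R)\subseteq\mathcal C$, so $\mathcal P$ is a subset of $\mathcal C^{\leq\omega}$; moreover $R\in\mathcal P$, so $\mathcal P\neq\emptyset$. Since $\mathcal C^{\leq\omega}$ is well founded, its nonempty subset $\mathcal P$ has a minimal element $S_0$ for the embeddability quasi-order (invoking dependent choice to exclude an infinite strictly descending chain, if one prefers the sequential formulation of well-foundedness).

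Next I would verify that $S_0$ is a minimal prime structure. It is prime and of cardinality $\kappa$ by construction. Let $T$ be any induced indecomposable substructure of $S_0$ with $\vert V(T)\vert=\kappa$. Then $T\leq S_0\leq R$, $T$ is prime, and $\vert V(T)\vert=\kappa$, so $T\in\mathcal P$ and $T\leq S_0$. Minimality of $S_0$ in $\mathcal P$ forbids $T< S_0$, hence $S_0\leq T$, that is, $S_0$ embeds into $T$. As $T$ was an arbitrary induced indecomposable substructure of $S_0$ of the same cardinality, $S_0$ is minimal prime; and $S_0\leq R$, which is the desired conclusion.

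The argument is short; the only point that needs care, and the main thing to get right, is the restriction to prime substructures of the \emph{same} cardinality $\kappa$. Without it, $\mathcal P$ would contain the finite prime substructures furnished by Ille's Theorem \ref{ille-theorem}, whose minimal elements are merely small finite primes and carry no information about an infinite $R$. It is precisely the cardinality constraint that keeps $\mathcal P$ downward closed under embeddings within the single cardinality $\kappa$ while forcing the minimal element to stay of full size $\kappa$, so that minimal primality in the sense of the definition follows at once.
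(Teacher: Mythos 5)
Your proof is correct. The paper states this lemma without giving a proof, and your argument --- extracting, by well-foundedness, a minimal element of the set of prime substructures of $R$ having the same cardinality as $R$, and observing that minimality in this set under embeddability is exactly minimal primality in the sense of the paper's definition --- is the natural and evidently intended one, with the cardinality restriction being precisely the point that needs (and receives) care.
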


\noindent{\bf Proof of Theorem \ref{thm:noninexhaustible}.} Let $R$ be a prime structure with $\age (R)=\mathcal C$. According to Lemma \ref{lem:well-founded},
 $\mathcal C^{\leq \omega} $ is well-quasi-ordered.  According to Lemma \ref{lem:well-founded-prime},  $R$   embeds  a minimal prime member.
\hfill $\Box$

\subsection{Primality and almost multichainability}
A relational structure $R$ is \emph{almost multichainable} if its domain $V$ is the disjoint union of a finite set $F$ and a set  $L\times K$ where  $K$ is a  finite set, for which there is a linear order $\leq$ on $L$, satisfying  the following condition:

 $\bullet$ For every local isomorphism $h$  of the chain $C:= (L, \leq)$  the map $(h, 1_K)$ extended by the identity on $F$ is a local isomorphism of $R$ (the map $(h, 1_K)$ is defined by $(h, 1_K)(x, y):= (h(x), y)$ ).

The notion of almost multichainability was introduced in \cite{pouzettr} (see \cite {pouzet.2006} for further references and discussions).  The special case $\vert K\vert =1$ is
the notion of almost chainability introduced by Fra\"{\i}ss\'e. The use of this notion in relation with the notion of primality is illustrated in several papers, notably \cite{boudabbous-pouzet-2010}, \cite{pouzet-sikaddour-zaguia-2005}.


We recall $1.$  of Theorem 4.19 p.265 of \cite{pouzet.2006}.

\begin{proposition}\label{multichainablewqo}The age of an almost multichainable structure  is hereditarily well-quasi-ordered.
\end{proposition}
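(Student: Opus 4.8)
The plan is to show that every finite induced substructure of $R$ is coded, up to isomorphism, by a word over a well-quasi-ordered alphabet, and that embeddability of substructures is implied by Higman's subword ordering on these codes; the conclusion then follows from Higman's theorem \cite{higman}. Throughout write $V=F\sqcup (L\times K)$ with $C:=(L,\le)$ the associated chain. First I would record the structural consequence of almost multichainability: since every local isomorphism $h$ of $C$ lifts to the local isomorphism $(h,1_K)$ of $R$ extended by $\id$ on $F$, the relation $\rho_i$ between two column elements $(\ell,k)$ and $(\ell',k')$ depends only on $k$, $k'$ and on whether $\ell<\ell'$, $\ell=\ell'$ or $\ell>\ell'$, while the relation between an element $a\in F$ and a column element $(\ell,k)$ depends only on $a$ and $k$. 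In other words $R$ is homogeneous in the $L$-direction, the finite sets $F$ and $K$ carrying all the remaining information.

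Fix a well-quasi-order $Q$; we must prove that $\age(R)\cdot Q$ is w.q.o. Given a labelled finite substructure $(R_{\restriction A},f)$ with $f\colon A\to Q$, I would encode it by the pair $\big(A\cap F,\, w(A)\big)$ as follows. Writing $\ell_1<\cdots<\ell_r$ for the $L$-values occurring in $A$ and $S_{\ell_j}:=\{k\in K:(\ell_j,k)\in A\}\neq\emptyset$, set $w(A):=c_1\cdots c_r$ over the alphabet $\Sigma:=\{(S,g):\emptyset\neq S\subseteq K,\ g\in Q^{S}\}$, where $c_j:=\big(S_{\ell_j},\,k\mapsto f(\ell_j,k)\big)$. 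Order $\Sigma$ by $(S,g)\le(S',g')$ iff $S=S'$ and $g(k)\le g'(k)$ for all $k\in S$. Then $\Sigma$ is a finite disjoint union of the well-quasi-orders $Q^{S}$ and is therefore w.q.o., so by Higman's theorem the set $\Sigma^{*}$ of finite words over $\Sigma$, equipped with the subword (Higman) embedding order, is w.q.o.

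Now take any infinite sequence $(R_{\restriction A_n},f_n)$ of labelled members of $\age(R)$. Since $F$ is finite there are finitely many possible values of $A_n\cap F$, so I would first pass to an infinite subsequence along which $A_n\cap F=F_0$ is constant. The labelled $F_0$-parts $(R_{\restriction F_0},f_n{\restriction} F_0)$ then range in $Q^{F_0}$, a finite power of $Q$, hence form a w.q.o.; together with the w.q.o. of $\Sigma^{*}$ and the fact that a finite product of w.q.o.'s is w.q.o., a further infinite subsequence $n_1<n_2<\cdots$ can be extracted along which the $F_0$-labels are pointwise non-decreasing and the column words $w(A_{n_1}),w(A_{n_2}),\dots$ embed in Higman's order. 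Finally I would build, for $n_p\le n_q$, an explicit embedding of $(R_{\restriction A_{n_p}},f_{n_p})$ into $(R_{\restriction A_{n_q}},f_{n_q})$ that is the identity on $F_0$ and sends a selected column element $(\ell_j,k)$ to $(\ell'_{\sigma(j)},k)$ according to the subword embedding $\sigma$ of $w(A_{n_p})$ into $w(A_{n_q})$; the structural fact of the first paragraph guarantees that all relations (within columns, across columns where $\sigma$ preserves left-to-right order, and between $F_0$ and columns where the identity keeps $k$ fixed) are preserved, and the chosen orderings guarantee the label inequalities $f_{n_p}(x)\le f_{n_q}(h(x))$.

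The delicate point, and the reason for the pigeonhole reduction to a fixed $F_0$, is the preservation of relations between the finite part and the columns. An arbitrary abstract isomorphism between the $F$-parts of two substructures need not respect these relations, because the relation between $a\in F$ and $(\ell,k)$ is governed by $a$ itself, not merely by the isomorphism type of $R_{\restriction F_0}$. Fixing $A_n\cap F=F_0$ and using the identity on $F_0$ removes this difficulty at once, since the identity trivially preserves the pattern $\rho_i(a,k)$. Once this is arranged the verification is routine, and the argument yields \emph{hereditary} well-quasi-ordering because $Q$ was an arbitrary w.q.o., absorbed into the alphabet $\Sigma$ and into $Q^{F_0}$.
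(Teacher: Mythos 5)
Your proof is correct and follows essentially the same route as the paper, which simply cites Theorem 4.19 of \cite{pouzet.2006} and describes the argument as "interpret members of the age by words over a (well-quasi-ordered) alphabet and apply Higman's theorem." Your write-up supplies the details of exactly that coding (pigeonholing the finite part $F$, encoding the columns as letters $(S,g)$ with $S\subseteq K$ and $g\in Q^S$, and checking that a Higman subword embedding lifts to an embedding of labelled structures), so there is nothing to add.
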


The proof of Proposition \ref{multichainablewqo} given in \cite {pouzet.2006}
 consists to interpret members of the age by words over a finite alphabet and apply Higman's Theorem on words. In fact, the extension of Higman's Theorem to b.q.o. tells us that the age of  an almost multichainable structure  is hereditarily b.q.o.

With Theorem \ref{thm:bounds}, we have:

\begin{theorem}
If the signature is bounded, the cardinality of bounds of the age of an almost multichainable structure is bounded.
\end{theorem}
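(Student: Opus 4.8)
The plan is simply to combine the two preceding results, so the proof reduces to checking that their hypotheses line up. First I would recall that the age $\age(R)$ of any relational structure $R$ is, by definition, a hereditary class of $\Omega_{s}$ (it is an ideal, hence in particular an initial segment for the embeddability quasi-order). This observation supplies the first hypothesis of Theorem \ref{thm:bounds} for free, independently of any assumption on $R$.

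Next I would invoke Proposition \ref{multichainablewqo}: when $R$ is almost multichainable, its age $\age(R)$ is hereditarily well-quasi-ordered. Together with the previous paragraph, this shows that $\age(R)$ is simultaneously hereditary and hereditary w.q.o., which are precisely the two conditions demanded by Theorem \ref{thm:bounds}. Under the standing assumption that the signature $s$ is bounded, Theorem \ref{thm:bounds} then applies verbatim and yields that the cardinality of the bounds of $\age(R)$ is bounded, which is the desired conclusion.

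The only point requiring attention is this matching of hypotheses, and it involves no genuine obstacle: the substantive work---interpreting the members of the age as words over a finite alphabet and applying Higman's theorem to control antichains---has already been carried out in the proof of Proposition \ref{multichainablewqo}. Consequently the present statement is a direct corollary, and the argument consists essentially of quoting Proposition \ref{multichainablewqo} to feed Theorem \ref{thm:bounds}.
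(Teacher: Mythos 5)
Your proposal is correct and is exactly the paper's own argument: the theorem is stated there as an immediate consequence of Proposition \ref{multichainablewqo} combined with Theorem \ref{thm:bounds}, with no further work required. The hypothesis-matching you spell out (an age is hereditary, and Proposition \ref{multichainablewqo} supplies the hereditary w.q.o.\ condition) is precisely what the paper leaves implicit.
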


 Proposition \ref{multichainablewqo} extends a little bit.

\begin{proposition}\label{multichainablebqo}If $\mathcal C$ is the age of a almost multichainable structure,  then the collection  $\mathcal C^{\leq\omega}$ of countable structures whose ages are included in $\mathcal C$ is b.q.o. and  in fact hereditary b.q.o. \end{proposition}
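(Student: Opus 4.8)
The plan is to extend the word\nobreakdash-coding that underlies Proposition \ref{multichainablewqo} from the finite members of $\mathcal C$ to the countable members of $\mathcal C^{\leq\omega}$, replacing finite words over a finite alphabet by countable chains labelled over a finite alphabet, and then to invoke the better\nobreakdash-quasi\nobreakdash-ordering of such labelled chains, which is the b.q.o. strengthening of Laver's theorem \cite{laver}. Concretely, I would fix a realization $\mathcal C=\age(R)$ with $R$ almost multichainable, so that $V(R)=F\sqcup(L\times K)$ with $F,K$ finite and $(L,\leq)$ a chain. The defining extension property forces a \emph{sign-only law}: the relations between $(l,k)$ and $(l',k')$ depend only on $k$, $k'$ and on the relative order of $l$ and $l'$, and the relations between $(l,k)$ and an element of $F$ depend only on $k$ and that element. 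Thus each member of $\mathcal C$ is coded by its bounded $F$\nobreakdash-part together with the word $T_1\cdots T_m$ over the finite alphabet $\Sigma:=2^{K}$ recording the successive column\nobreakdash-types along $L$; this is exactly the coding used for Proposition \ref{multichainablewqo}.

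First I would prove a \emph{decomposition lemma}: every countable $S$ with $\age(S)\subseteq\mathcal C$ admits a presentation as a finite part $F_S$, with $|F_S|\leq|F|$, together with a countable chain $C_S$ labelled in $\Sigma$, such that $S$ is recovered from this data by the sign\nobreakdash-only law. The point is that the sign\nobreakdash-only law is a finitary (universal) condition on pairs and triples; since every finite substructure of $S$ lies in $\mathcal C$ and hence obeys it, $S$ obeys it as well, and the structure theory of almost multichainable ages (\cite{pouzet.2006,pouzettr}) then yields the required decomposition and the bound on the exceptional part. I regard this decomposition lemma as the main obstacle: recovering a global chain order together with the finite residual part from a structure that is only assumed to have all its finite restrictions in $\mathcal C$, and checking that the $F$\nobreakdash-part cannot grow beyond $|F|$, is where the real work lies.

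Next I would verify \emph{soundness} of the coding: if the labelled chain of $S$ embeds into that of $S'$, meaning an order\nobreakdash-embedding of $C_S$ into $C_{S'}$ under which each column\nobreakdash-type increases for the inclusion order on $\Sigma$, matched with an embedding of the finite parts, then $S$ embeds into $S'$. This is immediate from the sign\nobreakdash-only law, since an order\nobreakdash-embedding preserves the relative order of $l$ and $l'$, while an inclusion of types lets each present row be sent to the same row of a larger column. Soundness says precisely that the presentation map is order\nobreakdash-reflecting, and under an order\nobreakdash-reflecting map the b.q.o. of the target transfers to the source; in particular completeness of the coding is not needed for this transfer.

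Finally I would invoke Laver's theorem in its b.q.o. form: countable chains labelled by a b.q.o. are b.q.o. under labelled embedding, every countable chain being $\sigma$\nobreakdash-scattered. Taking the finite label set $\Sigma$ and absorbing the bounded finite part as an additional finite b.q.o. factor (using closure of b.q.o. under finite unions and products), this shows that $\mathcal C^{\leq\omega}$ is b.q.o. For the hereditary statement, given any b.q.o. $Q$ I would enlarge the column labels to $\Sigma\times Q^{K}$, recording besides the type the $Q$\nobreakdash-label of each present row, which is again a b.q.o.; applying Laver with this label set and the associated product order shows that $\mathcal C^{\leq\omega}\cdot Q$ is b.q.o., that is, $\mathcal C^{\leq\omega}$ is hereditary b.q.o.
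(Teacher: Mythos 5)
The paper does not actually prove this proposition --- it only points to \cite{pouzet-zaguia-wqo-2022} --- so there is no internal argument to compare yours with line by line. That said, your strategy (present each countable member of $\mathcal C^{\leq\omega}$ by a bounded exceptional part together with a countable chain labelled in a finite alphabet, check that embeddability of presentations implies embeddability of structures, and then apply the labelled b.q.o.\ form of Laver's theorem \cite{laver}) is exactly the infinite analogue of the word-coding-plus-Higman argument that the paper sketches for Proposition \ref{multichainablewqo}, and the final steps --- order-reflection of the coding, absorbing the finite part and the $F$-to-row types into a finite b.q.o.\ factor, and enlarging the alphabet to $\Sigma\times Q^K$ for the hereditary statement --- are sound as you describe them.

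The genuine gap is the decomposition lemma, and the justification you offer for it does not work. The ``sign-only law'' is not a universal first-order condition on $S$: it is a condition \emph{relative to} a decomposition $V(S)=F_S\sqcup(L_S\times K)$ together with a linear order on $L_S$, and the existence of such a decomposition is an existential, second-order statement, not one that transfers from finite substructures by universality. The hypothesis $\age(S)\subseteq\mathcal C$ only gives, for each finite $A\subseteq V(S)$, \emph{some} embedding of $S_{\restriction A}$ into $R$; these embeddings need not be coherent, the split of $A$ into exceptional part and columns can vary with $A$, and nothing you have said bounds the part of $S$ that must be treated as exceptional (your claim $\vert F_S\vert\leq\vert F\vert$ is itself unproved). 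Extracting a single global chain, a single bounded exceptional set, and a column structure obeying $R$'s law requires a genuine compactness or Ramsey-type construction; that construction is the actual mathematical content of the proposition and is precisely what is delegated to \cite{pouzet-zaguia-wqo-2022}. As written, your argument proves the reduction ``decomposition lemma $\Rightarrow$ proposition'' but not the proposition.
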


For a proof,  see \cite{pouzet-zaguia-wqo-2022}.

Applying Lemma \ref {lem:well-founded-prime} and Proposition \ref {multichainablebqo}, we have:

\begin{theorem}\label{thm:multichain}
If $R$ is almost multichainable,  then   $\age(R)$ is hereditarily well-quasi-ordered.  Hence, it has finitely many bounds.  Every prime $R'$ with the same age (if any) contains a minimal prime structure.
\end{theorem}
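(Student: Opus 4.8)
The plan is to assemble the statement from three ingredients already in place: Proposition \ref{multichainablewqo} (or its strengthening Proposition \ref{multichainablebqo}), Theorem \ref{thm:bounds}, and Lemma \ref{lem:well-founded-prime}. The hypothesis is that $R$ is almost multichainable, and there are three conclusions to establish in turn: that $\age(R)$ is hereditarily well-quasi-ordered, that it has finitely many bounds, and that any prime $R'$ with $\age(R')=\age(R)$ embeds a minimal prime structure.

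First I would dispatch the hereditary well-quasi-ordering claim directly: this is exactly the content of Proposition \ref{multichainablewqo}, which asserts that the age of an almost multichainable structure is hereditarily well-quasi-ordered, so nothing further is required for the first conclusion. Second, the finiteness of the number of bounds follows immediately by combining this with Theorem \ref{thm:bounds}: since $\age(R)$ is a hereditary and hereditarily well-quasi-ordered subclass of $\Omega_s$, and the signature $s$ of a single relational structure is finite (hence bounded), Theorem \ref{thm:bounds} gives that the cardinality of its bounds is bounded, i.e.\ finite. This is the same deduction already recorded in the unnumbered theorem just after Proposition \ref{multichainablewqo}.

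The third conclusion is where the real work lies, and the intended route is through Lemma \ref{lem:well-founded-prime}, which says that if $\mathcal C^{\leq\omega}$ is well founded then every prime member of $\mathcal C^{\leq\omega}$ embeds a minimal one. To apply it with $\mathcal C:=\age(R)$, I must know that $\mathcal C^{\leq\omega}$ is well founded (equivalently well-quasi-ordered). This is precisely supplied by Proposition \ref{multichainablebqo}, which states that the collection $\mathcal C^{\leq\omega}$ of countable structures whose ages are included in $\mathcal C$ is b.q.o., hence in particular w.q.o.\ and thus well founded. Given an arbitrary prime $R'$ with $\age(R')=\age(R)=\mathcal C$, we have $R'\in\mathcal C^{\leq\omega}$ (taking a countable prime representative, whose existence is guaranteed by the argument of Theorem \ref{thm:main1}), so Lemma \ref{lem:well-founded-prime} yields a minimal prime structure embedded in $R'$, completing the proof.

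The main obstacle, and the only step that is not a one-line invocation, is verifying that the hypothesis of Lemma \ref{lem:well-founded-prime} is met, namely the well-foundedness of $\mathcal C^{\leq\omega}$; this is exactly why Proposition \ref{multichainablebqo} — rather than Proposition \ref{multichainablewqo} alone — is needed, since the latter only controls the finite age and its finite-label extensions, whereas Lemma \ref{lem:well-founded-prime} requires control over all countable structures with age in $\mathcal C$. Once that proposition is in hand the three conclusions chain together without further difficulty.
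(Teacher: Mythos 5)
Your proposal is correct and follows exactly the paper's route: Proposition \ref{multichainablewqo} for the hereditary well-quasi-ordering, Theorem \ref{thm:bounds} for the finiteness of the set of bounds, and Proposition \ref{multichainablebqo} combined with Lemma \ref{lem:well-founded-prime} for the existence of an embedded minimal prime structure. The paper gives no further detail beyond citing these same ingredients, so nothing is missing.
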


\begin{problem} If a minimal prime age is $2^{-}$-well-quasi-ordered,  this is the age of an almost multichainable binary relational structure.
\end{problem}

The answer is positive for graphs. Indeed, the minimal prime ages which are not ages of multichainable graphs are ages of some special graphs, the $G_{\mu}$'s, their ages are not $2^-$-w.q.o.  Some are $1^-$-w.q.o. (when $\mu$ is periodic). For more, see Remark \ref{rem:nonw.q.o.}.

\section{Minimal prime ages of graphs}\label{section:minimalprimesgraphs}

Our description  of minimal prime ages of graphs is based on several results. First a previous description of unavoidable prime graphs in large finite prime graphs of Chudnovsky, Kim, Oum and Seymour \cite{chudnovsky}, see also  Malliaris and Terry \cite {malliaris}.
Next  a  study of graphs associated to $0$-$1$ sequences.

\subsection{Unavoidable prime graphs.}


We  introduce some finite prime graphs. Fix an integer $n \geq 1$.

\begin{itemize}

\item \emph{The bipartite half-graph of height $n$} $H_n$, is a graph with $2n$ vertices $a_1\ldots,a_n$, $b_1,\dots,b_n$ such
that $a_i$ is adjacent to $b_j$ if and only if $i \leq j$ and such that $\{a_1\dots,a_n\}$ and $\{b_1,\dots,b_n\}$ are
independent sets.

\item \emph{The half split graph of height $n$} $H'_n$, is a graph with $2n$ vertices  $a_1,\ldots,a_n$, $b_1\dots,b_n$ such that
$a_i$ is adjacent to $b_j$ if and only if $i\leq j$ and such that $\{a_1\dots,a_n\}$ is an independent set and
$\{b_1,\dots,b_n\}$ is a clique (a graph is a split graph if its vertices can be partitioned into a clique and an independent set).

\item Let $H'_{n,I}$ be the graph obtained from $H'_n$ by adding a new vertex adjacent to $a_1,\ldots,a_n$ (and no others). Let $H^*_n$  be the graph obtained from $H'_n$  by adding a new vertex adjacent to $a_1$ (and no others).

\item \emph{The thin spider with $n$ legs} is a graph with $2n$ vertices $a_1,\ldots,a_n$, $b_1\dots,b_n$ such that $\{a_1\dots,a_n\}$ is an independent set and $\{b_1,\dots,b_n\}$ is a clique, and $a_i$ is adjacent to $b_j$ if and only if $i=j$. The \emph{thick spider  with $n$ legs} is the complement of the thin spider with $n$ legs. In particular, it is a graph with $2n$ vertices $a_1\ldots,a_n$, $b_1,\dots,b_n$ such that $\{a_1\dots,a_n\}$ is an independent set $\{b_1,\dots,b_n\}$ is a clique, and $a_i$ is adjacent to $b_j$ if and only if $i\not =j$. A \emph{spider} is a thin spider or a thick spider. In Item (4) of Theorem \ref{thm:malliaris} we consider the extension of this notion to infinite sets.

\item A sequence of distinct vertices $v_0,\ldots,v_m$ in a graph $G$ is called a \emph{chain from a set $I \subseteq V ( G )$ to
$v_m$} if $m\geq 2$ is an integer, $v_0,v_1\in I$, $v_2,\ldots,v_m \not \in I$ , and for all $i>0$, $v_{i-1}$ is either the unique
neighbor or the unique non-neighbor of $v_i$ in $\{v_0,\dots,v_{i-1}\}$. The length of a chain $v_0,\dots,v_m$ is $m$.
\end{itemize}

\begin{figure}[h]
\begin{center}
\leavevmode \epsfxsize=4in \epsfbox{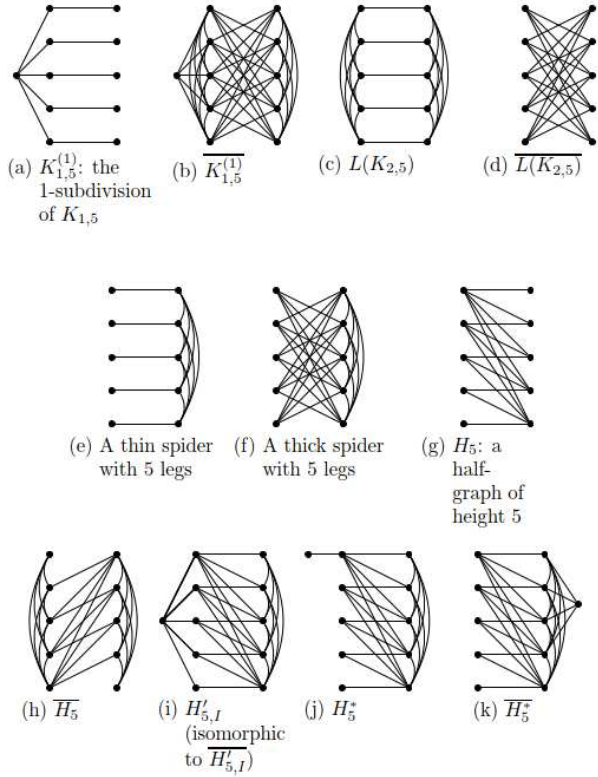}
\end{center}
\caption{Unavoidable prime finite graphs (this is Figure 1  from \cite{chudnovsky})}
\label{fig:liste-chudnowski}
\end{figure}

The following is due to Chudnovsky et al \cite{chudnovsky}:

\begin{theorem}[Theorem 1.2 of \cite{chudnovsky}]\label{thm:chudnovsky}
For every integer $n\geq 3$ there is $N$ such that every prime graph
with at least $N$ vertices contains one of the following graphs or their complements as an induced
subgraph.
\begin{enumerate}[$(1)$]
  \item  The 1-subdivision of $K_{1,n}$ (denoted by $K^1_{1,n}$ ).
  \item The line graph of $K_{2,n}$.
  \item The thin spider with $n$ legs.
  \item The bipartite half-graph of height $n$.
  \item The graph $H'_{n,I}$.
  \item the graph $H^*_n$.
  \item A prime graph induced by a chain of length $n$.
\end{enumerate}

\end{theorem}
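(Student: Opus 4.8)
The plan is to set $N=N(n)$ to be an iterated Ramsey function of $n$ and to split the statement into two essentially independent steps organised around the notion of a \emph{chain} introduced above. First I would show that every prime graph on at least $N$ vertices contains a chain of length $\ell$, where $\ell$ is a suitably large function of $n$; then I would show that the graph induced on a long chain can be regularised, using Ramsey's theorem, so that it contains one of the seven listed graphs or its complement. Throughout I would freely use the complementation symmetry: a set is a module of $G$ if and only if it is a module of $\overline{G}$, and interchanging $G\leftrightarrow\overline{G}$ swaps the two options (``unique neighbour'' versus ``unique non-neighbour'') in the definition of a chain. Hence both primality and the chain notion are self-complementary, so I may normalise one convention and let the phrase ``or their complements'' absorb the other.

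\textbf{Step 1 (long chains exist).} Here primality is the engine. Since a prime graph has no nontrivial module, for any two distinct vertices $u,v$ there is a \emph{splitter}: a vertex adjacent to exactly one of $u,v$. I would grow a chain by repeatedly feeding such splitters into the set $\{v_0,\dots,v_{i-1}\}$ already built, the subtle point being that the \emph{unique} neighbour/non-neighbour condition must hold against \emph{all} predecessors at once, not merely against the last one. The natural way to force this is to argue that a chain which is maximal yet short confines the remaining vertices to a set whose adjacencies to the chain are too uniform, so that a large subset of them forms a module, contradicting primality; the size bound needed to run this argument is where $N$ is fixed.

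\textbf{Step 2 (cleanup of the chain).} For $i\ge 1$ record $s_i=+$ when $v_{i-1}$ is the unique neighbour of $v_i$ in $\{v_0,\dots,v_{i-1}\}$ and $s_i=-$ when it is the unique non-neighbour. Then for $j<i$ one has $v_i$ adjacent to $v_j$ precisely when either $s_i=+$ and $j=i-1$, or $s_i=-$ and $j<i-1$; hence the graph induced on the chain is completely determined by the sign word $s_1\cdots s_m$. The constant word $+\cdots+$ gives a path and $-\cdots-$ gives its complement, both prime once $m$ is large, which is case $(7)$. Applying Ramsey's theorem to the sign word and to the $2$-colouring of pairs of chain vertices produces a long sub-chain carrying one of finitely many monotone or alternating patterns, and a finite case analysis identifies each with one of the named graphs up to complement: the bipartite half-graph $H_n$, the half split graph and its relatives $H'_{n,I}$ and $H^*_n$, the thin spider, the subdivision $K^1_{1,n}$, or the line graph of $K_{2,n}$.

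The main obstacle is Step 1. A naive greedy extension of a chain need not preserve the defining condition against all predecessors, so one must either interleave a Ramsey regularisation at every stage or, as suggested above, convert the failure of extension directly into a module; controlling how $N$ blows up under this process is the heart of the argument. By comparison the chain analysis of Step 2 is a finite, if fiddly, check, whose only genuine content is verifying that each extracted canonical graph is itself prime and does occur on the list. Finally, assembling the two steps gives the desired $N(n)$, completing the proof.
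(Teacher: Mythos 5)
A preliminary remark: the paper does not prove this statement. It is Theorem 1.2 of Chudnovsky, Kim, Oum and Seymour \cite{chudnovsky}, quoted as an external input (the authors also point to the infinitary version of Malliaris and Terry \cite{malliaris}, from which the finite statement can be recovered). So there is no internal proof to compare against; what follows assesses your plan against the known argument.

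Your Step 1 is false as stated, and this is the fatal gap. It is not true that every sufficiently large prime graph contains a long chain: the thin spider with $n$ legs and the $1$-subdivision $K^1_{1,n}$ are prime graphs of unbounded order in which every chain has length bounded by an absolute constant. (In $K^1_{1,n}$, a chain started from two leaves is forced through one subdivision vertex, the centre, one further subdivision vertex and its leaf, and then cannot be extended, because continuing would require a vertex adjacent to two distinct leaves.) This is precisely why items $(1)$--$(6)$ appear in the conclusion at all: the architecture of the actual proof is a dichotomy. Either $G$ contains a long chain, in which case one must still trim it to a chain inducing a \emph{prime} graph --- not automatic, compare Proposition \ref{lem:6'}, which shows $G_\mu$ fails to be prime exactly when $\mu$ has a long constant factor --- and one lands in outcome $(7)$; or every chain from the chosen initial pair is short, and it is from \emph{that} hypothesis, by counting the many vertices reachable only at small chain-depth and applying Ramsey-type arguments to them, that the six special structures are extracted. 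Your plan points the implication the wrong way, and the "maximal short chain yields a module" argument you sketch cannot be repaired, since the spider and $K^1_{1,n}$ are genuinely prime.

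Step 2 is also misconceived. The graph induced on a chain is exactly a graph $G_\mu$ in the sense of Subsection \ref{subsection:01graphs} (your sign word $s_1\cdots s_m$ is the word $\mu$), and this paper shows there are $2^{\aleph_0}$ pairwise distinct ages of such graphs (Corollary \ref{thm:cont-ages}). No Ramsey regularisation can therefore reduce a long chain to one of the six named graphs up to complementation; that is exactly why outcome $(7)$ is retained as a separate, irreducible alternative in the statement rather than being absorbed into $(1)$--$(6)$. Moreover, restricting a chain to a subsequence does not produce the chain of the corresponding subword, because the unique-neighbour condition is relative to the full set of predecessors; so the "monochromatic subword" extraction would not even yield a chain.
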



Malliaris and Terry prove in  \cite{malliaris} an infinitary  version of Theorem \ref{thm:chudnovsky} for infinite  graphs, then use it to prove Theorem \ref{thm:chudnovsky}. Their result is the following.
\begin{theorem}[Theorem 6.8 of \cite{malliaris}]\label{thm:malliaris}

 An infinite prime graph $G$ contains one of the following.
 \begin{enumerate}[$(1)$]
   \item Copies of $H_n$, $\overline{H_n}, H^*_n, \overline{H^*_n}, H'_{n,I}, \overline{H'_{n,I}}$  for arbitrarily large finite $n$,
   \item Prime graphs induced by arbitrarily long finite chains,
   \item $K^1_{1,\omega}$ or its complement,
   \item The line graph of $K_{2,\omega}$ or its complement,
   \item A spider with $\omega$ many legs.
   \end{enumerate}
\end{theorem}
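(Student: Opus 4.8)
The plan is to prove the infinitary statement directly --- the finitary Theorem~\ref{thm:chudnovsky} then follows from it by a standard compactness argument --- by combining the infinite Ramsey theorem with the stable/unstable dichotomy for the edge relation and exploiting primality to force one of the listed configurations. Throughout I would use that the families in $(1)$, $(3)$, $(4)$ are closed under complementation and that the spiders in $(5)$ come in a thin/thick complementary pair, so that it suffices to argue up to replacing $G$ by $\overline{G}$.

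First I would fix a skeleton. Applying the infinite Ramsey theorem to the edge relation on any countably infinite subset of $V$ yields an infinite homogeneous set $A=\{a_i:i<\omega\}$, that is, an infinite clique or an infinite independent set; replacing $G$ by $\overline{G}$ if necessary I assume $A$ is independent. Here primality enters cheaply: since $G$ is infinite, no two-element subset of $A$ is a module, and as the elements of $A$ are pairwise non-adjacent, for all $i\neq j$ there must be a vertex of $V\setminus A$ adjacent to exactly one of $a_i,a_j$. Writing $T(x):=\{a\in A:\{x,a\}\in E\}$ for the trace of $x\in V\setminus A$ on $A$, the family $\{T(x):x\in V\setminus A\}$ therefore separates the points of $A$.

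The dichotomy comes next. I would colour each separating vertex $x$ by the order type of $T(x)$ along the enumeration of $A$ and apply Ramsey once more, after which two regimes remain. In the \emph{unstable} regime the edge relation has the order property: there are vertices $x_0,x_1,\dots$ and (after thinning $A$) points $a_0,a_1,\dots$ with $\{x_i,a_j\}\in E \Leftrightarrow j\le i$ for arbitrarily large --- hence, by compactness, infinitely many --- indices. A further Ramsey refinement making the $x_i$ homogeneous among themselves and controlling the cross-edges then exhibits the bipartite half-graph $H_n$, or its split variants $H'_{n,I}$ and $H^*_n$, or a complement of one of these, for every $n$; this is conclusion $(1)$. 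In the \emph{stable} regime no arbitrarily large half-graph occurs, and the classical fact that stable indiscernible sequences are totally indiscernible sets confines the admissible traces to ``local'' ones: each separating vertex meets $A$ in a single point or in all but one point. Reading off these local patterns gives the remaining cases --- the propagation of unique-neighbour and unique-non-neighbour relations produces a chain of length $n$ for every $n$ (conclusion $(2)$); singleton traces organised around a fixed centre assemble into the $1$-subdivision $K^1_{1,\omega}$ or the line graph of $K_{2,\omega}$ (conclusions $(3)$ and $(4)$); and a uniform singleton (respectively co-singleton) adjacency pattern over all of $A$ yields a thin (respectively thick) spider with $\omega$ legs (conclusion $(5)$).

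The last task is to upgrade ``arbitrarily large finite copies'' to one infinite copy. Because all the data now lives inside the single fixed sequence $A$ and the single coherent family of separating vertices extracted above, a K\"onig's-lemma / diagonal extraction --- with indiscernibility guaranteeing that the finitely many choices at each stage remain mutually compatible --- produces one infinite witness of the relevant type, and Ille's compactness (Theorem~\ref{ille-theorem}) can be invoked wherever one needs the extracted finite fragments to sit inside prime restrictions. The hard part, I expect, is the stable regime of the third step together with this assembly: one must show that primality cannot be \emph{diffuse} but must localise into one of the sparse templates, and that infinitely many separating vertices can be organised into a single internally consistent infinite substructure rather than into merely arbitrarily large finite fragments. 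This is exactly where the stability-theoretic input --- total indiscernibility and the boundedness of the relevant ranks --- does the essential work.
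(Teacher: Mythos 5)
First, a point of reference: the paper does not prove this statement at all --- it is quoted verbatim as Theorem 6.8 of Malliaris and Terry \cite{malliaris} and used as a black box (together with Theorem \ref{thm:chudnovsky}) to establish completeness of the classification. So there is no internal proof to compare yours against; your proposal has to stand on its own, and while its broad strategy (Ramsey extraction plus a stable/unstable dichotomy for the edge relation over a homogeneous set, which is indeed the flavour of the Malliaris--Terry argument) is reasonable, it has gaps that are not merely expository.

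The most concrete problem is the second Ramsey application: you propose to ``colour each separating vertex $x$ by the order type of $T(x)$ along the enumeration of $A$ and apply Ramsey once more.'' There are infinitely many (indeed uncountably many) possible traces $T(x)\subseteq A$ and infinitely many order types, and Ramsey's theorem fails for infinite colourings, so this step does not go through as written; the dichotomy has to be set up differently (e.g.\ by bounding the number of alternations of $T(x)$ along $A$, which is where stability genuinely enters). Second, in the stable regime you assert that each separating vertex ``meets $A$ in a single point or in all but one point.'' Even granting that $A$ can be upgraded from edge-homogeneous to indiscernible (itself a nontrivial extraction you do not perform), stability only yields that each trace is finite or cofinite with a \emph{uniform finite bound} $k$ on the exceptional set --- not $k=1$ --- and reducing to $k=1$, then organising the resulting vertices into exactly one of the four templates (2)--(5) according to their mutual adjacencies, is precisely the combinatorial heart of the theorem; your sketch asserts the conclusion of this analysis rather than deriving it. In particular the emergence of case (2), arbitrarily long prime chains, as the residual outcome when no single template can be completed, is not explained. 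Finally, the ``upgrade to one infinite copy'' via K\"onig's lemma needs the finite fragments to be coherently nested, which is exactly what has not been established. The skeleton is salvageable, but as it stands the proposal records the known strategy without supplying the arguments that make it work.
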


The graphs mentioned in the last three items and some infinite versions of the graphs in Item 1 were considered in \cite{pouzet-zaguia2009}. In addition, the following
characterization of unavoidable infinite prime graphs without infinite clique (or infinite independent set) was given.

\begin{theorem}[Theorem 2 of \cite{pouzet-zaguia2009}]\label{thm:pouzet-zaguia}
An infinite prime graph which does not contain an infinite clique embeds one of the following:
\begin{enumerate}[$(1)$]
\item The bipartite half-graph of height $\omega$.
\item The infinite one way path.
\item The $1$-subdivision of $K_{1,\omega}$.
\item The complement of the line graph of $K_{2,\omega}$.\end{enumerate}
\end{theorem}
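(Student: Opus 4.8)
The plan is to reduce everything to the way an infinite independent set sits inside $G$, and to canonise that configuration. \textbf{First} I would use the infinite version of Ramsey's theorem: applied to the $2$-colouring of $[V(G)]^2$ by ``edge/non-edge'', the hypothesis that $G$ has no infinite clique forces the existence of an infinite independent set $A=\{a_0,a_1,\dots\}\subseteq V(G)$. For a vertex $x\notin A$ write $T_x:=\{a\in A:\{x,a\}\in E\}$ for its \emph{trace} on $A$. Since $A$ is independent, no element of $A$ distinguishes two others, so the only candidates for distinguishing vertices lie outside $A$; primality of $G$ therefore forces the family $\{T_x:x\notin A\}$ to \emph{separate points} of $A$, i.e.\ for all $a\neq a'$ in $A$ some trace contains exactly one of them. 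The whole theorem then amounts to understanding separating families of traces over a countable set.

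\textbf{Next} I would canonise this separating family by a Ramsey-type argument: passing to an infinite $A_0\subseteq A$ and to a suitably chosen infinite family of external vertices, one can arrange the traces on $A_0$ into one of three monotone shapes, namely $(a)$ a strictly increasing chain of finite initial segments, $(b)$ co-singletons $A_0\setminus\{a_k\}$, or $(c)$ traces of uniformly bounded size (in the extreme, singletons). Shapes $(a)$ and $(b)$ are the easy cases. In case $(a)$, after thinning the external witnesses by Ramsey so that they are independent (an infinite clique among them would contradict the no-clique hypothesis), a diagonal choice of indices gives vertices $a_i$ and $b_j$ with $\{a_i,b_j\}\in E$ iff $i\le j$ and both sides independent: this is precisely the bipartite half-graph of height $\omega$, case $(1)$. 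In case $(b)$, again making the witnesses $w_0,w_1,\dots$ independent, we obtain $T_{w_k}=A_0\setminus\{a_k\}$, that is $\{a_i,w_k\}\in E$ iff $i\neq k$; identifying $a_i$ with $a_1b_i$ and $w_k$ with $a_2b_k$ shows this is exactly the complement of the line graph of $K_{2,\omega}$, case $(4)$.

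\textbf{The hard case is $(c)$}, where the traces are essentially singletons and the bipartite picture between $A_0$ and the witnesses is matching-like. Here a purely bipartite reading fails, because a matching is highly \emph{decomposable}: the pairs $\{a_i,x_i\}$ would be modules of $G$. This is exactly where primality must be used in an essential way. The vertices that split these would-be modules must exist by primality, and a final round of Ramsey canonisation organises them into additional structure: if infinitely many splitting vertices share a common attachment one extracts a centre and recovers the $1$-subdivision of $K_{1,\omega}$, case $(3)$; if instead they attach in a ``consecutive/shift'' pattern along the matching one recovers the infinite one way path, case $(2)$.

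The \textbf{main obstacle} I anticipate is in this last case: one must guarantee that the path or subdivided star produced is genuinely \emph{infinite}, rather than merely unboundedly large and finite, and that the extracted witness sets can be kept independent rather than accumulating into a clique. I expect to handle this by combining the Ramsey canonisations above with the compactness of primality (Theorem~\ref{ille-theorem}), which lets one promote a coherent family of finite prime configurations to an infinite embedded copy, and with the no-infinite-clique hypothesis, which is what repeatedly allows the Ramsey extractions to return independent rather than complete witness sets.
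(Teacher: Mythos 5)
This statement is imported into the paper from \cite{pouzet-zaguia2009} without proof, so there is no internal argument to compare against; I am judging your sketch on its own. Your opening moves are sound and natural: Ramsey gives the infinite independent set $A$, primality of $G$ (no two-element subset of $A$ is a module, and a vertex of $A$ cannot distinguish two others) forces the traces of outside vertices to separate points of $A$, and your cases $(a)$ and $(b)$ are handled correctly once the witnesses are thinned to an independent set using the no-infinite-clique hypothesis. The gaps are in the two places where all the work of the theorem actually lives. First, the canonisation into the three trace shapes $(a)$, $(b)$, $(c)$ is asserted rather than proved, and as stated it is not exhaustive: after the Ramsey extraction the bipartite pattern between the chosen witnesses and $A_0$ can come out \emph{constant} (every witness adjacent to all of $A_0$, or to none of it), in which case the thinned family no longer separates points of $A_0$, no target configuration appears, and one is forced to recurse with fresh witnesses. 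Nothing in your argument controls that recursion or shows it terminates in one of the four listed graphs.

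The second and more serious gap is case $(c)$. You correctly see that an induced infinite matching is far from prime and that new vertices must split the pairs $\{a_i,x_i\}$, but the claimed dichotomy for the splitting layer (a common neighbour yielding $K^1_{1,\omega}$, or a shift pattern yielding the one-way path) omits outcomes: the splitting vertices can relate to the matching by a half-graph pattern (sending you back to case $(1)$), or can themselves form matching-like pairs that again need splitting, so that a priori the construction only produces longer and longer \emph{finite} induced paths or subdivided stars. Your proposed remedy, Theorem \ref{ille-theorem}, does not close this: Ille's compactness produces finite prime extensions of finite sets inside $G$; it does not promote arbitrarily long finite induced paths to an infinite induced path. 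A graph, even a prime one, can contain arbitrarily long finite induced paths without containing an infinite one, and ruling this out here --- or replacing the infinite path by one of the other three configurations when it fails --- is precisely the hard content of the theorem (compare \cite{pouzet-zaguia-dmtcs21}). So the ``main obstacle'' you flag at the end is real, and the tool you propose for overcoming it is not the right one.
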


The graphs mentioned in  Theorem \ref{thm:pouzet-zaguia} are depicted in Figure \ref{fig:list-graph-min-1}.

\subsection{Eleven almost multichainable graphs and their ages}
Let $\mathcal M$ be the graphs $G_0, G_1, G_3, G_4$, $G_5$ and $G_6$ depicted in Figures \ref{fig:list-graph-min-1} and  \ref{fig:list-graph-min-2}. Let $\tilde{\mathcal M}$ be the list of these graphs and their complements. Let $\mathcal L$ be the set of the ages of these graphs and of their complements.  It should be noted that the graphs $G_5$, $\overline G_5$ have the same age, hence $\mathcal L$ has eleven  members.

%

\begin{figure}[h]
\begin{center}
\leavevmode \epsfxsize=5in \epsfbox{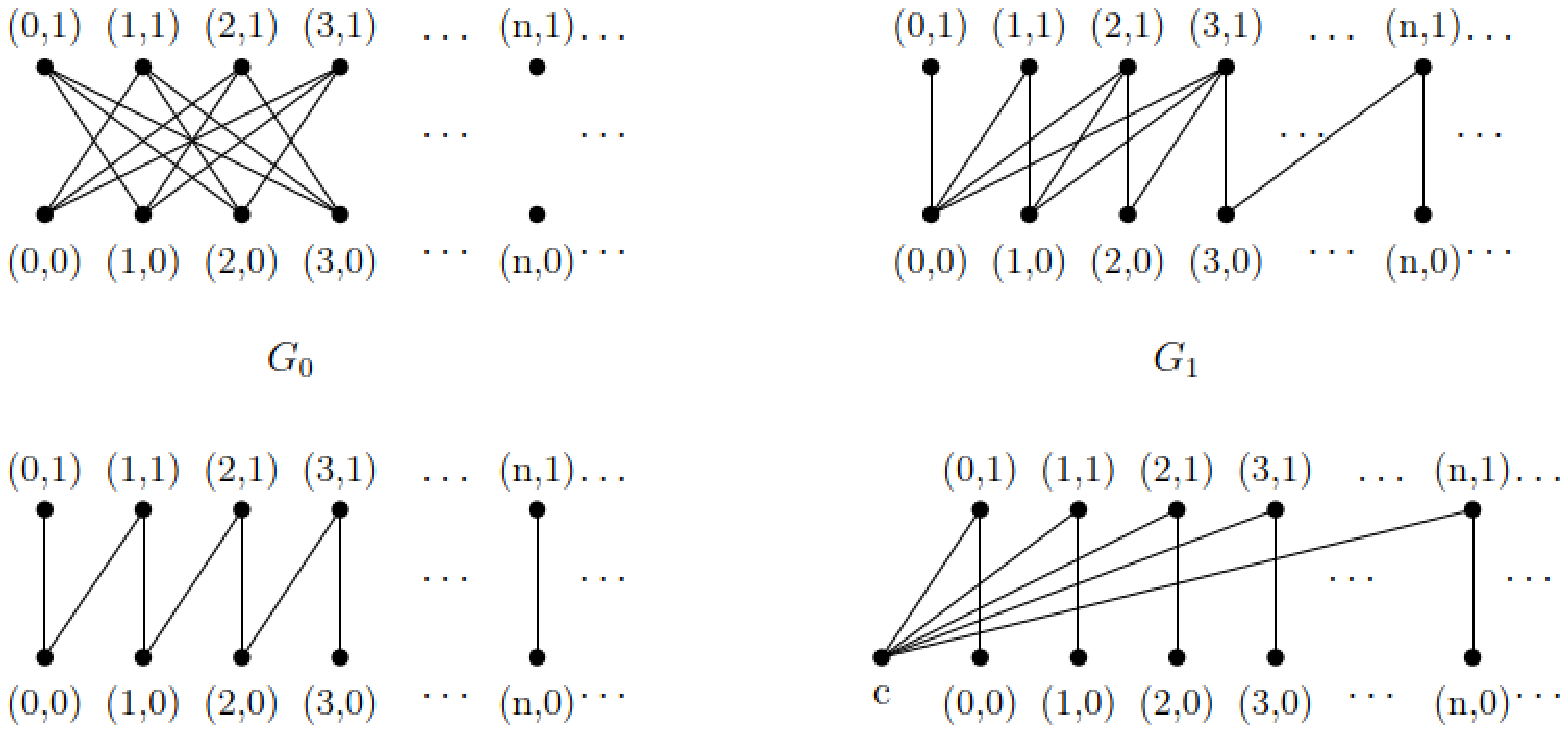}
\end{center}
\caption{}
\label{fig:list-graph-min-1}
\end{figure}

\begin{figure}[h]
\begin{center}
\leavevmode \epsfxsize=5in \epsfbox{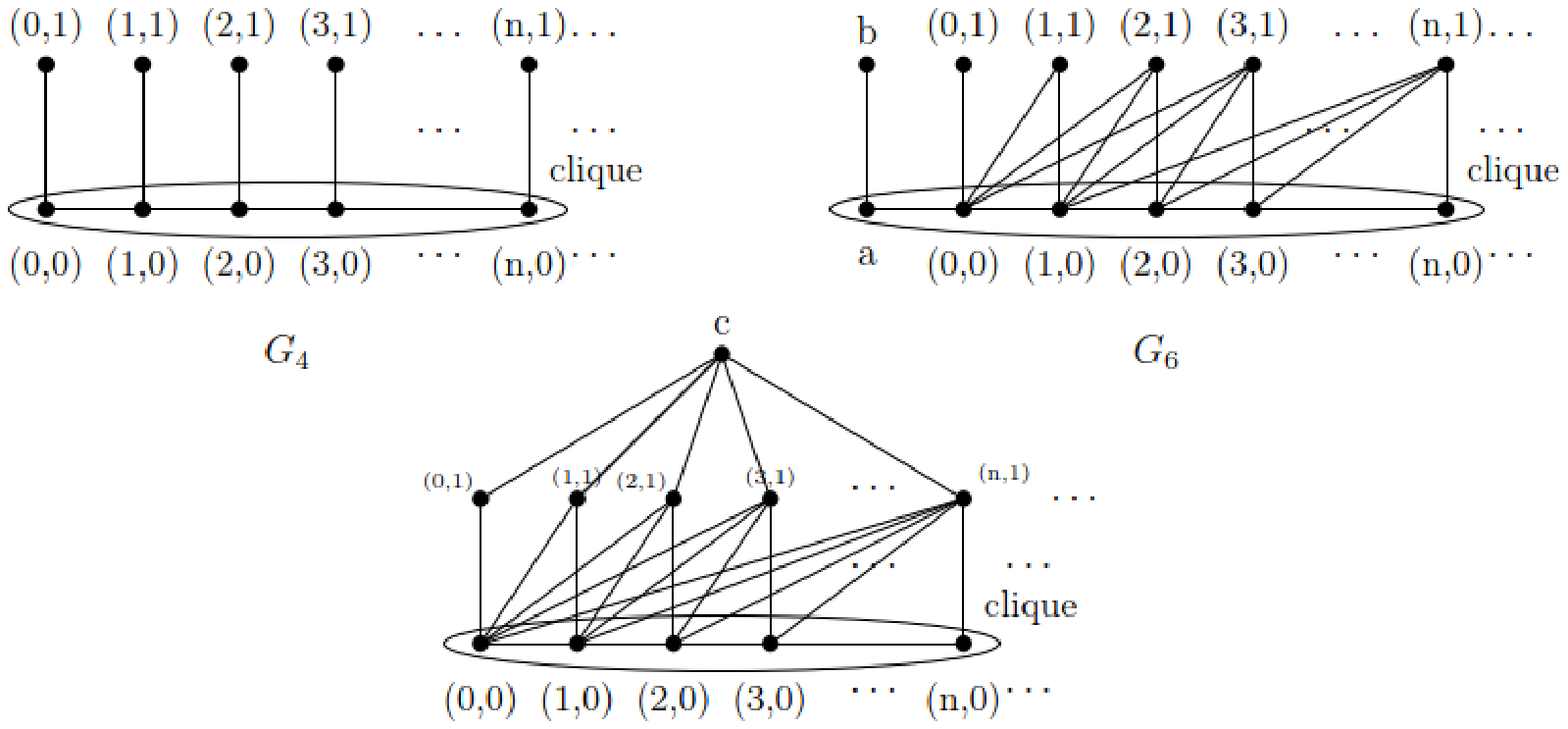}
\end{center}
\caption{}
\label{fig:list-graph-min-2}
\end{figure}

\begin{theorem}\label{thm:almostmultigraphs} Members of $\mathcal M$ and their complements are almost multichainable and minimal prime. Members of $\mathcal L$  are distinct  and minimal prime, five of them are exhaustible.
\end{theorem}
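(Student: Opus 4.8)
The plan is to prove the three assertions of Theorem \ref{thm:almostmultigraphs} in sequence, leveraging the structural machinery already assembled in the excerpt. First I would establish almost multichainability for each member of $\mathcal M$ (and by symmetry its complement, since complementation preserves the property: a module of $G$ is a module of $\overline G$, and the local-isomorphism condition transfers verbatim). For each of $G_0, G_1, G_3, G_4, G_5, G_6$ I would exhibit the required decomposition $V = F \sqcup (L \times K)$ explicitly: the finite part $F$ absorbs the boundary vertices, the index set $K$ records the finitely many ``types'' of vertex appearing in each repeated block, and $L$ carries the linear order along which the pattern is translated. One then checks the displayed bullet condition — that every local isomorphism $h$ of the chain $(L,\leq)$, lifted to $(h,1_K)$ and extended by the identity on $F$, is a local isomorphism of the graph. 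This reduces to verifying that adjacency between two points of $L\times K$ depends only on their $K$-coordinates together with the \emph{order relation} of their $L$-coordinates (equal, below, or above), which is exactly the half-graph / spider / path-type adjacency pattern built into these graphs. This verification is routine but must be done case by case from the figures.

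Next I would deduce minimality. Each $G\in\tilde{\mathcal M}$ is prime and infinite, so by Theorem \ref{ille-theorem} its age contains infinitely many prime structures. By Proposition \ref{multichainablewqo}, $\age(G)$ is hereditarily well-quasi-ordered, hence well-quasi-ordered, and therefore well-founded; its poset of prime members $\prim(\age(G))$ is accordingly level-finite with no infinite antichain. To invoke Theorem \ref{thm:minimalprime} I must show $\prim(\age(G))$ is J\'onsson and cofinal in $\age(G)$. Cofinality is immediate once I check that every finite induced subgraph of $G$ embeds into a finite \emph{prime} induced subgraph of $G$ (Theorem \ref{ille-theorem} guarantees each finite set extends to a finite prime set inside $G$). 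For the J\'onsson property I would argue directly from the ``unidirectional'' structure of these graphs: a proper initial segment of $\prim(\age(G))$ uses only a bounded stretch of the index set $L$, hence is finite — equivalently, by Theorem \ref{minimalposet}, $\prim(\age(G))$ has height $\omega$, is level-finite, and each bounded-height prime sits below every sufficiently large prime, which follows because the canonical finite prime subgraphs $G_{\restriction L'\times K}$ form an increasing chain under truncation of $L'$.

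For the last two claims I would first show the eleven ages are pairwise distinct. It suffices to separate the six graphs $G_i$ together with their complements by age-invariants; the natural invariant is the kernel (which by the excerpt is an age-invariant) together with the list of small prime graphs each age contains, read off from the Chudnovsky--Kim--Oum--Seymour / Malliaris--Terry unavoidable families of Theorems \ref{thm:chudnovsky} and \ref{thm:malliaris}. Because the collapse $\age(G_5)=\age(\overline{G_5})$ is the only coincidence (I would verify $G_5$ is self-complementary up to the age, so its age equals that of its complement, while all other pairs genuinely differ), one gets exactly eleven members. Finally, for the exhaustibility count I would compute the kernel of each representative: an age is exhaustible precisely when the kernel is nonempty (by the characterization recalled before Theorem \ref{thm:main2}), and almost inexhaustible with finite kernel by Theorem \ref{thm:main2}. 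The graphs whose repeated pattern is ``anchored'' at a distinguished finite end — those with a genuine boundary vertex that no translation of $L$ can move — have nonempty finite kernel and hence are exhaustible, and I expect exactly five of the eleven to be of this anchored type, the remaining six being inexhaustible (empty kernel).

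The main obstacle I anticipate is the J\'onsson verification for $\prim(\age(G))$: the subtlety is that a finite prime \emph{induced subgraph} of $G$ need not itself be of the clean form $G_{\restriction L'\times K}$, so one must show that every prime member of the age embeds into one of these canonical truncations and thus that the canonical truncations are cofinal among primes and genuinely exhaust the age level by level. I would handle this by combining Theorem \ref{ille-theorem} (to extend any finite set to a prime one) with a direct combinatorial bound showing a prime induced subgraph using indices spread over a length-$\ell$ window of $L$ embeds into the canonical prime on that window — essentially the module analysis that the later sections (Theorem \ref{thm:main1} and the results on modules of $0$-$1$ graphs) supply. The distinctness of the eleven ages and the exact count of five exhaustible ones are then bookkeeping against the figures, but they depend on this structural description being in place.
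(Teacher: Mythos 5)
Your plan is considerably more ambitious than what the paper actually does: the paper's proof of this theorem consists of an inspection of the six graphs (to read off almost multichainability and the kernel sizes $0,0,0,1,1,2$ for $G_0,G_1,G_4,G_3,G_5,G_6$, whence the count of five exhaustible ages) together with citations --- to \cite{pouzet-zaguia2009} for the minimal primality of the graphs and to Chapter 6 of \cite{oudrar} for the distinctness and minimal primality of the ages. Your route via Theorem \ref{thm:minimalprime} (exhibiting $\prim(\age(G))$ as a J\'onsson poset cofinal in the age) is the natural way to supply the delegated details, and your treatment of almost multichainability, of cofinality, of distinctness, and of the exhaustibility count via kernels is sound in outline.

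Two genuine gaps remain. First, ``minimal prime'' in the first sentence of the theorem is the \emph{structure-level} notion of the Definition in the subsection ``Links with an other notion of minimality'' ($G$ is prime and embeds into every prime induced substructure of the same cardinality); this is the claim the paper delegates to \cite{pouzet-zaguia2009}, and it is logically independent of the age-level minimality you prove --- the paper itself notes that a minimal prime age need not be the age of any minimal prime graph. Your proposal never addresses it. Second, and more substantively, the key lemma you isolate for the J\'onsson property points the wrong way. Showing that every prime member of the age embeds \emph{into} a canonical truncation $G_{\restriction F\cup(L'\times K)}$ yields cofinality of the canonical chain, but the J\'onsson property (condition $(iii)$ of Theorem \ref{minimalposet}) requires that every sufficiently large prime of the age \emph{contains} any prescribed prime $Q$; for this you must show that a prime induced subgraph of $G$ on $N$ vertices contains a canonical truncation whose size tends to infinity with $N$. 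That unavoidability statement is the real content --- a miniature of Theorem \ref{thm:chudnovsky} for each of the six graphs, proved by showing that primality forces the selected vertices to alternate along $L$ --- and it is not supplied by the module analysis of Section \ref{sec:modules}, which concerns only the $0$-$1$ graphs $G_\mu$. (A further small caution: adjacency of ``path type'' along $L$ is \emph{not} invariant under local isomorphisms of the chain --- this is exactly why the infinite path is excluded from $\mathcal M$ and handled by the word machinery --- so your case-by-case verification must confirm that no such pattern occurs in the members of $\mathcal M$.)
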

\begin{proof}

An inspection of the six members of $\mathcal M$ shows that $G_0$, $G_1$ and $G_4$ are multichainable with an empty kernel, the three others are almost multichainable with a one-element kernel, in the case of $G_3$ and $G_5$, and a two-element kernel in the case of $G_6$. This gives three exhaustible ages; with the ages of their complements added (and since $G_5$ and $\overline G_5$ have the same age) this gives five exhaustible ages. The fact that these graphs are minimal prime is given in \cite {pouzet-zaguia2009}.
The second part of the theorem, notably the   fact that the ages are distinct and minimal prime  is detailed in Chapter 6 page 109 of the first author's thesis \cite{oudrar}.
\end{proof}

The only prime graphs occurring in Theorem \ref{thm:chudnovsky} and \ref{thm:malliaris} and not in Theorem \ref {thm:almostmultigraphs} are chains. Chains can be represented  by   words on the alphabet $\{0,1\}$. They will give rise to uncountably many minimal prime ages. We study these graphs and their ages in the next subsection.

\subsection{Graphs associated to $0$-$1$ sequences.}\label{subsection:01graphs}

%
%

\begin{definition}To a word $\mu$ we associate the graph $G_{\mu}$ whose vertex set $V(G_\mu)$ is $\{-1,  0, \ldots,  n-1\}$  if  the domain of $\mu$ is $\{0, \ldots,  n-1\}$, $\{-1\} \cup \NN$ if the domain of $\mu$  is $\NN$, and $\NN^{*}$  or $\ZZ$ if the domain of $\mu$ is $\NN^{*}$  or $\ZZ$. For two vertices $i,j$ with $i<j$ we let $\{i,j\}$ be an edge of $G_{\mu}$ if and only if
\begin{align*}
  \mu_j=1 & \mbox{ and } j=i+1,\mbox{or} \\
  \mu_j=0 &\mbox{ and } j\neq i+1.
\end{align*}
\end{definition}


For instance, if $\mu$ is the word defined on $\NN$ by setting $\mu_i=1$ for all $i\in \NN$, then  $G_\mu$ is the infinite one way path on $\{-1\}\cup \NN$. Note that if $\mu'$ is the word defined on $\NN$ by setting $\mu'_i=1$ for all $i\in \NN \setminus \{1\}$ and $\mu'_1=0$, then  $G_{\mu'}$ is also the infinite one way path. In particular,  the graphs $G_{\mu}$ and $G_{\mu'}$ have the same age but $\mu$ and $\mu'$ do not have the same sets of finite factors.

\begin{figure}[h]
\begin{center}
\leavevmode \epsfxsize=5in \epsfbox{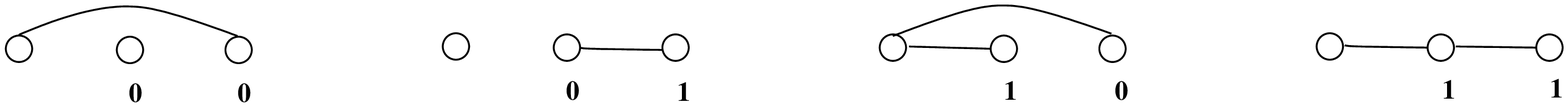}
\end{center}
\caption{$0$-$1$ words of length two and their corresponding graphs.} \label{fig:gmu-three}
\end{figure}

\begin{figure}[h]
\begin{center}
\leavevmode \epsfxsize=4in \epsfbox{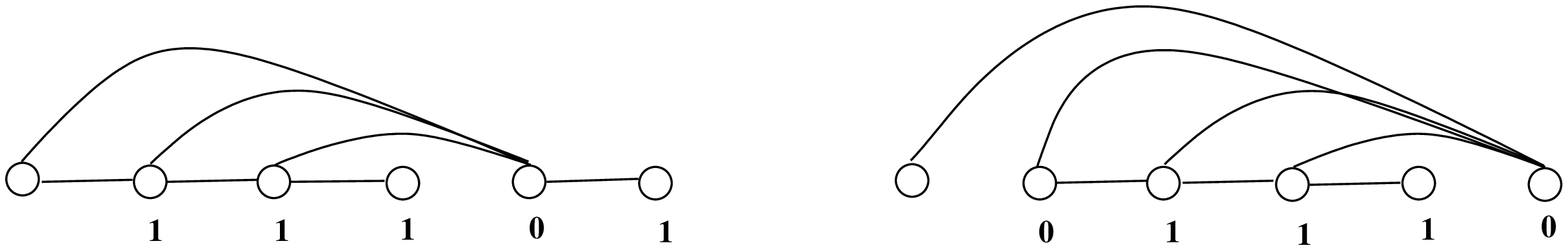}
\end{center}
\caption{Two distinct $0$-$1$ sequences with isomorphic corresponding graphs.} \label{fig:gmu-same-graphs}
\end{figure}

This correspondence between $0$-$1$ words and graphs was first considered in \cite{sobranithesis}, \cite{sobranietat}; see also  \cite{zverovich} and \cite{chudnovsky}.
\begin{remark}\label{lem:comp}
If $I$ is an interval of $\NN$ and $\mu:=(\mu_{i})_{i\in I}$ is a $0$-$1$ sequence, then $\overline{G_\mu}=G_{\overline{\mu}}$, where $\overline{\mu}:=(\overline{\mu}_{i})_{i\in I}$ is the $0$-$1$ sequence defined by $\overline{\mu}(i):=\mu(i)\dot +1$ and $\dot +$ is the addition modulo $2$.
\end{remark}

\begin{figure}[h]
\begin{center}
\leavevmode \epsfxsize=3in \epsfbox{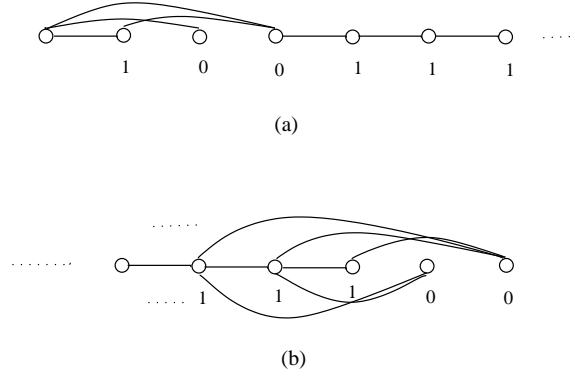}
\end{center}
\caption{$0$-$1$ graphs nonrealizable by a sequence on $\ZZ$.} \label{fig:gmu-nonrealizable}
\end{figure}

\begin{remark} Given a $0$-$1$ graph defined on $\NN\cup \{-1\}$ or on $\NN^*$ there does not exist necessarily a $0$-$1$ graph on $\ZZ$ with the same age.\\
\emph{Indeed,
$(a)$ Let $\mu:=100111\ldots$ be an infinite word on $\NN$ (the corresponding graph is depicted in (a) of  Figure $\ref{fig:gmu-nonrealizable}$). There does not exist a word $\mu'$ on $\NN^*$ or $\ZZ$ such that $\age(G_\mu)=\age(G_{\mu'})$. \\
$(b)$ Let $\nu:=\ldots 11100$ be an infinite word on $\NN^*$ (the corresponding graph is depicted in (b) of  Figure $\ref{fig:gmu-nonrealizable}$). There does not exist a word $\nu'$ on $\NN$ or $\ZZ$ such that $\age(G_\nu)=\age(G_{\nu'})$.\\
\emph{Proof of (a):} Every vertex of the graph $G_\mu$ has finite degree. Suppose for a contradiction that there exists a word $\mu'$ on $\NN^*$ or $\ZZ$ such that $\age(G_\mu)=\age(G_{\mu'})$. Then there exists $i \in \ZZ$ such that $\mu'(i)=0$ because otherwise $G_{\mu'}$ would be a path and hence $\age(G_\mu)\neq \age(G_{\mu'})$. But then the vertex $i$ of $G_{\mu'}$ would have infinite degree which is impossible since every vertex of the graph $G_\mu$ has finite degree. $\hfill$ $\blacksquare$\\
\emph{Proof of (b):} The graph $G_\nu$ has two vertices of infinite degree. Suppose for a contradiction that there exists a word $\nu'$ on $\NN$ or $\ZZ$ such that $\age(G_\nu)=\age(G_{\nu'})$. Then $\nu'$ must take the value $0$ on an infinite subset of $I$ of $\ZZ$ because otherwise every vertex of $G_{\nu'}$ would have finite degree which is impossible since $\age(G_\nu)=\age(G_{\nu'})$. Let $I'\subseteq I$ be an infinite set of nonconsecutive integers. Then $G_{\nu'}$ induces an infinite clique on $I'$. This is not possible since the only cliques of $G_\nu$ have cardinality $3$}.   $\hfill$ $\blacksquare$
\end{remark}

\begin{remark} Given a word $\nu$ we associate the graph $G{\nu}$ whose vertex set $V(G^\nu)$ is $\{-n+1, \ldots,  0,1\}$  if  the domain of $\nu$ is $\{-n+1, \ldots,  0\}$, $\NN$ or $\ZZ$ if the domain of $\nu$  is $\NN$ or $\ZZ$ respectively, and $\NN^{*}\cup \{1\}$   if the domain of $\nu$ is $\NN^{*}$. For two vertices $i,j$ with $i<j$ we let $\{i,j\}$ be an edge of $G^{\nu}$ if and only if
\begin{align*}
  \nu_i=1 & \mbox{ and } j=i+1,\mbox{or} \\
  \nu_i=0 &\mbox{ and } j\neq i+1.
\end{align*}
If $\nu$ is of domain $\{0, \ldots,  n-1\}$, $\NN$, $\NN^*$ or $\ZZ$ define $\nu^*$ to be the sequence of domain is $\{-n+1, \ldots,  0\}$, $\NN^*$, $\NN$ or $\ZZ$ respectively by setting $\nu^*(i):=\nu(-i)$. Then ${G}^{\nu^*}$ and $G_\nu$ are isomorphic.
\end{remark}

\begin{remark}\label{remark:union-clique-inde}
For every word $\mu$  the graph $G_\mu$ is the union of at most two infinite cliques and at most two infinite independent sets.

\emph{To see that, let $\mu$ be a $0$-$1$ sequence on an infinite interval $J$ of $\ZZ$.
\begin{enumerate}[$(1)$]
  \item  If $\mu$ takes the value $0$ or the value $1$ finitely many times, then there exists a finite interval $K$ of $J$ such that ${G_\mu}_{\restriction J\setminus K}$ has at most two connected components and either each connected component is an infinite path or the complement of an infinite path.
 \item If  $\mu$ takes the values $0$ and $1$ infinitely many times, let $J_0:=\{j\in J : \mu(j)=0\}$ and $J_1:=\{j\in J : \mu(j)=1\}$. For $i\in \{0,1\}$ let $C_i:=\{\min(J_0)+i+2k : k\in \NN\}$. Note that it is possible for $C_0$ or $C_1$ to be empty, for an example consider the periodic sequence $\mu:=011011\ldots$. Then $\{C_0,C_1\}$ is a partition of $J_0$ and $G_\mu$ induces a clique on $C_0$ and on $C_1$. Similarly, for $i\in \{0,1\}$ let $I_i:=\{\min(J_1)+i+2k : k\in \NN\}$. Note that it is possible for $I_0$ or $I_1$ to be empty, for an example consider the periodic sequence $\mu:=100100\ldots$. Then $\{I_0,I_1\}$ is a partition of $J_1$ and $G_\mu$ induces an independent set on $I_0$ and on $I_1$.
\end{enumerate}}
\end{remark}

%
%

Here is our first result.

\begin{theorem}\label{thm:permutation-graph}For every $0$-$1$ word $\mu$ the age  $\age(G_\mu)$ consists of permutation graphs.
\end{theorem}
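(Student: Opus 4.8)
The goal is to show that for every finite induced subgraph of $G_\mu$, the graph is a permutation graph. By the discussion in the Prerequisites, a finite graph is a permutation graph if and only if it is the comparability graph of an order of dimension at most two, equivalently if both $G$ and $\overline G$ are comparability graphs. Since $\overline{G_\mu}=G_{\overline\mu}$ by Remark \ref{lem:comp}, and $\overline\mu$ is again a $0$-$1$ word, it suffices to exhibit, for each $G_\mu$, a transitive orientation of its edges, i.e. to realize $G_\mu$ as the comparability graph of a poset of dimension at most two. The plan is therefore to construct two explicit linear orders on $V(G_\mu)$ whose associated ``reversal'' graph (in the sense of the permutation-graph definition) is exactly $G_\mu$.

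The first linear order is the natural one: take $\leq$ to be the usual order inherited from $\ZZ$ on the vertex set $V(G_\mu)$. I would then define a second linear order $\preceq$ (or equivalently a permutation $\sigma$) so that the pairs reversed between $\leq$ and $\preceq$ are precisely the edges of $G_\mu$. Recall the edge rule: for $i<j$, $\{i,j\}$ is an edge iff either $\mu_j=1$ and $j=i+1$, or $\mu_j=0$ and $j\neq i+1$. The key structural observation to exploit is that a pair $\{i,j\}$ with $i<j$ is a \emph{non-edge} exactly when ($\mu_j=0$ and $j=i+1$) or ($\mu_j=1$ and $j\neq i+1$); so the non-edges are determined letter-by-letter by $\mu$. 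First I would handle the finite case directly (it suffices by the Compactness Theorem, stated in the Permutation graphs subsection, to treat finite induced subgraphs). I would build $\sigma$ by scanning the vertices left to right and deciding, for each $j$, the relative position of $j$ against all smaller vertices $i$: when $\mu_j=0$ the vertex $j$ should be placed so that it is reversed with every earlier vertex except its immediate predecessor, and when $\mu_j=1$ it should be reversed only with its immediate predecessor.

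The main obstacle is verifying that the two relations ``reversed with all but the predecessor'' and ``reversed only with the predecessor'' can be realized \emph{simultaneously and consistently} by a single linear order $\preceq$, since the constraints coming from different letters $\mu_j$ interact. Concretely, one must check that the orientation of edges obtained by orienting $i\to j$ (for $i<j$) whenever $\{i,j\}$ is an edge is transitive; transitivity of this orientation is exactly what guarantees $G_\mu$ is a comparability graph, and combined with the analogous statement for $\overline\mu$ yields dimension at most two. I expect the cleanest route is to define the second order $\preceq$ explicitly — for instance, place the vertices in blocks determined by maximal runs of $0$'s and $1$'s in $\mu$, reversing the block order relative to $\leq$ in a way dictated by the letters — and then check transitivity by a short case analysis on triples $i<j<k$ according to the six possible value-patterns of $(\mu_j,\mu_k)$ and whether the indices are consecutive.

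Once the finite case is settled, the infinite statement follows immediately: every finite induced subgraph of $G_\mu$ is of the form $G_{\nu}$ for a factor $\nu$ of $\mu$ (up to the conventions on the vertex $-1$), hence is a permutation graph by the finite case, and by the Compactness Theorem quoted after the definition of permutation graphs this is exactly what it means for $\age(G_\mu)$ to consist of permutation graphs. I would close by noting that a fully explicit realizer, together with a characterization of which order types of realizers arise, is deferred to the later section on transitive orientations of $0$-$1$ graphs (Section \ref{sec:proof-thm:permutation-graph}), so here only the existence of a two-linear-order realizer for each finite $G_\nu$ needs to be established.
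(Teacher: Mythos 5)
There is a genuine gap, and it occurs at the very first step of your construction. You fix the first linear order of the intended realizer to be the natural order inherited from $\ZZ$ on $V(G_\mu)$ and then try to build a single second order $\preceq$ whose reversals against the natural order are exactly the edges. This already fails for the word $\mu=00$: the vertex set is $\{-1,0,1\}$ and the only edge is $\{-1,1\}$ (the pairs $\{-1,0\}$ and $\{0,1\}$ are consecutive with $\mu_j=0$, hence non-edges). With $-1<0<1$ as one of the two linear orders, the non-edges force $-1\prec 0$ and $0\prec 1$, while the edge forces $1\prec -1$; these constraints are cyclic, so no second linear order exists. (The graph is of course a permutation graph, just not with the natural order as a member of the realizer.) Your own left-to-right rule produces exactly this inconsistency: for $j=1$ with $\mu_1=0$ it demands $0\prec 1$ and $1\prec -1$, while $j=0$ demands $-1\prec 0$. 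The fallback of ``blocks determined by maximal runs'' is too vague to assess, and in any case the premise that one of the two linear orders may be taken to be the natural order is false, so the construction has to be redesigned rather than patched.

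The paper's argument (Lemma \ref{lem:one-extension}) avoids this by building \emph{both} linear orders $L_w$ and $M_w$ by induction on the length of $w$, inserting the new vertex at an interior position when necessary (e.g.\ immediately before $\max(L_{w'})$), and --- this is the key idea your proposal is missing --- maintaining the invariant that the most recently added vertex is \emph{extremal} in $L_w$ or in $M_w$. That invariant is exactly what makes the next step possible: if $\mu_j=1$ the new vertex $j$ must be comparable only to $j-1$ in the transitive orientation, and if $\mu_j=0$ it must be comparable to everything except $j-1$; in either case transitivity forces $j-1$ to be extremal in the poset built so far, so the induction cannot proceed without carrying this extremality along. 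Your surrounding reductions --- passing to finite induced subgraphs by compactness, and using $\overline{G_\mu}=G_{\overline\mu}$ so that it suffices to produce transitive orientations --- are correct and agree with the paper; only the core inductive construction is missing, and it is precisely where the work lies.
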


The proof of Theorem \ref{thm:permutation-graph} is given in Section \ref{sec:proof-thm:permutation-graph}. It follows from the  Compactness Theorem of First Order Logic and Lemma \ref{lem:one-extension}. It was brought to us by  Brignall \cite{brignall:private} that  chains are  the same objects as  \emph{pin sequences} (see \cite {brignall:thesis} Subsection 2.6. p.41).

The next result is about the number of hereditary classes  of  finite permutation graphs.
It is easy to prove and well known that there are $2^{\aleph_0}$ such classes.   This is due to the existence of infinite antichains among finite permutation graphs.

In general, it is not true that two words with different sets of finite factors give different  ages. But, we prove:


\begin{theorem}\label{thm:recurrent-word} Let $\mu$ and $\mu'$ be two words. If $\mu$ is recurrent and   $\age (G_\mu) \subseteq \age(G_{\mu'})$, then $Fac(\mu)\subseteq Fac(\mu')$.
\end{theorem}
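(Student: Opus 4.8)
The plan is to recover factors of $\mu$ from the adjacency structure of $G_\mu$ and then transport this reading through an embedding into $G_{\mu'}$. First I would exploit recurrence: since $\mu$ is recurrent, $\fac(\mu)$ is inexhaustible, so by Theorem~\ref{thm:rec-age-word} there is a two-sided word $\nu$ on $\ZZ$ with $\fac(\mu)=\fac(\nu)$. Equal factor sets yield $\age(G_\mu)=\age(G_\nu)$: a finite induced subgraph of $G_\nu$ is determined by the letters at its positions together with the consecutiveness pattern of those positions, each maximal run of consecutive positions reads a factor common to $\mu$ and $\nu$, and since $\nu$ is infinite the runs can be placed arbitrarily far apart (hence non-consecutive), so every finite induced subgraph of one graph occurs in the other. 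Combined with the hypothesis $\age(G_\mu)\subseteq\age(G_{\mu'})$ this gives $\age(G_\nu)\subseteq\age(G_{\mu'})$, and it suffices to prove $\fac(\nu)\subseteq\fac(\mu')$. The advantage of working with the two-sided $\nu$ is that any factor now has unlimited context on both sides.

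The second ingredient is the intrinsic recoverability of letters. On a consecutive interval $\{p,p+1,\dots,q\}$ of positions, $G_\mu$ is a chain, and for $i\ge p+2$ the letter $\mu_i$ is read off from the local pattern: among $\{p,\dots,i-1\}$ the vertex $i$ is adjacent to exactly $i-1$ when $\mu_i=1$ and to all but $i-1$ when $\mu_i=0$. Thus a consecutive interval of $G_\mu$ encodes the corresponding factor rigidly. Given a target factor $w$ of $\nu$, recurrence lets me choose a much longer factor $W$ having $w$ in its centre with long buffers on either side; let $H$ be the induced subgraph of $G_\nu$ carried by the interval realizing $W$. Since $\age(G_\nu)\subseteq\age(G_{\mu'})$, the finite graph $H$ embeds into $G_{\mu'}$.

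The heart of the argument — and the step I expect to be the main obstacle — is a rigidity lemma for embeddings of long chains, which is precisely the content of Section~\ref{section:embeddings}: any embedding of $H$ into $G_{\mu'}$ must send the central portion of $H$ (where $w$ sits) onto a consecutive interval of positions of $\mu'$ carrying the word $w$. To prove this I would reconstruct the linear order of $H$ from the graph alone by comparing neighbourhoods: for $a<b$ the symmetric difference $N(a)\triangle N(b)$, restricted to the vertices lying beyond the pair, isolates the successor $\max(a,b)+1$ (to the right of $b$ the two vertices differ exactly at $b+1$, since $\{b,b+1\}$ is an edge iff $\mu_{b+1}=1$ while $\{a,b+1\}$ is an edge iff $\mu_{b+1}=0$). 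This successor/betweenness information pins down the order up to the reversal and complementation symmetries recorded in Remark~\ref{lem:comp}. The buffers guarantee that the bounded boundary distortions an embedding may introduce cannot reach the central letters, so the reconstructed order is genuinely positional there and the word read is exactly $w$.

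The remaining subtlety is orientation: a priori the image could appear reversed or complemented, so that one recovers a transform of $w$ rather than $w$ itself. I would resolve this either by using recurrence to select buffer patterns that break these symmetries, or by appealing directly to the embedding analysis of Section~\ref{section:embeddings}. Once the central interval is shown to be a consecutive block of $\mu'$ reading $w$, we conclude $w\in\fac(\mu')$; as $w$ was an arbitrary factor of $\nu$ and $\fac(\nu)=\fac(\mu)$, this establishes $\fac(\mu)\subseteq\fac(\mu')$.
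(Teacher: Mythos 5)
Your overall strategy is the same as the paper's: use recurrence to place the target factor $w$ inside a much longer factor $W$, embed the finite graph carried by $W$ into $G_{\mu'}$, and invoke a rigidity statement for such embeddings to read $w$ off the image. The gap is that this rigidity statement essentially \emph{is} the theorem, and your sketch does not establish it. Concretely: (a) the conclusion that the image of the central block is a consecutive interval of positions already presupposes that $H$ is prime (Corollary \ref{lem:5}); by Proposition \ref{lem:6'} and Corollary \ref{cor:large-prime-restr}, $G_W$ fails to be prime exactly when $W$ contains a constant run of length $|W|-4$, and since the buffers are whatever $\mu$ happens to place around the occurrence of $w$, ``long buffers'' alone do not guarantee this. (b) Your neighbourhood-symmetric-difference argument recovers the successor relation only up to reversal and complementation, and you explicitly leave that ambiguity open; but this is precisely the hard point. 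The paper's resolution (Lemmas \ref{lem:embed-left-left} and \ref{lem:embed-left-right}, via Corollary \ref{cor:6'}) is not a symmetry-breaking choice of buffers: it shows that the ``wrong orientation'' forces \emph{both} $W$ and $\mu'$ to contain a constant run of length $|W|-7$, i.e. it can only occur in the degenerate regime of long constant runs. Appealing to ``the embedding analysis of Section \ref{section:embeddings}'' is appealing to the proof you were asked to supply.

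That degenerate regime is not a removable corner case. Recurrence of $\mu$ does not bound $l(\mu)$, and when $W$ contains constant runs comparable to its length, $H$ contains long induced paths or complements of paths; these embed with either orientation, with a floating extra vertex, and with the first two letters of the read word undetermined (Lemma \ref{lem:pk}), so the interval-image-reading-$w$ rigidity in the form you state is false there. The paper closes this case with a separate device (Lemma \ref{lem:embed-1111} and Corollary \ref{cor:embed-0000}): an anchor $0^4$ or $1^4$ prefixed to $w$ forces, by a direct induction, that $1w$ (resp.\ $0w$) is a factor of any word whose graph contains $G_{1^4w}$. Your proposal could be repaired without that device by exploiting your own centering idea quantitatively: if $u$ is \emph{non-constant} and $W$ is a window of length at least $|u|+16$ with $u$ at its centre, every constant run of $W$ has length at most $(|W|+|u|)/2\le |W|-8$, so $G_W$ and its relevant one-vertex-deleted subgraphs are prime and the orientation analysis of Lemma \ref{lem:embed-left-right} goes through with no hypothesis on $\mu'$; constant factors $u=0^k,1^k$ then need the separate (easy) argument via Lemma \ref{lem:pk}. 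As written, however, the decisive step is missing.
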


%
%

Using this result and the fact that  there are $2^{\aleph_0}$ $0$-$1$ recurrent words with distinct sets of factors, we obtain the following.

\begin{corollary}\label{thm:cont-ages} There are $2^{\aleph_0}$ ages of permutation graphs.
\end{corollary}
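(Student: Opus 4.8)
The plan is to combine two ingredients: Theorem~\ref{thm:recurrent-word}, which says that recurrent words with distinct factor sets yield distinct ages, together with the existence of a continuum of recurrent $0$-$1$ words having pairwise distinct factor sets. Since Theorem~\ref{thm:permutation-graph} guarantees that each $\age(G_\mu)$ consists of permutation graphs, producing $2^{\aleph_0}$ distinct such ages immediately gives $2^{\aleph_0}$ distinct hereditary classes of finite permutation graphs, which is the assertion of Corollary~\ref{thm:cont-ages} (the upper bound $2^{\aleph_0}$ being automatic, as there are only continuum-many hereditary classes over a countable set of finite structures).

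First I would exhibit a family $(\mu_x)_{x}$ of recurrent $0$-$1$ words, indexed by a set of cardinality $2^{\aleph_0}$, such that $x\neq y$ implies $\fac(\mu_x)\neq\fac(\mu_y)$. A clean way is to take Sturmian words: for each irrational slope $\alpha\in(0,1)$ the Sturmian word $s_\alpha$ is uniformly recurrent (hence recurrent, so $\fac(s_\alpha)$ is inexhaustible by Theorem~\ref{thm:rec-age-word}), and distinct irrationals give distinct factor sets since the number of factors of each length and the combinatorics of the factors encode $\alpha$. As there are $2^{\aleph_0}$ irrationals in $(0,1)$, this yields the required family. Alternatively, one can use a direct antichain-style construction, encoding an arbitrary infinite binary sequence into the gaps of a recurrent word while preserving recurrence; but the Sturmian family is the most economical.

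Next I would invoke Theorem~\ref{thm:recurrent-word} to convert distinctness of factor sets into distinctness of ages. Fix two indices $x\neq y$; without loss of generality there is a factor $w$ with $w\in\fac(\mu_x)\setminus\fac(\mu_y)$. If the ages $\age(G_{\mu_x})$ and $\age(G_{\mu_y})$ were equal, then in particular $\age(G_{\mu_x})\subseteq\age(G_{\mu_y})$; since $\mu_x$ is recurrent, Theorem~\ref{thm:recurrent-word} would force $\fac(\mu_x)\subseteq\fac(\mu_y)$, contradicting $w\notin\fac(\mu_y)$. Hence the ages are distinct. Applying this to the whole family produces $2^{\aleph_0}$ pairwise distinct ages, each consisting of permutation graphs by Theorem~\ref{thm:permutation-graph}, and the corollary follows.

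The only genuine obstacle is the construction of the continuum-sized family of recurrent words with pairwise distinct factor sets; once that is in hand, the rest is a direct application of the already-stated theorems. I expect this step to be routine given the classical theory of Sturmian (or more generally uniformly recurrent) words, so the corollary is essentially an immediate consequence of Theorems~\ref{thm:permutation-graph} and~\ref{thm:recurrent-word}. One should also record the trivial upper bound: the collection of hereditary classes of finite structures on a fixed countable alphabet injects into the power set of a countable set, so there are at most $2^{\aleph_0}$ of them, making $2^{\aleph_0}$ the exact count.
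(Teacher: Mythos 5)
Your proposal is correct and follows the same route as the paper: the paper derives the corollary in one line from Theorem~\ref{thm:recurrent-word} together with the existence of $2^{\aleph_0}$ recurrent $0$-$1$ words with pairwise distinct factor sets (the paper likewise points to Sturmian words for this), with Theorem~\ref{thm:permutation-graph} supplying that each $\age(G_\mu)$ consists of permutation graphs. Your write-up just makes explicit the contrapositive use of Theorem~\ref{thm:recurrent-word} and the trivial upper bound, both of which the paper leaves implicit.
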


The ages we obtain in Theorem \ref{thm:cont-ages} are not necessarily well-quasi-ordered. To obtain well-quasi-ordered ages,  we consider graphs associated to uniformly recurrent sequences.

\begin{theorem}\label{thm:uniformly-ages}Let $\mu$ be a $0$-$1$ sequence on an infinite interval of $\ZZ$. The following propositions are equivalent.
\begin{enumerate}[$(i)$]
  \item $\mu$ is uniformly recurrent.
  \item $\mu$ is recurrent and $\age(G_\mu)$ is minimal prime.
\end{enumerate}
\end{theorem}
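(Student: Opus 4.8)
The plan is to reduce the equivalence to a comparison of two posets: the set $\fac(\mu)$ of finite factors of $\mu$ under the factor order, and the set $\prim(\age(G_\mu))$ of finite prime induced subgraphs of $G_\mu$ under embeddability. The bridge between them is supplied by the later analysis: the module characterization of Section \ref{sec:modules} shows that $G_w$ fails to be prime only when $w$ has a long constant factor (cf.\ Remark \ref{remark:union-clique-inde}), so when $\mu$ is not constant all sufficiently long factors induce prime graphs and these are cofinal in $\age(G_\mu)$; and the embedding analysis of Section \ref{section:embeddings} shows that, for factors $v,w$ inducing prime graphs, $G_v\le G_w$ forces $v$ to be a factor of $w$ up to the two natural symmetries, namely reversal $v\mapsto v^{*}$ (which yields an isomorphic graph) and complementation $v\mapsto \overline v$ (for which $\overline{G_v}=G_{\overline v}$ by Remark \ref{lem:comp}). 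These symmetries are order-automorphisms of $\Sigma^{*}$ preserving well-quasi-order and the J\'onsson property, hence harmless. As a preliminary normalization, for recurrent $\mu$ Theorem \ref{thm:rec-age-word} lets me pass to a word with domain $\NN$ having the same factor set and the same associated age, placing us in the setting of Theorem \ref{unif-recurrent}; the constant word, for which $G_\mu$ is the infinite path and $\prim(\age(G_\mu))$ is the chain of finite paths, is uniformly recurrent with minimal prime age and is dispatched directly.

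For $(i)\Rightarrow(ii)$, assume $\mu$ uniformly recurrent; then $\mu$ is recurrent, so only minimality remains. By Theorem \ref{unif-recurrent} the poset $\fac(\mu)$ is a countable J\'onsson poset, in particular well-quasi-ordered with every proper ideal finite. Via the bridge, the cofinal family $D$ of prime-inducing factors is order-isomorphic (up to reversal and complementation) to $\prim(\age(G_\mu))$, which is therefore cofinal in $\age(G_\mu)$ and, being order-isomorphic to a subset of $\fac(\mu)$, is well-quasi-ordered. Moreover every proper ideal $I$ of $\prim(\age(G_\mu))$ transports to a proper ideal of $D$ whose downward closure in $\fac(\mu)$ is again a proper ideal (if it were cofinal, initiality would force $I$ to be everything), hence finite by the J\'onsson hypothesis; so $I$ is finite. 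By the characterization of Theorem \ref{minimalposet}, $\prim(\age(G_\mu))$ is J\'onsson, and Theorem \ref{thm:minimalprime} then yields that $\age(G_\mu)$ is minimal prime.

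For $(ii)\Rightarrow(i)$, assume $\mu$ recurrent and $\age(G_\mu)$ minimal prime. Recurrence gives that $\fac(\mu)$ is inexhaustible, so by Theorem \ref{unif-recurrent} it suffices to prove $\fac(\mu)$ is well-quasi-ordered; since the factor order is well-founded, this amounts to excluding infinite antichains. By Theorem \ref{thm:main1} the age $\age(G_\mu)$ is well-quasi-ordered, hence so is $\prim(\age(G_\mu))$ and, through the reflecting half of the bridge, so is the cofinal family $D$ of prime-inducing factors under the factor order. An arbitrary infinite antichain of $\fac(\mu)$ is then pushed into $D$ by sandwiching each word in a prime-inducing factor (possible by recurrence) and extracting, so that incomparability is preserved; this contradicts the well-quasi-ordering of $D$. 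Thus $\fac(\mu)$ is well-quasi-ordered and $\mu$ is uniformly recurrent.

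The main obstacle is the bridge itself, together with the extraction just mentioned: one must control embeddings between prime $0$-$1$ graphs tightly enough that they reflect the factor order up to reversal and complementation, prove that prime-inducing factors are cofinal, and show that a general factor antichain can be replaced by one lying in that cofinal family without destroying incomparability. This is precisely the work carried out in Sections \ref{sec:modules} and \ref{section:embeddings} (the latter already underlying Theorem \ref{thm:recurrent-word}); once it is in hand, the transfer of the well-quasi-order and J\'onsson properties across the resulting cofinal, finite-to-one correspondence is routine through Theorems \ref{minimalposet} and \ref{thm:minimalprime}.
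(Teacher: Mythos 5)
Your direction $(i)\Rightarrow(ii)$ is essentially a repackaging of the paper's argument (the paper verifies directly that any proper hereditary subclass either contains interval-restrictions of unbounded length, hence by uniform recurrence everything, or has primes of bounded size via Corollary \ref{lem:5}), and with the rigidity results of Sections \ref{sec:modules} and \ref{section:embeddings} in hand your transport of the J\'onsson property would go through. But your direction $(ii)\Rightarrow(i)$ has a genuine gap at the step where you claim that an arbitrary infinite antichain of $\fac(\mu)$ can be ``pushed into $D$ by sandwiching each word in a prime-inducing factor \ldots so that incomparability is preserved.'' Sandwiching $u_i$ into a longer factor $w_i\in D$ with $u_i\leq w_i$ does not preserve incomparability: $w_i$ may well be a factor of $w_j$ even though $u_i$ is not a factor of $u_j$ (being a factor of $w_j$ only places $u_i$ inside $w_j$, not inside $u_j$). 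Worse, once you have established that $D$ is w.q.o., \emph{no} infinite subset of $D$ contains an infinite antichain, so there is nothing to ``extract''; the contradiction you aim for is unreachable by this route. The underlying obstruction is that a poset with a w.q.o.\ cofinal (even downward-generating) subset need not be w.q.o.\ --- indeed $\fac(\mu)$ for $\mu$ recurrent but not uniformly recurrent is an up-directed ideal, hence is the downward closure of a chain, yet is not w.q.o.\ by Theorem \ref{unif-recurrent}. So knowing $\fac(\mu)=\downarrow D$ with $D$ w.q.o.\ cannot by itself yield that $\fac(\mu)$ is w.q.o.

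The root cause is that you replaced the hypothesis ``$\age(G_\mu)$ is minimal prime'' by its weaker consequence ``$\age(G_\mu)$ is w.q.o.'' (via Theorem \ref{thm:main1}). The paper's proof uses minimality essentially: for any infinite initial segment $X$ of $\fac(\mu)$, Corollary \ref{cor:primeX} shows $\downarrow G_X$ contains infinitely many primes, minimality forces $\downarrow G_X=\age(G_\mu)$, and Theorem \ref{thm:recurrent-word-2} then gives $\fac(\mu)\subseteq X$; thus every proper initial segment of $\fac(\mu)$ is finite, i.e.\ $\fac(\mu)$ is J\'onsson, and Theorem \ref{unif-recurrent} concludes. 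You should argue with initial segments and minimality rather than antichains and w.q.o. A secondary, smaller point: reversal $v\mapsto v^*$ does \emph{not} yield a graph isomorphic to $G_v$ under the same construction (e.g.\ $G_{01}$ is $K_2$ plus an isolated vertex while $G_{10}$ is $P_3$); it is the mirror construction $G^{v^*}$ of the paper's remark that is isomorphic to $G_v$, so the ``bridge'' should be stated as the paper does, via truncation by boundedly many initial letters rather than via a reversal symmetry.
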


The proofs of Theorem \ref {thm:recurrent-word} and \ref {thm:uniformly-ages} are given in Section \ref{sec:proof-thm:recurrent-word} and \ref{sec:proof-thm:uniformly-ages}.

As it is well known,  there are $2^{\aleph_0}$ uniformly recurrent words with distinct sets of factors (e.g. Sturmian words with different slopes, see Chapter 6 of  \cite{pytheas}). With Theorem  \ref{thm:recurrent-word} we get:

\begin{corollary}\label{thm:minprimeages} There are $2^{\aleph_0}$ ages of permutation graphs which are minimal prime.
\end{corollary}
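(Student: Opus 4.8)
The plan is to combine two facts already established in the excerpt: first, the classical existence of uncountably many uniformly recurrent words with distinct sets of factors; and second, Theorem \ref{thm:uniformly-ages} together with Theorem \ref{thm:recurrent-word}, which let us pass from distinctness of factor sets to distinctness of minimal prime ages. Concretely, I would start by invoking the well-known family of Sturmian words (Chapter 6 of \cite{pytheas}): for each irrational slope $\alpha \in (0,1)$ one obtains a $0$-$1$ Sturmian word $\mu_\alpha$, these words are uniformly recurrent, and distinct slopes yield distinct sets of factors. Since there are $2^{\aleph_0}$ irrational slopes in $(0,1)$, this produces a family $\{\mu_\alpha\}$ of $2^{\aleph_0}$ uniformly recurrent $0$-$1$ words with pairwise distinct factor sets $\fac(\mu_\alpha)$.

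Next I would attach to each such word its associated graph $G_{\mu_\alpha}$ and invoke the results on these graphs. By Theorem \ref{thm:permutation-graph}, every age $\age(G_{\mu_\alpha})$ consists of permutation graphs. Since each $\mu_\alpha$ is uniformly recurrent, it is in particular recurrent, so by the implication $(i)\Rightarrow(ii)$ of Theorem \ref{thm:uniformly-ages} the age $\age(G_{\mu_\alpha})$ is minimal prime. Thus each $\alpha$ gives a minimal prime age of permutation graphs, and it only remains to show that distinct slopes give distinct ages.

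The crux of the argument — and the step I expect to carry the real content — is the injectivity of the map $\alpha \mapsto \age(G_{\mu_\alpha})$, which is exactly where Theorem \ref{thm:recurrent-word} is used. Suppose $\alpha \neq \beta$ but $\age(G_{\mu_\alpha}) = \age(G_{\mu_\beta})$. Then in particular $\age(G_{\mu_\alpha}) \subseteq \age(G_{\mu_\beta})$, and since $\mu_\alpha$ is recurrent, Theorem \ref{thm:recurrent-word} yields $\fac(\mu_\alpha) \subseteq \fac(\mu_\beta)$. Symmetrically, using $\age(G_{\mu_\beta}) \subseteq \age(G_{\mu_\alpha})$ and recurrence of $\mu_\beta$, we get $\fac(\mu_\beta) \subseteq \fac(\mu_\alpha)$. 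Hence $\fac(\mu_\alpha) = \fac(\mu_\beta)$, contradicting the distinctness of the factor sets of Sturmian words of different slopes. Therefore the map is injective.

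Putting these together: the assignment $\alpha \mapsto \age(G_{\mu_\alpha})$ is an injection from the $2^{\aleph_0}$ irrational slopes into the collection of minimal prime ages of permutation graphs, so there are at least $2^{\aleph_0}$ such ages; and since the number of hereditary classes of finite graphs is bounded by $2^{\aleph_0}$, there are exactly $2^{\aleph_0}$. The only mild subtlety to watch is that Theorem \ref{thm:recurrent-word} requires the \emph{source} word of each inclusion to be recurrent, which is why both inclusions must be set up separately (once with $\mu_\alpha$ recurrent, once with $\mu_\beta$ recurrent); this is exactly what forces me to use the two-sided inclusion $\age(G_{\mu_\alpha}) = \age(G_{\mu_\beta})$ rather than a one-sided comparison.
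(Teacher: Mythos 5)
Your proposal is correct and follows essentially the same route as the paper: the authors likewise take Sturmian words of distinct irrational slopes as their $2^{\aleph_0}$ uniformly recurrent words with distinct factor sets, and combine Theorems \ref{thm:permutation-graph}, \ref{thm:uniformly-ages} and \ref{thm:recurrent-word} exactly as you do. Your explicit two-sided application of Theorem \ref{thm:recurrent-word} to get injectivity is the right reading of what the paper leaves implicit.
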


Theorem \ref{thm:main1} asserts that
minimal prime ages are well-quasi-ordered. Since minimal prime ages are incomparable when ordered by set-inclusion,  it follows from Corollary  \ref{thm:minprimeages} that the set of well-quasi-ordered ages of permutation graphs, when ordered by set inclusion, has an uncountable antichain. On the other hand, observe that the chains are  countable.

\begin{problem}
Does every uncountable set of ages of permutation graphs, when ordered by set inclusion, contain an uncountable antichain of ages?
\end{problem}

\begin{remark}\label{rem:b.q.o.} \emph{When $\mu$ is uniformly recurrent,  the age $\mathcal C$ of $G_{\mu}$ is w.q.o. since it is minimal prime. In fact,  it is b.q.o. Indeed, since $\fac(\mu)$ is J\'onsson,  it is b.q.o.  (see \cite {pouzettr, carroy-pequignot}). From the extension of Higman's Theorem  to b.q.o, the set $(\fac(\mu))^*$ of finite sequences of members of $\fac(\mu)$, once equipped with the Higman's ordering of finite sequences,  is b.q.o. If  $s:= (u_0, \dots u_k)\in (\fac(\mu))^*$, we  may represent it by  a sequence $u'_0, \dots u'_k$ of factors of $\mu$ in such a way that $u'_i $ is before $u'_{i+1}$ and not contiguous to it. The graph induced by $G_{\mu}$ on this union of factors does not depend of the representation. Denote it by $G(s)$. Observe that the map which associate $G(s)$ to each $s$ is order preserving. It follows that its range is b.q. o. Once observed that this range is $\mathcal C$, the result follows.}
\end{remark}
\begin{remark}\label{rem:nonw.q.o.}
\emph{If $\mu$ is periodic, the collection $\mathcal C^{\leq \omega}$, of countable $G$ such that $\age (G) \subseteq \mathcal C$ is $1^-$-w.q.o. (and, in fact,  $1^-$-b.q.o. But if $\mu$ is uniformly recurrent and not periodic, $\mathcal C^{\leq \omega}$ is not w.q.o. (indeed, the sequence of $G_n:= G_{\mu_ \restriction  [n, \rightarrow[}$, $n\in \NN$,  is strictly decreasing). This simple fact is a reason for using uniformly recurrent sequences in the theory of relations}.
\end{remark}

If $\mu$ is an infinite word, then $\age( G_{\mu})$ is not $2^{-}$-w.q.o. However,

\begin{theorem}\label{thm-1-wqo}  If $\mu$ is an infinite word, $\age( G_{\mu})$ is  $1^{-}$-w.q.o. if and only if $\mu$ is periodic.
\end{theorem}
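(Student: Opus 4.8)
The plan is to prove the two implications separately, spending essentially no effort on the easy one.

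First I would dispatch the implication \emph{$\mu$ periodic $\Rightarrow$ $\age(G_\mu)$ is $1^-$-w.q.o.} directly from Remark \ref{rem:nonw.q.o.}: when $\mu$ is periodic the class $\mathcal C^{\leq\omega}$ of countable graphs $G$ with $\age(G)\subseteq\mathcal C:=\age(G_\mu)$ is $1^-$-w.q.o. Every finite $R\in\mathcal C$ satisfies $\age(R)\subseteq\mathcal C$, hence $R\in\mathcal C^{\leq\omega}$; so the pointed class $\age(G_\mu)_{1^-}$ is a subclass of $(\mathcal C^{\leq\omega})_{1^-}$, and a subclass of a w.q.o. is w.q.o. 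Thus $\age(G_\mu)$ is $1^-$-w.q.o.

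For the converse I would argue the contrapositive, producing an infinite antichain of pointed graphs when $\mu$ is not periodic. Taking $\mu$ of domain $\NN$ first, Theorem \ref{thm:ulti-periodic} tells us that non-periodicity of $\mu$ is exactly the failure of $\fac(\mu)$ to be w.q.o. for the suffix order; since the suffix order is a well-founded tree it carries no infinite descending chain, so this failure yields an infinite suffix-antichain $(s_k)_{k\in\NN}$ of factors (no $s_k$ a suffix of $s_l$ for $k\neq l$). Writing $s_k=\mu_{\restriction[p_k,q_k)}$, the induced subgraph $J_k$ of $G_\mu$ on the consecutive block $\{p_k,\dots,q_k-1\}$ is the graph attached to the factor $s_k$, and I would mark its right-most vertex $q_k-1$, getting $(J_k,q_k-1)\in\age(G_\mu)_{1^-}$. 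The left-infinite and bi-infinite domains are treated the same way: because the suffix half of Theorem \ref{thm:ulti-periodic} is already proved through a word on $\ZZ$ with the same factors, non-periodicity again delivers a suffix-antichain, and the construction proceeds symmetrically.

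The crux, and what I expect to be the main obstacle, is to verify that the $(J_k,q_k-1)$ really form an antichain, i.e. that a pointed embedding $(J_k,q_k-1)\hookrightarrow(J_l,q_l-1)$ forces $s_k$ to be a suffix of $s_l$, contradicting the choice of $(s_k)$. The leverage is that the graphs $G_w$ are precisely the graphs induced by the chains of Theorem \ref{thm:chudnovsky}$(7)$: a vertex $j$ is joined downward either to its predecessor $j-1$ alone (when $\mu_j=1$) or to all earlier vertices but $j-1$ (when $\mu_j=0$), so the graph encodes the consecutiveness relation and is rigid. I would invoke the analysis of embeddings of $0$-$1$ words versus embeddings of their graphs carried out in Section \ref{section:embeddings} — the same rigidity that underlies Theorem \ref{thm:recurrent-word} — to conclude that an embedding fixing the right-most vertex must preserve the linear order and send the vertices onto a consecutive block ending at that vertex, which read back on words says exactly that $s_k$ is a suffix of $s_l$. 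The real work will be to exclude order-scrambling or gapped embeddings, which would correspond merely to a right-aligned subword rather than a genuine suffix, and to absorb the non-injectivity of $w\mapsto G_w$ exhibited in Figure \ref{fig:gmu-same-graphs}; the latter creates only boundedly many spurious comparabilities near the word-boundaries, which are eliminated by passing to an infinite subfamily of $(s_k)$. Granting this rigidity step, the pointed graphs $(J_k,q_k-1)$ are pairwise incomparable, $\age(G_\mu)_{1^-}$ is not w.q.o., and the contrapositive is complete.
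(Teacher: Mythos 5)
Your treatment of the hard implication (non-periodic $\Rightarrow$ not $1^-$-w.q.o.) is, up to taking the contrapositive, the same argument as the paper's: the paper also marks the last vertex $n$ of $G_w$ and uses the fact that $n-1$ is the unique neighbour or unique non-neighbour of $n$ to peel the word off from the right, so that comparability of the pointed graphs $(G_w,n)$ reflects the suffix order, and then invokes Theorem \ref{thm:ulti-periodic}. You correctly isolate the crux (rigidity of pointed embeddings) and the correct mechanism. One caveat: your fix for the boundary defect --- the fact that an embedding $(G_w,n)\hookrightarrow(G_{w'},n')$ only pins down $w$ with its first letter or two deleted, so it certifies a right-aligned occurrence rather than a genuine suffix --- is left as ``pass to an infinite subfamily of $(s_k)$,'' and it is not clear that thinning the antichain alone repairs this: two suffix-incomparable factors can become comparable after deleting their first two letters. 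The clean repair is to replace each $s_k$ by a two-letter left extension $v_ks_k\in\fac(\mu)$ and compare lengths, in the spirit of Lemma \ref{lem:prime-extension} and Corollary \ref{cor:embed-lmu4}; the paper's own one-line induction glosses over the same point, so this is a shared imprecision rather than a defect of your route.

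The genuine gap is in the easy implication. You derive ``$\mu$ periodic $\Rightarrow\age(G_\mu)$ is $1^-$-w.q.o.'' from Remark \ref{rem:nonw.q.o.}, but that remark is an unproved assertion in the paper, and it is a strictly stronger statement (it concerns the class $\mathcal C^{\leq\omega}$ of all countable graphs with age contained in $\age(G_\mu)$). Citing it here is circular: the natural justification of the remark is precisely the argument that Theorem \ref{thm-1-wqo} is meant to supply. The paper instead proves this direction directly: by Theorem \ref{thm:ulti-periodic}, periodicity makes $\fac(\mu)$ w.q.o. for both the prefix and the suffix orders; each pointed graph $(G_w,a_w)$ is split into the part to the left of the marked vertex and the part to its right, an increasing subsequence is extracted successively for the two coordinates, and the passage from pointed connected blocks to arbitrary pointed members of the age is handled by the Higman-type coding of Remark \ref{rem:b.q.o.} (members of the age correspond to finite sequences of factors). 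Without an argument of this kind, your proposal establishes only one of the two implications.
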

\begin{proof}
Suppose that $\age( G_{\mu})$ is  $1^{-}$-w.q.o. We claim   that $\fac(u)$ is w.q.o. for the suffix  order.  According to Theorem \ref{thm:ulti-periodic} this implies that  $\mu$ is periodic. The proof of our claim is based on the following observation. Let  $w:= w_0\dots w_n$ and   $w':= w'_0\dots w’_{n’}$ be  two finite words. Then  $w$ is a suffix  of  $w’$ if and only if  the labelled graph  $(G_{w}, n)$ embeds in the labelled graph  $(G_{w'}, n’)$. Indeed, if $n$ is mapped to  $n'$, then since  $n-1$ is the unique neighbour or nonneighbour of $n$ in  $G_w$ we infer that   $n-1$ is mapped to the  unique neighbour or nonneighbour of $n'$ in  $G_{w'}$. Hence, the labelled graphs obtained by deleting $n$ and $n'$ and labelling them  $n-1$ and $n'-1$ embed in each other.  Conversely, if $\mu$ is periodic, then according to  Theorem \ref{thm:ulti-periodic}, $\fac (\mu)$ is w.q.o. for the prefix and the suffix order. We prove first that the collection of $(G_{w}, a_w)$, where  $w\in \fac (\mu)$ and $a_w$ is a constant, is w.q.o. (decompose  each $G_{w}$ into an initial  part and a final part  containing only the label $a_w$. An infinite  sequence   of such labelled graphs yields two infinite sequences;  extract an increasing sequence from the first and then an increasing sequence from the corresponding sequence. This yields an increasing sequence). From that fact, the proof that $\age  (G_{\mu})$ is w.q.o. is as in Remark \ref{rem:b.q.o.}. \end{proof}

Permutation graphs come from posets and  from bichains. Let us recall that a \emph{bichain} is relational structure $R:= (V, (\leq', \leq''))$ made of a set $V$ and two linear orders $\leq'$ and $\leq''$ on $V$. If $V$ is finite and has $n$ elements, there is a unique permutation $\sigma$ of $\{1, \ldots, n\}$ for which $R$ is isomorphic to the  bichain $C_{\sigma}:= (\{1, \ldots, n\}, \leq, \leq_{\sigma})$ where $\leq$ is the natural order on $\underline n:=\{1, \ldots, n\}$ and  $\leq_{\sigma}$ is the linear order defined by $i\leq_{\sigma} j$ if $\sigma(i)\leq \sigma (j)$.


If we represent bichains by permutations, embeddings between bichains is equivalent to the \emph{ pattern   containment}  between the corresponding permutations, see Cameron \cite{cameron}.

To a bichain $R:= (V, (\leq', \leq''))$, we may associate the intersection order $o(R):= (V, \leq'\cap\leq'')$ and to $o(R)$ its comparability graph.

The following is Theorem 67 from \cite{pouzet-zaguia-wqo-2022}.

\begin{theorem}\label{thm:minimalprime-graph-poset}
\begin{enumerate}[$(1)$]
\item Let $P:=(V,\leq)$ be a  poset.  Then  $\age(\inc (P))$ is minimal prime if and only if
$\age(\comp(P))$ is minimal prime. Furthermore,  $\age(P)$ is minimal prime if and only if $\age (\ainc(P))$ is minimal prime and $\downarrow  \prim (\age(P))= \age (P)$.
\item  Let $B:=(V,(\leq_1, \leq_2))$ be a bichain and $o(B):= (V, \leq_1\cap \leq_2)$. Then  $\age (B)$ is minimal prime if and only if $\age (o(B))$ is minimal prime and $\downarrow  \prim (\age (B))= \age (B)$.
\end{enumerate}
\end{theorem}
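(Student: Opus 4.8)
The plan is to reduce everything, via the characterization of minimal prime classes in Theorem~\ref{thm:minimalprime}, to statements about the posets of primes $J:=\prim(\age(P))$ and $J':=\prim(\age(\ainc(P)))$, ordered by embeddability, and then to transport the J\'onsson property across a natural map between them. The first equivalence in $(1)$ is immediate: since $\ainc(P)=\overline{\comp(P)}$ and graph complementation is an automorphism of the class of finite graphs preserving embeddability, primality, and sending $\age(G)$ to $\age(\overline G)$, a class of graphs is minimal prime iff its ``complement class'' is, whence $\age(\ainc(P))$ is minimal prime iff $\age(\comp(P))$ is. For the second equivalence I first set up $\ainc\colon J\to J'$, $Q\mapsto \ainc(Q)$. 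By Gallai's analysis of comparability graphs \cite{gallai}, a finite poset of size at least $3$ is prime iff its incomparability graph is prime (the only discrepancies between poset modules and graph modules occur for chains and antichains, which are never prime in this range); hence $\ainc$ is well defined. It is strictly order preserving (poset embeddings induce graph embeddings, and strict ones strictly increase the number of vertices), onto (any prime $H\in J'$ is $\ainc(P_{\restriction S})$ for a subset $S$ realizing $H$, and $P_{\restriction S}$ is then prime), and has fibres of size at most two: if $\ainc(Q)\cong\ainc(Q')$ with $Q,Q'$ prime, then $\comp(Q)\cong\comp(Q')$ is a prime comparability graph, which has a unique transitive orientation up to reversal \cite{gallai}, so $Q'\cong Q$ or $Q'\cong Q^{*}$. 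Finally this surjection shows $\downarrow J=\age(P)$ forces $\downarrow J'=\age(\ainc(P))$, since every $H=\ainc(Q)$ lies below $\ainc(R)\in J'$ whenever $Q\le R\in J$.

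For the forward implication, suppose $\age(P)$ is minimal prime. By Theorem~\ref{thm:minimalprime} the poset $J$ is J\'onsson and cofinal, so $\downarrow\prim(\age(P))=\age(P)$, which is the second stated condition; it remains to prove $\age(\ainc(P))$ minimal prime, for which (by Theorem~\ref{thm:minimalprime} together with the cofinality of $J'$ just noted) it suffices that $J'$ be J\'onsson. As $\ainc$ is onto with fibres of size $\le 2$ and $J$ is infinite, $J'$ is infinite; and for any proper initial segment $I'$ of $J'$ its preimage $I:=\ainc^{-1}(I')$ is a proper initial segment of $J$ (proper because $\ainc$ is onto), hence finite since $J$ is J\'onsson, so $I'=\ainc(I)$ is finite. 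Thus every proper initial segment of the infinite poset $J'$ is finite, i.e.\ $J'$ is J\'onsson.

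The reverse implication is the substantial one. Assume $\age(\ainc(P))$ minimal prime and $\downarrow\prim(\age(P))=\age(P)$; by Theorem~\ref{thm:minimalprime}, $J'$ is J\'onsson. I would first record that $J$ is up-directed: given $Q_1,Q_2\in J\subseteq\age(P)$, up-directedness of the age yields a common upper bound $W\in\age(P)$, and the cofinality hypothesis places $W$ below some prime $R\in J$, so $Q_1,Q_2\le R\in J$. To see that $J$ is J\'onsson it suffices (by definition, $J$ being infinite) to show every proper initial segment $I$ of $J$ is finite. One checks that $\ainc(I)$ is an initial segment of $J'$ (lift an embedding $\ainc(R)\le\ainc(Q)$ with $Q\in I$ to $R\le Q$ or $R^{*}\le Q$; either way $R$ or $R^{*}$ lies in $I$, so $\ainc(R)\in\ainc(I)$); if $\ainc(I)$ is proper it is finite because $J'$ is J\'onsson, and then $I$ is finite since fibres have size $\le 2$. \emph{The heart of the argument is to exclude the case} $\ainc(I)=J'$. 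In that case every $Q\in J$ satisfies $\ainc(Q)=\ainc(S)$ for some $S\in I$, hence $Q\in I$ or $Q^{*}\in I$; writing $I^{*}:=\{Q^{*}:Q\in I\}$, this gives $J=I\cup(I^{*}\cap J)$, a union of two initial segments of $J$ (that $I^{*}\cap J$ is an initial segment uses heredity of $\age(P)$ and that dualization is an order involution). Since $J$ is up-directed, a cover of $J$ by two initial segments must have one equal to $J$; as $I$ is proper, $I^{*}\cap J=J$, so $Q^{*}\in I$ for all $Q\in J$. Then $J^{*}\subseteq I\subseteq J$, and applying the involution, $J=J^{*}\subseteq I$, contradicting $I\subsetneq J$. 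Hence $\ainc(I)$ is always proper, $I$ is finite, and $J$ is J\'onsson; with the cofinality hypothesis, Theorem~\ref{thm:minimalprime} yields that $\age(P)$ is minimal prime. (As a byproduct $J$ is w.q.o., which one may also see directly: an infinite antichain $\{Q_n\}$ of $J$ would, since $J'$ is w.q.o.\ by Theorem~\ref{thm:main1}, admit a subsequence with $\ainc(Q_{n_0})\le\ainc(Q_{n_1})\le\cdots$; each step forces $Q_{n_i}^{*}\le Q_{n_{i+1}}$, and dualizing the first gives $Q_{n_0}\le Q_{n_1}^{*}\le Q_{n_2}$, a contradiction.)

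Part $(2)$ follows the same template with the forgetful map $B\mapsto o(B)=(V,\le_1\cap\le_2)$ in place of $P\mapsto\comp(P)$, and with $\comp(o(B))$ being exactly the permutation graph attached to $B$, which links the statement to the rest of the paper. The analogue of ``a prime comparability graph has a unique transitive orientation up to reversal'' is that a prime two-dimensional poset determines its realizer up to the finitely many symmetries given by reversing both linear orders and interchanging them; granting this, the induced map $\prim(\age(B))\to\prim(\age(o(B)))$ is again strictly order preserving, onto, and finite-to-one on primes, with fibres generated by those involutions. The three ingredients then transfer verbatim: the cofinality hypothesis $\downarrow\prim(\age(B))=\age(B)$ gives up-directedness, and the covering argument (now using the finite group generated by reversal and swap in place of the single involution $*$) rules out an initial segment surjecting onto $\prim(\age(o(B)))$. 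I expect the main obstacle throughout to be exactly this control of the orientation ambiguity: establishing that the forgetful maps are finite-to-one on primes and that their fibres are governed by order-automorphisms of the ambient class, so that up-directedness can be leveraged in the covering step.
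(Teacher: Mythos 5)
The paper does not actually prove this statement; it is imported verbatim as Theorem~67 of \cite{pouzet-zaguia-wqo-2022}, so there is no in-paper argument to measure yours against. Judged on its own terms, your proof is correct and its architecture is the natural one: translate both sides through the J\'onsson/cofinality criterion of Theorem~\ref{thm:minimalprime}, transport initial segments along the forgetful map $Q\mapsto\ainc(Q)$ (well defined, onto, order preserving, with fibres $\{Q,Q^{*}\}$), and use up-directedness of $\prim(\age(P))$ --- which you correctly extract from the hypothesis $\downarrow\prim(\age(P))=\age(P)$ --- together with the dualization involution to exclude a proper initial segment whose image is all of $\prim(\age(\ainc(P)))$. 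The covering step $J=I\cup(I^{*}\cap J)$ and the deduction $J=J^{*}\subseteq I$ are both sound, and the opening equivalence via graph complementation is immediate.

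Three points are stated more loosely than they should be. First, the fact that a finite poset on at least three elements is prime iff its comparability graph is prime is true and classical, but your parenthetical (``the only discrepancies \dots occur for chains and antichains'') is not the right formulation: the correct statement is that a module of $\comp(Q)$ which is not a module of $Q$ must induce a chain $m_1<\dots<m_k$ with some external vertex strictly inside $(m_1,m_k)$, and one then checks that this forces $[m_1,m_k]$ (or $V\setminus\{\max\}$) to be a nontrivial module of $Q$; cite Gallai \cite{gallai} or Kelly \cite{kelly85} precisely, or prove it. Second, in part $(2)$ the fibre of $B\mapsto o(B)$ over a fixed prime poset is governed by the swap $(\leq_1,\leq_2)\mapsto(\leq_2,\leq_1)$ alone; reversing both linear orders sends $o(B)$ to $o(B)^{*}$ and lands in a different fibre. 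This does not damage the covering argument (any finite family of embeddability-preserving involutions works there), but the fibre description as written is off. Third, the bichain analogue of the first point --- a finite bichain is prime iff its intersection order is prime --- is the real content of part $(2)$ and is only asserted; it does hold (a nontrivial module of $\leq_1\cap\leq_2$ that is not a common interval must be a decreasing antichain in the two orders, and the module condition forces the box it spans to coincide with both strips, hence to be a nontrivial common interval), but it needs the same level of justification you give, or should give, in part $(1)$.
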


%

%

%

With $(2)$ of Theorem \ref{thm:minimalprime-graph-poset} and Corollary \ref {thm:minprimeages} we  have:

\begin{theorem}\label{thm:minprimeages-bichains}
There are $2^{\aleph_0}$ ages of bichains and permutation orders  which are minimal prime.
\end{theorem}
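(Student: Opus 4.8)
The plan is to transfer the cardinality computation of Corollary~\ref{thm:minprimeages} from permutation graphs to permutation orders and bichains by means of Theorem~\ref{thm:minimalprime-graph-poset}. Fix a $0$-$1$ uniformly recurrent word $\mu$; by Theorem~\ref{thm:uniformly-ages} the age $\age(G_\mu)$ is minimal prime, and by Theorem~\ref{thm:main1} it is the age of a prime graph $G$ with $\age(G)=\age(G_\mu)$. Since $\age(G)$ consists of permutation graphs (Theorem~\ref{thm:permutation-graph}), every finite induced subgraph of $G$ is a permutation graph, so by the Compactness Theorem of First Order Logic (as recorded in the discussion of permutation graphs) $G$ is the comparability graph of a poset of dimension at most two. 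That is, there is a bichain $B:=(V,(\leq_1,\leq_2))$ whose intersection order $P:=o(B)$ satisfies $\comp(P)=G$. This $B$ and this permutation order $P$ are the structures I would attach to $\mu$.

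First I would use part $(1)$ of Theorem~\ref{thm:minimalprime-graph-poset} to see that $\age(P)$ is minimal prime. Indeed $\age(\comp(P))=\age(G_\mu)$ is minimal prime, so $\age(\inc(P))$ is minimal prime; by the same part $\age(P)$ is minimal prime provided that, in addition, $\downarrow\prim(\age(P))=\age(P)$. Part $(2)$ then gives that $\age(B)$ is minimal prime, since $\age(o(B))=\age(P)$ is minimal prime, provided $\downarrow\prim(\age(B))=\age(B)$. Thus the whole argument reduces to the two cofinality conditions: that prime posets, respectively prime bichains, are cofinal in their ages.

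I expect these cofinality conditions to be the heart of the matter, and I would establish them by a comparison of modules. A subset $A\subseteq V$ is a module of $B$ exactly when it is a common interval of $\leq_1$ and $\leq_2$, and any such common interval is a module of the intersection order $P=o(B)$; moreover the modules of the poset $P$ coincide with those of its comparability graph $G=\comp(P)$. Consequently, for every finite $F$ one has the chain of implications: $G_{\restriction F}$ prime $\Rightarrow$ $P_{\restriction F}$ prime $\Rightarrow$ $B_{\restriction F}$ prime. Now $G$ is prime, so by Ille's criterion (Theorem~\ref{ille-theorem}) every finite subset of $V$ extends to a finite $F$ with $G_{\restriction F}$ prime; the implications then make $P_{\restriction F}$ and $B_{\restriction F}$ prime as well. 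Since $\age(P)$ and $\age(B)$ are precisely the classes of finite restrictions $P_{\restriction F}$ and $B_{\restriction F}$, their prime members are cofinal, i.e. $\downarrow\prim(\age(P))=\age(P)$ and $\downarrow\prim(\age(B))=\age(B)$. Together with the previous paragraph this makes both $\age(P)$ and $\age(B)$ minimal prime.

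It then remains to count. Uniformly recurrent words with pairwise distinct sets of factors give, by Theorem~\ref{thm:recurrent-word}, pairwise distinct ages $\age(G_\mu)$; and an equality $\age(P_\mu)=\age(P_{\mu'})$ would force $\age(\comp(P_\mu))=\age(\comp(P_{\mu'}))$, that is $\age(G_\mu)=\age(G_{\mu'})$, while $\age(B_\mu)=\age(B_{\mu'})$ would force $\age(o(B_\mu))=\age(o(B_{\mu'}))$ and hence again equal graph ages. Thus $\mu\mapsto\age(P_\mu)$ and $\mu\mapsto\age(B_\mu)$ are injective on the $2^{\aleph_0}$ uniformly recurrent words with distinct factor sets (for instance the Sturmian words of distinct slopes). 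As there are at most $2^{\aleph_0}$ ages altogether, we obtain exactly $2^{\aleph_0}$ minimal prime ages of permutation orders and of bichains. The step I would scrutinise most carefully is the module comparison, and the check that the Compactness construction really delivers a single bichain $B$ on a fixed vertex set $V$ whose finite restrictions exhaust the age; the remaining verifications are routine bookkeeping.
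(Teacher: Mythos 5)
Your argument is correct and follows essentially the same route as the paper, which derives the theorem in one line from Theorem \ref{thm:minimalprime-graph-poset}$(2)$ together with Corollary \ref{thm:minprimeages}. Your additional work --- building the bichain $B$ with $\comp(o(B))=G_\mu$ via Lemma \ref{lem:gw-permut-graph}/compactness, and verifying the cofinality hypotheses $\downarrow\prim(\age(P))=\age(P)$ and $\downarrow\prim(\age(B))=\age(B)$ by comparing modules of the graph, the poset and the bichain and invoking Ille's theorem --- correctly supplies exactly the verifications the paper leaves implicit.
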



\subsection{A complete characterization of minimal prime ages of graphs}

\begin{theorem}\label{thm:charact-minimal-prime-ages}
A hereditary class $\mathcal{C}$ of finite graphs is minimal prime if and only if $\mathcal{C}=\age(G_\mu)$ for some uniformly recurrent word on $\NN$, or $\mathcal{C}\in \mathcal {L}$.
\end{theorem}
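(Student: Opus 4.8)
The plan is to prove the two implications separately, the \emph{if} direction being immediate from results already in place. If $\mu$ is a uniformly recurrent word on $\NN$ then it is in particular recurrent, so the implication $(i)\Rightarrow(ii)$ of Theorem \ref{thm:uniformly-ages} gives that $\age(G_\mu)$ is minimal prime; and if $\mathcal{C}\in\mathcal L$ then $\mathcal C$ is minimal prime by Theorem \ref{thm:almostmultigraphs}. All the work lies in the \emph{only if} direction, whose engine is the finite list of unavoidable prime graphs supplied by Chudnovsky--Kim--Oum--Seymour and Malliaris--Terry.

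So let $\mathcal C$ be a minimal prime hereditary class of finite graphs. By Theorem \ref{thm:main1} I may write $\mathcal C=\age(G)$ with $G$ an infinite prime graph, and I split into two cases according to whether $\age(G)$ contains a member of $\mathcal L$. In the first case, $\mathcal A\subseteq\mathcal C$ for some $\mathcal A\in\mathcal L$; since $\mathcal A$ is minimal prime (Theorem \ref{thm:almostmultigraphs}) and minimal prime ages form an antichain under set inclusion, I conclude $\mathcal C=\mathcal A\in\mathcal L$ and am done. In the second case no member of $\mathcal L$ lies in $\age(G)$, and I claim $G$ must then contain prime graphs induced by arbitrarily long chains. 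Indeed, by Theorem \ref{thm:malliaris} the infinite prime graph $G$ contains one of its five listed configurations; by the remark following Theorem \ref{thm:almostmultigraphs}, the only unavoidable prime graphs occurring there whose ages are not in $\mathcal L$ are the chains, so each of the non-chain items (arbitrarily large finite half-graphs and split variants, $K^1_{1,\omega}$, the line graph of $K_{2,\omega}$, an infinite spider, or their complements) would force $\age(G)$ to contain the corresponding member of $\mathcal L$, contradicting this case. Hence item (2) of Theorem \ref{thm:malliaris} holds: $G$ contains arbitrarily long chains.

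It remains to convert chains into a word. Recall from the definition of $G_\mu$ that in $G_\mu$ each vertex $j$ has $j-1$ as its unique neighbour (when $\mu_j=1$) or its unique non-neighbour (when $\mu_j=0$) among the smaller vertices; thus a chain is exactly a $G_w$ for a finite $0$-$1$ word $w$, and since the edge rule for a pair $\{i,j\}$ with $i<j$ depends only on $\mu_j$ and on whether $j=i+1$, the induced subgraph of $G_\mu$ on any interval of vertices is of the form $G_w$ for a factor $w$ of $\mu$. Set $W:=\{w\in\{0,1\}^*: G_w\in\mathcal C\}$. A factor of $w$ yields an induced sub-chain, so $G_{\mathrm{factor}}$ embeds in $G_w$ and $W$ is hereditary for the factor ordering; moreover $W$ contains arbitrarily long words because $\mathcal C$ contains arbitrarily long chains. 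Thus $W$ is an infinite factorial language, so by K\"onig's lemma it contains all factors of some right-infinite word, and passing to a minimal subshift of its orbit closure produces a uniformly recurrent word $\mu$ on $\NN$ with $\fac(\mu)\subseteq W$.

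To finish the second case I identify $\age(G_\mu)$ with $\mathcal C$. Any finite induced subgraph of $G_\mu$ sits on a vertex set contained in some interval, hence embeds in an interval-subgraph $G_w$ with $w\in\fac(\mu)\subseteq W$, and so belongs to $\mathcal C$; therefore $\age(G_\mu)\subseteq\mathcal C$. Since $\mu$ is uniformly recurrent, Theorem \ref{thm:uniformly-ages} makes $\age(G_\mu)$ minimal prime, and the antichain property of minimal prime ages upgrades the inclusion to $\age(G_\mu)=\mathcal C$, completing the \emph{only if} direction. I expect the main obstacle to be precisely this second case: the passage from ``arbitrarily long chains in $\mathcal C$'' to a single uniformly recurrent word on $\NN$ realizing $\mathcal C$. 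It relies on the exactness of the chain/word correspondence together with the restriction-to-intervals computation, on extracting a uniformly recurrent word from the factorial language $W$, and on the antichain property to turn an inclusion into an equality. The completeness of the entire classification ultimately rests on the finiteness of the unavoidable list (Theorems \ref{thm:chudnovsky} and \ref{thm:malliaris}), which is what guarantees that the chains and the eleven almost multichainable ages of $\mathcal L$ exhaust every possibility.
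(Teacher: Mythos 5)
Your proof is correct, and its skeleton is the same as the paper's: the unavoidability theorems force a minimal prime class either to contain one of the eleven almost multichainable ages of $\mathcal L$ (whence equality by minimality) or to contain arbitrarily long chains, and in the latter case one extracts a uniformly recurrent word $\mu$ with $\fac(\mu)$ inside the factorial language $W=\{w: G_w\in\mathcal C\}$ and upgrades $\age(G_\mu)\subseteq\mathcal C$ to equality by minimality. You differ from the paper in two places. First, you realize $\mathcal C$ as $\age(G)$ for an infinite prime graph $G$ (via Theorem \ref{thm:main1}) and invoke the infinitary Malliaris--Terry statement (Theorem \ref{thm:malliaris}), whereas the paper applies the finitary Chudnovsky--Kim--Oum--Seymour theorem (Theorem \ref{thm:chudnovsky}) directly to the infinitely many finite primes of $\mathcal C$; both routes work, and yours has the small advantage of making the disposal of the non-chain cases explicit (the paper dismisses them in one sentence), at the cost of relying on the construction of the infinite prime realization. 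Second, for the extraction of the uniformly recurrent word you use K\"onig's lemma and a minimal subshift of the orbit closure, where the paper stays inside its own machinery: Lemma \ref{lem:contains minimal} produces a J\'onsson initial segment $U$ of $W$, and the equivalence $(i)\Leftrightarrow(iii)$ of Theorem \ref{unif-recurrent} identifies $U$ with $\fac(\mu)$ for a uniformly recurrent $\mu$. These are the same fact in two languages (a minimal subshift is exactly a J\'onsson ideal of the factor order), but citing the paper's Lemma \ref{lem:contains minimal} and Theorem \ref{unif-recurrent} keeps the argument self-contained. Everything else --- the chain-to-word dictionary, the interval computation showing restrictions of $G_\mu$ to intervals are of the form $G_w$ with $w\in\fac(\mu)$, and the use of the antichain property of minimal prime ages --- matches the paper.
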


\begin{proof}
$\Leftarrow$. Follows from Theorems \ref {thm:almostmultigraphs}and  \ref {thm:uniformly-ages}.  and Chapter 6 page 109 of the first author's thesis \cite{oudrar}.

 $\Rightarrow$ Follows essentially from Theorem \ref{thm:chudnovsky}.  Let $\mathcal C$ be a minimal prime age. Then $\mathcal C$ contains infinitely many prime graphs  of one of the types  given in Theorem \ref{thm:chudnovsky}.   If for an example, $\mathcal C$ contains infinitely many chains,  that is graphs of the form $G_{\mu}$ for $\mu$  finite, then, since it is minimal prime, we claim that this is the age of some $G_{\mu}$ with $\mu$ uniformly recurrent. Indeed, let $\mathcal{A}$ be an age containing $0$-$1$ graphs $G_w$ for arbitrarily long finite words $w$. We prove that  $\mathcal{A}$ contains the age of a graph $G_\mu$ where $\mu$ is a uniformly recurrent word. Indeed, let $W$ be the set of finite words $w$ such that $G_w\in  \mathcal{A}$. Clearly, $W$ is an infinite hereditary set of finite words. It follows from Lemma \ref{lem:contains minimal} that $W$ contains an initial segment $U$ which is J\'onsson. It follows from the equivalence $(i) \Leftrightarrow  (iii)$ of Theorem \ref{unif-recurrent} that $U=\fac(\mu)$ where $\mu$ is a uniformly recurrent word. We now prove that $\age(G_\mu)\subseteq  \mathcal{A}$. Let $H\in  \age(G_\mu)$. There exists then $w\in \fac(\mu)$ such that $H$ is an induced subgraph of $G_w$. But $w\in \fac(\mu)\subseteq W$. Thus $G_w\in \mathcal{A}$ as required. For the other cases, use the structure of the infinite graphs described in  Figures \ref{fig:list-graph-min-1} and  \ref{fig:list-graph-min-2}.
\end{proof}

\begin{theorem}\label{thm:charact-minimal-prime-ages-hered}
\begin{enumerate}[$(1)$]
  \item A minimal prime hereditary class $\mathcal{C}$ of finite graphs is hereditary  well-quasi-ordered if and only if $\mathcal{C}\in \mathcal L$.
  \item A minimal prime hereditary class $\mathcal{C}$ of finite graphs remains well-quasi-ordered when just one  label is added if and only if $\mathcal{C}=\age(G_\mu)$ for some periodic $0$-$1$ word on $\NN$, or $\mathcal{C}\in \mathcal L$.
  \end{enumerate}
\end{theorem}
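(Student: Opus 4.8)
The plan is to reduce both statements to the trichotomy supplied by Theorem~\ref{thm:charact-minimal-prime-ages}: a minimal prime hereditary class $\mathcal C$ of finite graphs either lies in $\mathcal L$ or equals $\age(G_\mu)$ for some uniformly recurrent word $\mu$ on $\NN$. Having this classification, it remains only to locate, along this list, the exact place where each strength of well-quasi-ordering is lost. The guiding contrast is that the two families behave radically differently under labelling: the members of $\mathcal L$ are ages of almost multichainable graphs, while the classes $\age(G_\mu)$ with $\mu$ infinite already fail to be $2^-$-w.q.o.

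For part $(1)$, the direction $\Leftarrow$ is immediate: by Theorem~\ref{thm:almostmultigraphs} every member of $\mathcal L$ is the age of an almost multichainable graph, and Theorem~\ref{thm:multichain} (equivalently Proposition~\ref{multichainablewqo}) says such an age is hereditarily well-quasi-ordered. For $\Rightarrow$, I would assume $\mathcal C$ minimal prime and hereditarily w.q.o.\ and apply Theorem~\ref{thm:charact-minimal-prime-ages}: either $\mathcal C\in\mathcal L$, and we are done, or $\mathcal C=\age(G_\mu)$ for a uniformly recurrent, hence infinite, word $\mu$ on $\NN$. In the second case I invoke the observation recorded just before Theorem~\ref{thm-1-wqo}, that $\age(G_\mu)$ is never $2^-$-w.q.o.\ for infinite $\mu$. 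Since a hereditarily w.q.o.\ class is in particular $2^-$-w.q.o., this is a contradiction, forcing $\mathcal C\in\mathcal L$.

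For part $(2)$, the direction $\Leftarrow$ splits. If $\mathcal C\in\mathcal L$ it is hereditarily w.q.o.\ by part $(1)$, hence $1^-$-w.q.o. If $\mathcal C=\age(G_\mu)$ for a periodic word $\mu$ on $\NN$, then $\mu$ is uniformly recurrent and recurrent, so $\mathcal C$ is minimal prime by Theorem~\ref{thm:uniformly-ages} (keeping the biconditional well-posed), and $\mathcal C$ is $1^-$-w.q.o.\ by the sufficiency part of Theorem~\ref{thm-1-wqo}. For $\Rightarrow$, let $\mathcal C$ be minimal prime and $1^-$-w.q.o. By Theorem~\ref{thm:charact-minimal-prime-ages} either $\mathcal C\in\mathcal L$, or $\mathcal C=\age(G_\mu)$ for an infinite uniformly recurrent word $\mu$ on $\NN$; in the latter case the necessity part of Theorem~\ref{thm-1-wqo} forces $\mu$ to be periodic, which is exactly the stated alternative.

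The one point that needs care, and which I regard as the only genuine obstacle, is the implication \emph{hereditarily w.q.o.\ $\Rightarrow$ $n^-$-w.q.o.} used above (with $n=2$ in part $(1)$ and $n=1$ in part $(2)$). I would prove it by the standard device of coding distinguished points as labels: to $(R,a_1,\dots,a_n)\in\mathcal C_{n^-}$ assign $a_1,\dots,a_n$ distinct labels and all other vertices one common further label, taking the label poset $Q$ to be a finite antichain so that a label-preserving embedding in $\mathcal C\cdot Q$ is precisely an embedding of $R$ carrying each $a_i$ to $a_i'$. This gives an order-reflecting map $\mathcal C_{n^-}\to\mathcal C\cdot Q$, and since $\mathcal C\cdot Q$ is w.q.o.\ by hereditary w.q.o.\ of $\mathcal C$, so is $\mathcal C_{n^-}$ (coinciding $a_i$'s being absorbed by a few extra labels). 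With this in place every invocation above is legitimate; one should also note that the overlaps among the alternatives of Theorem~\ref{thm:charact-minimal-prime-ages}, such as the age of the infinite path, which is $\age(G_\mu)$ for the periodic word $\mu=111\cdots$ yet lies outside $\mathcal L$, cause no difficulty, since the arguments only ever use membership in $\mathcal L$ or periodicity of $\mu$, never the mutual exclusivity of the two cases.
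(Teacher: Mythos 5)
Your proposal is correct, and it is essentially the derivation the paper intends: the theorem is stated without a printed proof, immediately after the ingredients you use (Theorem~\ref{thm:charact-minimal-prime-ages} for the trichotomy, Theorems~\ref{thm:almostmultigraphs} and~\ref{thm:multichain} for $\mathcal L$, Theorem~\ref{thm-1-wqo} and the remark preceding it for the $G_\mu$ side). The one step you supply that the paper leaves entirely implicit is the reduction of $n^-$-w.q.o.\ to labelled w.q.o.\ over a finite antichain; your coding of the distinguished points as labels (with the coincidence patterns absorbed into finitely many extra labels) is the standard and correct way to do this, and making it explicit is a genuine improvement, since the paper's statement that it does not know whether its four labelling notions differ might otherwise leave a reader unsure which implications are actually available.
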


 The corresponding characterization of minimal prime ages of posets and bichains will follow from Theorem \ref{thm:minimalprime-graph-poset} and a careful examination of our list of graphs to decide which graphs are comparability graphs.

\begin{figure}[h]
\begin{center}
\leavevmode \epsfxsize=4in \epsfbox{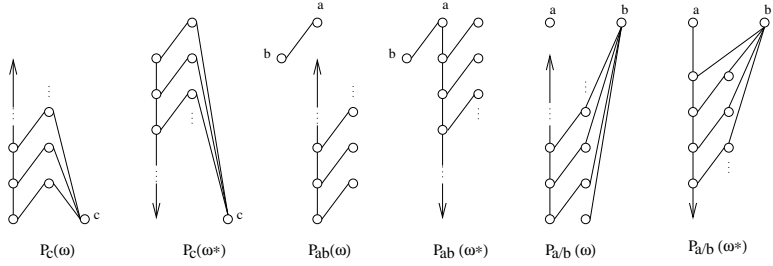}
\end{center}
\caption{Transitive orientations of the graphs $G_5$, $\overline G_5$, $G_6$ and $\overline G_6$}
\label{fig:min-age-dim2}
\end{figure}

\begin{corollary}\label{cor:thm:charact-minimal-prime-ages-bichains}
\begin{enumerate}[$(1)$]
  \item A hereditary class $\mathcal{C}$ of finite comparability graphs is minimal prime if and only if $\mathcal{C}=\age(G_\mu)$ for some uniformly recurrent word on $\NN$, or \\
   $\mathcal{C}\in  \{\age(G_0), \age (G_1), \age(\overline G_1), \age(G_3), \age(G_5), \age(G_6), \age(\overline G_6)\}$.

  \item A hereditary class $\mathcal{C}$ of finite permutation graphs is minimal prime if and only if $\mathcal{C}=\age(G_\mu)$ for some uniformly recurrent word on $\NN$, or \\
   $\mathcal{C}\in  \{\age (G_1), \age(\overline G_1), \age(G_5), \age(G_6), \age(\overline G_6)\}$.
\end{enumerate}
\end{corollary}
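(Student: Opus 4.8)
The plan is to read off both statements from the complete description of minimal prime ages of arbitrary finite graphs, Theorem \ref{thm:charact-minimal-prime-ages}, by intersecting that list with the class of comparability graphs, respectively permutation graphs. The first thing I would record is that both of these classes are hereditary: comparability graphs are closed under induced subgraphs (by the compactness principle and Gallai's finite obstruction list \cite{gallai}), and a finite graph is a permutation graph exactly when it and its complement are comparability graphs, so permutation graphs are hereditary as well. Consequently, a hereditary class $\mathcal C$ all of whose members are comparability graphs (resp. permutation graphs) is in particular a hereditary class of finite graphs, and every hereditary subclass of $\mathcal C$ again consists of comparability graphs (resp. permutation graphs). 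Since being minimal prime refers only to the hereditary subclasses of $\mathcal C$ and to its prime members, and since primality is module-theoretic and hence intrinsic, $\mathcal C$ is minimal prime \emph{as a class of comparability graphs} (resp. permutation graphs) if and only if it is minimal prime \emph{as a class of finite graphs}. By Theorem \ref{thm:charact-minimal-prime-ages}, such a $\mathcal C$ is therefore either $\age(G_\mu)$ for a uniformly recurrent word $\mu$ on $\NN$, or one of the eleven members of $\mathcal L$; it only remains to decide which of these consist entirely of comparability graphs, resp. permutation graphs.

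The words contribute nothing to be excluded. By Theorem \ref{thm:permutation-graph}, every age $\age(G_\mu)$ consists of permutation graphs, hence a fortiori of comparability graphs, so all the ages $\age(G_\mu)$ with $\mu$ uniformly recurrent survive in both lists. The remaining work reduces to examining the eleven members of $\mathcal L$, and here I would use the compactness reduction already invoked in Section \ref{sec:preq}. Because the class of comparability graphs is determined by a finite set of obstructions \cite{gallai}, an age $\age(G)$ consists of comparability graphs if and only if every finite induced subgraph of $G$ is a comparability graph, that is, if and only if $G$ itself is a comparability graph. Likewise, since a finite graph is a permutation graph precisely when it and its complement are comparability graphs, $\age(G)$ consists of permutation graphs if and only if both $G$ and $\overline G$ are comparability graphs. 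This converts the whole question into a finite checklist: decide which of $G_0,\overline{G_0},G_1,\overline{G_1},G_3,\overline{G_3},G_4,\overline{G_4},G_5(=\overline{G_5}),G_6,\overline{G_6}$ are comparability graphs.

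For part (1) I would carry out this verification graph by graph. For each of $G_0,G_1,\overline{G_1},G_3,G_5,G_6,\overline{G_6}$ I would exhibit an explicit transitive orientation of the infinite graph, certifying it is a comparability graph and hence that its age consists of comparability graphs; for $G_5$, $\overline{G_5}$, $G_6$ and $\overline{G_6}$ these are exactly the orientations displayed in Figure \ref{fig:min-age-dim2}, and for $G_0$, $G_1$, $\overline{G_1}$, $G_3$ one writes down the analogous orientations directly from Figures \ref{fig:list-graph-min-1} and \ref{fig:list-graph-min-2}. For the four remaining graphs $\overline{G_0},\overline{G_3},G_4,\overline{G_4}$ I would instead point to a finite induced subgraph lying in Gallai's list of minimal non-comparability obstructions \cite{gallai}, which shows these graphs are not comparability graphs and that their ages fall outside the class. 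This produces precisely the seven ages named in (1), together with all the $\age(G_\mu)$.

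Part (2) then follows from part (1) and the complement-symmetric criterion above: $\age(G)$ consists of permutation graphs if and only if both $G$ and $\overline G$ are comparability graphs, a condition visibly invariant under passing to the complement. Reading off the comparability verdicts of part (1), the members of $\mathcal L$ for which \emph{both} $G$ and $\overline G$ are comparability graphs are exactly $G_1$ (with $\overline{G_1}$), $G_5$ (whose age is self-complementary), and $G_6$ (with $\overline{G_6}$); the ages $\age(G_0)$ and $\age(G_3)$ drop out because $\overline{G_0}$ and $\overline{G_3}$ are not comparability graphs. Together with all the $\age(G_\mu)$ this yields precisely the five members of $\mathcal L$ listed in (2). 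I expect the only genuinely delicate part of the argument to be the graph-by-graph comparability analysis of the eleven members of $\mathcal L$ (the explicit transitive orientations on the one side and the forbidden configurations on the other); everything else is bookkeeping around Theorem \ref{thm:charact-minimal-prime-ages}, Theorem \ref{thm:permutation-graph}, and the compactness characterization of comparability graphs.
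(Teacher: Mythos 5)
Your proposal is correct and follows essentially the same route as the paper: restrict the complete classification of Theorem \ref{thm:charact-minimal-prime-ages} to the comparability (resp. permutation) setting, note via Theorem \ref{thm:permutation-graph} that all the $\age(G_\mu)$ survive, and then examine the eleven members of $\mathcal L$ one by one to decide which are comparability graphs and which have comparability complements (the paper supplies the transitive orientations of $G_5$, $\overline G_5$, $G_6$, $\overline G_6$ in Figure \ref{fig:min-age-dim2} for exactly this purpose). The only caveat is that, like the paper, you defer the actual graph-by-graph verification rather than exhibiting the orientations and obstructions explicitly, but the reduction and the resulting lists are exactly right.
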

We end this section with the following conjecture.

\begin{conjecture}Every infinite prime graph embeds one of the graphs depicted in Figures \ref{fig:list-graph-min-1} and  \ref{fig:list-graph-min-2}, or a graph $G_\mu$ for some $0$-$1$ sequence $\mu$ on an interval of $\ZZ$.
\end{conjecture}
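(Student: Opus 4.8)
The plan is to run the whole argument off the infinitary classification of Malliaris and Terry (Theorem \ref{thm:malliaris}), which applies verbatim to an infinite prime graph $G$ and splits the problem into five cases. The easy part is cases $(3)$, $(4)$ and $(5)$: there Theorem \ref{thm:malliaris} already hands us an \emph{infinite} induced substructure — a copy of $K^1_{1,\omega}$ or its complement, of the line graph of $K_{2,\omega}$ or its complement, or of a spider with $\omega$ legs — and each of these infinite graphs is one of those drawn in Figures \ref{fig:list-graph-min-1} and \ref{fig:list-graph-min-2} (for the graphs without infinite clique this is exactly the content of Theorem \ref{thm:pouzet-zaguia}; the thin and thick spiders with $\omega$ legs occur among $G_0,\dots,G_6$). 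So in these three cases $G$ embeds one of the prescribed graphs and there is nothing left to do.

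The real work concerns cases $(1)$ and $(2)$, where Theorem \ref{thm:malliaris} supplies only \emph{arbitrarily large finite} copies. In case $(2)$ I would use the identification of chains with pin sequences and of pin sequences with $0$-$1$ words (so that a prime graph induced by a finite chain is a $G_w$ with $w$ finite) and form $W:=\{w\in\{0,1\}^*: G_w\leq G\}$. Since a prefix $w'$ of $w$ gives an induced subgraph $G_{w'}$ of $G_w$, the set $W$ is closed under prefixes, hence is a subtree of the binary tree; by hypothesis it has words of every length, so it is infinite, and König's Lemma yields an infinite branch, that is a word $\mu$ on $\NN$ every prefix of which lies in $W$, whence $\age(G_\mu)\subseteq\age(G)$. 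The delicate point — and the reason this is only a conjecture — is that an inclusion of ages does \emph{not} by itself give an embedding of the \emph{infinite} graph $G_\mu$ into $G$. To upgrade to an actual embedding I would try to choose the embeddings of the prefixes $G_w$ coherently, organising them into a single increasing tower of local isomorphisms of $G$ whose union embeds $G_\mu$; this is a compactness argument on embeddings that must exploit the finiteness of the local adjacency data around each vertex and the rigidity forced by primality.

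The hard part will be case $(1)$, which I expect to require a genuinely new idea. Here $G$ contains $H_n,\overline{H_n},H^*_n,\overline{H^*_n},H'_{n,I},\overline{H'_{n,I}}$ for arbitrarily large finite $n$ only, and these graphs are \emph{not} of the form $G_\mu$: in a half-graph a typical vertex has both many neighbours and many non-neighbours, so it cannot play the role of a chain vertex, which has a \emph{unique} neighbour or non-neighbour among its predecessors. Hence case $(1)$ cannot be routed through the $G_\mu$'s, and one must instead promote the finite half-graphs and their relatives to the infinite half-graph $H_\omega$ and its relatives, which do appear in Figure \ref{fig:list-graph-min-1}. Such a promotion is false for arbitrary graphs — the disjoint union $\bigsqcup_n H_n$ contains every $H_n$ but not the connected graph $H_\omega$, and such a union is not prime — and this is precisely the clue: the argument must use primality of $G$ to forbid these ``spread-out'' families of finite copies. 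I would attempt a Ramsey-type extraction that selects, from the growing finite copies, a coherently nested subfamily using the partition behaviour imposed by the modules of $G$, so that the union is an infinite half-graph or one of its listed relatives.

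The unifying obstacle across cases $(1)$ and $(2)$ is therefore the passage from arbitrarily large finite unavoidable structures to a single infinite \emph{embedded} one; cases $(3)$--$(5)$ escape it only because Theorem \ref{thm:malliaris} already provides the infinite object. I would accordingly concentrate the effort on a single compactness-and-Ramsey mechanism, driven by primality, that performs this finite-to-infinite passage uniformly, treating the chain words of case $(2)$ and the half-graph family of case $(1)$ as its two representative instances.
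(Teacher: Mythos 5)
The statement you are trying to prove is stated in the paper as a \emph{conjecture}: the authors offer no proof of it, so there is no argument of theirs to compare yours against. Your proposal is best read as a plan of attack, and you are candid that it is incomplete; as it stands it is not a proof, and the gaps you leave open are precisely the points where the conjecture is hard.

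Concretely, two steps are missing. In your case $(2)$, K\"onig's Lemma applied to the prefix-closed tree $W$ of words $w$ with $G_w\leq G$ yields a word $\mu$ on $\NN$ with $\age(G_\mu)\subseteq\age(G)$, but the conjecture demands an embedding of the infinite graph $G_\mu$ into $G$, and inclusion of ages does not give that. The rigidity results of the paper (Corollary \ref{lem:5}, Proposition \ref{prop:embed-lmu-finite}) constrain embeddings of one $0$-$1$ graph into \emph{another $0$-$1$ graph}; they say nothing about how copies of $G_w$ sit inside an arbitrary prime graph $G$, so the ``coherent tower of local isomorphisms'' you invoke has no mechanism behind it --- the finitely many embeddings of the prefixes need not be compatible, and no compactness principle forces them to be. In your case $(1)$ the situation is the same one level up: you correctly observe that arbitrarily large finite half-graphs do not in general yield an infinite half-graph (your example $\bigsqcup_n H_n$ shows this), that primality must be exploited, and that the graphs of item $(1)$ cannot be routed through the $G_\mu$'s; but the ``Ramsey-type extraction using the partition behaviour imposed by the modules of $G$'' is a hope, not an argument. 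Your reduction of cases $(3)$--$(5)$ to the graphs of Figures \ref{fig:list-graph-min-1} and \ref{fig:list-graph-min-2} is sound, since there Theorem \ref{thm:malliaris} already supplies an infinite induced copy; but the finite-to-infinite passage in cases $(1)$ and $(2)$ is exactly what remains open, and identifying it does not discharge it.
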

\subsection{Bounds of minimal prime hereditary classes}\label{section:bornes}

We recall that a \emph{bound} of a hereditary class $\mathcal C$ of finite structures (e.g. graphs, ordered sets) is any structure $R\not \in \mathcal C$ such that every proper induced substructure of $R$ belongs to $\mathcal C$.

As we have seen in Theorem \ref{thm:charact-minimal-prime-ages} minimal prime ages of graphs belong either to $\mathcal L$, in which case they have finitely many bounds since they are ages of multichainable graphs (Theorem \ref{thm:multichain}), or they are of the form $\age (G_{\mu})$ with $\mu$ uniformly recurrent.

If $\mu$ is a $0$-$1$ periodic word, $\age(G_\mu)$ may have infinitely many bounds. This is the case if $\mu$ is constant. For the remaining cases within uniformly recurrent sequences, we have the following.

\begin{theorem}\label{thm:bound-uniform} Let $\mu$ be a $0$-$1$ uniformly recurrent word.
\begin{enumerate} [$(1)$]
\item  If $\mu$ is  non periodic,  then $\age(G_\mu)$ has infinitely many bounds;
\item If $\mu$ is  periodic and non constant, then $\age(G_{\mu})$ has finitely many bounds.
\end{enumerate}
\end{theorem}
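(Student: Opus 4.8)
The plan is to transfer the question to the level of words, where the counting of bounds is already settled by Theorems \ref{thm:periodic-bounds} and \ref{thm:inf-bound-uniform-seq}, and then to move back to graphs through the correspondence $w\mapsto G_w$. The delicate point throughout is that this correspondence is not injective (see Figure \ref{fig:gmu-same-graphs}), so the translation between bounds of the word class $\fac(\mu)$ and bounds of the graph age $\age(G_\mu)$ rests on the embedding analysis of Section \ref{section:embeddings}. Since $\mu$ is uniformly recurrent it is recurrent, so Theorem \ref{thm:recurrent-word} applies and controls this ambiguity: up to the local modifications identified there, an induced copy of $G_w$ in $G_\mu$ forces $w$, or a close variant of it, to be a factor of $\mu$.

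For the first assertion I would start from Theorem \ref{thm:inf-bound-uniform-seq}: as $\mu$ is uniformly recurrent and non periodic, $\fac(\mu)$ has infinitely many bounds, and since the alphabet is finite these have unbounded length. Fix such a bound word $v=v_0\cdots v_{n-1}$, so that $v\notin \fac(\mu)$ while the two maximal proper factors $v_0\cdots v_{n-2}$ and $v_1\cdots v_{n-1}$ lie in $\fac(\mu)$. A direct check on the edge rule shows that deleting the extreme vertex $-1$ from $G_v$ yields, after the shift $j\mapsto j-1$, the graph $G_{v_1\cdots v_{n-1}}$, and deleting the largest vertex yields $G_{v_0\cdots v_{n-2}}$; both therefore belong to $\age(G_\mu)$. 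Using the embedding results together with Theorem \ref{thm:recurrent-word}, one argues that $G_v\notin \age(G_\mu)$, so $G_v$ contains an induced bound $B_v$ of $\age(G_\mu)$. Because the obstruction recorded by $v$ cannot be witnessed on a bounded number of vertices, else $\fac(\mu)$ would have bounds of bounded length, the sizes $|B_v|$ are unbounded along the chosen sequence of $v$'s, producing infinitely many pairwise non-isomorphic bounds.

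For the second assertion, Theorem \ref{thm:periodic-bounds} gives, for $\mu$ periodic of period $p$, that every bound of $\fac(\mu)$ has length at most $p$; equivalently $\fac(\mu)$ is cut out by excluding finitely many factors. I would then show there is an integer $N=N(p)$ such that every finite graph $H\notin\age(G_\mu)$ already contains an induced subgraph on at most $N$ vertices that is not in $\age(G_\mu)$; this bounds the size of every bound, hence their number. To localise the obstruction I would use the decomposition of Remark \ref{remark:union-clique-inde}: since $\mu$ is non constant it takes both values infinitely often, so $G_\mu$ is the union of two cliques $C_0,C_1$ and two independent sets $I_0,I_1$ whose mutual adjacencies follow the period-$p$ pattern. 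Membership of $H$ in $\age(G_\mu)$ then reduces to realising $H$ inside this rigid periodic scaffold, a condition whose failure is detectable on a window of size governed by $p$. The non-constant hypothesis is exactly what makes all four pieces genuinely present and the scaffold rigid; for constant $\mu$ the graph $G_\mu$ is an infinite path (or its complement), whose age is excluded by arbitrarily long cycles $C_n$ and hence has infinitely many bounds, which is why that case is excluded here.

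The main obstacle in both parts is the same: passing faithfully between the word picture and the graph picture. Deleting an \emph{internal} vertex of $G_v$ is not the same operation as deleting an internal letter of $v$, because the long-range adjacencies created by the $0$-labels change; one therefore cannot naively declare $G_v$ itself to be a bound, and one must invoke the embedding lemmas of Section \ref{section:embeddings} both to certify non-membership in part (1) and to localise obstructions in part (2). I expect the heart of the complete argument, carried out in Section \ref{sec:proof-thm:bound-uniform}, to be precisely this embedding bookkeeping.
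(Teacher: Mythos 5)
Your part (1) breaks down at its central step: the claim that $G_v\notin\age(G_\mu)$ when $v=v_0\cdots v_{n-1}$ is a bound of $\fac(\mu)$ is false. Since $v_1\cdots v_{n-1}$ is a proper factor of the bound $v$, it lies in $\fac(\mu)$, and Lemma \ref{lem:bound-mu-not-bound-gmu} (equivalently Corollary \ref{cor:bound-mu-not-bound-gmu}) shows that $G_v$ then embeds into $G_\mu$: the first letter of a word only governs the adjacency of the anchor vertex to the first ``real'' vertex, and this adjacency can always be reproduced inside $G_\mu$ by placing the anchor at a non-consecutive position. The paper states this trap explicitly (``It is tempting to think that candidates for bounds of $\age(G_\mu)$ \dots are graphs of the form $G_w$ where $w$ is a bound of $\fac(\mu)$. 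This is false.''). Consequently your $G_v$ lies in $\age(G_\mu)$ and contains no bound at all, so the subsequent extraction of an induced bound $B_v$ and the unboundedness argument for $|B_v|$ have nothing to work with. The correct construction (Lemma \ref{lem:bound-gmu}(1)) prepends one more letter: one takes $w'=w_0v$ with $w_0$ chosen so that $w_0v_0\cdots v_{n-2}$ is a factor of $\mu$, and then proves, using the rigidity of embeddings of long prime $G_w$'s (Proposition \ref{prop:embed-lmu-finite}, Lemma \ref{lem:embed-left-left}, Corollary \ref{cor:6'}), that $G_{w'}$ itself is a bound of $\age(G_\mu)$; in particular every vertex deletion, including internal ones, must be checked to land back in $\age(G_\mu)$.

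Your part (2) is not yet an argument. The assertion that non-membership in $\age(G_\mu)$ is ``detectable on a window of size governed by $p$'' is exactly the statement that the bounds have size bounded in terms of $p$, i.e.\ it restates the conclusion; the clique/independent-set decomposition of Remark \ref{remark:union-clique-inde} does not by itself yield any such locality. The paper's proof goes through a quite different and genuinely nontrivial route: it first disposes of non-prime bounds and of bounds possessing a $-1$-extremal vertex using the fact that $\age(G_\mu)$ is $1^-$-well-quasi-ordered for periodic $\mu$ (Theorem \ref{thm-1-wqo}, Lemmas \ref{lem:argument-pouzet} and \ref{lem:bound-prime-periodic}), then handles the remaining prime, non-critically-prime bounds $H$ by choosing $x$ with $H\setminus\{x\}$ prime, pinning down the images of the two embeddings $H\setminus\{x\}\hookrightarrow G_\mu$ and $H\setminus\{f_x^{-1}(i_0)\}\hookrightarrow G_\mu$ via the interval rigidity of Section \ref{section:embeddings}, and deriving a contradiction from Lemma \ref{lem:embed-bound-periodic} when $H$ is large. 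None of these ingredients (the $1^-$-w.q.o.\ property, the Schmerl--Trotter reduction to non-critically-prime graphs, the two-embedding comparison) appears in your sketch, and without them the finiteness claim is unsupported. Your closing remark about the constant case is correct but peripheral.
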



%

In \cite{brignall-engen-vatter}, Brignall et al  provided an example of a hereditary class of permutation graphs which are w.q.o., have finitely many bounds,  but are not labelled w.q.o. solving negatively a conjecture of  Korpelainen et al  \cite{korpelainen-lozin-razgon}.

As stated in $(2)$ of Theorem \ref {thm:bound-uniform}, ages of $0$-$1$ graphs corresponding to periodic and non constant words provide  infinitely many examples of such classes. Note  these classes are $1^-$-w.q.o.

\section{A proof  of Theorem \ref{thm:permutation-graph} and a characterization of order types of
realizers of transitive orientations of $0$-$1$ graphs}\label{sec:proof-thm:permutation-graph}

Let $P: (V, \leq)$ be a poset. An element $x\in V$ is \emph{extremal} if it is maximal or minimal.

\begin{lemma}\label{lem:one-extension}Let $w:=w_0\ldots w_{n-1}$ be a finite word with $n\geq 2$ and $w':=w_0\ldots w_{n-2}$. Then every realizer $(L_{w'}, M_{w'})$ of a transitive orientation of $G_{w'}$ on $\{-1, 0, \dots, n-2\}$ (if any) such that $n-2$ is extremal in $L_{w'}$ or in $M_{w'}$ extends to a realizer $(L_{w}, M_{w})$ of a transitive orientation of $G_{w}$ on $\{-1, 0, \dots, n-1\}$   such that $n-1$ is extremal in $L_{w}$ or in $M_{w}$.
\end{lemma}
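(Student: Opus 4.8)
The plan is to view $G_w$ as the one-vertex extension of $G_{w'}$ obtained by adjoining the vertex $n-1$, and to insert this vertex into both linear orders of the given realizer at an \emph{extremal} slot. The first thing I would record is that the edge rule for a pair $\{i,j\}$ with $i<j$ depends only on the letter $w_j$ at the larger index, so $G_w$ restricted to $\{-1,0,\dots,n-2\}$ is exactly $G_{w'}$; adjoining $n-1$ changes nothing among the old vertices. Next I would compute the neighbourhood of $n-1$, which is the largest vertex, so every pair $\{i,n-1\}$ is governed by $w_{n-1}$: if $w_{n-1}=1$ then $n-1$ is adjacent to $n-2$ only, while if $w_{n-1}=0$ then $n-1$ is adjacent to every old vertex \emph{except} $n-2$ (note $n-2\geq 0$ since $n\geq 2$, so it is never the auxiliary vertex $-1$). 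Finally I would recall the realizer/comparability dictionary: in a realizer $(L,M)$ two vertices are adjacent precisely when they occur in the same relative order in $L$ and in $M$, and non-adjacent precisely when their $L$- and $M$-orders are opposite; since the intersection of two linear orders is automatically a partial order, transitivity never has to be checked.

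I would treat the case $w_{n-1}=1$ directly, where $n-1$ must be adjacent to $n-2$ and to nothing else. Suppose, for instance, that $n-2$ is the minimum of $M_{w'}$. Then I would make $n-1$ the maximum of $L_w$ (so $n-1$ is extremal there, which is the conclusion we want) and insert $n-1$ just above $n-2$ in $M_w$, that is, as the second-smallest element. For any old $i\neq n-2$ one then has $i<_{L_w}n-1$ but $n-1<_{M_w}i$, opposite orders, hence a non-edge; while $n-2<_{L_w}n-1$ and $n-2<_{M_w}n-1$, the same order, hence the single required edge. The other three possibilities for where $n-2$ is extremal are handled by the obvious symmetric placements: if $n-2$ is the maximum of $M_{w'}$, make $n-1$ the minimum of $L_w$ and insert it just below $n-2$ in $M_w$; and if $n-2$ is extremal in $L_{w'}$, exchange the roles of $L$ and $M$ and argue as before. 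In each subcase $n-1$ sits at an end of one of the two orders, so the required extremality of $n-1$ holds, and the adjacency check is the same two-line verification. Since $n-2$ is extremal in $L_{w'}$ or $M_{w'}$ by hypothesis, one of these four placements always applies.

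For the case $w_{n-1}=0$ I would not repeat the analysis but reduce to the previous one by complementation. By Remark \ref{lem:comp}, $\overline{G_w}=G_{\overline w}$, and the last letter of $\overline w$ is $1$; moreover replacing $M$ by its reverse $M^{*}$ turns a realizer of a graph into a realizer of its complement, since it flips every ``same order''/``opposite order'' relation, and it preserves extremality, since maximum and minimum are merely interchanged. Thus I would apply the $w_{n-1}=1$ case to $\overline w$ and the realizer $(L_{w'},M_{w'}^{*})$ to obtain a realizer of $G_{\overline w}=\overline{G_w}$ with $n-1$ extremal, and then reverse the second order once more to return to a realizer of $G_w$ with $n-1$ still extremal. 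The only point demanding care is the bookkeeping: checking that inserting $n-1$ at the chosen extremal slot produces exactly the prescribed neighbourhood while leaving the old order untouched. There is no genuine obstacle here, because the extremality of $n-2$ is precisely what creates the room to attach $n-1$ to $n-2$ alone (or, after complementation, to everything but $n-2$) while keeping $n-1$ at an end of one of the two orders.
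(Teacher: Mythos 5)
Your proof is correct and follows essentially the same strategy as the paper's: insert the new vertex $n-1$ at an extremal or next-to-extremal slot of each linear order, splitting on the value of $w_{n-1}$ and on where $n-2$ is extremal, and verify adjacency via the ``same order versus opposite order'' dictionary. The only organizational difference is that you dispose of the case $w_{n-1}=0$ by reversing one linear order and invoking $\overline{G_w}=G_{\overline{w}}$, whereas the paper gives a second explicit insertion for that case; both are valid and equally short.
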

\begin{proof}
Let    $(L_{w'}, M_{w'})$ be a realizer of  a transitive orientation $P_{w'}$ of $G_{w'}$ on $\{-1, 0, \dots, n-2\}$ such that $n-2$ is extremal in $L_{w'}$ or in $M_{w'}$.
 We may assume without loss of generality that $n-2$ is maximal in $L_{w'}$ or in  $M_{w'}$. Otherwise, consider $P^*_{w'}$ and the pair $(L^*_{w'}, M^*_{w'})$. Note that $P^*_{w'}$ is a transitive orientation of $G_{w'}$,  the pair  $(L^*_{w'}, M^*_{w'})$  is a realizer of $P^*_{w'}$,     and $n-2$ is maximal in  $L^*_{w'}$ or in $M^*_{w'}$ (this is because  $n-2$ is minimal in $L_{w'}$ or in $M_{w'}$). We then extend   $(L^*_{w'}, M^*_{w'})$  to a realizer of  $P^*_{w'}$  with the desired property. The dual of this realizer is a realizer of $P_{w}$ with the required property. We may also suppose that $n-2$ is maximal in $L_{w'}$, because  otherwise, we interchange the roles of  $L_{w'}$ and $M_{w'}$. \\
$\bullet$ If $w_{n-1}=1$, then $\{n-2,n-1\}$ is the unique edge of $G_{w}$ containing $n-1$. Clearly $P_w:=P_{w'}\cup \{(n-1,n-2)\}$ is a transitive orientation of $G_{w}$. Let $L_w$ be the linear order obtained from $L_{w'}$ so that $n-1$ appears immediately before $\max(L_{w'})=n-2$ and larger than all other elements and let $M_w$ be the linear order obtained from $M_{w'}$ by letting $n-1$ smaller than all elements of $M_{w'}$. Clearly, $(L_w, M_w)$ is a realizer of  $P_{w}$ and by construction $n-1$ is minimal in $P_{w}$ and in $M_w$.\\
$\bullet$ Else if $w_{n-1}=0$, then $\{n-2,n-1\}$ is the unique non edge of $G_{w}$ containing $n-1$. Since $n-2$ is maximal in $P_{w'}$ we infer that $P_w:=P_{w'}\cup \{(x,n-1)) : x\in \{-1,0,\ldots,n-3\}\}$ is a transitive orientation of $G_{w}$ in which $n-1$ and $n-2$ are incomparable. Let $L_w$ be the linear order obtained from $L_{w'}$ so that $n-1$ appears immediately before $\max(L_{w'})=n-2$ and larger than all other elements and let $M_w$ be the linear order obtained from $M_{w'}$ by letting $n-2$ larger than all elements of $M_{w'}$. Clearly, $(L_w, M_w)$ is a realizer of  $P_{w}$ (indeed,  $n-2$ and $n-1$ are incomparable in $L_w\cap M_w$ and for all $x\in \{-1,0,\ldots,n-3\}$, $x< n-1$ in $L'_w \cap M'_w$ proving that $\{L_w,M_w\}$ is a realizer of $P_w$). By construction $n-1$ is maximal in $P_w$ and $M^{'}_w$. The proof of the lemma  is now complete.
\end{proof}

\begin{lemma}\label{lem:gw-permut-graph}
Let $\mu$ be a $0$-$1$ sequence defined on an interval $I$ of $\ZZ$. Then $G_{\mu}$ is a comparability graph and an incomparability graph. In particular, if $I$ is finite,  then $G_{\mu}$ is a permutation graph.
\end{lemma}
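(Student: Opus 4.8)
The plan is to settle the finite case directly from Lemma \ref{lem:one-extension} and then bootstrap to an arbitrary interval $I$ by the Compactness Theorem of First Order Logic, using Remark \ref{lem:comp} to pass between comparability and incomparability. Note that the permutation-graph conclusion is only claimed for finite $I$ (an infinite $G_\mu$ such as the complement of the one-way path need not be a permutation graph), so for general $I$ I aim only at the comparability and incomparability assertions.

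First I would handle finite words. I claim that for every finite word $w:=w_0\ldots w_{n-1}$ (so that $G_w$ has vertex set $\{-1,0,\ldots,n-1\}$) there is a realizer $(L_w,M_w)$ of a transitive orientation $P_w$ of $G_w$ in which the top vertex $n-1$ is extremal. I prove this by induction on $n$. For $n=1$ the graph $G_{w_0}$ has only the two vertices $-1$ and $0$ and is either a single edge or two isolated vertices; in both cases a two-element realizer exists in which $0$ is maximal in one of the two linear orders and maximal or minimal in the other, hence extremal. The induction step is exactly Lemma \ref{lem:one-extension}: a realizer of a transitive orientation of $G_{w'}$ with $n-2$ extremal extends to one of $G_w$ with $n-1$ extremal. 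Thus $P_w$ is an order of dimension at most two whose comparability graph is $G_w$, so $G_w$ is a comparability graph and in fact a permutation graph. Applying the same to $\overline w$ and invoking $\overline{G_w}=G_{\overline w}$ (Remark \ref{lem:comp}) shows $\overline{G_w}$ is a comparability graph too, so $G_w$ is an incomparability graph. This already gives the ``in particular'' clause, since for finite $I$ the graph $G_\mu$ is some $G_w$.

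Next I would treat an arbitrary interval $I$ by the compactness characterization of comparability graphs recalled earlier: a graph is a comparability graph if and only if each of its finite induced subgraphs is one. Let $A$ be a finite subset of $V(G_\mu)$ and let $J$ be the least (finite) interval of $\ZZ$ containing $A$. Because the edge rule defining $G_\mu$ between vertices $i<j$ depends only on $\mu_j$ and on whether $j=i+1$, it is translation invariant, and the least element of $J$ plays the role of the letterless vertex $-1$ (the value $\mu$ may take there is irrelevant, as no vertex lies below it). Hence $G_\mu{\restriction}J$ is isomorphic to $G_w$ for the finite word $w$ read off $\mu$ along $J$, and $G_\mu{\restriction}A$, being an induced subgraph of $G_\mu{\restriction}J\cong G_w$, is a comparability graph because comparability graphs are closed under induced subgraphs. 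Compactness then gives that $G_\mu$ is a comparability graph. For the incomparability assertion I would apply the same argument to $\overline\mu$, using $\overline{G_\mu}=G_{\overline\mu}$ (Remark \ref{lem:comp}) to conclude that $\overline{G_\mu}$ is a comparability graph, i.e. $G_\mu$ is an incomparability graph.

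The only step requiring genuine care is the reduction in the infinite case, namely verifying that the restriction of $G_\mu$ to a finite interval really reproduces a graph of the form $G_w$; this rests on the translation invariance of the edge rule and on the minimum vertex of the interval behaving like the distinguished vertex $-1$. Once this is in place, compactness does the rest and no difficulty arises beyond Lemma \ref{lem:one-extension}, which carries the whole finite construction.
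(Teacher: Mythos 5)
Your proof is correct and follows essentially the same route as the paper: the finite case by induction on the word length via Lemma \ref{lem:one-extension}, and the infinite case by the Compactness Theorem after observing that every finite induced subgraph of $G_\mu$ sits inside a restriction of the form $G_w$ (with the minimum of the ambient finite interval playing the role of the vertex $-1$). The only, immaterial, difference is that the paper builds an explicit realizer as an increasing union of the finite ones when $I$ is bounded below and reserves compactness for intervals unbounded below, whereas you invoke compactness uniformly for every infinite $I$.
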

\begin{proof}
We consider two cases.\\
$(a)$ $I$ is finite or $I$ is a final segment of $\ZZ$ bounded below. Write  $I:= \{i_0, \dots, i_n,  \dots \}$ and define for every $n$ a realizer $(L _{n},  M _{n})$ of a transitive orientation of the  restriction of $G_{\mu}$ to  $\{i_0-1, i_0, \dots, i_n\}$.  For that,  use Lemma \ref{lem:one-extension} and induction on  $n$. Note that for  $n= 0$,  the restriction of $G_{\mu}$  to $\{i_0-1, i_0\}$ is either a 2-element independent set or a 2-element clique,  and these are permutation graphs. Then $(L_{\mu}, M_{\mu})$ where  $L_{\mu}:= \bigcup_{n\in I}L_n$ and $M_{\mu}:= \bigcup_{n\in I}M_n$  is a realizer of a transitive orientation of  $G_{\mu}$. Hence  $G_{\mu}$ a comparability and an incomparability graph, and a permutation graph if $I$ is finite.

\noindent $(b)$ $I$ is an initial segment of $\ZZ$.  In this case,  if $F$ is any finite subset of $I$, let   $J$ be a finite interval of $I$ containing $F$. Let $w$ be the restriction of $\mu$ to $J\setminus \{\min(J)\}$. Then the graph induced by $G_\mu$ on $J$ is $G_w$. It follows from (a) that  $G_w$ is a permutation graph, hence  $ {G_{\mu}}_ {\restriction F}$ is permutation graph. It follows   from the Compactness Theorem of First Order Logic that $G_{\mu}$ is a comparability and an incomparability graph.
\end{proof}

Theorem \ref{thm:permutation-graph} readily follows from Lemma \ref {lem:gw-permut-graph}.

The remainder of this section is devoted to characterizing the order types of linear extensions in a realizer of a transitive orientation of the graph $G_{\mu}$ in the case  $\mu$ is a $0$-$1$ word on~$\NN$.

\begin{lemma}\label{lem:finiteinterval}Let $w:=w_0\ldots w_{n-1}$ be a finite word with $n\geq 3$. If  $(L_{w}, M_{w})$ is a realizer of a transitive orientation of $G_{w}$ on $\{-1, 0, \dots, n-1\}$ constructed step by step by means of Lemma \ref{lem:one-extension}, then
 for all $0\leq k \leq n-3$ the set $\{k+2,\dots,n-1\}$ does not meet the intervals of $L_w$ and $M_w$ generated by $\{-1,0,\dots, k\}$.
\end{lemma}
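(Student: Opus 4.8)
The plan is to prove the statement by induction on the number of vertices, carrying a single invariant through each application of Lemma \ref{lem:one-extension}. Reindexing the conclusion, it suffices to establish: in the realizer $(L_w,M_w)$, for every $j$ with $2\le j\le n-1$ and every $k$ with $0\le k\le j-2$, the vertex $j$ lies outside the interval generated by $\{-1,0,\dots,k\}$, both in $L_w$ and in $M_w$; fixing $k$ and letting $j$ range over $\{k+2,\dots,n-1\}$ recovers the lemma. Throughout, the interval generated by a set $Y$ in a chain is its smallest interval $[\min Y,\max Y]$, so that \emph{$j$ lies outside it} means $j<\min Y$ or $j>\max Y$ in the chain.

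First I would isolate the placement fact hidden in the proof of Lemma \ref{lem:one-extension}. When the vertex $j$ (playing the role of $n-1$) is adjoined, the construction first normalizes the current realizer, by passing to the dual and/or interchanging the two linear orders, to a pair $(\hat L,\hat M)$ in which $j-1$ is the maximum of $\hat L$; it then places $j$ immediately below $\max(\hat L)=j-1$ and above all remaining elements in $\hat L$, and it places $j$ at an extreme of $\hat M$ (minimal if $w_j=1$, maximal if $w_j=0$). Consequently, in $\hat L$ the vertex $j$ exceeds every element of $\{-1,\dots,j-2\}$, and in $\hat M$ the vertex $j$ is below all or above all elements; hence, for every $k\le j-2$, the vertex $j$ lies outside the interval generated by $\{-1,\dots,k\}$ in both $\hat L$ and $\hat M$.

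The observation that drives the induction is that the bookkeeping operations used to normalize and to undo the normalization preserve the invariant. Reversing a linear order maps intervals to intervals and so fixes the property ``$y$ lies outside the interval generated by $Y$'', while interchanging the two orders merely permutes the pair; thus the invariant holds for $(L,M)$ if and only if it holds for any normalized $\phi(L,M)=(\hat L,\hat M)$. The inductive step then reads as follows. Assume the invariant for the realizer on $\{-1,\dots,m\}$; normalize (invariant preserved, by the observation); adjoin $m+1$ — for $j=m+1$ use the placement fact, and for $j\le m$ note that inserting the single new vertex $m+1$ leaves the relative order of $\{-1,\dots,m\}$ unchanged, hence leaves the position of $j$ relative to $\{-1,\dots,k\}$ unchanged, so the invariant persists; finally undo the normalization (invariant preserved again). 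The base cases $m\le 1$ are vacuous, and taking $m=n-1$ yields the lemma.

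I expect the only delicate point to be the bookkeeping: reading off from the proof of Lemma \ref{lem:one-extension} exactly where the new vertex lands in each of the two orders and in each case $w_j\in\{0,1\}$, and confirming that the dualization and interchange reductions invoked there do not disturb the ``outside the interval'' property. Once the invariance of that property under order-reversal and under swapping the two components is recorded, the remaining argument is a routine induction with no computation.
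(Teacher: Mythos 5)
Your proof is correct, and it is organized genuinely differently from the paper's. The paper fixes $k$ and $j$ and argues by cases on whether $j>k+2$ or $j=k+2$ and on the letter $w_j$: for $j>k+2$ it observes that $j$ is adjacent to all of $\{-1,\dots,k+1\}$ (if $w_j=0$) or to none of them (if $w_j=1$), and deduces from the constructed transitive orientation together with the realizer property that $j$ sits above all or below all of these vertices in each of $L_w$ and $M_w$; the boundary case $j=k+2$ is then settled by a direct appeal to the algorithm of Lemma \ref{lem:one-extension}. You instead run a single induction along the construction, carrying the invariant that every already-placed vertex $j$ lies outside the interval generated by $\{-1,\dots,k\}$ for all $k\le j-2$; the invariant is verified for the newly adjoined vertex by the placement fact (immediately below the top in one order, at an extreme of the other) and is propagated because inserting one element does not disturb the relative order of the old vertices and because the normalization steps (dualization and interchange of the two linear orders) visibly preserve the ``outside the interval'' property. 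Your route avoids the adjacency analysis entirely and makes explicit the bookkeeping about dualization and swapping that the paper leaves implicit; the paper's route exposes the combinatorial mechanism (each later vertex is adjacent to all or to none of an earlier initial block, according to $w_j$) at the price of a heavier case analysis. Both arguments ultimately rest on the same placement data extracted from the proof of Lemma \ref{lem:one-extension}.
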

 \begin{proof}

 Let $k\in \{0,\ldots, n-3\}$ and $j\in \{k+2,\ldots, n-1\}$.\\
\textbf{Case 1:} $j>k+2$.\\
Suppose $w_j=0$. Then $j$ is adjacent to all vertices of $\{-1,0,\ldots, k+1\}$. It follows from the algorithm described in Lemma \ref{lem:one-extension} that in a transitive orientation of $G_w$ the vertex $j$ is larger than all elements of $\{-1,0,\ldots, k+1\}$ or the vertex $j$ is smaller than all elements of $\{-1,0,\ldots, k+1\}$. Hence, if $\{L_w,M_w\}$ is a realizer of a transitive orientation of $G_w$, then $j$, in both $L_w$ and  $M_w$,  is either above all elements of $\{-1,0,\ldots, k+1\}$ or is below all elements of $\{-1,0,\ldots, k+1\}$. Hence, $j\not \in I$. We now consider the case $w_j=1$. Then $j$ is not adjacent to any vertex of $\{-1,0,\ldots, k+1\}$. Hence, if $\{L_w, M_w\}$ is a realizer of a transitive orientation of $G_w$, then $j$ is either above all elements of $\{-1,0,\ldots, k+1\}$ in $L_w$ and below all elements of $\{-1,0,\ldots, k+1\}$ in $M_w$,  or $j$ is below all elements of $\{-1,0,\ldots, k+1\}$ in $L_w$ and above all elements of $\{-1,0,\ldots, k+1\}$ in $M_w$. Hence, $j\not \in I$.\\
\textbf{Case 2:} $j=k+2$.\\
We may assume without loss of generality that $k+1$ is maximal in the restriction of a transitive orientation $P$ of $G_{w}$ and $L_{w}$ to $\{-1,0,\dots,k+1\}$ (otherwise consider the dual of $P$ which is a transitive orientation of the restriction of $G_{w}$ to $\{-1,0,\dots,k+1\}$). It follows from the algorithm described in Lemma \ref{lem:one-extension} that $k+2\not \in I$ as required.
\end{proof}

As it is customary, we denote by $\omega$ the order type of $\NN$, by $\omega^*$ the order type of its dual and by $\omega^*+ \omega$ the order type of $\ZZ$.

The proof of the following Lemma is easy and is left to the reader.

\begin{lemma}
  \begin{enumerate}[$(1)$]
    \item The intersection of two linear orders of order type $\omega$ is a w.q.o.
    \item The intersection of two linear orders of order types $\omega$ and $\omega^*$ has no infinite chains.
    \item The intersection of two linear orders of order types $\omega$ and $\omega^*+\omega
   $ is well founded.
  \end{enumerate}
\end{lemma}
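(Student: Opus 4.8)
The plan is to prove all three parts uniformly from two elementary observations about the intersection order. Fix the common domain $V$ carrying the two linear orders $\leq_1$ and $\leq_2$, write $\leq_P$ for $\leq_1\cap\leq_2$ and set $P:=(V,\leq_P)$. First, since $\leq_P\,\subseteq\,\leq_1$ as relations, every strictly $\leq_P$-descending sequence is strictly $\leq_1$-descending. Second, for distinct $x,y\in V$ the two linear orders either agree on $\{x,y\}$, in which case $x,y$ are $\leq_P$-comparable, or they disagree, in which case $x,y$ are $\leq_P$-incomparable. Hence a chain of $P$ is a subset of $V$ on which $\leq_1$ and $\leq_2$ induce the \emph{same} linear order, and an antichain of $P$ is a subset on which $\leq_2$ induces the \emph{reverse} of the order induced by $\leq_1$.

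For (1) I would show that $P$ has neither an infinite descending chain nor an infinite antichain; since a poset is a well-quasi-order exactly when it has neither, this gives the claim. Well-foundedness is immediate from the first observation, because $\leq_1$, having type $\omega$, is a well-order. For antichains, suppose $A$ is an infinite antichain. By the second observation $\leq_2$ reverses $\leq_1$ on $A$; enumerating $A$ increasingly in $\leq_1$ (possible since $\leq_1{\restriction}A$ is an infinite suborder of $\omega$ and so has type $\omega$) yields $a_0<_1 a_1<_1\cdots$, whence $a_0>_2 a_1>_2\cdots$, an infinite $\leq_2$-descending sequence. This contradicts the well-foundedness of $\leq_2$, so no infinite antichain exists.

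For (2) let $C$ be a chain of $P$; by the second observation $\leq_1$ and $\leq_2$ induce one and the same linear order on $C$. Regarded inside $(V,\leq_1)\cong\omega$ this order is well-founded, and regarded inside $(V,\leq_2)\cong\omega^*$ it is reverse-well-founded (every nonempty subset has a greatest element). An infinite well-ordered set has type at least $\omega$ and therefore contains a strictly increasing $\omega$-sequence, contradicting reverse-well-foundedness; hence $C$ is finite, and $P$ has no infinite chain. For (3) the conclusion is the weakest and follows at once from the first observation: $\leq_1$ is a well-order, so every nonempty $S\subseteq V$ has a $\leq_1$-least element, which is $\leq_P$-minimal in $S$; thus $P$ is well-founded. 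Here the precise order type $\omega^*+\omega$ of $\leq_2$ is irrelevant --- only the well-ordering of $\leq_1$ is used, and indeed $P$ may well carry infinite antichains in this case.

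There is no real obstacle here; the statements are genuinely routine. The only points needing a moment's care are the chain/antichain dichotomy for an intersection of two linear orders and the elementary fact that an infinite linear order cannot be order-embedded into both $\omega$ and $\omega^*$, both of which I have isolated above.
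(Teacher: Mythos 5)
Your proof is correct: the chain/antichain dichotomy for an intersection of two linear orders (chains are sets where the two orders agree, antichains are sets where they are reversed) together with the well-foundedness inherited from the $\omega$-factor gives all three parts, and each step checks out. The paper explicitly leaves this lemma to the reader, so there is no proof to compare against; your write-up is the standard argument the authors evidently had in mind, and it supplies exactly what they omitted.
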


\begin{corollary}Let $\mu$ be a word on $\NN$. If $(L,M)$ is a realizer of a transitive orientation of $G_\mu$, then the order types of $L$ and $M$ embed into $\omega^*+\omega$. Furthermore, if $\mu$ has finitely many $0$'s or $1$'s, then the order types of $L$ and $M$ embed into $\omega$ or $\omega^*$, else at least one of $L$ and $M$ have order type $\omega^*+\omega$.
\end{corollary}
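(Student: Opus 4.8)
The plan is to read off all three conclusions from the finiteness of the convex hulls furnished by Lemma~\ref{lem:finiteinterval}, and then to split the \emph{furthermore} clause into two regimes using the clique/independent-set decomposition of Remark~\ref{remark:union-clique-inde} together with the lemma immediately preceding this corollary (on intersections of linear orders of types $\omega$, $\omega^*$ and $\omega^*+\omega$). The order-theoretic fact I will use throughout is elementary: a countable linear order embeds into $\omega^*+\omega$ if and only if each of its closed intervals is finite; indeed, cutting at any point $x_0$ exhibits such an order as $\Lambda+R$ with $\Lambda$ of type at most $\omega^*$ and $R$ of type at most $\omega$. For the first assertion I would fix the realizer $(L,M)$ of a transitive orientation $P:=L\cap M$ of $G_\mu$ (taken, as in the proof of Lemma~\ref{lem:gw-permut-graph}, to be built step by step by Lemma~\ref{lem:one-extension}, so that Lemma~\ref{lem:finiteinterval} applies), and set $V_k$ to be the convex hull in $L$ of $\{-1,0,\dots,k\}$, with $W_k$ the analogous hull in $M$. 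By Lemma~\ref{lem:finiteinterval} the set $V_k$ misses $\{k+2,\dots\}$, so $V_k$ is finite; the $V_k$ are convex, increase with $k$, and their union is $L$. Hence every interval $[x,y]_L$ lies in some $V_k$ and is finite, so $L$—and symmetrically $M$—embeds into $\omega^*+\omega$.

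For the dichotomy I use that, by Remark~\ref{remark:union-clique-inde}, the $0$-positions of $\mu$ split into two cliques of $G_\mu$, hence two chains of $P$, and the $1$-positions into two independent sets, hence two antichains of $P$. Suppose both symbols occur infinitely often. Then $G_\mu$ has an infinite clique, so $P$ has an infinite chain, and an infinite independent set, so $P$ has an infinite antichain. By the first part each of $L,M$ has type $\omega$, $\omega^*$ or $\omega^*+\omega$, and I eliminate every pair in which neither is $\omega^*+\omega$: if $(L,M)$ were $(\omega,\omega)$ then item~(1) of the preceding lemma makes $P$ well-quasi-ordered, contradicting the infinite antichain; $(\omega^*,\omega^*)$ is excluded by applying the same to $P^*=L^*\cap M^*$, whose antichains coincide with those of $P$; and $(\omega,\omega^*)$ or $(\omega^*,\omega)$ is excluded by item~(2), which denies $P$ an infinite chain. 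Hence at least one of $L,M$ has type $\omega^*+\omega$.

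Now suppose $\mu$ has only finitely many $0$'s; the finitely-many-$1$'s case reduces to this via Remark~\ref{lem:comp}, since $\overline{G_\mu}=G_{\overline\mu}$ and the pair $(L,M^*)$ realizes a transitive orientation of $\ainc(P)=G_{\overline\mu}$, reversing one coordinate, while $\{\omega,\omega^*\}$ is closed under reversal. With finitely many $0$'s, $G_\mu$ has no infinite clique, and by Remark~\ref{remark:union-clique-inde}(1) it becomes, after deletion of a finite set $K$, a disjoint union of at most two one-way infinite paths. For such a path the realizer produced from its endpoint by Lemma~\ref{lem:one-extension} has type $\omega$ (or dually $\omega^*$): each step attaches the new vertex at an end of the current orders, so a coinitial descending part and a cofinal ascending part cannot both appear. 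Reinstating the finite set $K$ perturbs the orders only finitely, leaving membership in $\{\omega,\omega^*\}$ unchanged, so both $L$ and $M$ embed into $\omega$ or $\omega^*$.

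The main obstacle is precisely this last step, the exclusion of type $\omega^*+\omega$ when one symbol is rare. Counting chains and antichains against the intersection lemma is insufficient here: a one-way fence has no infinite chain at all, yet that fact alone does not forbid an $(\omega^*+\omega)$-realizer. What really rules it out is two-dimensionality itself—if one tries to lay the minimal antichain of a one-way fence as an $\omega^*$ and the maximal antichain as an $\omega$, the cover relations force a $3$-cycle in the conjugate order, so no such $M$ exists. I would therefore settle this subcase either through the explicit one-ended construction of Lemma~\ref{lem:one-extension} or by isolating this cyclicity obstruction directly. A secondary point to address is that Lemma~\ref{lem:finiteinterval}, and hence the convex-hull argument for the first assertion, is stated for realizers built step by step; for a genuinely arbitrary realizer one must first reduce to this canonical form before running the argument above.
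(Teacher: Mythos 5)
Your proof follows essentially the same route as the paper's: the first assertion via the finite convex hulls supplied by Lemma~\ref{lem:finiteinterval}, the case where both letters occur infinitely often via an infinite chain plus an infinite antichain played against the preceding lemma on intersections of linear orders, and the remaining case via reduction to the one-way infinite path after deleting a finite set. The two points you rightly flag as delicate --- applying Lemma~\ref{lem:finiteinterval} to an arbitrary realizer rather than one built step by step, and excluding type $\omega^*+\omega$ for realizers of the one-way path (which the paper dismisses as ``easily seen'') --- are glossed over in the paper as well, and both are most cleanly closed by observing that the relevant graph is prime, so its realizer is unique up to transposition and duality and therefore must agree with the explicit construction of Lemma~\ref{lem:one-extension}.
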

\begin{proof}
Let $(L,M)$ be a realizer of a transitive orientation of $G_\mu$. According to Lemma \ref{lem:finiteinterval}, for every $k\in \NN$,  the least interval of $L$ containing $\{-1, 0, \dots,  k\}$ is included in $\{-1, 0, \dots,  k+1\}$. Hence $L$ is a countable increasing union of finite intervals, proving that $L$ embeds in $\ZZ$.

If $\mu$ has finitely many $0$'s or $1$'s, then there exists a final interval $I$ of $\NN$ such that the restriction of $G_\mu$ to $I$ is an infinite one way path or the complement of an infinite one way path. It can be easily seen that the order types in a  realizer of transitive orientations of an infinite one way path or its complement are $\{\omega,\omega\}$ or $\{\omega,\omega^*\}$ or $\{\omega^*,\omega^*\}$. Since $\NN\setminus I$ is an initial segment of $\NN$ we have that the order types of a linear extension in a realizer $P_\mu$ are $\{\omega,\omega\}$ or $\{\omega,\omega^*\}$ or $\{\omega^*,\omega^*\}$. Next we suppose that $\mu$ has infinitely many $0$'s and $1$'s. There exists then two infinite subsets of nonconsecutive integers $J$ and $K$ so that $\mu$ is constant on $J$ and $K$, and $\mu$ takes the value $1$ on $J$ and takes the value $0$ on $K$. Then $P_\mu$ has an infinite antichain, induced by the set $J$, and an infinite chain, induced by the set $K$. The order types of a linear extension in a realizer of $P_\mu$ cannot be $\{\omega,\omega\}$ or $\{\omega^*,\omega^*\}$ because otherwise $P_\mu$ or its dual is w.q.o and hence has no infinite antichains. The order types of a linear extension in a realizer of  $P_\mu$ cannot be $\{\omega,\omega^*\}$ either because otherwise all chains of $P_\mu$ would be finite. 
\end{proof}

\begin{figure}[h]
\begin{center}
\leavevmode \epsfxsize=3in \epsfbox{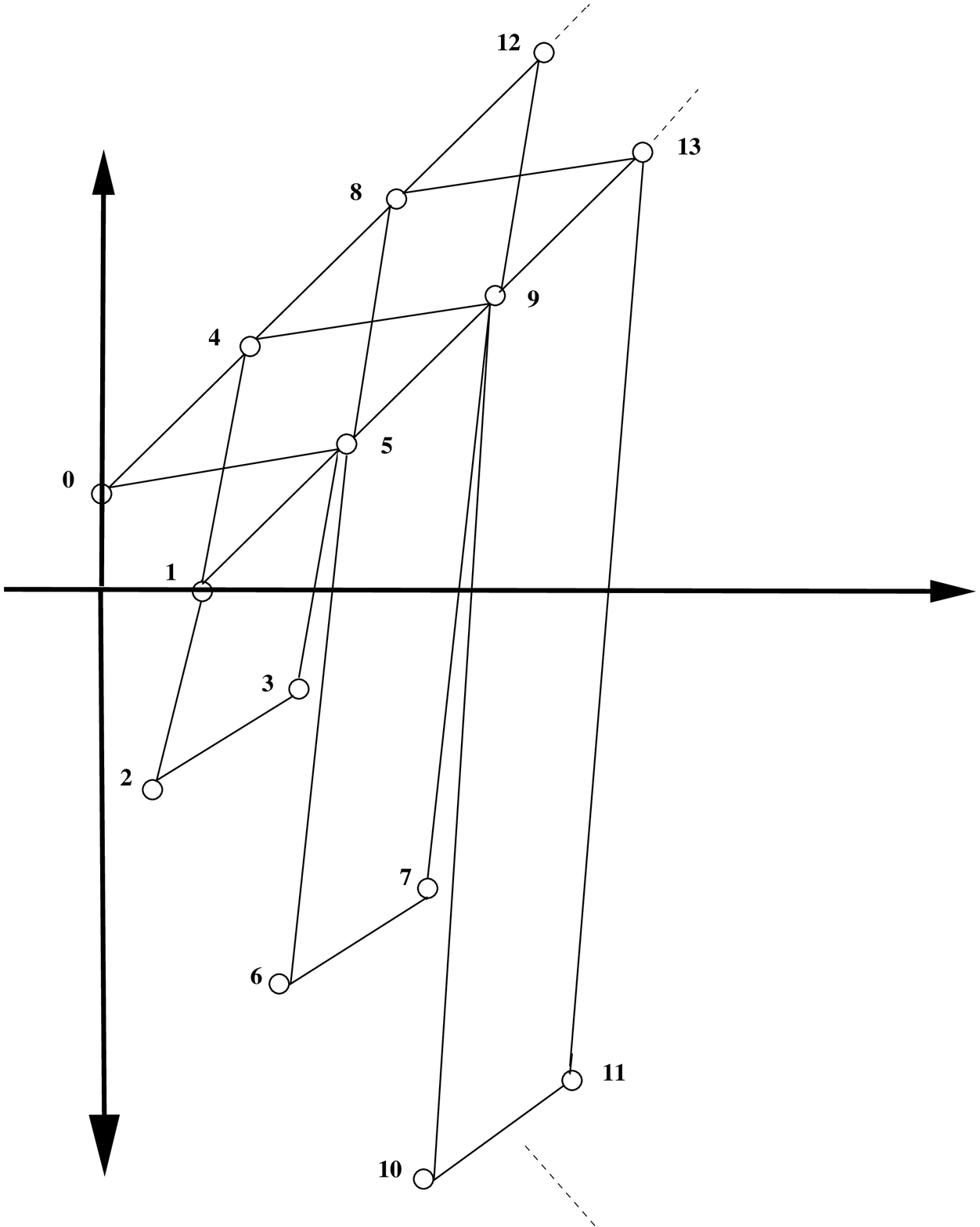}
\end{center}
\caption{An embedding into $\NN \times \ZZ$ of a transitive orientation of the graph corresponding to the periodic $0$-$1$ sequence $\mu:=001100110011\dots$.}
\label{fig:omega-z}
\end{figure}

We now provide examples of $P_\mu$ that have realizers of type $(\omega,\omega^*+\omega)$ and $(\omega^*+\omega,\omega^*+\omega)$.

\begin{example}Let $\mu:=001100110011\dots$. The order types of a linear extension in a realizer of  $P_\mu$ are $\omega$ and $\omega^*+\omega$. Indeed, an embedding of $P_\mu$ into $\NN \times \ZZ$ is depicted in Figure \ref{fig:omega-z}. It follows easily that $P_\mu$ has a realizer of type $(\omega,\omega^*+\omega)$. Since $P_\mu$ is prime it has a unique realizer up to a transposition.
\end{example}

\begin{figure}[h]
\begin{center}
\leavevmode \epsfxsize=5in \epsfbox{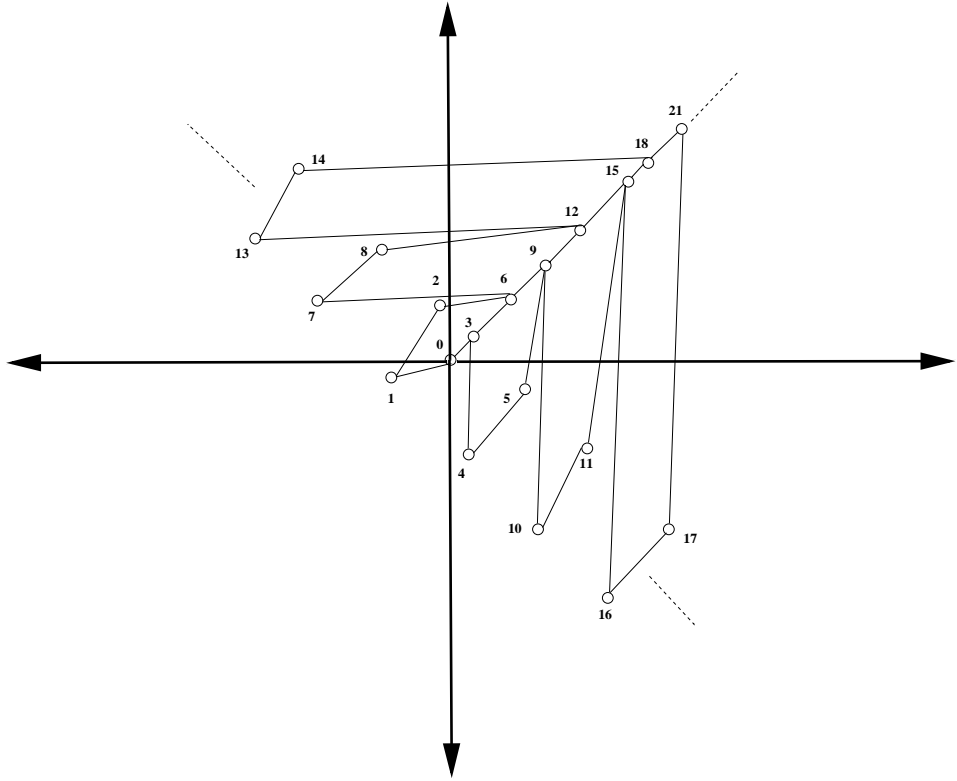}
\end{center}
\caption{An embedding into $\ZZ \times \ZZ$ of a transitive orientation of the graph corresponding to the periodic $0$-$1$ sequence $\mu:=011011011\dots$.}
\label{fig:z-z}
\end{figure}

\begin{example}Let $\mu:=011011011\dots$. The order types of a linear extension in a realizer of  $P_\mu$ is $\ZZ$. Indeed, an embedding of $P_\mu$ into $\ZZ \times \ZZ$ is depicted in Figure \ref{fig:z-z}. It follows easily that $P_\mu$ has a realizer of type $(\omega^*+\omega,\omega^*+\omega)$. Since $P_\mu$ is prime it has a unique realizer up to a transposition.
\end{example}

We should mention that in the first example $G_{\nu}$ nor its complement are permutation graphs, while in the second example both are.

\section{Modules in $G_\mu$}\label{sec:modules}


The aim of this section is to characterize the modules of a $0$-$1$ graph. We prove among other things, that if $G_\mu$ is not prime, then $\mu$ contains large factors of $0$'s or $1$'s. Results of this section will be used in Section \ref{section:embeddings} to derive properties of embeddings between $0$-$1$ graphs.

We recall that if $G:=(X,E)$ is a graph, then a subset $M$ of $X$ is called a \emph{module} in $G$ if for every $x\not \in M$, either $x$ is adjacent to all vertices of $M$ or $x$ is not adjacent to any vertex of $M$.


The following lemma will be useful.

\begin{lemma}\label{lem:module-complement}A graph and its complement have the same set of modules. In particular, $G_{\mu}$ and $G_{\overline{\mu}}$ have the same modules.
\end{lemma}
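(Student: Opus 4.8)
The plan is to argue directly from the definition of a module, observing that the defining condition is manifestly self-dual under complementation. Let $G=(X,E)$ be a graph and $M\subseteq X$. By definition, $M$ is a module precisely when every vertex $x\notin M$ is either adjacent to all vertices of $M$ or adjacent to no vertex of $M$. Passing to the complement $\overline G$ interchanges adjacency and non-adjacency on the same vertex set: a vertex $x$ that is adjacent to all of $M$ in $G$ is adjacent to none of $M$ in $\overline G$, and a vertex adjacent to none of $M$ in $G$ is adjacent to all of $M$ in $\overline G$. Hence the disjunction ``$x$ is adjacent to all of $M$, or $x$ is adjacent to none of $M$'' holds in $G$ if and only if it holds in $\overline G$; only the two alternatives are exchanged.

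From this I would conclude at once that $M$ is a module of $G$ if and only if $M$ is a module of $\overline G$, so $G$ and $\overline G$ have the same set of modules. This settles the first assertion. For the second, I would invoke Remark \ref{lem:comp}, which gives the identity $\overline{G_\mu}=G_{\overline{\mu}}$. Applying the first part to $G:=G_\mu$ shows that $G_\mu$ and $\overline{G_\mu}$ share the same modules, and rewriting $\overline{G_\mu}$ as $G_{\overline{\mu}}$ yields that $G_\mu$ and $G_{\overline{\mu}}$ have the same modules, as required.

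As for the main difficulty: there is essentially none to flag. The statement is an immediate consequence of unwinding the definition of a module, the only point being that the ``complete-or-anticomplete'' condition is symmetric in edges and non-edges. The sole external input is the already-established identity $\overline{G_\mu}=G_{\overline{\mu}}$ of Remark \ref{lem:comp}, so the argument is purely formal and requires no case analysis on $\mu$.
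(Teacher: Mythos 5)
Your proof is correct: the module condition is manifestly self-dual under complementation, and the second assertion follows from Remark \ref{lem:comp}. The paper states this lemma without proof, treating it as immediate, and your argument is exactly the routine unwinding of the definition that justifies that omission.
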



Lemma \ref{lem:module-complement} and Remark \ref{lem:comp} of subsection \ref{subsection:01graphs} combined together will allow us to simplify proofs. Indeed, if we are arguing on the value of $\mu$ on a particular integer $i$ we may only consider the case $\mu(i)=0$ (or $\mu(i)=1$).

We recall some properties of modules in a graph. The proof of the following lemma is easy and is left to the reader (see \cite{fraisse84}).

\begin{lemma}\label{lem:prop-modules}Let $G=(V,E)$ be a graph. The following propositions are true.
\begin{enumerate}[$(1)$]
  \item The intersection of a nonempty set of modules is a module (possibly empty).
  \item The union of two modules with nonempty intersection is a module.
  \item For two modules $M$ and $N$, if $M\setminus N\neq \varnothing$, then $N\setminus M$ is a module.
\end{enumerate}
\end{lemma}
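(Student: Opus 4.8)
The plan is to argue directly from the definition of a module, which I restate as follows: $M\subseteq V$ is a module precisely when each vertex $x\notin M$ is either adjacent to every vertex of $M$ (call $x$ \emph{complete} to $M$) or adjacent to no vertex of $M$ (call $x$ \emph{anticomplete} to $M$). The one observation I will use repeatedly is that if $x$ is complete (resp. anticomplete) to a set, then it is complete (resp. anticomplete) to every subset. Part $(1)$ is then immediate: writing $M:=\bigcap_{i\in I}M_i$ and taking $x\notin M$, there is some $i$ with $x\notin M_i$; since $M_i$ is a module, $x$ is complete or anticomplete to $M_i$, hence to the subset $M\subseteq M_i$. For part $(2)$, put $U:=M\cup N$ and fix $z\in M\cap N$. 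Given $x\notin U$ (so $x\notin M$ and $x\notin N$), the behaviour of $x$ on $M$ and on $N$ is governed by the single pair $\{x,z\}$: if $x$ is adjacent to $z$ then, being complete or anticomplete to $M$ and adjacent to $z\in M$, it must be complete to $M$, and likewise complete to $N$; if $x$ is non-adjacent to $z$ it is anticomplete to both $M$ and $N$. Either way $x$ is complete or anticomplete to $M\cup N$.

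The main obstacle is part $(3)$, since unlike the first two it must genuinely combine the module structures of $M$ and $N$, and the hypothesis $M\setminus N\neq\emptyset$ has to be used. I would set $A:=N\setminus M$; if $A=\emptyset$ it is trivially a module, so assume $A\neq\emptyset$ and fix a witness $y\in M\setminus N$. Taking an arbitrary $x\notin A$, I split into two cases. If $x\notin N$, then since $A\subseteq N$ and $N$ is a module, $x$ is complete or anticomplete to $N$, hence to $A$. If instead $x\in M$, I show that $x$ treats any two $a,a'\in A$ alike by the chain of equivalences
\[
x\sim a \iff y\sim a \iff y\sim a' \iff x\sim a',
\]
where $\sim$ denotes adjacency: the outer two equivalences hold because $a,a'\notin M$ are each complete or anticomplete to $M\supseteq\{x,y\}$, and the middle one holds because $y\notin N$ is complete or anticomplete to $N\supseteq\{a,a'\}$. (The degenerate case $x=y$ is covered by the middle equivalence alone.) This shows $x$ is complete or anticomplete to $A$, completing $(3)$.

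The only delicate point is the bookkeeping of which vertex lies outside which module, so that the module property may legitimately be invoked at each link of the chain: for $(3)$ one must check that $y\notin N$ and that $a,a'\notin M$, which is exactly what the choice $y\in M\setminus N$ and the definition $A=N\setminus M$ guarantee. No appeal to Lemma \ref{lem:module-complement} or to the earlier $0$-$1$ graph material is needed; everything follows from the definition alone, so the write-up will be short.
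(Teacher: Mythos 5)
Your proof is correct and complete. The paper itself gives no proof of this lemma --- it is explicitly left to the reader with a pointer to Fra\"{\i}ss\'e's work --- so there is no argument to compare against; your direct verification from the definition (the reduction of parts $(1)$ and $(2)$ to the behaviour of $x$ on a single witness vertex, and the three-link chain of equivalences through $y\in M\setminus N$ for part $(3)$) is exactly the standard elementary argument, and your case split in $(3)$ does cover all $x\notin N\setminus M$, since any such $x$ lying in $N$ must lie in $M$.
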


Let $G:=(X,E)$ be a graph and $\{x,y,z\} \subseteq X$. We say that $z$ \emph{separates} $x$ and $y$ if $\{z,x\}$ is an edge and $\{z,y\}$ is not and edge, or vice versa. For instance,
\begin{itemize}
\item if $I$ is an interval of $\NN$, $\mu$ is a $0$-$1$ sequence on $I$ and $i\in I$, then $i$ separates $i-1$ and $j$ for all $j<i-1$ in $G_\mu$. (Indeed,  $\{j,i\}$ is an edge if and only if $\{i-1, i\}$ is not an edge).
\end{itemize}

\begin{lemma}Let $G$ be a graph and $\{x,y,z\}\subseteq V(G)$. If $z$ separates $x$ and $y$ and if $x$ and $y$ belong to a module in $G$, then $z$ belongs to that module.
\end{lemma}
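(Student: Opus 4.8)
The plan is to argue directly from the definition of a module, by contradiction. Let $M$ be a module of $G$ with $x,y\in M$, and suppose toward a contradiction that $z\notin M$. The only fact I need is the defining property of a module recalled just above the statement: for every vertex $w\notin M$, either $w$ is adjacent to all vertices of $M$ or $w$ is adjacent to no vertex of $M$.

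Applying this property to the vertex $z$, which lies outside $M$ by assumption, I get a dichotomy. In the first case $z$ is adjacent to every element of $M$; since $x,y\in M$, this means $\{z,x\}$ and $\{z,y\}$ are both edges. In the second case $z$ is adjacent to no element of $M$, so neither $\{z,x\}$ nor $\{z,y\}$ is an edge. In either case $z$ is adjacent to $x$ and $y$ simultaneously or to neither of them.

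This directly contradicts the hypothesis that $z$ separates $x$ and $y$, which by definition means that exactly one of $\{z,x\}$, $\{z,y\}$ is an edge. Hence the assumption $z\notin M$ is untenable, and I conclude $z\in M$.

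There is no real obstacle here: the statement is essentially the contrapositive of the module condition specialized to the pair $\{x,y\}$, and the whole argument is a single application of the definition. The only thing to be careful about is to phrase ``separates'' correctly (adjacent to exactly one of $x,y$) so that it is manifestly incompatible with both outcomes of the module dichotomy (adjacent to both, or adjacent to neither).
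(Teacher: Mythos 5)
Your proof is correct and is exactly the one-line argument from the definition of a module; the paper in fact states this lemma without any proof, treating it as immediate, and your write-up supplies precisely that omitted routine verification.
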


\begin{lemma}\label{lem:intial-interval-module}Let $\mu$ be a $0$-$1$ sequence on an interval $I:=\{i_1,\ldots,i_n,\ldots\}$ of $\NN$ and let $i_0:=i_1-1$. Let $J\subseteq \{i_0\}\cup I$ be a nonempty subset.   Let $J^{-}$ be the maximal initial segment of $J$ which is an interval of $\{i_0\}\cup I$. If $J$ is not an interval of $\{i_0\}\cup I$, then $J^{-}$ is a module of ${G_{\mu}}_{\restriction J}$. In particular, if $J^{-}$ is not a singleton, then ${G_{\mu}}_{\restriction J}$ is not prime.
\end{lemma}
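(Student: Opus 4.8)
The plan is to compute adjacencies directly from the definition of $G_\mu$, exploiting the gap that separates $J^-$ from the rest of $J$. First I would describe $J^-$ concretely. Writing the elements of $J$ in increasing order as $j_1 < j_2 < \cdots$, the set $J^-$ is the maximal initial run of consecutive integers, say $J^- = \{j_1, \dots, j_k\}$ with $j_{t+1} = j_t + 1$ for $t < k$. Since $J$ is assumed not to be an interval of $\{i_0\}\cup I$, we have $J^- \subsetneq J$ and, by maximality of $J^-$, the next element of $J$ satisfies $j_{k+1} \geq j_k + 2$. Consequently every $x \in J \setminus J^-$ obeys $x \geq j_k + 2$, hence $x > j$ and $x \neq j+1$ for all $j \in J^-$. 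Note also that $x \in I$ (because $x > i_0 = \min(\{i_0\}\cup I)$), so $\mu_x$ is well defined.

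The key computation is then immediate. Fix $x \in J \setminus J^-$ and $j \in J^-$; since $x > j$, the pair $\{j, x\}$ is an edge of $G_\mu$ if and only if either $\mu_x = 1$ and $x = j+1$, or $\mu_x = 0$ and $x \neq j+1$. Because $x \neq j+1$, the first alternative never holds and the second holds exactly when $\mu_x = 0$. Thus $\{j,x\}$ is an edge if and only if $\mu_x = 0$, a condition that does not depend on the choice of $j \in J^-$. In other words, $x$ is adjacent to every vertex of $J^-$ when $\mu_x = 0$ and to none of them when $\mu_x = 1$. This is precisely the defining property of a module, so $J^-$ is a module of ${G_\mu}_{\restriction J}$.

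For the \emph{in particular} clause I would verify that $J^-$ is a \emph{nontrivial} module. It is nonempty (it contains $j_1 = \min J$) and, since $J$ is not an interval, $J^- \subsetneq J$, so it is not the whole vertex set of ${G_\mu}_{\restriction J}$; if moreover $J^-$ is not a singleton then $|J^-| \geq 2$, and $J^-$ is a genuine nontrivial module, whence ${G_\mu}_{\restriction J}$ is not prime.

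The argument is entirely elementary; the only point requiring care is the bookkeeping around the definitions of initial segment, interval, and the edge rule. The crux, and the one place where the hypotheses are actually used, is the observation that the gap after $J^-$ forces $x \neq j+1$ for every $j \in J^-$, which collapses the two-case edge rule into a single condition depending solely on $\mu_x$. Once this uniformity is established the module property, and with it the failure of primality, follow with no further computation.
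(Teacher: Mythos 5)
Your proof is correct and follows essentially the same route as the paper's: both arguments rest on the observation that every $x\in J\setminus J^{-}$ lies at least two past $\max(J^{-})$, so the edge rule collapses to a condition on $\mu_x$ alone and $x$ is adjacent to all of $J^{-}$ or to none of it. The paper phrases this via the notion of separation (noting that $\max(J^{-})+1$ is the only element of $I$ that could separate two elements of $J^{-}$, and it is not in $J\setminus J^{-}$), while you carry out the adjacency computation directly, but the content is identical.
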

\begin{proof} If $J$ is not an interval of $\{i_0\}\cup I$, $J\setminus J^{-}$ is nonempty. Furthermore,   no element of $J\setminus J^{-}$ separates two elements of $J^{-}$ (indeed, the element $i_k:= \max (J^{-}) +1$ does not belong to $J^{-}$ and is the only element of $I$ that separates two elements of $J^{-}$).  Therefore $J^{-}$ is a module of ${G_{\mu}}_{\restriction J}$. If $J^{-}$ is not a singleton, then since it is distinct from $J$,  it is a nontrivial module of ${G_{\mu}}_{\restriction J}$ and therefore  ${G_{\mu}}_{\restriction J}$ is not prime.
\end{proof}

\begin{corollary}\label{lem:5} Let $\mu$ be a $0$-$1$ word on $\NN$ and let $F\subseteq \{-1\}\cup \NN$ be such that ${G_{\mu}}_{\restriction F}$ is prime. Then $F\setminus \{\min(F)\}$ is an interval of $\NN$.
\end{corollary}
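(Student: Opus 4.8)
The plan is to argue by contradiction, exhibiting a nontrivial module of ${G_\mu}_{\restriction F}$ whenever $F\setminus\{\min F\}$ fails to be an interval; this runs parallel to the proof of Lemma \ref{lem:intial-interval-module}, applied to the tail of $F$.

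First I would fix notation: set $a_0:=\min F$ and $F':=F\setminus\{a_0\}$, noting that $F'\subseteq\NN$, since $-1$, if it belongs to $F$, must equal $a_0$. If $F'$ has at most one element it is trivially an interval, so I assume $|F'|\geq 2$ and write $F=\{a_0<a_1<a_2<\cdots\}$. Assuming $F'$ is not an interval of $\NN$, I would let $m\geq 1$ be the least index for which $a_{m+1}>a_m+1$; by minimality $a_{i+1}=a_i+1$ for $1\leq i<m$, and $a_{m+1}$ is a genuine element of $F$.

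The heart of the argument is the claim that $M:=\{a_0,a_1,\dots,a_m\}$ is a module of ${G_\mu}_{\restriction F}$. The vertices of $F$ lying outside $M$ are exactly the $a_j$ with $j\geq m+1$, and each satisfies $a_j\geq a_{m+1}>a_m+1=\max(M)+1$, so $a_j>a_i+1$ for every $a_i\in M$. By the definition of $G_\mu$, whenever $a_i<a_j$ and $a_j\neq a_i+1$ the pair $\{a_i,a_j\}$ is an edge precisely when $\mu_{a_j}=0$, a condition independent of $a_i$. Hence each outside vertex $a_j$ is adjacent either to all of $M$ (if $\mu_{a_j}=0$) or to none of $M$ (if $\mu_{a_j}=1$), which is exactly the module condition. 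Since $|M|=m+1\geq 2$ and $a_{m+1}\in F\setminus M$, the module $M$ is nontrivial, contradicting the primality of ${G_\mu}_{\restriction F}$. Therefore $F'=F\setminus\{\min F\}$ is an interval of $\NN$.

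The main subtlety, and the only point where this is more than a direct quotation of Lemma \ref{lem:intial-interval-module}, is that one must prepend $\min F$ to the maximal initial interval $\{a_1,\dots,a_m\}$ of $F'$. Indeed, when $a_1>a_0+1$ the set $\{a_1,\dots,a_m\}$ alone need not be a module of ${G_\mu}_{\restriction F}$, because $a_0$ can separate its vertices; including $a_0$ repairs this. This also explains why the statement removes $\min F$ rather than asserting that $F$ itself is an interval: the minimum of $F$ is precisely the one vertex that the primality hypothesis is allowed to leave detached from the rest by a gap.
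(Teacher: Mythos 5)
Your proof is correct and takes essentially the same route as the paper: both arguments exhibit $\{\min F\}$ together with the maximal initial interval of $F\setminus\{\min F\}$ as a nontrivial module of ${G_\mu}_{\restriction F}$, yours by a direct adjacency computation (edges from a vertex $a_j$ lying beyond the gap depend only on $\mu_{a_j}$) and the paper's by first invoking Lemma~\ref{lem:intial-interval-module} and then adjoining $\min F$. The subtlety you flag --- that $\min F$ must be prepended because it could otherwise separate vertices of the initial interval --- is exactly the point made in the paper's own proof.
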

\begin{proof} We apply Lemma \ref{lem:intial-interval-module} with $J:=F\setminus \{\min(F)\}$. It follows that if $J$ is not an interval of $I$, then $J^{-}$ is a module of ${G_{\mu}}_{\restriction J}$. Since no element of $J\setminus J^{-}$ separates two elements of $\{\min(F)\}\cup J^{-}$ we infer that $\{\min(F)\}\cup J^{-}$ is a module of ${G_{\mu}}_{\restriction F}$ which is prime. Hence, if $J^{-}$ is not empty $\{\min(F)\}\cup J^{-}$ is a nontrivial module of ${G_{\mu}}_{\restriction F}$ which is impossible. This proves that $F\setminus \{\min(F)\}$ is an interval of $\NN$ as required.
\end{proof}

\begin{lemma}\label{lem:module}Let $I:=\{i_1,\ldots,i_n, \ldots\}$ be an interval of $\NN$ of cardinality at least $3$, $\mu$ be a $0$-$1$ sequence on $I$ and $i_0=i_1-1$. For $k\geq 2$, $\{i_0,\ldots,i_{k-1}\}$ is a module of $G_\mu \setminus \{i_j\}$ if and only if $k=j$.
\end{lemma}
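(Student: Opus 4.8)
The plan is to read everything directly off the edge rule defining $G_\mu$ together with the separation property recorded just before this lemma. Recall that for $i\in I$ the vertex $i$ separates $i-1$ from every vertex $m<i-1$, because $\{m,i\}$ is an edge exactly when $\mu_i=0$ while $\{i-1,i\}$ is an edge exactly when $\mu_i=1$. It is also convenient to restate the definition of a module in the form I will actually use: a set $M$ is a module of a graph $H$ (with $M\subseteq V(H)$) if and only if no vertex of $V(H)\setminus M$ separates two vertices of $M$. Writing $M:=\{i_0,\dots,i_{k-1}\}$ and using that the $i_n$ are consecutive integers with $i_{n+1}=i_n+1$, the set $M$ consists of the $k$ smallest vertices of $G_\mu$. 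I would then split into three cases according to the position of $j$ relative to $k$.

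First, if $j<k$ then $i_j\in M$, so $M$ is not even contained in the vertex set of $G_\mu\setminus\{i_j\}$ and hence cannot be a module of it; as $j\neq k$ this agrees with the claimed equivalence. Second, if $j>k$ then $i_k\neq i_j$, so $i_k$ survives the deletion and lies outside $M$. By the separation property $i_k$ separates $i_{k-1}$ from $i_{k-2}$, and both of these belong to $M$ since $k\geq2$. An outside vertex separating two members of $M$ violates the module condition, so $M$ is not a module of $G_\mu\setminus\{i_j\}$, again matching $j\neq k$.

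It remains to treat $j=k$, the only case that should produce a module. Here $i_k$ has been deleted, so every vertex outside $M$ in $G_\mu\setminus\{i_k\}$ is some $i_\ell$ with $\ell\geq k+1$, whence $i_\ell\geq i_{k-1}+2$. For any $x\in M$ we have $x\leq i_{k-1}<i_\ell-1$, so $i_\ell\neq x+1$, and by the edge rule $\{x,i_\ell\}$ is an edge precisely when $\mu_{i_\ell}=0$, a condition that does not depend on the chosen $x\in M$. Thus each external vertex is adjacent either to all of $M$ or to none of it, so $M$ is a module of $G_\mu\setminus\{i_k\}$. Assembling the three cases yields the stated equivalence.

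I do not expect a genuine obstacle here; the whole argument reduces to the distance-in-$\ZZ$ criterion built into the definition of $G_\mu$. The only points needing care are bookkeeping ones: ensuring that the vertices named actually exist (this is where $k\geq2$ and $\vert I\vert\geq3$ enter, guaranteeing that $i_{k-2}$, $i_{k-1}$, $i_k$ are genuine vertices), and keeping track of whether the deleted vertex $i_j$ lies inside or outside $M$, which is exactly what distinguishes the three cases.
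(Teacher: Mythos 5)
Your proof is correct and follows essentially the same route as the paper: the forward direction rests on the observation that $i_k$ separates $i_{k-1}$ and $i_{k-2}$ (both in the candidate module since $k\geq 2$), so $i_k$ must be the deleted vertex, while the backward direction is read off the edge rule since every surviving external vertex $i_\ell$ with $\ell\geq k+1$ is non-consecutive with all of $\{i_0,\dots,i_{k-1}\}$. The only difference is that you spell out the converse and the case $j<k$ explicitly, which the paper dismisses as immediate.
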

\begin{proof}We only need to prove the forward implication. Suppose that $\{i_0,\ldots,i_{k-1}\}$ is a module of $G_\mu \setminus \{i_j\}$. Then $i_j\not \in \{i_0,\ldots,i_{k-1}\}$ and hence $k\leq j$. Since $i_k$ is the only vertex that separates $i_{k-1}$ and $i_{k-2}$ (recall that $k\geq 2$) we infer that $j=k$.
\end{proof}

\begin{corollary}\label{cor:module}Let $I:=\{i_1,\ldots,i_n\}$ be an interval of $\NN$ of cardinality at least $3$, $\mu$ be a $0$-$1$ sequence on $I$ and $i_0=i_1-1$. We suppose $G_\mu$ is prime and let $x\in \{i_0\}\cup I$. If $G_\mu \setminus \{x\}$ is prime, then $x\in \{i_0,i_1,i_n\}$.
\end{corollary}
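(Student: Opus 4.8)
The plan is to prove the contrapositive. I would fix the deleted vertex as $x=i_j$ for some index $j$, observe that the exceptional set $\{i_0,i_1,i_n\}$ corresponds to $j\in\{0,1,n\}$, and show that for every \emph{interior} index, namely $2\le j\le n-1$, the graph $G_\mu\setminus\{x\}$ fails to be prime by exhibiting an explicit nontrivial module. The whole argument rests on Lemma \ref{lem:module}, which already identifies exactly which initial intervals $\{i_0,\ldots,i_{k-1}\}$ become modules once a single vertex $i_j$ is removed.

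Concretely, suppose $x=i_j$ with $2\le j\le n-1$ and set $M:=\{i_0,\ldots,i_{j-1}\}$. Since $j\ge 2$, Lemma \ref{lem:module} applied with $k=j$ guarantees that $M$ is a module of $G_\mu\setminus\{i_j\}$, so the only remaining task is to verify that $M$ is nontrivial. This is pure bookkeeping on the index ranges: $M$ has exactly $j$ elements, so $j\ge 2$ forces $M$ to be neither empty nor a singleton, while $j\le n-1$ guarantees that $i_{j+1},\ldots,i_n$ are vertices of $G_\mu\setminus\{i_j\}$ lying outside $M$, whence $M$ is a proper subset of the vertex set. Thus $M$ is a nontrivial module, $G_\mu\setminus\{x\}$ is not prime, and by contraposition any $x$ with $G_\mu\setminus\{x\}$ prime must satisfy $x\in\{i_0,i_1,i_n\}$.

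I do not expect a genuine obstacle here; the two index thresholds $j\ge 2$ and $j\le n-1$ are precisely what single out the three permitted vertices, and the only point to watch is matching the hypothesis $k\ge 2$ of Lemma \ref{lem:module} to the deleted index. For $j\in\{0,1\}$ (deleting $i_0$ or $i_1$) the lemma yields no module of this form, since it would require $k=j\le 1<2$; and for $j=n$ the candidate set $\{i_0,\ldots,i_{n-1}\}$ is the entire vertex set of $G_\mu\setminus\{i_n\}$, hence trivial. These are exactly the reasons the three boundary vertices are exceptions. I would also note, as a small remark, that the primality of $G_\mu$ is not actually invoked in this direction, since the nontrivial module is constructed directly inside the deleted graph.
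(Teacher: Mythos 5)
Your proof is correct and is exactly the derivation the paper intends: the corollary is stated as an immediate consequence of Lemma~\ref{lem:module}, obtained by taking $k=j$ for an interior index $2\le j\le n-1$ and checking that the resulting module $\{i_0,\ldots,i_{j-1}\}$ is nontrivial. Your side remark that the primality of $G_\mu$ is never used in this direction is also accurate.
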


In the next lemma we state some properties of modules of $G_\mu$ when $\mu$ is a word on $\NN$. It follows that a nontrivial module of $G_{\mu}$ with at least three elements is necessarily the whole domain of $G_{\mu}$ minus a singleton.

\begin{lemma}\label{lem:1} Let $I:=\{i_1,\ldots,i_n,\ldots\}$ be an interval of $\NN$, $i_0=i_1-1$ and let $\mu$ be a $0$-$1$ sequence on $I$. Let $M$ be a nontrivial module of $G_{\mu}$.
\begin{enumerate}[$(1)$]
\item Let $i_j,i_k\in M$ with $j<k$.
\begin{enumerate}[$(a)$]
\item If $i_{k+1}\in I$, then $i_{k+1}\in M$.
\item $\{m\in I : i_k\leq m\}\subseteq M$.
\item exactly one of $i_0$ and $i_1$ is in $M$.

\end{enumerate}
\item The largest final segment $F$ of $I$ included in $M$ is nonempty.

\item Assume $F$ has at least two elements. Then

\begin{enumerate}[$(a)$]
  \item $\mu$ is constant on $F\setminus \{\min(F)\}$ and $\mu(\min(F))\neq \mu(\min(F)+1)$.
  \item  $\{m\in I : m\leq \min(F)-2\}\subseteq M$.
  \item $F=I$ or $F=I\setminus \{i_1\}$.
\end{enumerate}
\end{enumerate}
\end{lemma}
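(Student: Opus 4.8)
The plan is to lean on two facts throughout: the separation lemma just stated (if $z$ separates two members of a module, then $z$ belongs to that module) and the explicit edge rule of $G_\mu$, namely that for $m<\ell$ in $\{i_0\}\cup I$ the pair $\{m,\ell\}$ is an edge iff $\mu_\ell=1$ and $\ell=m+1$, or $\mu_\ell=0$ and $\ell\neq m+1$. In particular, as recorded in the bullet point above, for $i\in I$ the vertex $i$ separates $i-1$ from every $j<i-1$. Since $I$ is an interval of $\NN$, its elements are consecutive integers, and I will freely identify $i_k$ with the integer $i_1+(k-1)$.

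For (1)(a) I would observe that if $i_j,i_k\in M$ with $j<k$ and $i_{k+1}\in I$, then $i_k=i_{k+1}-1$ while $i_j<i_{k+1}-1$, so $i_{k+1}$ separates the two members $i_j,i_k$ of $M$ and hence lies in $M$. Part (1)(b) then comes from iterating (1)(a): applying it to $i_j,i_k$ gives $i_{k+1}\in M$, then to $i_j,i_{k+1}$ gives $i_{k+2}\in M$, and so on up the whole final segment $\{m\in I: i_k\le m\}$.

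For (1)(c) I would treat the two halves separately. If both $i_0$ and $i_1$ were in $M$, the very same separation step (started from $i_0,i_1$) would force $i_n\in M$ for every $n$, making $M=\{i_0\}\cup I$ and contradicting nontriviality. For the converse, suppose $i_0,i_1\notin M$; then every member $i_m$ of $M$ has index $m\ge 2$, hence is non-consecutive to $i_0$, so $i_0\sim i_m$ iff $\mu_{i_m}=0$, and the module condition on $i_0\notin M$ forces $\mu$ to be \emph{constant}, say $\equiv c$, on $M$. Writing $i_p=\min M$ ($p\ge 2$) and choosing a second element $i_q\in M$ with $q>p$, I would test $i_{p-1}\notin M$: it meets $i_p$ iff $\mu_{i_p}=1$ (consecutive) and $i_q$ iff $\mu_{i_q}=0$ (non-consecutive), i.e.\ iff $c=1$ and iff $c=0$. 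These being opposite, $i_{p-1}$ is adjacent to exactly one of $i_p,i_q\in M$, contradicting the module property. I expect this to be the main obstacle, as it is the one place where the value of $\mu$ and the module condition must be reconciled precisely; the trick is to first extract constancy of $\mu$ on $M$ from $i_0\notin M$.

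The rest is bookkeeping with the largest final segment $F$ of $I$ inside $M$, writing $f:=\min F$. For (2): if $M$ meets $I$ in two points, (1)(b) already puts a nonempty final segment inside $M$; otherwise, by (1)(c) and nontriviality the only option is $M=\{i_0,i_k\}$ with $k\ge2$, and testing $i_{k+1}$ (which would be adjacent to exactly one of $i_0,i_k$) shows $i_{k+1}\notin I$, so $i_k=\max I$ and $F=\{i_k\}\neq\varnothing$. For (3)(a): maximality of $F$ gives $f-1\notin M$ (and when $f=i_1$ this predecessor is $i_0$, which is out of $M$ by (1)(c)); testing $f-1$ against $F$ as in (1)(c), it meets $f$ iff $\mu_f=1$ and meets each larger element of $F$ iff $\mu=0$ there, so the module condition forces $\mu$ constant on $F\setminus\{f\}$ with $\mu(f)\neq\mu(f+1)$. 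For (3)(b): any $m\in I$ with $m\le f-2$ is non-consecutive to both $f$ and $f+1$, so by (3)(a) it separates them and the separation lemma gives $m\in M$. For (3)(c): if $f\ge i_3$ then $i_1\le f-2$, so (3)(b) forces $i_1\in M$; then (1)(b) applied to $i_1,f$ gives $I\subseteq M$, whence $F=I$ and $f=i_1$, contradicting $f\ge i_3$. Therefore $f\in\{i_1,i_2\}$, i.e.\ $F=I$ or $F=I\setminus\{i_1\}$.
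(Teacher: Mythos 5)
Most of your argument is sound and very close to the paper's own proof: your (1)(a), (1)(b), (2), (3)(a) and (3)(b) are exactly the intended separation arguments, and your proof of (1)(c) is a legitimate variant (the paper first uses $i_{k-1}\notin M$ to deduce $\mu(i_k)\neq\mu(i_j)$ and then gets the contradiction from $i_0$; you first use $i_0\notin M$ to deduce that $\mu$ is constant on $M$ and then get the contradiction from $i_{p-1}$ --- both routes work).

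There is, however, a genuine gap in your (3)(c). You claim that from $i_1\in M$ and $f:=\min(F)\in M$, ``(1)(b) applied to $i_1,f$ gives $I\subseteq M$.'' It does not: (1)(b) asserts $\{m\in I: i_k\le m\}\subseteq M$ where $i_k$ is the \emph{larger} of the two elements, so applied to the pair $(i_1,f)$ it only returns the final segment above $f$, i.e.\ $F$ itself, which you already have. The stronger reading ``everything above the smaller element lies in $M$'' is false: for $\mu=011$ the set $\{i_1,i_3\}$ is a nontrivial module containing $i_1$ and $i_3$ but not $i_2$. For $f\ge i_4$ you could repair the step by noting that (3)(b) puts both $i_1$ and $i_2$ in $M$ and then applying (1)(b) to that pair, but the case $f=i_3$ is not covered, since (3)(b) then only yields $i_1\in M$. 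The fix is the one the paper uses: the separation argument of (3)(b) applies verbatim to $i_0$ as well (it is non-consecutive to both $f$ and $f+1$, which carry different $\mu$-values by (3)(a)), so $i_0\in M$; together with $i_1\in M$ this contradicts (1)(c). With that one step replaced, your proof is complete.
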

\begin{proof}
\begin{enumerate}[$(1)$]
\item Let $i_j,i_k\in M$ with $j<k$.
\begin{enumerate}[$(a)$]
\item Suppose $i_{k+1}\in I$. Since $i_j,i_k\in M$  and $i_{k+1}$ separates $i_j$ and $i_k$ we infer that $i_{k+1}\in M$. This proves Item $(1) (a)$. \item Since $M$ contains at least two distinct elements,  Item $(1) (b)$ now follows by repeatedly applying Item $(1) (a)$.
    \item Suppose for a contradiction that $\{i_0, i_1\} \subseteq M$. It follows from Item $(1)(b)$ that $I\subseteq M$. This is impossible since $M$ is nontrivial. This proves that at least one of $i_0$ or $i_1$ is not in $M$. Suppose that $M\cap\{i_0, i_1\}= \emptyset$. Let $k$ be the smallest positive integer such that $i_k\in M$ (note that $k\geq 2$). Since $M$ is nontrivial there exists  some $j>k$ such that $i_j\in M$. Since $i_{k-1}\not \in M$, $i_{k-1}$ cannot separate $i_k$ and $i_{j}$. Since  $i_{k-1}$ and $i_{k}$ are  consecutive, $\mu(i_k) \not= \mu (i_j)$. Hence, $i_0$ separates the two elements $i_{k}$ and  $i_{j}$ of $M$. Since $i_0\not \in M$ and $M$ is a module,  we obtain a contradiction. This proves Item $(1) (c)$.
\end{enumerate}

\item Let  $F$ be the largest final segment of $I$ included in $M$.  We prove that $F$ is nonempty. Indeed, since $M$ is nontrivial, it has at least two elements $i_j,i_k$ with $j<k$. From Item $(1) (b)$ it follows that either $i_{k+1}\in I$ and hence the final segment $\{m\in I : i_k\leq m\}$ is nonempty and is a subset  of $M$. Or, $\max (I)=i_k$ and $\{i_k\}$ is a final segment of $I$ and belongs to $M$.
\item
\begin{enumerate}[$(a)$]
\item Let $l,m\in F\setminus \{\min(F)\}$. Since $\min(F)-1\not \in M$ and $M$ is a module, we infer that the vertex $\min(F)-1$ of $G_{\mu}$ is either adjacent to both $l$ and $m$ or not adjacent to both $l$ and $m$. Thus $\mu(l)=\mu(m)$ and $\mu$ is constant on $F\setminus \{\min(F)\}$ as required. Since $\min(F)$ and $\min(F)+1$ are elements of $M$ and since $\min(F)-1$ and $\min(F)$ are consecutive in $I$ we must have $\mu(\min(F))=\mu(\min(F)+1)+1$, that is $\mu(\min(F))\neq \mu(\min(F)+1)$, proving Item $(3) (a)$.
    \item It follows from $(3)(a)$ that every element $m\in I\cup\{i_0\}$ such that $m<\min(F)-1$ is adjacent to $\min(F)$ but not adjacent to $\min(F)+1$ or vice versa. Since $\min(F)$ and $\min(F)+1$ are elements of $M$ and $M$ is a module we infer that $m\in M$. Hence, $\{m\in I : m\leq \min(F)-2\}\subseteq M$ proving item $(3) (b)$.
        \item It follows from  $(1)(c)$ and $(3)(b)$ that $\min(F)\in \{i_1,i_2\}$. If $\min(F)=i_1$, then $F=I$. Else if $\min(F)=i_2$, then $F=I\setminus \{i_1\}$. This completes the proof of $(3)(c)$ and of the lemma.
        \end{enumerate}
\end{enumerate}
The  proof of the lemma  is now complete.
\end{proof}

Let $I$ be an interval of $\NN$ and  $G_{\mu}$ be the graph on $\{\min(I)-1\}\cup I$  associated to a sequence $\mu$ defined on $I$. In the following proposition we characterize the modules of $G_\mu$. We prove that if $\mu \not \in \{011,100,001,110\}$, then $G_{\mu}$ has at most one nontrivial module. Also, if $I$ is finite, then a nontrivial module of $G_{\mu}$ has necessarily  cardinality $2$ or $|I|$.

\begin{proposition}\label{lem:6}Let $I:=\{i_1,\ldots,i_n,\ldots\}$ be an interval of $\NN$, $i_0:=i_1-1$ and let $\mu$ be a $0$-$1$ sequence on $I$.

\begin{enumerate}[$(1)$]
\item If $\mu \not \in \{011,100,001,110\}$, then $G_{\mu}$ has at most one nontrivial module.
\item If $M$ is a nontrivial module of $G_\mu$, then \underline{either} $M=I$ or $M=\{i_0\}\cup I\setminus \{i_1\}$, \underline{or} $I$ is finite, $I=\{i_1,\ldots,i_n\}$ and $M=\{i_0,i_n\}$ or $M=\{i_1,i_n\}$.
 \end{enumerate}
\end{proposition}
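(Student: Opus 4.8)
The plan is to prove part (2) first, because the list of nontrivial modules it produces is exactly what part (1) needs. Throughout I set $M_1:=I$, $M_2:=\{i_0\}\cup(I\setminus\{i_1\})$, and, when $I$ is finite, $M_3:=\{i_0,i_n\}$ and $M_4:=\{i_1,i_n\}$; I will quote the items of Lemma~\ref{lem:1} by number and use repeatedly that a vertex separating two members of a module lies in that module.

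For part (2) I would take a nontrivial module $M$ and let $F$ be the largest final segment of $I$ contained in $M$, which is nonempty by Lemma~\ref{lem:1}(2), and split on $|F|$. If $|F|\ge 2$, Lemma~\ref{lem:1}(3)(c) gives $F=I$ or $F=I\setminus\{i_1\}$: in the first case $i_1\in M$, so Lemma~\ref{lem:1}(1)(c) forces $i_0\notin M$ and $M=M_1$; in the second case $\min(F)=i_2$ and (as $|F|\ge2$) $i_2,i_3\in M$, and since $\mu(i_2)\neq\mu(i_3)$ by Lemma~\ref{lem:1}(3)(a), the vertex $i_0$ separates $i_2$ and $i_3$, forcing $i_0\in M$ and hence (via (1)(c)) $M=M_2$. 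If $F$ is a singleton, then $F=\{i_n\}$ with $i_n=\max I$, so $I$ is finite and $i_{n-1}\notin M$ by maximality of $F$; using Lemma~\ref{lem:1}(1)(b) I would show $M\cap I$ has at most one element below $i_n$, and then rule out $M=\{i_0,i_j,i_n\}$ with $2\le j\le n-2$ by noting that $i_{j+1}$ separates $i_0$ and $i_j$ and would therefore have to lie in $M$, producing a second element of $M\cap I$ below $i_n$. What survives, after applying (1)(c), is exactly $M=M_3$ or $M=M_4$.

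For part (1) I would use (2) to reduce to showing that two of $M_1,\dots,M_4$ are modules at once only for $\mu\in\{011,100,001,110\}$. For the four ``non-nested'' pairs I would invoke Lemma~\ref{lem:prop-modules}: if $M_1,M_2$ (resp.\ $M_1,M_3$; $M_2,M_4$; $M_3,M_4$) were both modules, one of $M_1\cap M_2=\{i_2,\dots,i_n\}$, $M_1\setminus M_3=\{i_1,\dots,i_{n-1}\}$, $M_2\setminus M_4=\{i_0,i_2,\dots,i_{n-1}\}$, $M_3\cup M_4=\{i_0,i_1,i_n\}$ would be a nontrivial module distinct from every $M_k$, contradicting (2). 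This disposes of the infinite case (only $M_1,M_2$ arise there, and their intersection $I\setminus\{i_1\}$ is the forbidden module) and of all finite $n\ge3$, while $n=2$ (where $M_1=M_4$ and $M_2=M_3$) falls to the one-line check that $M_1=\{i_1,i_2\}$ is a module iff $\mu\in\{10,01\}$ and $M_2=\{i_0,i_2\}$ iff $\mu\in\{00,11\}$, so again at most one.

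The two nested pairs $M_4\subseteq M_1$ and $M_3\subseteq M_2$ carry no lattice obstruction, and here I would compute directly. Using Lemma~\ref{lem:module-complement} and Remark~\ref{lem:comp} I may assume $\mu(i_1)=0$, so that $M_1=I$ is a module iff $\mu=01^{n-1}$ and $M_2$ is a module iff $\mu=001^{n-2}$. For $\mu=01^{n-1}$ the vertex $i_2$ is adjacent to $i_1$ (consecutive, $\mu(i_2)=1$) but, when $n\ge4$, non-adjacent to $i_n$ (non-consecutive, $\mu(i_n)=1$), so $M_4$ fails to be a module unless $n=3$, i.e.\ $\mu=011$; the symmetric computation shows $M_3\subseteq M_2$ survives only for $n=3$, i.e.\ $\mu=001$, and complementation restores $100$ and $110$. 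I expect this last step to be the crux: every pair but the two nested ones dies uniformly through the intersection/union/difference properties of modules, whereas the nested pairs persist exactly when the single interior vertex $i_2$ switches its adjacency to $i_n$, which is precisely the transition from $n\ge 4$ to $n=3$.
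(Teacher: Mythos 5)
Your proposal is correct and follows essentially the same route as the paper: part (2) is derived from Lemma~\ref{lem:1} by locating the maximal final segment $F$ contained in $M$, and part (1) is obtained by running through the six pairs of candidate modules, killing the four non-nested pairs with the intersection/union/difference properties of Lemma~\ref{lem:prop-modules} and reducing the two nested pairs to the words $011,100,001,110$. The only cosmetic differences are that you case-split on $|F|$ rather than $|M|$ in part (2) and dispose of the degenerate $n=2$ situation by direct inspection where the paper isolates it as a preliminary Claim.
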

\begin{proof}We recall that a graph on at most two vertices is prime. Hence, if $G_{\mu}$ has a nontrivial module, then $|I|\geq 2$. \\
\textbf{Claim:} If $|I|=2$, then $G_\mu$ has exactly one nontrivial module.\\
\textbf{\emph{Proof of Claim:}} We only consider the case $\mu(i_1)=0$ and deduce the other case by considering $\overline{\mu}$. By inspection, if $\mu(i_2)=0$, then $\{i_0,i_2\}$ is the only nontrivial module of $G_\mu$. if $\mu(i_2)=1$, then $\{i_1,i_2\}$ is the only nontrivial module of $G_\mu$.\hfill $\Box$
\begin{enumerate}[$(1)$]
\item We use the characterisation of modules of $G_\mu$ found in Item $(2)$. We consider all possible pairs of such modules. 
    \begin{enumerate}[{Case} 1.]

    \item $\{i_0,i_n\}$ and $\{i_1,i_n\}$ are both modules of $G_\mu$.\\
    Since $\{i_0,i_n\}\cap \{i_1,i_n\}\neq \varnothing$ and the union of two modules with nonempty intersection is a module (see Item $(2)$ of Lemma \ref{lem:prop-modules}) we infer that $A:=\{i_0,i_n\}\cup \{i_1,i_n\}= \{i_0,i_1,i_n\}$ is a module of $G_\mu$. It follows from Item $(2)$ that $A$ is trivial. Since $A$ has $3$ elements we infer that $A=\{i_0\}\cup I$. This implies that $|I|=2$ and hence $n=2$. We derive a contradiction from the Claim. 

    \item $I$ and $\{i_0\}\cup I\setminus \{i_1\}$ are both modules of $G_\mu$.\\
    Since the intersection of two modules is a module we infer that $A:=I\cap (\{i_0\}\cup I\setminus \{i_1\})= I\setminus \{i_1\}$ is a module of $G_\mu$. It follows from Item $(2)$ that $A$ is trivial. Hence, $A=\varnothing$ or $A$ is a singleton or $A=\{i_0\}\cup I$. This last case is not possible.  The case $A=\varnothing$ is also not possible because otherwise $I=\{i_1\}$, which contradicts $|I|\geq 2$. We are left with the case $A$ is a singleton, that is $I$ has two elements. We derive a contradiction from the Claim.. 

    \item $\{i_0,i_n\}$ and $I$ are both modules in $G_\mu$.\\
    We apply Item $(3)$ of Lemma \ref{lem:prop-modules} with $M:=\{i_0,i_n\}$ and $N:=I$. Since $M\setminus N\neq \varnothing$, then $A:=N\setminus M=I\setminus \{i_n\}$ is a module of $G_\mu$. It follows from Item $(2)$ that $A$ is trivial. Hence, $A=\varnothing$ or $A$ is a singleton or $A=\{i_0\}\cup I$. This last case is not possible.  The case $A=\varnothing$ is also not possible because otherwise $I=\{i_n\}$, which contradicts $|I|\geq 2$. We are left with the case $A$ is a singleton, that is $I$ has two elements. We derive a contradiction from the Claim. 

\item $\{i_1,i_n\}$ and $\{i_0\}\cup I\setminus \{i_1\}$ are both modules of $G_\mu$.\\
We apply Item $(3)$ of Lemma \ref{lem:prop-modules} with $M:=\{i_1,i_n\}$ and $N:=\{i_0\}\cup I\setminus \{i_1\}$. Since $M\setminus N\neq \varnothing$, then $A:=N\setminus M=\{i_0\} \cup I\setminus \{i_1,i_n\}$ is a module of $G_\mu$. It follows from Item $(2)$ that $A$ is trivial. Hence, $A=\varnothing$ or $A$ is a singleton or $A=\{i_0\}\cup I$. This last case and the case $A=\varnothing$ are not possible. We are left with the case $A$ is a singleton. Since $i_0\not \in I\setminus \{i_1,i_n\}$ we infer that $I\setminus \{i_1,i_n\}=\varnothing$ and hence $I$ has at most two elements. We derive a contradiction from $I|\geq 2$ in the case $I$ is a singleton, and from the Claim in the case $|I|=2$. 

    \item $\{i_1,i_n\}$ and $I$ are both modules in $G_\mu$.\\
   Since $I$ is a module it follows from (3)(c) of Lemma \ref{lem:1} that $\mu$ is constant on $I\setminus \{i_1\}$ and $\mu(i_1)\neq \mu(i_n)$. Then $n\leq 3$ because otherwise $i_2$ separates $i_1$ and $i_n$ contradicting our assumption that  $\{i_1,i_n\}$ is a module in $G_\mu$. It follows that $\mu=100$ or $\mu=011$.

    \item $\{i_0,i_n\}$ and $\{i_0\}\cup I\setminus \{i_1\}$ are both modules in $G_\mu$.\\
Since $\{i_0\}\cup I\setminus \{i_1\}$  is a module it follows from (3)(c) of Lemma \ref{lem:1} that $\mu$ is constant on $I\setminus \{i_1,i_2\}$ and $\mu(i_2)\neq \mu(i_n)$. Then $n\leq 3$ because otherwise $i_{n-1}$ separates $i_0$ and $i_n$ contradicting our assumption that  $\{i_0,i_n\}$ is a module in $G_\mu$. It follows that $\mu=110$ or $\mu=001$.

\end{enumerate}
\item Let $M$ be nontrivial module of $G_{\mu}$. Suppose first that  $M$ has cardinality at least $3$. Let $F$ be the largest  final segment of $I$ included in $M$. From $(2)$ of Lemma \ref{lem:1}, $F$ is nonempty. Since $M$ has at least $3$ elements, it follows from Item $(1)(b)$ and $(1)(c)$ of Lemma  \ref{lem:1}  that $F$ has at least two elements. It follows from $(3)(c)$ of Lemma  \ref{lem:1} that $F=I$ or $F=I\setminus \{i_1\}$. Since $F\subseteq M\subset  I\cup \{i_0\}$, if  $F=I$, then $M=I$.  Else, it follows from $(1)(c)$ of Lemma  \ref{lem:1} that $i_0\in M$. Hence, $M=\{i_0\}\cup I\setminus \{i_1\}$.\\
We now consider the case $M$ has exactly two elements. It  follows from Item $(1) (b)$ of Lemma \ref{lem:1} that $i_{n}\in M$. It follows from Item $(1) (c)$ of Lemma \ref{lem:1} that exactly one of $i_0$ and $i_1$ is in $M$. Hence, $M=\{i_0,i_n\}$ or $M=\{i_1,i_n\}$.
\end{enumerate}
The proof of the proposition is now complete.
\end{proof}

Several corollaries will now follow.

\begin{corollary}\label{cor:primegmu} Let $\mu$ be a $0$-$1$ word on $\NN$. The graph $G_{\mu}$ is prime if and only if $\mu \not \in \{011111\ldots, 100000\ldots, 0011111\ldots, 1100000\ldots\}$.
\end{corollary}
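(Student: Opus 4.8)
The plan is to characterize primality of $G_\mu$ for an infinite $0$-$1$ word $\mu$ on $\NN$ by applying the module characterization of Proposition \ref{lem:6}. Since $\mu$ is an infinite word on $\NN$, the interval $I$ is infinite, so the possibility ``$I$ is finite, $M=\{i_0,i_n\}$ or $M=\{i_1,i_n\}$'' from Item $(2)$ of Proposition \ref{lem:6} cannot occur. Hence, for infinite $\mu$, any nontrivial module $M$ of $G_\mu$ must satisfy $M=I$ or $M=\{i_0\}\cup I\setminus\{i_1\}$ (here $i_0=-1$ and $I=\NN$). Therefore $G_\mu$ is prime if and only if neither $I$ nor $\{i_0\}\cup I\setminus\{i_1\}$ is a nontrivial module.

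First I would determine exactly when $M=I=\NN$ is a module of $G_\mu$. By Item $(3)(c)$ and $(3)(a)$ of Lemma \ref{lem:1}, when $I$ is a module with $F=I$ we have $\mu$ constant on $I\setminus\{i_1\}=\{1,2,\ldots\}$ and $\mu(i_1)\neq\mu(i_1+1)$; conversely one checks directly from the definition of $G_\mu$ that if $\mu$ is constant from position $1$ onward and differs at position $0$, then $\NN$ is indeed a module (the vertex $-1$ is adjacent to all of $\NN$ or to none of it, depending on the constant value). Since $\overline{G_\mu}=G_{\overline\mu}$ and complementation preserves modules (Lemma \ref{lem:module-complement} and Remark \ref{lem:comp}), I only need to treat one value of the constant and deduce the other. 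This yields precisely the words $\mu=100000\ldots$ and $\mu=011111\ldots$ (i.e. $\mu(0)\neq\mu(1)=\mu(2)=\cdots$).

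Next I would analyze when $M=\{i_0\}\cup I\setminus\{i_1\}=\{-1\}\cup\{2,3,\ldots\}$ is a module. Here the largest final segment $F$ included in $M$ equals $I\setminus\{i_1\}=\{2,3,\ldots\}$, so $\min(F)=i_2=2$. Applying Item $(3)(a)$ of Lemma \ref{lem:1}, $\mu$ is constant on $F\setminus\{\min(F)\}=\{3,4,\ldots\}$ and $\mu(2)\neq\mu(3)$. A direct verification from the definition confirms the converse, giving the words where $\mu(2)\neq\mu(3)=\mu(4)=\cdots$ together with the constraint that $1\notin M$ is genuinely separated; this produces $\mu=0011111\ldots$ and $\mu=1100000\ldots$. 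Again complementation halves the casework.

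Assembling the two cases, $G_\mu$ has a nontrivial module exactly when $\mu\in\{011111\ldots,\,100000\ldots,\,0011111\ldots,\,1100000\ldots\}$, and is prime otherwise, which is the claim. The main obstacle I anticipate is the bookkeeping in the converse directions: Proposition \ref{lem:6} and Lemma \ref{lem:1} tell us that \emph{if} a nontrivial module of size $\geq 3$ exists it must have one of the two listed forms and forces these factor patterns, but I must independently verify from the raw definition of the edge relation in $G_\mu$ that each of these four words actually \emph{does} yield a module (so that these cases are genuinely excluded from primality). Care is also needed with the two-element module case: by Item $(1)(b)$ of Lemma \ref{lem:1} a two-element module must contain arbitrarily large vertices, which is impossible when $I$ is infinite, so no two-element module arises and the infinite case is clean. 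The systematic use of Lemma \ref{lem:module-complement} to pass between a word and its complement keeps the verification to two essential computations rather than four.
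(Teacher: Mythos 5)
Your proposal is correct and follows essentially the same route as the paper's own proof: item $(2)$ of Proposition \ref{lem:6} rules out every module shape except $M=I$ and $M=\{i_0\}\cup I\setminus\{i_1\}$ once $I$ is infinite, and Lemma \ref{lem:1} $(3)(a)$ then forces the four listed words, with the converse (which the paper dismisses as ``easy'') checked directly from the edge relation. The only blemish is an indexing slip in your second case: with $I=\NN$ one has $i_1=0$ and $i_2=1$, so the forced pattern is $\mu(1)\neq\mu(2)=\mu(3)=\cdots$ together with $\mu(0)=\mu(1)$, rather than $\mu(2)\neq\mu(3)=\cdots$; this does not affect the conclusion.
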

\begin{proof}We prove the following equivalence:  the graph $G_{\mu}$ is not prime if and only if \break $\mu \in \{011111\ldots, 100000\ldots, 0011111\ldots, 1100000\ldots\}$. \\
$\Rightarrow $ Let $M$ be a nontrivial module of $G_\mu$. Since  $I$ is infinite it follows from $(1)$ of Proposition \ref{lem:6} that $M=\NN$ or  $M=\{-1\}\cup \NN\setminus \{0\}$. It follows from (3) (a) of Lemma \ref{lem:1} that $\mu \in \{011111\ldots, 100000\ldots, 0011111\ldots, 1100000\ldots\}$ as required.\\
$\Leftarrow$ Easy.
\end{proof}

Since all of the $0$-$1$ sequences in $\{011111\ldots, 100000\ldots, 0011111\ldots, 1100000\ldots\}$ are not recurrent we get this.

\begin{corollary}\label{cor:prime-recurrent}Let $\mu$ be a recurrent $0$-$1$ word on $\NN$. Then the graph $G_{\mu}$ is prime.
\end{corollary}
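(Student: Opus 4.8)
The plan is to read this off directly from the preceding Corollary~\ref{cor:primegmu}, whose statement already characterizes exactly when $G_\mu$ fails to be prime for an infinite word $\mu$ on $\NN$: namely, $G_\mu$ is non-prime precisely when $\mu$ lies in the explicit four-element set $\{011111\ldots,\ 100000\ldots,\ 0011111\ldots,\ 1100000\ldots\}$. Thus the only thing I need to verify is that a recurrent word can never be one of these four exceptional sequences, and the corollary then follows immediately by the contrapositive.

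First I would recall the defining property of recurrence: $\mu$ is recurrent if and only if every finite factor of $\mu$ occurs infinitely often in $\mu$ (equivalently, $\fac(\mu)$ is inexhaustible, by the discussion preceding Theorem~\ref{thm:rec-age-word}). In particular, any single letter that occurs in $\mu$ at all must occur infinitely often. Next I would inspect the four exceptional words and observe that each of them is eventually constant, so that one of the two letters appears only finitely many times. Concretely, in $011111\ldots$ and in $0011111\ldots$ the letter $0$ occurs only once, respectively twice; in $100000\ldots$ and $1100000\ldots$ the letter $1$ occurs only once, respectively twice. Hence each of these four words has a finite factor (a single $0$ or a single $1$) that occurs only finitely often, so none of them is recurrent.

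Combining these two observations finishes the argument: if $\mu$ is a recurrent $0$-$1$ word on $\NN$, then $\mu$ cannot coincide with any of the four non-recurrent sequences listed in Corollary~\ref{cor:primegmu}, so $\mu\not\in\{011111\ldots,\ 100000\ldots,\ 0011111\ldots,\ 1100000\ldots\}$, and therefore $G_\mu$ is prime.

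I do not expect any genuine obstacle here; the content is entirely carried by Corollary~\ref{cor:primegmu}, and the present statement is a routine specialization. The only point requiring the smallest amount of care is to state the reason cleanly, namely that recurrence forces every occurring letter to occur infinitely often, which is exactly what the four eventually-constant words violate.
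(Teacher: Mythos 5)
Your proposal is correct and follows exactly the paper's own route: the paper derives the corollary from Corollary~\ref{cor:primegmu} by noting that none of the four exceptional sequences is recurrent. Your extra sentence verifying non-recurrence (each exceptional word has a letter occurring only finitely often) merely spells out the detail the paper leaves implicit.
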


We have a similar conclusion to the corollary if we consider words on $\NN^*$ or $\ZZ$ but not necessarily recurrent.

\begin{lemma}Let $\mu$ be a $0$-$1$ word on $\NN^*$ or on $\ZZ$. Then the graph $G_{\mu}$ is prime.
\end{lemma}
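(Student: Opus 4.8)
The plan is to prove primality directly, by showing $G_\mu$ has no nontrivial module, rather than through the compactness criterion (Theorem~\ref{ille-theorem}); the finite and $\NN$-based module analysis of Lemma~\ref{lem:1} and Proposition~\ref{lem:6} will guide the argument but cannot be quoted verbatim, since those statements are tailored to a domain with a least vertex $i_0$. Indeed, the whole point is that the exceptional modules found in the $\NN$ case (Corollary~\ref{cor:primegmu}) are created precisely by the bottom vertex $-1$; here the domain $\NN^{*}$ or $\ZZ$ has no least element, and it is exactly this \emph{unboundedness below} that will kill every candidate module.

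First I would record the one local fact used throughout: whenever $v$ and $v-2$ lie in the domain, $v$ separates $v-1$ and $v-2$ (and more generally $v$ separates $v-1$ from every $j<v-1$), because $\{v-1,v\}$ is an edge iff $\mu_v=1$ while $\{j,v\}$ is an edge iff $\mu_v=0$. Combined with the separation lemma (if $z$ separates two elements of a module then $z$ lies in that module), iterating this gives ``upward propagation'': if $m<m'$ both lie in a module $M$, then the whole final segment $[m',\rightarrow)$ of the domain lies in $M$ (this is the analogue of Lemma~\ref{lem:1}(1)(b), valid for any domain). Assume now $M$ is a nontrivial module, so $|M|\geq 2$; by upward propagation $M$ contains a final segment, hence $V\setminus M$ is nonempty and bounded above, so $b^{*}:=\max(V\setminus M)$ exists, $b^{*}\notin M$, and $(b^{*},\rightarrow)\subseteq M$. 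For $\NN^{*}$ this already forces $0\in M$, so $b^{*}\leq -1$.

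The core case is when $b^{*}+2$ still belongs to the domain (always true for $\ZZ$, and true for $\NN^{*}$ when $b^{*}\leq -2$). Since $b^{*}\notin M$ must relate uniformly to $M\supseteq(b^{*},\rightarrow)$, comparing $b^{*}$ with $b^{*}+1$ (adjacent iff $\mu_{b^{*}+1}=1$) and with each $m\geq b^{*}+2$ (adjacent iff $\mu_m=0$) forces $\mu$ to be constant, say $\equiv c$, on $\{b^{*}+2,\rightarrow\}$ and $\mu_{b^{*}+1}=\bar c$; otherwise $b^{*}$ would separate two vertices of $M$. Now $\mu_{b^{*}+1}\neq\mu_{b^{*}+2}$, so any $y<b^{*}$ (which exists, as the domain is unbounded below) separates $b^{*}+1$ and $b^{*}+2$; both lie in $M$, hence every such $y$ lies in $M$. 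Thus $M=V\setminus\{b^{*}\}$. Finally $b^{*}$ separates $b^{*}-1$ and $b^{*}-2$ (both now in $M$, and both in the domain by unboundedness below), contradicting $b^{*}\notin M$.

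The remaining obstacle is the top-boundary case special to $\NN^{*}$, namely $b^{*}=-1$, where there is no room above ($b^{*}+2=1\notin\NN^{*}$) and the argument above stalls; I expect this bookkeeping to be the only delicate point. Here $0\in M$, $-1\notin M$, and $M$ has some element $t\leq -2$; upward propagation shows $M$ cannot contain two elements $\leq -2$ (a second one would drag $-1$ into $M$), so $M=\{t,0\}$. This two-element set is then destroyed using the two vertices below $t$: $t-2$ is adjacent to $t$ iff $\mu_t=0$ and to $0$ iff $\mu_0=0$, forcing $\mu_t=\mu_0$, while $t-1$ is adjacent to $t$ iff $\mu_t=1$ and to $0$ iff $\mu_0=0$, forcing $\mu_t=\bar\mu_0$ --- a contradiction. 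Hence no nontrivial module exists in either case and $G_\mu$ is prime; everything beyond the $\NN^{*}$ boundary case is a uniform separation argument whose engine is the existence of vertices arbitrarily far to the left.
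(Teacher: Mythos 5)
Your proof is correct and follows essentially the same route as the paper's: identify the maximal final segment of the domain contained in a putative nontrivial module $M$ (your $b^{*}$ is exactly $\min(F)-1$ in the paper's notation), deduce $\mu(b^{*}+1)\neq\mu(b^{*}+2)$ from the uniform behaviour of $b^{*}$ towards $M$, conclude that every vertex below $b^{*}$ separates $b^{*}+1$ from $b^{*}+2$ and hence $M=V\setminus\{b^{*}\}$, and then contradict this with $b^{*}$ separating $b^{*}-1$ from $b^{*}-2$. The one genuine difference is your explicit treatment of the $\NN^{*}$ boundary case $b^{*}=-1$ (i.e.\ $F=\{0\}$, so that $b^{*}+2$ is not a vertex): the paper's argument tacitly assumes $n+1=\min(F)+1$ lies in the domain and is silent on this case, so your two-line elimination of the candidate module $\{t,0\}$ via the vertices $t-1$ and $t-2$ actually closes a small gap in the published proof rather than merely duplicating it.
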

\begin{proof}
As in $(2)$ of Lemma \ref{lem:1}, if $M$ is a module of $G_{\mu}$, then the largest final segment $F$ of $\NN^*$ or of $\ZZ$ included in $M$ is nonempty. Suppose for a contradiction that $F\neq \NN^*$ and $F\neq \ZZ$ and let $n:=\min(F)$. Then $n-1\not \in M$ because otherwise $F\cup\{n-1\}$ is a final segment included in $M$ and $F\subseteq  F\cup\{n-1\}$ contradicting the maximality of $F$. Since $M$ is a module and $n-1\not \in M$ we infer that $n-1$ must be either adjacent to both $n$ and $n+1$ or nonadjacent to both $n$ and $n+1$. Since $n-1$ and $n$ are consecutive in $\ZZ$ we have $\mu(n)\neq\mu(n+1)$. But then every $k<n-1$ separates $n$ and $n+1$. It follows from our assumption that $M$ is a module tat $\{k: k<n-1\}\subseteq M$. Hence, $M=\NN^*\setminus \{n-1\}$ or $M=\ZZ^*\setminus \{n-1\}$. We get a contradiction since $n-3\in M$ and $n-1$ separates $n-2$ and $n-3$.
\end{proof}

In the next proposition we show that $G_\mu$ not being prime forces the sequence $\mu$ to have a large factor of $0$'s or of $1$'s.

\begin{proposition}\label{lem:6'}
Let $I:=\{i_1,\ldots, i_n\}$ be a finite  interval of $\NN$,  $i_0=i_1-1$,  and let $\mu$ be a $0$-$1$ sequence on $I$. Suppose $G_\mu$ is not prime and let $M$ be a nontrivial module of $G_\mu$.
\begin{enumerate}[{Case} 1.]
  \item $M$ has cardinality $2$. Then  \underline{either}  $M=\{i_0,i_n\}$ and either $(n=2$ and $(\mu=00$ or $\mu=11))$, or $n>2$  and  ($\mu=1\underbrace{00\ldots 0}_{n-3} 10$ or $\mu=0\underbrace{11\ldots 1}_{n-3} 01$), \underline{or} $M=\{i_1,i_n\}$, and either $(n=3$ and $(\mu=100$ or $\mu=011))$, or $n>3$ and $(\mu=11\underbrace{00\ldots 0}_{n-4} 10$ or $\mu=00\underbrace{11\ldots 1}_{n-4} 01)$.
  \item  $M$ has cardinality $n$. Then  \underline{either} $M=I$ and  $(\mu=1\underbrace{00\ldots 0}_{n-1}$ or $\mu=0\underbrace{11\ldots 1}_{n-1})$, \underline{or} $M=\{i_0\}\cup I\setminus\{i_1\}$ and   $(\mu=00\underbrace{11\ldots 1}_{n-2}$ or $\mu=11\underbrace{00\ldots 0}_{n-2})$. In particular, $M$ induces a path or the complement of a path in $G_{\mu}$.
   \end{enumerate}
\end{proposition}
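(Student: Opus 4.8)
The plan is to combine the classification of candidate modules already obtained in Proposition \ref{lem:6} with a direct reading of the adjacency rule defining $G_\mu$. By Proposition \ref{lem:6}$(2)$, since $I$ is finite, a nontrivial module $M$ of $G_\mu$ is one of the four sets $I$, $\{i_0\}\cup I\setminus\{i_1\}$, $\{i_0,i_n\}$, $\{i_1,i_n\}$. The first two have cardinality $n$ and the last two have cardinality $2$ (the two families coinciding when $n=2$), so Case~1 corresponds to $M\in\{\{i_0,i_n\},\{i_1,i_n\}\}$ and Case~2 to $M\in\{I,\{i_0\}\cup I\setminus\{i_1\}\}$. Thus it remains only to determine, for each of these four sets, exactly which sequences $\mu$ make it a module; since every translation below is an equivalence, this simultaneously confirms that the listed words genuinely yield modules.

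First I would record the adjacency rule in a usable form. For two vertices $a<b$ of $\{i_0\}\cup I$, the definition of $G_\mu$ gives that $\{a,b\}$ is an edge if and only if either $b=a+1$ and $\mu_b=1$, or $b\neq a+1$ and $\mu_b=0$. Writing $a_k:=\mu_{i_k}$, this says that $i_k$ is joined to its immediate predecessor $i_{k-1}$ precisely when $a_k=1$, and to every vertex strictly below $i_{k-1}$ precisely when $a_k=0$. The consequence I will exploit is that $a_k$ controls adjacencies \emph{from above}: whether a fixed vertex $x<i_k$ sees $i_k$ depends only on $a_k$ and on whether $x=i_{k-1}$.

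With this in hand the computation for each candidate is short. Take $M=I$: the only outside vertex is $i_0$, and $i_0\sim i_1$ iff $a_1=1$ while $i_0\sim i_k$ ($k\geq 2$) iff $a_k=0$; the module condition ``$i_0$ sees all of $M$ or none'' forces either $a_1=1$, $a_2=\dots=a_n=0$ (giving $\mu=1\underbrace{0\cdots0}_{n-1}$) or the complementary pattern $\mu=0\underbrace{1\cdots1}_{n-1}$. The other three sets are handled the same way, the only subtlety being the \emph{boundary vertices}: for $M=\{i_0,i_n\}$ one checks the outside vertices $i_1,\dots,i_{n-1}$ and must treat $i_1$ (adjacency to $i_0$ governed by the consecutive clause via $a_1$) and $i_{n-1}$ (adjacency to $i_n$ governed by $a_n$) separately from the interior vertices $i_k$ with $2\leq k\leq n-2$; solving the resulting system of equivalences $a_1\neq a_n$, $a_{n-1}\neq a_n$, and $a_k=a_n$ for $2\leq k\leq n-2$ yields $\mu=1\underbrace{0\cdots0}_{n-3}10$ or $\mu=0\underbrace{1\cdots1}_{n-3}01$ when $n>2$, and $\mu=00$ or $\mu=11$ when $n=2$. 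The case $M=\{i_1,i_n\}$ is identical after shifting attention to $i_0,i_2,\dots,i_{n-1}$, producing the stated words $\mu=100,011$ for $n=3$ and $\mu=11\underbrace{0\cdots0}_{n-4}10$, $\mu=00\underbrace{1\cdots1}_{n-4}01$ for $n>3$. Throughout, Lemma \ref{lem:module-complement} (the modules of $G_\mu$ and $G_{\overline\mu}$ coincide) lets me deduce each $a_n=1$ pattern from the corresponding $a_n=0$ pattern by passing to $\overline\mu$, halving the bookkeeping.

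Finally, the ``in particular'' clause for Case~2 is a direct inspection: for $\mu=1\underbrace{0\cdots0}_{n-1}$ the induced subgraph on $M=I$ has $i_j\sim i_k$ ($j<k$) iff $k\geq j+2$, so its non-edges are exactly the consecutive pairs and $G_{\mu\restriction M}$ is the complement of a path; the pattern $\mu=0\underbrace{1\cdots1}_{n-1}$ gives a path by passing to the complement via Lemma \ref{lem:module-complement}, and the two patterns attached to $M=\{i_0\}\cup I\setminus\{i_1\}$ behave the same once the elements of $M$ are listed in increasing order. The main obstacle is not conceptual but bookkeeping: one must carefully isolate the boundary indices $1$ and $n$, where the ``$b=a+1$'' clause of the adjacency rule behaves differently from the interior, and separately dispose of the small values $n=2,3,4$, where some index ranges are empty and the four candidate sets partially coincide.
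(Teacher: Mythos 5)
Your proposal is correct and follows essentially the same route as the paper: reduce to the four candidate sets via Proposition \ref{lem:6}(2), normalise by complementation using Lemma \ref{lem:module-complement}, and then read off $\mu$ letter by letter from the module condition applied to the outside vertices (with the consecutive/non-consecutive boundary indices treated separately). The only cosmetic difference is that in Case 2 the paper quotes Lemma \ref{lem:1}(3)(a) where you recompute the single-outside-vertex condition directly, which amounts to the same elementary verification.
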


\begin{proof}Since $G_{\mu}$ and $G_{\overline{\mu}}$ have the same modules (this follows from Lemma \ref{lem:module-complement}) we may assume without loss of generality that $\mu(i_n)=0$.
\begin{enumerate} [{Case} 1.]

  \item Suppose $M$ has exactly two elements. It follows from Item (2)  of Proposition \ref {lem:6}  that either $M=\{i_0,i_n\}$ or $M=\{i_1,i_n\}$.
      Suppose $M=\{i_0,i_n\}$. It follows from our assumption $\mu(i_n)=0$ that $i_{n}$ is not adjacent to $i_{n-1}$ and $i_{n}$ is adjacent to $i_k$ for all $k<n-1$. Since $\{i_0,i_n\}$ is a module we infer that $i_{0}$ cannot be adjacent to $i_{n-1}$ and $i_0$ is adjacent to $i_k$ for all $1\leq k<n-1$. It follows that $\mu(i_{n-1})=1$ if $i_{n-1}\neq i_1$, and $\mu(i_{n-1})=0$ if $i_{n-1}= i_1$, that is if $n=2$. Furthermore, $\mu(i_k)=0$ for all $1<k<n-1$ and $\mu(i_{1})=1$. Thus $\mu=00$ if $n=2$ and $\mu=1\underbrace{00\ldots 0}_{n-3} 10$ if $n>2$.

      Suppose $M=\{i_1,i_n\}$. It follows from our assumption $\mu(i_n)=0$ that $i_{n}$ is not adjacent to $i_{n-1}$ and $i_{n}$ is adjacent to $i_k$ for all $k<n-1$. Hence, $i_{1}$ cannot be adjacent to $i_{n-1}$ and $i_1$ is adjacent to $i_0$ and to $i_k$ for all $1<k<n-1$. It follows that ($\mu(i_{n-1})=0$ if $n=3$) and ($\mu(i_{n-1})=1$ if $n>3$) and $\mu(i_k)=0$ for all $1<k<n-1$ and $\mu(i_{1})=1$. Then $\mu=100$ if $n=3$ and $\mu=11\underbrace{00\ldots 0}_{n-4} 10$ otherwise.

  \item Suppose $M$ has exactly $n$ elements. It follows from Item (2)  of Proposition \ref {lem:6} that $M=I$ or $M=\{i_0\}\cup I\setminus \{i_1\}$.

  Suppose $M=I$. It follows from (2) (a) of Lemma \ref{lem:1} that $\mu$ is constant on $I\setminus \{i_1\}$ and $\mu(i_1)\neq \mu(i_2)$.  It follows from our assumption $\mu(n)=0$ that $\mu(i_1)=1$ and $\mu(i_k)=0$ for all $2\leq k\leq n$, in which case $\mu$ induces the complement of a path on $M$.

  Suppose $M=\{i_0\}\cup I\setminus\{i_1\}$. It follows from (2) (a) of Lemma \ref{lem:1} that  $\mu$ is constant on $I\setminus \{i_2\}$ and $\mu(i_2)\neq \mu(i_3)$. It follows from our assumption $\mu(n)=0$ that $\mu(i_2)=1$, then $\mu(i_k)=0$ for all $3\leq k\leq n$ and $\mu(i_1)=1$, in which case $\mu$ induces the complement of a path on $I$.
\end{enumerate}
\end{proof}

For a set $X$ of finite words let $l_i(X)$  be the supremum, over all words $\mu$ in $X$, of the length of factors of $i's$ in $\mu$. Let $l(X):= \max \{l_0(X), l_1(X)\}$. For a $0$-$1$ sequence $\mu$ we let $l(\mu):=l(\fac(\mu))$. Note that $l(\mu)=l(\overline{\mu})$. We should mention that if $\mu$  uniformly recurrent and non constant, then $l(\mu)$ is finite.

\begin{corollary}\label{cor:large-prime-restr}
Let $X$ be an infinite set of finite words such that $l(X)$ is finite. Then for every $w\in X$ such that $\vert w\vert >l(X)+4$ the graph $G_w$ is prime.
\end{corollary}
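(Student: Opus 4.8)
The plan is to prove the contrapositive: fixing $w\in X$ of length $n:=|w|$, I would assume that $G_w$ is not prime and deduce $n\leq l(X)+4$. Since $w$ is a finite word, the underlying interval of $\NN$ on which it is defined is finite, so Proposition \ref{lem:6'} applies directly. Choosing any nontrivial module $M$ of $G_w$, that proposition forces $w$ to be one of the explicit patterns it lists.

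The single observation driving the argument is that every one of these patterns contains a monochromatic factor (a maximal run of one letter) of length at least $n-4$. In Case 2 of Proposition \ref{lem:6'}, the module $I$ forces $w\in\{1\underbrace{0\cdots0}_{n-1},\,0\underbrace{1\cdots1}_{n-1}\}$ and the module $\{i_0\}\cup I\setminus\{i_1\}$ forces $w\in\{11\underbrace{0\cdots0}_{n-2},\,00\underbrace{1\cdots1}_{n-2}\}$, giving runs of length $n-1$ and $n-2$ respectively. In Case 1, the module $\{i_0,i_n\}$ yields (for $n>2$) the pattern $1\underbrace{0\cdots0}_{n-3}10$ or its complement, with a run of $n-3$ equal letters, while the module $\{i_1,i_n\}$ yields (for $n>3$) the pattern $11\underbrace{0\cdots0}_{n-4}10$ or its complement, with a run of $n-4$ equal letters; the residual small cases $n=2,3$ carry runs of length $2$. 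The smallest of all these lower bounds is $n-4$, attained in the last subcase.

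Consequently $l(w):=l(\fac(w))\geq n-4$, and since $w\in X$ this gives $l(X)\geq l(w)\geq n-4$, that is $n\leq l(X)+4$. Contraposing, every $w\in X$ with $|w|>l(X)+4$ satisfies that $G_w$ is prime, which is the assertion. I do not expect a genuine obstacle here: all of the structural content is already packaged in Proposition \ref{lem:6'}, and what remains is merely to read off the worst-case run length $n-4$ from the finite list of forbidden patterns, together with the routine bookkeeping that in the small cases $n=2,3$ the quantity $n-4$ is nonpositive, so the inequality $l(w)\geq n-4$ holds trivially while the actual run length is $2$.
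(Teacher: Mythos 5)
Your argument is correct and is essentially the paper's own proof: the paper also assumes $G_w$ is not prime, invokes Proposition \ref{lem:6'} to conclude that $w$ contains a run of $0$'s or $1$'s of length at least $\vert w\vert-4$, and derives $\vert w\vert-4\leq l(X)$, contradicting $\vert w\vert>l(X)+4$. Your version merely phrases this as a contrapositive and spells out the case-by-case verification of the worst-case run length $n-4$ that the paper leaves implicit.
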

\begin{proof}
Let $w\in X$ be such that $\vert w\vert >l(X)+4$ and suppose for a contradiction that $G_w$ is not prime. It follows from Proposition \ref{lem:6'} that $w$ has a factor of $0$'s or of $1$'s of length at least $\vert w\vert -4$. Hence, $\vert w\vert -4\leq l(X)$. This contradicts our assumption $\vert w\vert> l(X)+4$.
\end{proof}

\begin{corollary}\label{cor:primeX} If  $X$ is an infinite initial segment of $\{0, 1\}^*$, then  the set $X'$ of $u\in X$ such that $G_{u}$ is prime is infinite. \end{corollary}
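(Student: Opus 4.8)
The plan is to distinguish two cases according to whether the quantity $l(X)=\max\{l_0(X),l_1(X)\}$ is finite. First I would record the elementary fact that, since $\{0,1\}^*$ contains only finitely many words of each length, an infinite $X$ must contain words of arbitrarily large length. This is the feature that will let either case produce \emph{infinitely} many prime $G_u$.

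Suppose first that $l(X)<\infty$. Then the conclusion is immediate from Corollary~\ref{cor:large-prime-restr}: every $w\in X$ with $\vert w\vert>l(X)+4$ has $G_w$ prime, and since $X$ contains words of unbounded length there are infinitely many such $w$. Hence $X'$ is infinite in this case.

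The remaining case $l(X)=\infty$ is where the content lies, since Corollary~\ref{cor:large-prime-restr} no longer applies. Here at least one of $l_0(X),l_1(X)$ is infinite, say $l_{\varepsilon}(X)=\infty$ for some $\varepsilon\in\{0,1\}$. Because $X$ is an initial segment for the factor ordering, hence closed under taking factors, the presence of arbitrarily long runs of $\varepsilon$ inside members of $X$ forces the constant word $\varepsilon^{n}$ to lie in $X$ for every $n$. I would then read off directly from the definition of $G_{\mu}$ (as already noted for the constant word in Subsection~\ref{subsection:01graphs}) that $G_{1^{n}}$ is the path $\mathrm{P}_{n+1}$ on the vertex set $\{-1,0,\dots,n-1\}$, while by Remark~\ref{lem:comp} the graph $G_{0^{n}}$ is its complement $\overline{G_{1^{n}}}$. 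For $n\geq 3$ the path $\mathrm{P}_{n+1}$ has at least four vertices and is therefore prime, and Lemma~\ref{lem:module-complement} transfers this primality to the complement; thus $G_{\varepsilon^{n}}$ is prime for all $n\geq 3$. Since $\varepsilon^{n}\in X$ for all these $n$, again $X'$ is infinite.

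Combining the two cases gives the corollary. The one step that needs justification is the primality of $G_{1^{n}}=\mathrm{P}_{n+1}$ for $n\geq3$; I would supply a short separation argument using the lemma stated just before Lemma~\ref{lem:intial-interval-module} (any vertex separating two members of a module belongs to that module), showing that a nontrivial module of a path on at least four vertices would be forced to exhaust the whole vertex set. The main conceptual obstacle is precisely recognizing that the finiteness hypothesis of Corollary~\ref{cor:large-prime-restr} can fail, and that its failure is caused exactly by long constant runs $0^{n}$ or $1^{n}$, which however are themselves members of $X$ and yield prime graphs on their own.
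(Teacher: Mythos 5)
Your proof is correct and follows essentially the same route as the paper: the paper's own argument is exactly the dichotomy on whether $X$ contains arbitrarily long constant factors (in which case the corresponding graphs are paths or complements of paths, hence prime) versus $l(X)$ finite (in which case Corollary \ref{cor:large-prime-restr} applies). You have merely filled in the details that the paper dismisses with ``clearly prime.''
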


\begin{proof} If $X$ contains  factors of $0$'s  or factors of $1$'s of arbitrary large length, then  the corresponding graphs are clearly prime. Otherwise, $l(X)$ is finite and the conclusion follows from Corollary \ref{cor:large-prime-restr}.
\end{proof}


\begin{corollary}\label{cor:6'}Let $I:=\{i_1,\ldots, i_n\}$ be a finite  interval of $\NN$,  $i_0:=i_1-1$,  and let $\mu$ be a $0$-$1$ sequence on $I$. Suppose $G_\mu$ is prime but at least one of $G_\mu\setminus \{i_0\}$ and  $G_\mu\setminus \{i_n\}$ and $G_\mu\setminus \{i_1\}$ is not prime. Then $\mu$ has $0^{n-6}$ or $1^{n-6}$ as a factor.
\end{corollary}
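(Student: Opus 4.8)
The plan is to reduce all three cases to the single fact, extracted from Proposition \ref{lem:6'} (and already invoked in the proof of Corollary \ref{cor:large-prime-restr}), that a finite $0$-$1$ word $w$ whose graph $G_w$ is not prime must contain $0^{|w|-4}$ or $1^{|w|-4}$ as a factor. Since $0^{n-6}$ and $1^{n-6}$ are empty words when $n\leq 6$, hence trivially factors of $\mu$, I may assume $n\geq 7$. I would then treat the three possible non-prime deletions separately, the common idea being that each deleted graph is, after relabelling its vertices by their natural order, again a $0$-$1$ graph $G_w$ with $|w|=n-1$, and that the word $w$ differs from a factor of $\mu$ in at most one letter. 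Throughout I would feel free to pass to $\overline{\mu}$ via Remark \ref{lem:comp} and Lemma \ref{lem:module-complement} whenever it lightens the bookkeeping.

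First I would dispose of the two ``easy'' deletions. If $G_\mu\setminus\{i_0\}$ is not prime, then deleting the extra vertex $i_0$ leaves the graph on $\{i_1,\dots,i_n\}$, which is exactly $G_\nu$ for $\nu:=\mu(i_2)\cdots\mu(i_n)$, with $i_1$ now playing the role of the added vertex; here $\nu$ is a suffix of $\mu$. Symmetrically, if $G_\mu\setminus\{i_n\}$ is not prime, deleting the largest vertex leaves $G_{\mu'}$ with $\mu':=\mu(i_1)\cdots\mu(i_{n-1})$, a prefix of $\mu$. In both cases the displayed fact produces a monochromatic factor of length $(n-1)-4=n-5\geq n-6$ sitting inside a factor of $\mu$, as required.

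The real work is the third deletion, $G_\mu\setminus\{i_1\}$, whose vertex set $\{i_0,i_2,i_3,\dots,i_n\}$ is no longer an interval. I would prove directly that, after relabelling these vertices in increasing order as $0,1,\dots,n-1$ (so $i_0\mapsto 0$ and $i_k\mapsto k-1$ for $k\geq 2$), the graph becomes the $0$-$1$ graph $G_\tau$ with $\tau:=\overline{\mu(i_2)}\,\mu(i_3)\mu(i_4)\cdots\mu(i_n)$ of length $n-1$. Comparing edge rules shows that for every pair with larger endpoint $i_k$, $k\geq 3$, the adjacency is still governed by $\mu(i_k)$ exactly as in a $0$-$1$ graph, so $\tau_b=\mu(i_{b+1})$ for $b\geq 2$. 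The single exception is the pair $\{i_0,i_2\}$: removing $i_1$ makes it consecutive in the relabelled order although $i_0,i_2$ were non-consecutive in $\mu$, and this flip of the consecutive/non-consecutive status forces the one flipped letter $\overline{\mu(i_2)}$ in the first position of $\tau$. Checking this identification pair by pair, verifying that the consecutive and non-consecutive alternatives all agree and that no inconsistent value of $\tau$ is ever demanded, is the step I expect to be the most delicate.

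Granting the identification, $G_\tau$ is not prime, so the displayed fact yields a monochromatic factor of $\tau$ of length $(n-1)-4=n-5$. If this factor avoids the first letter $\tau_1=\overline{\mu(i_2)}$, it already lies in the suffix $\mu(i_3)\cdots\mu(i_n)$, which is a factor of $\mu$; if it begins at $\tau_1$, then deleting that first letter leaves a monochromatic factor of length $n-6$ still contained in $\mu(i_3)\cdots\mu(i_n)$. Either way $\mu$ contains $0^{n-6}$ or $1^{n-6}$, which completes the proof.
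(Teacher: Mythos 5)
Your proof is correct, and for the first two deletions it coincides with the paper's argument: in both treatments $G_\mu\setminus\{i_0\}$ and $G_\mu\setminus\{i_n\}$ are recognized as $0$-$1$ graphs of words of length $n-1$ that are factors of $\mu$, and Proposition \ref{lem:6'} (in the form: a word $w$ with $G_w$ not prime contains $0^{|w|-4}$ or $1^{|w|-4}$) gives a monochromatic factor of length $n-5$. Where you genuinely diverge is the deletion of $i_1$. The paper does not re-encode $G_\mu\setminus\{i_1\}$ as a $0$-$1$ graph; instead it runs a case analysis on the nontrivial module $M$ of $G_\mu\setminus\{i_1\}$: if $M=\{i_2,\dots,i_n\}$ it argues directly that $\mu$ is constant there; if $M\subseteq\{i_2,\dots,i_n\}$ is smaller it applies Proposition \ref{lem:6'} to the restriction to $\{i_2,\dots,i_n\}$ with $n'=n-2$ (this is where the exponent $n-6$ comes from); and in the residual case it pins down $M=\{i_0,i_n\}$ by a separation argument and gets a run of length $n-3$. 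Your observation that $G_\mu\setminus\{i_1\}$, after relabelling, is exactly $G_\tau$ for $\tau=\overline{\mu(i_2)}\,\mu(i_3)\cdots\mu(i_n)$ — the flip of the first letter being forced because $\{i_0,i_2\}$ becomes a consecutive pair once $i_1$ is removed — is correct (I checked the edge rules: only pairs with larger endpoint $i_2$ are affected), and it lets you invoke the Proposition once, uniformly, losing only one letter to the flip and hence landing on $n-6$ just as the paper does. Your route is cleaner and avoids repeating the module analysis; the paper's route yields slightly sharper run lengths in two of its subcases, which is immaterial for the stated conclusion. The reduction of $n\le 6$ to the empty factor is a harmless housekeeping step the paper leaves implicit.
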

\begin{proof}
\begin{enumerate}[$(1)$]
\item The graph $G_\mu\setminus \{i_0\}$ is isomorphic to the graph $G_{\mu'}$ where $V(G_{\mu'}):=\{i_1,\ldots, i_n\}$ and $\mu':=\mu_{\restriction \{i_2,\ldots, i_n\}}$. If $G_\mu\setminus \{i_0\}$ is not prime, then the graph $G_{\mu'}$ is not prime and we can apply Proposition \ref{lem:6'} to this graph with $n':=n-1$ and deduce that $\mu'$ has $0^{n'-4}$ or $1^{n'-4}$ as a factor. Hence, $\mu$ has $0^{n-5}$ or $1^{n-5}$ as a factor.\\
The case $G_\mu\setminus \{i_n\}$ not prime can be treated similarly. Apply Proposition \ref{lem:6'} to  the graph $G_{\mu'}$ where $V(G_{\mu'}):=\{i_0,\ldots, i_{n-1}\}$ and $\mu':=\mu_{\restriction \{i_1,\ldots, i_{n-1}\}}$ and $n':=n-1$.
\item Suppose $G_\mu\setminus \{i_1\}$ is not prime and let $M$ be a nontrivial module. If $M=\{i_2,\ldots, i_n\}$, then $i_0$ must be either adjacent to all elements of $M$ or adjacent to non. Thus $\mu$ is constant on $M$, that is $\mu_{\restriction \{i_2,\ldots, i_n\}}=0^{n-1}$ or $\mu_{\restriction \{i_2,\ldots, i_n\}}=1^{n-1}$. If $M\neq \{i_2,\ldots, i_n\}$, then $M$ is a nontrivial module of $G_\mu'$ where $V(G_{\mu'}):=\{i_2,\ldots, i_n\}$ and $\mu':=\mu_{\restriction \{i_3,\ldots, i_n\}}$. We then apply Proposition \ref{lem:6'} to $G_{\mu'}$ with $n'=n-2$ and deduce that $\mu'$ has $0^{n-6}$ or $1^{n-6}$ as a factor. For the remainder of the proof we may assume that $M$ meets $\{i_2,\ldots, i_n\}$ in a singleton and since $M$ is nontrivial $i_0\in M$. Let $k\neq 0$ be such that $i_k\in M$. Since $i_{k+1}$ separates $i_k$ from $i_0$ we infer that $k+1>n$. This shows that $k=n$, that is $M=\{i_0,i_n\}$. Suppose $\mu(i_n)=1$. Then no vertex in $\{i_2,\ldots,i_{n-2}\}$ is adjacent to $i_n$. Since $M$ is a module no vertex in $\{i_2,\ldots,i_{n-2}\}$ is adjacent to $i_0$ and therefore $\mu$ is constant on $\{i_2,\ldots,i_{n-2}\}$ and takes the value $1$. Thus $\mu$ has $1^{n-3}$ as factor. If $\mu(i_n)=0$, then we obtain that $\mu$ has $0^{n-3}$ as factor.
\end{enumerate}
\end{proof}

\section{Embeddings between $0$-$1$ graphs}\label{section:embeddings}

In this section we study the relation between embeddings of words and embeddings of the corresponding $0$-$1$ graphs, see for example Proposition \ref{prop:embed-lmu-finite}.  Results obtained in this section will be used in the proof of Theorem \ref{thm:recurrent-word}.

\begin{lemma}\label{lem:p4}Let $\mu$ a $0$-$1$ sequence on  an interval of $I$ of $\NN$. Let $\{i_0,i_1,i_2,i_3\}\subseteq I$ be such that $i_0<i_1<i_2<i_{3}$. If ${G_\mu}_{ \restriction \{i_0,i_1,i_2,i_3\}}$ is isomorphic to  a  $P_4$, then $\{i_1,i_2,i_3\}$ is an interval of $\NN$ and $\mu$ can be any $0$-$1$ word of length $3$.
\end{lemma}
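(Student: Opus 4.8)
The plan is to read off all six adjacencies of the induced graph directly from the definition of $G_\mu$, exploiting that the edge between $i<j$ is governed by $\mu_j$ and by whether $j=i+1$. First I would record that, since $i_0<i_1<i_2<i_3$ forces $i_2\geq i_0+2$ and $i_3\geq i_1+2\geq i_0+3$, the pairs $\{i_0,i_2\}$, $\{i_0,i_3\}$, $\{i_1,i_3\}$ are never pairs of consecutive integers; hence by the edge rule $i_0\sim i_2\Leftrightarrow \mu_{i_2}=0$ and $i_0\sim i_3\Leftrightarrow i_1\sim i_3\Leftrightarrow \mu_{i_3}=0$, where $\sim$ denotes adjacency in $G_\mu$. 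The crucial structural consequence is that $i_0$ and $i_1$ automatically have the same adjacency to $i_3$.

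Next I would force $i_3=i_2+1$ by a degree argument. If $\{i_2,i_3\}$ were not consecutive, then $i_2\sim i_3\Leftrightarrow\mu_{i_3}=0$ as well, so $i_3$ would be adjacent to all three of $i_0,i_1,i_2$ (when $\mu_{i_3}=0$) or to none of them (when $\mu_{i_3}=1$); that is, $i_3$ would have degree $3$ or $0$, which is impossible in a $P_4$, where every vertex has degree $1$ or $2$. Hence $\{i_2,i_3\}$ is consecutive.

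The heart of the argument is forcing $i_2=i_1+1$, which I would do via primality of $P_4$. If $\{i_1,i_2\}$ were not consecutive, the edge rule would give $i_1\sim i_2\Leftrightarrow \mu_{i_2}=0$, the same condition as $i_0\sim i_2$; combined with the fact that $i_0$ and $i_1$ always see $i_3$ alike, the pair $\{i_0,i_1\}$ would then be a two-element module of the induced graph. Since a $P_4$ is prime (it has no nontrivial module), this is a contradiction. I expect this module/twin step to be the main conceptual obstacle: the elementary but easy-to-mishandle part is the bookkeeping of the six adjacencies, while the clean finish is the observation that $i_0$ and $i_1$ would become twins, which primality of $P_4$ forbids. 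Together these two steps force $i_2=i_1+1$ and $i_3=i_1+2$, so $\{i_1,i_2,i_3\}$ is an interval of $\NN$.

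For the last assertion I would note that, once $\{i_1,i_2,i_3\}$ is an interval, every adjacency among the four vertices is determined by $\mu_{i_1},\mu_{i_2},\mu_{i_3}$ and by the single alternative $i_0=i_1-1$ or $i_0<i_1-1$, which affects only the edge $\{i_0,i_1\}$ (an edge iff $\mu_{i_1}=1$ in the first alternative, iff $\mu_{i_1}=0$ in the second). A short inspection of the eight length-$3$ words then shows that each one yields a $P_4$ for exactly one of these two alternatives (for instance $111$ and $101$ with $i_0=i_1-1$, and $011$ and $001$ with $i_0<i_1-1$), so the hypothesis imposes no constraint on $\mu_{i_1}\mu_{i_2}\mu_{i_3}$; that is, $\mu$ restricted to $\{i_1,i_2,i_3\}$ can be any $0$-$1$ word of length $3$.
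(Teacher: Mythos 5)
Your proof is correct and follows essentially the same route as the paper: the paper obtains that $\{i_1,i_2,i_3\}$ is an interval from the primality of $P_4$ via Corollary \ref{lem:5}, whose underlying module argument is exactly the twin argument you carry out inline for $\{i_0,i_1\}$ (you replace the remaining case by a degree argument on $i_3$, which is equally valid), and then both proofs finish with a case check of the length-three words, the paper using self-complementarity of $P_4$ to halve the cases where you enumerate directly. No gaps.
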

\begin{proof} Since a $P_4$ is prime it follows from Corollary \ref{lem:5} that $\{i_1,i_2,i_3\}$ is an interval of $\NN$. Since $P_4$ is isomorphic to its complement follows  that if ${G_\mu}_{ \restriction \{i_0,i_1,i_2,i_3\}}$ is isomorphic to $P_4$, then so is ${G_{\overline{\mu}}}_{ \restriction \{i_0,i_1,i_2,i_3\}}$. So we may assume without loss of generality that $\mu(i_3)=1$. If $\mu(i_2)=0$, then $\{i_1,i_2\}$ is not an edge of ${G_{\overline{\mu}}}_{ \restriction \{i_0,i_1,i_2,i_3\}}$ and $\{i_0,i_2\}$ is an edge of ${G_{\overline{\mu}}}_{ \restriction \{i_0,i_1,i_2,i_3\}}$. Hence, $\mu(i_1)=1$ if $i_1$ is a successor of $i_0$ and $\mu(i_1)=0$ otherwise.  If $\mu(i_2)=1$,  then $\{i_1,i_2\}$ is an edge of ${G_{\overline{\mu}}}_{ \restriction \{i_0,i_1,i_2,i_3\}}$ and $\{i_0,i_2\}$ is not an edge of ${G_{\overline{\mu}}}_{ \restriction \{i_0,i_1,i_2,i_3\}}$. Hence, $\mu(i_1)=1$ if $i_1$ is a successor of $i_0$ and $\mu(i_1)=0$ otherwise.
\end{proof}

We denote by $1^k$ the constant word of length $k$ whose all letters are $1$, that is $1^k:=\underbrace{11\ldots 1}_{k \mbox{ times}}$. Similarly we define $0^k$.

\begin{lemma}\label{lem:pk} Let $\mu$ a $0$-$1$ sequence on  an interval $I$ of $\NN$. Let $\{i_0,i_1,\ldots,i_{k-1}\}\subseteq I$ be such that  $k\geq 5$ and $i_0<i_1\ldots  <i_{k-1}$. If ${G_\mu}_{ \restriction \{i_0, \ldots, i_{k-1}\}}$ is isomorphic to $P_k$, then $\{i_1,\ldots,i_{k-1}\}$ is an interval of $\NN$ and $\mu_{\restriction i_3,\ldots, i_{k-1}} =1^{k-3}$ and $\mu_{\restriction \{i_1i_2\}}$ can be  any $0$-$1$ word of length $2$.
\end{lemma}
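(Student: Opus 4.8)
The plan is to reduce everything to a statement about consecutive integers and then run a short degree count on the path.

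\textbf{Step 1 (the interval claim).} Since $P_k$ is prime for $k\ge 4$ and ${G_\mu}_{\restriction F}\cong P_k$ with $F:=\{i_0,\dots,i_{k-1}\}$ and $\min(F)=i_0$, Corollary~\ref{lem:5} immediately gives that $\{i_1,\dots,i_{k-1}\}$ is an interval of $\NN$, exactly as in the proof of Lemma~\ref{lem:p4}. I would then fix the notation $i_j=i_1+(j-1)$ for $1\le j\le k-1$ and write $i_0=i_1-d$ with $d\ge 1$. Note that since $i_0$ is the least of these vertices, its edges to the remaining $i_j$ are governed by $\mu_{i_j}$ (the value at the larger index), so $\mu_{i_0}$ plays no role whatsoever in the induced graph.

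\textbf{Step 2 (forcing $\mu_{i_l}=1$ for $l\ge 3$).} The heart of the argument is a count of the neighbours of $i_l$ lying below it. For $l\ge 2$ the integer-predecessor $i_{l-1}=i_l-1$ is the only consecutive one, while $i_0,i_1,\dots,i_{l-2}$ are all non-consecutive to $i_l$. By the edge rule this gives: if $\mu_{i_l}=1$ then $i_l$ has the single lower neighbour $i_{l-1}$, whereas if $\mu_{i_l}=0$ then $i_l$ is adjacent to every one of $i_0,\dots,i_{l-2}$, i.e. to $l-1$ lower neighbours. Since $P_k$ has maximum degree $2$, $\mu_{i_l}=0$ forces $l-1\le 2$, hence $\mu_{i_l}=1$ for all $l\ge 4$. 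For the boundary value $l=3$ I would argue by contradiction: if $\mu_{i_3}=0$ then $i_3$ already has the two lower neighbours $i_0,i_1$; but $k\ge 5$ guarantees that $i_4$ exists, and since $\mu_{i_4}=1$ was just shown, $\{i_3,i_4\}$ is an edge, so $\deg i_3\ge 3$, contradicting $P_k$. Hence $\mu_{i_3}=1$ as well, which proves $\mu_{\restriction\{i_3,\dots,i_{k-1}\}}=1^{k-3}$.

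\textbf{Step 3 (no constraint on $\mu_{i_1},\mu_{i_2}$).} To see that $\mu_{\restriction\{i_1,i_2\}}$ is genuinely unconstrained, I would exhibit, for each of the four values of $(\mu_{i_1},\mu_{i_2})$, a position of $i_0$ for which the induced graph is a $P_k$. With $\mu_{i_l}=1$ for $l\ge 3$ fixed, the vertices $i_2,i_3,\dots,i_{k-1}$ always form a $P_{k-2}$ with endpoint $i_2$, and the only remaining edges to decide are $\{i_1,i_2\}$ (present iff $\mu_{i_2}=1$), $\{i_0,i_2\}$ (present iff $\mu_{i_2}=0$), and $\{i_0,i_1\}$ (present iff $\mu_{i_1}=1$ when $d=1$, resp. iff $\mu_{i_1}=0$ when $d\ge 2$). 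A short case check then shows: for $(\mu_{i_1},\mu_{i_2})=(1,1)$ take $d=1$ to get $i_0-i_1-i_2-\cdots-i_{k-1}$; for $(0,1)$ take $d\ge 2$ for the same path; and for $(1,0)$ resp. $(0,0)$ one obtains $i_1-i_0-i_2-i_3-\cdots-i_{k-1}$ by taking $d=1$ resp. $d\ge 2$. In every case the induced graph is $P_k$, so all four length-two words occur.

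I expect the only delicate point to be the separate treatment of $l=3$ in Step 2: this is exactly where the hypothesis $k\ge 5$ enters, and overlooking the edge $\{i_3,i_4\}$ would wrongly suggest that $\mu_{i_3}=0$ is admissible (which is precisely why $P_4$ behaves differently, cf. Lemma~\ref{lem:p4}). Everything else is routine adjacency bookkeeping from the edge rule defining $G_\mu$.
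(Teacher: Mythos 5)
Your proof is correct and rests on the same two pillars as the paper's: Corollary \ref{lem:5} for the interval claim and the maximum-degree bound of $P_k$ for forcing the $1$'s. The only organizational difference is that the paper shows $\mu(i_{k-1})=\mu(i_{k-2})=1$, peels off the leaf $i_{k-1}$ and inducts down to the $P_4$ case of Lemma \ref{lem:p4}, whereas you run a single lower-neighbour count ($\mu_{i_l}=0$ gives $l-1$ lower neighbours, hence $l\le 3$), treat $l=3$ separately via the edge $\{i_3,i_4\}$ (which is exactly where $k\ge 5$ enters, and is handled correctly), and then verify the realizability of all four values of $(\mu_{i_1},\mu_{i_2})$ by direct construction instead of inheriting it from Lemma \ref{lem:p4}.
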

\begin{proof}Suppose ${G_\mu}_{ \restriction \{i_0, \ldots, i_{k-1}\}}$ is isomorphic to $P_k$. Since $P_k$ is prime for $k\geq 4$ it follows from Corollary \ref{lem:5} that $\{i_1,\ldots,i_{k-1}\}$ is an interval of $\NN$. Then $\mu(i_{k-1})=1$ because otherwise $i_{k-1}$ would be a vertex of degree at least $3$ in $P_k$ and this is impossible. Similarly, we have $\mu_{i_{k-2}}=1$. Since $i_{k-1}$ is a vertex of degree $1$ in ${G_\mu}_{ \restriction \{i_1, \ldots, i_k\}}$,  which is isomorphic to $P_k$, we infer that ${G_\mu}_{ \restriction \{i_0, \ldots, i_{k-2}\}}$ is isomorphic to $P_{k-1}$. The required conclusion follows from Lemma \ref{lem:p4} and an induction on $k\geq 5$.
\end{proof}


\begin{lemma}\label{lem:embed-left-left}Let $\mu$ be a $0$-$1$ sequence on an interval $J$ of $\NN$. Let $I:=\{i_0,i_1,\ldots, i_n\}$ be a finite  interval of $\NN$ with $n\geq 2$ and let $w$ be a $0$-$1$ sequence on $I\setminus \{i_0\}$. Suppose $G_w$ embeds into $G_\mu$ and let $f$ be such an embedding. If $f(i_n)=\max(f(I))$, then $f(\{i_2,\ldots, i_n\})$ is an interval of $J$ and $f$ is strictly increasing on $\{i_2,\ldots, i_n\}$ and $\mu_{\restriction f(\{i_3,\ldots, i_n\})}=w_3\ldots w_n$.
\end{lemma}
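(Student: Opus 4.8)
The plan is to argue by induction on $n$, peeling off the top vertex $i_n$ at each stage. Writing $w=w_1\ldots w_n$ with $w_k:=w(i_k)$, I first dispose of the base case $n=2$: there $\{i_2,\ldots,i_n\}=\{i_2\}$ is a singleton, so $f$ restricted to it is trivially strictly increasing and its image is an interval, while $\{i_3,\ldots,i_n\}=\varnothing$ makes the statement about $\mu$ an identity between empty words. Thus all the content lies in the inductive step from $n-1$ to $n$ for $n\geq 3$.

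The crucial local fact I would isolate is the following. Put $m:=f(i_n)=\max(f(I))$; note $m\in J$ since $f(I)$ has at least three elements. I claim $\mu_m=w_{i_n}$ and $f(i_{n-1})=m-1$. Indeed, by the adjacency rule of $G_w$, if $w_{i_n}=1$ then $i_n$ is adjacent among $\{i_0,\ldots,i_{n-1}\}$ only to $i_{n-1}$, whereas if $w_{i_n}=0$ then $i_n$ is adjacent to every $i_k$ with $k\leq n-2$ and to no others below it. Transporting this through $f$, and using that every $f(i_k)$ ($k\leq n-1$) lies below $m$ because $m$ is the maximum of the image, the vertex $m$ has exactly one neighbour (when $w_{i_n}=1$), respectively exactly $n-1$ neighbours (when $w_{i_n}=0$), among $f(i_0),\ldots,f(i_{n-1})$. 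On the other hand, in $G_\mu$ the vertex $m$ has below it either the single neighbour $m-1$ (if $\mu_m=1$) or all vertices but $m-1$ (if $\mu_m=0$). Comparing the two descriptions and using $n-1\geq 2$, a short count eliminates the value of $\mu_m$ opposite to $w_{i_n}$, forcing $\mu_m=w_{i_n}$; the unique special vertex $m-1$ (the sole neighbour, resp.\ sole non-neighbour, of $m$ below it) must then lie in the image and equal $f(i_{n-1})$, so $f(i_{n-1})=m-1$.

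Granting the claim, I would finish as follows. Since $m=\max(f(I))$ and $f$ is injective, each $f(i_k)$ with $k\leq n-2$ satisfies $f(i_k)<m$ and $f(i_k)\neq m-1=f(i_{n-1})$, hence $f(i_k)<f(i_{n-1})$; therefore $f(i_{n-1})=\max\bigl(f(\{i_0,\ldots,i_{n-1}\})\bigr)$. The restriction of $f$ to $\{i_0,\ldots,i_{n-1}\}$ is an embedding of $G_{w'}$ into $G_\mu$, where $w':=w_1\ldots w_{n-1}$ and $G_{w'}={G_w}_{\restriction\{i_0,\ldots,i_{n-1}\}}$. Applying the induction hypothesis to $w'$, whose interval has $n-1\geq 2$ inner vertices, gives that $f(\{i_2,\ldots,i_{n-1}\})$ is an interval of $J$ on which $f$ is strictly increasing with maximum $f(i_{n-1})=m-1$, and that $\mu_{\restriction f(\{i_3,\ldots,i_{n-1}\})}=w_3\ldots w_{n-1}$. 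Adjoining $f(i_n)=m=f(i_{n-1})+1$ extends the interval to $f(\{i_2,\ldots,i_n\})$ and preserves strict monotonicity, while $\mu_m=w_{i_n}=w_n$ appends the correct last letter, yielding $\mu_{\restriction f(\{i_3,\ldots,i_n\})}=w_3\ldots w_n$. This closes the induction.

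The main obstacle is precisely the local claim of the second paragraph: everything reduces to pinning down the image of the top vertex. The leverage comes from $m$ being the maximum of $f(I)$, which confines the neighbourhood of $f(i_n)$ to the two rigid shapes a $0$-$1$ graph allows, and the counting then simultaneously fixes the letter $\mu_m$ and identifies the predecessor $f(i_{n-1})=m-1$. The hypothesis $n\geq 3$ is exactly what makes the count decisive; for $n=2$ the two shapes are not separated by cardinality, which is why the conclusion on $\mu$ begins only at index $3$, echoing the freedom in the first letters already visible in Lemmas \ref{lem:p4} and \ref{lem:pk}.
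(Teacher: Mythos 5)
Your proof is correct and follows essentially the same route as the paper's: both arguments induct on $n$, using the maximality of $f(i_n)$ in the image to force $\mu(f(i_n))=w_n$ and $f(i_{n-1})=f(i_n)-1$, then peel off the top vertex. The only (cosmetic) difference is that the paper first reduces to $w_n=1$ by passing to complements, whereas you treat both values of $w_n$ symmetrically via the degree count.
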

\begin{proof}Let $w=w_1\ldots w_n$. We notice at once that we can assume without loss of generality that $w_n=1$. Indeed, if $w_n=0$, then we consider $\overline w$ and $\overline \mu$ and recall that $G_{\overline{w}}$ is the complement of $G_w$. Furthermore, two graphs embed in each other if and only if their corresponding complements embed in each other.\\
Let $f$ be an embedding of $G_w$ into $G_{\mu}$ such that $f(i_n)=\max(f(I))$. If $n=2$, there is nothing to prove. Next we suppose $n\geq 3$. It follows from our assumption $w_n=1$ that $i_n$ has degree $1$ in $G_w$ and $i_{n-1}$ is its unique neighbour. Since $f$ is an embedding we infer that $f(i_n)$ has degree $1$ in $f(G_{w})$. It follows from this and $n\geq 3$ and $f(i_n)=\max(f(I))$  that $\mu(f(i_n))=1$. Hence, $f(i_n)-1$ is the unique neighbour of $f(i_n)$ in $G_\mu$ satisfying $f(i_n)-1<f(i_n)$, and therefore in $f(G_{w})$. Since $f$ is an embedding we must have $f(i_{n-1})=f(i_n)-1$. The proof of the lemma follows by induction on $n\geq 3$.
\end{proof}

It should be noted that the lemma is best possible. Indeed, $f(i_1)<f(i_0)$ is possible in general.

\begin{lemma}\label{lem:embed-left-right}Let $I:=\{i_0,i_1,\ldots, i_n\}$ be a finite  interval of $\NN$ with $n\geq 7$ and let $w$ be a $0$-$1$ sequence on $I\setminus \{i_0\}$ so that  $G_w$ is prime. Let $\mu$ be a $0$-$1$ sequence on an interval $J$ of $\NN$. Suppose $G_w$ embeds into $G_\mu$ and let $f$ be such an embedding. Let $f(I):=\{j_0,j_1,\ldots,j_n\}$ so that $j_0<j_1<\ldots <j_n$. If $f(i_n)\in \{j_0,j_1\}$, then $w$ and $\mu$ have $0^{n-7}$ or $1^{n-7}$ as a factor.
\end{lemma}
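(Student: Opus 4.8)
The plan is to normalise to the case $w_n=1$, extract a long run of $1$'s first in $\mu$ and then, through the path lemma, in $w$.

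First I would reduce. Since $G_{\overline w}=\overline{G_w}$ and $G_{\overline\mu}=\overline{G_\mu}$ (Remark \ref{lem:comp}), and an embedding of $G_w$ into $G_\mu$ is exactly an embedding of $G_{\overline w}$ into $G_{\overline\mu}$ with the same image and the same extremal vertex, while the hypothesis $f(i_n)\in\{j_0,j_1\}$ and the desired conclusion are invariant under complementation, I may assume $w_n=1$. Then in $G_w$ the vertex $i_n$ has degree $1$, its unique neighbour is $i_{n-1}$, and $i_n$ is nonadjacent to every vertex of $S:=\{i_0,\dots,i_{n-2}\}$. Next, $H:=G_\mu\restriction f(I)$ is isomorphic to $G_w$, hence prime, so the argument of Corollary \ref{lem:5} (i.e.\ Lemma \ref{lem:intial-interval-module}) applied to $f(I)$ shows that $f(I)\setminus\{\min f(I)\}=\{j_1,\dots,j_n\}$ is an interval of $\NN$; thus $j_1,\dots,j_n$ are consecutive integers and every adjacency inside $H$ can be read off directly from the defining rule of $G_\mu$.

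Set $f(i_{n-1})=j_{\ell_0}$. As $f$ is an isomorphism, $f(i_n)\in\{j_0,j_1\}$ is nonadjacent to $f(S)=f(I)\setminus\{f(i_n),j_{\ell_0}\}$. Since $j_1,\dots,j_n$ are consecutive, a vertex $j_c$ with $c\ge 3$ is adjacent to $j_0$, or to $j_1$, exactly when $\mu_{j_c}=0$; so the nonadjacency forces $\mu_{j_c}=1$ for every $c\in\{3,\dots,n\}$ except possibly $c=\ell_0$. Hence $\mu$ is already $1$ on the consecutive block $j_3,\dots,j_n$ with at most one exception, and it remains to show the exception lies near an end. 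For this I would use that $\deg_{G_w}(i_{n-1})$ equals $2$ if $w_{n-1}=1$ and $n-1$ if $w_{n-1}=0$. A direct count in $H$, using the values of $\mu$ just fixed, gives $\deg_H(j_{\ell_0})=\ell_0$ for interior $\ell_0$ (when $\mu_{j_{\ell_0}}=0$ the downward neighbours are precisely $j_0,\dots,j_{\ell_0-2}$, together with the single upward neighbour $j_{\ell_0+1}$), and $\deg_H(j_n)=n-1$ when $\ell_0=n$. Matching this with $\deg_{G_w}(i_{n-1})\in\{2,n-1\}$ forces $\ell_0\in\{2,n-1,n\}$ (the alternative $\mu_{j_{\ell_0}}=1$ leaves no exception). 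In each case the positions $j_2,\dots,j_n$ contain a contiguous run of $1$'s of length at least $n-4$, so $\mu$ has $1^{\,n-4}$, hence $1^{\,n-7}$, as a factor; the analysis for $f(i_n)=j_0$ is identical.

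Finally I transfer the run to $w$. A maximal block of consecutive positions on which $\mu=1$ induces, in $G_\mu$ and therefore in $H\cong G_w$, an induced path, so $G_w$ contains an induced $P_{n-4}$. Applying Lemma \ref{lem:pk} to this copy yields that $w$ has $1^{(n-4)-3}=1^{\,n-7}$ as a factor (for $n\in\{7,8\}$ the word $1^{\,n-7}$ is empty or a single letter and the claim is immediate from $w_n=1$). Undoing the complement turns both runs into runs of $0$'s or of $1$'s, giving the statement for $w$ and for $\mu$. I expect the only delicate point to be the degree bookkeeping of the third paragraph: one must track the few boundary positions (the location of $\ell_0$ and the status of $j_0,j_1,j_2$) to confirm that in every branch the surviving run of $1$'s has length at least $n-4$, which is precisely what produces the loss of $7$ in the final bound.
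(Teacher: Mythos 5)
Your proposal is correct and follows essentially the same route as the paper's proof: reduce to $w_n=1$ by complementation, use primality and Corollary \ref{lem:5} to make $\{j_1,\dots,j_n\}$ an interval, exploit the fact that $f(i_{n-1})$ is the unique neighbour of $f(i_n)$ together with a degree count to locate $f(i_{n-1})$ and force $\mu$ to be $1$ on almost all of $j_3,\dots,j_n$, and finally transfer the run to $w$ via Lemma \ref{lem:pk}. The only cosmetic difference is that the paper organises the boundary positions of $f(i_{n-1})$ through Corollary \ref{cor:6'} and an explicit case split on $w_{n-1}$, whereas you handle them by the uniform degree computation (whose small exceptional positions, e.g.\ $\ell_0=1$, you correctly flag as the bookkeeping to be checked and which only yield longer runs).
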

\begin{proof}
Let $w:=w_1\ldots w_n$. As in the proof of Lemma \ref{lem:embed-left-left} we may assume without loss of generality that $w_n=1$. Then $i_n$ has degree $1$ in $G_w$ and $i_{n-1}$ is its unique neighbour. Let $f$ be an embedding of $G_w$ into $G_\mu$ and suppose $f(i_n)\in \{j_0,j_1\}$. Since $f$ is an embedding we infer that $f(i_n)$ has degree $1$ in $f(G_w)$. Furthermore, since  $G_w$ is prime, $f(G_w)$ is prime too and therefore $\{j_1,\ldots,j_n\}$ is an interval of $\NN$ (Corollary \ref{lem:5}).
 \begin{enumerate}[{Case} 1.]
\item $f(i_n)=j_0$. \\
Let $k\in \NN$ be such that $j_k:=f(i_{n-1})$. It follows from Corollary \ref{cor:6'} that we may assume $k\not \in \{1,n\}$. Since $f$ is an embedding and $j_0=f(i_n)$ it follows  that $j_k$ is the unique neighbour of $j_0$ in $f(G_w)$. It follows from this and $k\not \in \{1,n\}$ that $\mu(j_k)=0$ and $\mu$ is constant on  $\{j_2,\ldots,j_{n}\}\setminus \{j_k\}$ and takes the value $1$. In particular, $j_k$ has at least $j_0$ and $j_{k+1}$ as neighbours.\\
\textbf{If} $w_{n-1}=1$, then $i_{n-1}$ has degree $2$ in $G_w$ and since $f$ is an embedding $j_k=f(i_{n-1})$ has degree $2$ in $f(G_w)$. It follows from $k\not \in \{1,n\}$ and $\mu(j_k)=0$ that $k=2$. In particular, $\mu_{\restriction \{j_3,\ldots, i_{n}\}}=1^{n-2}$ and $f(G_w)$ embeds $P_{n-1}$. Since $f$ is an embedding we infer that $G_w$ embeds $P_{n-1}$. It follows from Lemma \ref{lem:pk} that $w$ has $1^{n-4}$ as a factor.\\
\textbf{Else if} $w_{n-1}=0$, then $i_{n-1}$ has degree $n-1$. Since $f$ is an embedding we infer that $j_k$ has degree $n-1$ in $f(G_w)$. This forces $k=n-1$. It follows from Lemma \ref{lem:embed-left-left} applied to $w'=w_1\ldots w_{n-1}$ and $\mu$ that $f$ is strictly increasing on $\{i_2,\ldots, i_n\}$ and $\mu_{\restriction f(\{i_3,\ldots, i_{n-1}\})}=w_3\ldots w_{n-1}$. In particular, $w$ has $1^{n-3}$ as a factor.

\item $f(i_n)=j_1$.\\
Let $k\in \NN$ be such that $j_k=f(i_{n-1})$. It follows from Corollary \ref{cor:6'} that we may assume $k\not \in \{1,n\}$. Since $f$ is an embedding and $j_1=f(i_n)$ it follows  that $j_k$ is the unique neighbour of $j_1$ in $f(G_w)$. It follows from this and $k\not \in \{1,n\}$ that:\\
$(a)$ $k=2$ and $\mu$ is constant on  $\{j_2,\ldots,j_{n}\}$ and takes the value $1$, or\\
$(b)$ $k>2$ and $\mu(i_2)=\mu(j_k)=0$ and $\mu$ is constant on  $\{j_3,\ldots,j_{n}\}\setminus \{i_k\}$ and takes the value $1$.\\
\textbf{If} $w_{n-1}=1$, then $i_{n-1}$ has degree $2$ in $G_w$ and since $f$ is an embedding $j_k=f(i_{n-1})$ has degree $2$ in $f(G_w)$. Then only case $(a)$ holds. Indeed, if not $j_k$ would be adjacent to $j_{k+1}$, $j_1$ and $j_0$ and hence has degree $3$ which is impossible. Thus $\mu_{\restriction \{j_2,\ldots, j_{n}\}}=1^{n-1}$. In particular, $f(G_w)$ has an induced $P_n$ and since $f$ is an embedding we infer that $G_w$ has an induced $P_n$. It follows from Lemma \ref{lem:pk} that $w$ has $1^{n-3}$ as a factor.\\
\textbf{Else if} $w_{n-1}=0$, then $i_{n-1}$ has degree $n-1$ in $G_w$. Since $f$ is an embedding we infer that $j_k=f(i_{n-1})$ has degree $n-1$ in $f(G_w)$. Then only case $(b)$ holds. Indeed, if not $j_k$ would be adjacent only to $j_1$, $j_3$ and hence has degree $2$ which is impossible. This forces $k=n-1$. It follows that $\mu_{\restriction \{j_3,\ldots, i_{n-3}\}}=1^{n-5}$. In particular, $f(G_w)$ has an induced $P_{n-4}$ and since $f$ is an embedding we infer that $G_w$ has an induced $P_{n-4}$. It follows from Lemma \ref{lem:pk} that $w$ has $1^{n-7}$ as a factor.
\end{enumerate}
\end{proof}

\begin{proposition}\label{prop:embed-lmu-finite}Let $\mu$ be a recurrent word on $\NN$ such that $l(\mu)$ is finite. Let $w:=w_0\ldots w_{n-1}$ be a finite word such that $n>l(\mu)+7$. If $G_w$ embeds into $G_\mu$  and $f$ is such an embedding, then $f(-1), f(0) <f(1)<f(2)<\ldots<f(n-1)$ and either $\{f(-1), f(1),f(2),\ldots,f(n-1)\}$  or $\{f(0), f(1),f(2),\ldots,f(n-1)\}$is an interval of $\NN$ and $w_2\ldots w_n$ is a factor of $\mu$.
\end{proposition}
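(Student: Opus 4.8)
The plan is to show that, up to complementation, the embedding $f$ must send the last vertex $n-1$ to the top of its image, after which Lemma \ref{lem:embed-left-left} delivers almost everything. The two genuinely new ingredients are a primality bookkeeping (to make the earlier lemmas applicable) and a ``deletion trick'' that locates $f(n-1)$.

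First I would reduce to the case $w_{n-1}=1$. Since $\overline{G_w}=G_{\overline w}$ and $\overline{G_\mu}=G_{\overline\mu}$ (Remark \ref{lem:comp}), the \emph{same} map $f$ embeds $G_{\overline w}$ into $G_{\overline\mu}$; moreover $\overline\mu$ is recurrent with $l(\overline\mu)=l(\mu)$, and $w_2\dots w_{n-1}$ is a factor of $\mu$ iff $\overline w_2\dots \overline w_{n-1}$ is a factor of $\overline\mu$, while the ordering and interval conclusions concern only the map $f$. Hence it suffices to treat $w_{n-1}=1$, so that $n-1$ has degree $1$ in $G_w$ with unique neighbour $n-2$. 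Next I would record that $G_w$ and its prefix graph $G_{w'}$, where $w':=w_0\dots w_{n-2}$, are both prime. Indeed a run of $1$'s (resp.\ $0$'s) of length $m$ in $w$ produces an induced $P_m$ (resp.\ $\overline{P_m}$) in $G_w$, hence in $G_\mu$; by Lemma \ref{lem:pk} this forces a run of length $m-3$ in $\mu$ (resp.\ in $\overline\mu$), so all runs of $w$ have length at most $l(\mu)+3$ (using $l(\overline\mu)=l(\mu)$). As $n>l(\mu)+7$, neither $w$ nor $w'$ can contain a run as long as Proposition \ref{lem:6'} requires for non-primality, so $G_w$ and $G_{w'}$ are prime.

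Write $F:=f(\{-1,0,\dots,n-1\})=\{j_0<j_1<\dots<j_n\}$. Since $G_w$ is prime, so is ${G_{\mu}}_{\restriction F}$, and Corollary \ref{lem:5} gives that $F\setminus\{j_0\}=\{j_1,\dots,j_n\}$ is an interval of consecutive integers. The heart of the argument is to locate $f(n-1)=j_k$. If $k\in\{0,1\}$, Lemma \ref{lem:embed-left-right} forces $\mu$ to contain $0^{\,n-7}$ or $1^{\,n-7}$, a run longer than $l(\mu)$, which is impossible. If $2\le k\le n-1$, I would restrict $f$ to $G_{w'}=G_w\setminus\{n-1\}$: its image is $F':=F\setminus\{j_k\}$, and since $G_{w'}$ is prime Corollary \ref{lem:5} again forces $F'\setminus\{j_0\}=\{j_1,\dots,j_n\}\setminus\{j_k\}$ to be an interval. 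But deleting an interior point $j_k$ (with $1<k<n$) from the interval $\{j_1,\dots,j_n\}$ leaves a gap, a contradiction. Hence $k=n$, that is $f(n-1)=\max F$.

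With $f(n-1)=\max F$ in hand, Lemma \ref{lem:embed-left-left} applies and yields that $f$ is strictly increasing on $\{1,\dots,n-1\}$, that its image there is an interval ending at $j_n$, and that $\mu_{\restriction\{f(2),\dots,f(n-1)\}}=w_2\dots w_{n-1}$, so $w_2\dots w_{n-1}$ is a factor of $\mu$. Combining this with the fact that $\{j_1,\dots,j_n\}$ is an interval forces $f(k)=j_{k+1}$ for $1\le k\le n-1$ and $\{f(-1),f(0)\}=\{j_0,j_1\}$; in particular $f(-1),f(0)<f(1)$, and adjoining whichever of $f(-1),f(0)$ equals $j_1$ to $\{f(1),\dots,f(n-1)\}=\{j_2,\dots,j_n\}$ reconstitutes the interval $\{j_1,\dots,j_n\}$, giving the interval claim. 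I expect the main obstacle to be ruling out the interior positions $2\le k\le n-1$: a direct degree count (using $\deg_{G_w}(n-1)=1$) only excludes $k$ bounded away from $n$, whereas the deletion trick above disposes of all interior $k$ at once via Corollary \ref{lem:5} for the prime graph $G_{w'}$. The one point needing care is the constant bookkeeping ensuring $G_{w'}$ (and not merely $G_w$) is prime, which is precisely where the slack in the hypothesis $n>l(\mu)+7$ is consumed.
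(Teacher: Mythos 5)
Your argument is correct and follows essentially the same route as the paper: primality of $G_w$ and of $G_{w'}=G_w\setminus\{n-1\}$ pins $f(n-1)$ down to the three extreme positions (the paper cites Corollaries \ref{cor:large-prime-restr}, \ref{cor:6'} and \ref{cor:module}, where you rederive the same facts from Lemma \ref{lem:pk}, Proposition \ref{lem:6'} and Corollary \ref{lem:5} applied to the image of $G_{w'}$), Lemma \ref{lem:embed-left-right} then eliminates the two left positions, and Lemma \ref{lem:embed-left-left} finishes. One quantitative point needs tightening: a run of length $m$ in $w$ yields an induced path on $m+1$ vertices, i.e.\ $P_{m+1}$ rather than $P_m$, so Lemma \ref{lem:pk} actually gives $m\le l(\mu)+2$; with your stated bound $m\le l(\mu)+3$, a run of length exactly $l(\mu)+3=n-5$ is not excluded when $n=l(\mu)+8$, and $n-5$ is precisely the run length Proposition \ref{lem:6'} requires for the word $w'$ of length $n-1$ to yield a non-prime graph, so primality of $G_{w'}$ would not follow in that boundary case. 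The sharper count $m\le l(\mu)+2$ closes this gap and makes the hypothesis $n>l(\mu)+7$ exactly sufficient.
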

\begin{proof} It follows from Corollary \ref{cor:large-prime-restr} that $G_w$ is prime. It follows from our assumption $n>l(\mu)+7$ and Corollary \ref{cor:6'} that $G_w\setminus \{-1\}$ and $G_w\setminus \{0\}$ and $G_w\setminus \{n-1\}$ are also prime. Since $f$ is an embedding it follows that in $f(G_w)$ removal of one of the vertices $f(-1)$ or $f(0)$ or $f(n-1)$ leaves a prime graph. Since $G_w$ is prime and $f$ is embedding it follows that $f(G_w)$ is also prime. It follows from Corollary \ref{lem:5} that $I:= f(V(G_w))\setminus \{\min(f(V(G_w))\}$ is an interval  of $\NN$. It follows from Corollary  \ref{cor:module} that $f(n-1)\in \{\min(f(V(G_w)),\min(I),\max(I)\}$. It follows from Lemma \ref{lem:embed-left-right} that $f(n-1)= \max(f(V(G_w)))$. The required conclusion follows then from Lemma \ref{lem:embed-left-left}.
\end{proof}

\begin{corollary}\label{cor:embed-lmu4}Let $\mu$ be a recurrent word on $\NN$ such that $l(\mu)<4$. Let $u, v$ be finite words such that $\vert v\vert \geq 3$ and $n>l(mu)+4$ and $G_{vu}$ is prime. If $G_{vu}$ embeds into $G_\mu$, then $u\in \fac(\mu)$.
\end{corollary}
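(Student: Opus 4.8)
The plan is to reduce everything to Proposition~\ref{prop:embed-lmu-finite}, applied to the single word $w:=vu$. All of that proposition's hypotheses are at hand: $\mu$ is recurrent with $l(\mu)$ finite (since $l(\mu)<4$), the graph $G_w=G_{vu}$ is prime and embeds into $G_\mu$ by assumption, and the length threshold is met. For the last point, note that the proposition requires $|w|>l(\mu)+7$; the hypothesis $n>l(\mu)+4$ (a lower bound on the length of $u$) together with $|v|\ge3$ gives
\[
|w|=|v|+|u|>3+\bigl(l(\mu)+4\bigr)=l(\mu)+7,
\]
as needed. So I would fix an embedding $f$ of $G_{vu}$ into $G_\mu$ and invoke the proposition.

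Proposition~\ref{prop:embed-lmu-finite} then yields, besides the (near) monotonicity of $f$ onto an interval of $\NN$, the crucial combinatorial fact that the word obtained from $w$ by deleting its two leading letters is a factor of $\mu$. Writing $w=vu$ with $k:=|v|\ge 3$, the deleted letters $w_0,w_1$ both lie inside the prefix $v$, because $v$ occupies the positions $0,\dots,k-1$ and $k\ge 3>2$. Hence the suffix $u=w_{k}\cdots w_{|w|-1}$ is contained, as a terminal block, in the surviving word $w_2\cdots w_{|w|-1}$. Since $w_2\cdots w_{|w|-1}\in\fac(\mu)$ and the factor relation is transitive, it follows that $u\in\fac(\mu)$, which is the assertion.

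The argument is essentially bookkeeping, and the only place where care is needed is the matching of constants: one must be sure that the two letters thrown away by Proposition~\ref{prop:embed-lmu-finite} are absorbed by $v$ (this is exactly the purpose of the hypothesis $|v|\ge3$) and that $|vu|$ clears the threshold $l(\mu)+7$. The hypothesis $l(\mu)<4$ is what makes this threshold an explicit small constant; concretely it bounds the maximal runs of any word embedding into $G_\mu$, since a run $0^{k}$ inside such a word induces a copy of $\overline{P_{k+1}}$, equivalently a copy of $P_{k+1}$ in $G_{\overline{\mu}}=\overline{G_\mu}$, which by Lemma~\ref{lem:pk} forces the factor $0^{k-2}$ in $\mu$ and hence $k\le l(\mu)+2$. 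This run bound underlies the embedding analysis carried out in Lemma~\ref{lem:embed-left-right} and Corollary~\ref{cor:6'} that feeds Proposition~\ref{prop:embed-lmu-finite}, so no genuine new obstacle arises beyond keeping track of which letters survive the deletion.
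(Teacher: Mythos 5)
Your proposal is correct and is exactly the paper's argument: the paper's proof of this corollary is the one-line application of Proposition~\ref{prop:embed-lmu-finite} to $w:=vu$, and you have merely made explicit the two bookkeeping points (that $\vert v\vert\ge 3$ together with $\vert u\vert>l(\mu)+4$ clears the threshold $\vert w\vert>l(\mu)+7$, and that the two letters discarded by the proposition are absorbed by $v$, so $u$ survives as a factor of $w_2\cdots w_{\vert w\vert-1}\in\fac(\mu)$). No gap; the closing remarks about the role of $l(\mu)<4$ are accurate context but not needed for the deduction.
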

\begin{proof}Follows from Proposition  \ref{prop:embed-lmu-finite} applied to $w:=vu$.
\end{proof}

\begin{lemma} \label{lem:prime-extension}  If  $\mu$ is recurrent word and $u\in \fac(\mu)$, then  there exists  $v\in \{0,1\}^*$ such that $\vert v\vert \geq 4$ and  $vu\in \fac (\mu)$  and $G_{vu}$ is prime.
\end{lemma}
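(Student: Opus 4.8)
The plan is to realize the factors of $\mu$ inside a left-infinite word and then to exploit the very rigid shape of the non-prime $0$-$1$ graphs classified in Proposition \ref{lem:6'}. First, since $\mu$ is recurrent, Theorem \ref{thm:rec-age-word} furnishes a word $\nu$ on $\ZZ$ with $\fac(\mu)=\fac(\nu_{\restriction \NN^*})$; write $\rho:=\nu_{\restriction \NN^*}$, a word on $(-\infty,0]$. As $u\in \fac(\mu)=\fac(\rho)$ and $\rho$ is infinite to the left, after a translation we may assume $u=\rho_{\restriction [-m+1,0]}$ with $m:=\vert u\vert$. For every $k\geq 0$ set $v_k:=\rho_{\restriction [-m+1-k,-m]}$, so that $\vert v_k\vert=k$ and $w_k:=v_k u=\rho_{\restriction [-m+1-k,0]}\in \fac(\rho)=\fac(\mu)$. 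Thus $u$ admits left-extensions lying in $\fac(\mu)$ of every length, and it remains to produce one of length at least $4$ whose associated graph is prime. This is where recurrence is used in an essential way: it is exactly the passage from $\mu$ to a word infinite to the left that guarantees these extensions exist.

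I would argue by contradiction, assuming $G_{w_k}$ is not prime for every $k\geq 4$. By Proposition \ref{lem:6'} each such $w_k$ is one of the listed near-constant words, and in particular (this is the estimate behind Corollary \ref{cor:large-prime-restr}) it contains a run of a single letter of length at least $\vert w_k\vert-4=m+k-4$; moreover, inspection of the list shows each listed word has at most four letters differing from its majority letter. For $k$ large the majority letter $s\in\{0,1\}$ cannot depend on $k$, since two nested words $w_k$ and $w_{k+1}$ cannot each be dominated by a different letter once $m+k>8$. Hence the set of positions $j\leq 0$ with $\rho_j\neq s$ has at most four elements, so there is $M$ with $\rho_j=s$ for all $j\leq -M$.

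The contradiction then comes from the complementary feature of the classification: every word occurring in Proposition \ref{lem:6'} begins with a run of length at most two. But for $k$ so large that $-m+1-k\leq -M-2$, the first three letters of $w_k$ occupy positions $\leq -M$ and are therefore all equal to $s$, so $w_k$ begins with a run of length at least $3$. Such a word is not among the listed non-prime forms, whence $G_{w_k}$ is prime, contradicting our assumption. Consequently there is some $k\geq 4$ with $G_{w_k}$ prime, and taking $v:=v_k$ gives $\vert v\vert\geq 4$, $vu=w_k\in\fac(\mu)$ and $G_{vu}$ prime, as required.

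The main obstacle is precisely the step ruling out that the left context of $\mu$ is forever trapped in the near-constant non-prime patterns. The resolution rests on using \emph{both} halves of Proposition \ref{lem:6'} simultaneously — that a non-prime $w_k$ has only a bounded number of ``defects'' from being constant (forcing $\rho$ to be eventually constant to the left) and that it has a bounded initial run (so that an eventually-constant left context produces a genuinely prime extension) — combined with the left-infiniteness of $\rho$. One small point to record in the write-up is the elementary verification, by direct inspection of the finitely many forms listed in Proposition \ref{lem:6'}, that the initial run has length at most two and the total number of non-majority letters is bounded; the empty-word case $u=\Box$ (i.e. $m=0$) is covered verbatim by the same argument.
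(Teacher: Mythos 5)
Your proof is correct, but it takes a genuinely different route from the paper's. The paper argues by cases on $\mu$ itself: if $1^4\in\fac(\mu)$ it uses recurrence to produce a factor $1^4\beta u$ and observes via Proposition \ref{lem:6'} that no non-prime word begins with $1^4$; the case $0^4\in\fac(\mu)$ follows by complementation; and if $l(\mu)<4$ it invokes Corollary \ref{cor:large-prime-restr} directly on any sufficiently long left-extension. You instead pass to a left-infinite realization of $\fac(\mu)$ via Theorem \ref{thm:rec-age-word}, consider \emph{all} left-extensions $w_k$ of a fixed occurrence of $u$, and derive a contradiction from the assumption that every $G_{w_k}$ is non-prime, exploiting two complementary features of the classification in Proposition \ref{lem:6'}: the bounded number of ``defects'' from the majority letter (which forces $\rho$ to be eventually constant to the left) and the bounded initial run (which then makes long enough $w_k$ prime). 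Both proofs ultimately rest on the same classification; yours avoids the case split on $l(\mu)$ at the price of an indirect argument, while the paper's is shorter and constructive in each branch. One small inaccuracy to repair in your write-up: the blanket claim that ``every word occurring in Proposition \ref{lem:6'} begins with a run of length at most two'' is false for the length-$4$ entries $1110$ and $0001$ (the forms $11\,0^{n-4}\,10$ and $00\,1^{n-4}\,01$ with $n=4$), which begin with a run of length $3$. This does not damage the argument, because you only apply the claim to $w_k$ with $k$ large, hence $\vert w_k\vert\geq 5$, and for words of length at least $5$ the inspection is valid; but the claim should be stated with that length restriction.
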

\begin{proof}We consider several cases.
\begin{enumerate}[{Case} 1.]
\item $\mu$ has $1^4$ as a factor.\\
We can write $\mu=\alpha 1^4\mu'$ where $\alpha$ is a finite word and $\mu'$ is an infinite $0$-$1$ sequence. Since $\mu$ is recurrent $\fac(\mu)=\fac(1^4\mu')=\fac(\mu')$. Hence, we may assume without loss of generality that  $\alpha$ is the empty word. Let $u\in \fac(\mu')$. There exists then $\beta \in \fac(\mu')$ such that $1^4\beta u\in \fac(\mu')$. It follows from Proposition \ref{lem:6'} that $G_{1^4 \beta u}$ is prime. Choose $v:=1^4 \beta$.
\item $\mu$ has $0^4$ as a factor.\\
We apply Case 1 to $\overline{\mu}$ and $\overline{u}$.
\item $l(\mu)<4$.
Let $u\in \fac(\mu)$. Since $\mu$ is recurrent there exists $v\in \fac(\mu)$ such that $vu\in \fac(\mu)$ and $\vert v \vert\geq 4$ and $\vert vu \vert>l(\mu)+4$. It follows from  Corollary \ref{cor:large-prime-restr} that $G_{vu}$ is prime.
\end{enumerate}
\end{proof}

\begin{lemma}\label{lem:embed-1111}Let $\mu$ be a word on an interval $I$ of $\NN$ and let $w:=w_1\ldots w_n$ be any finite word. If $G_{1^4 w}$ embeds into $G_\mu$, then $1w$ is a factor of $\mu$.
\end{lemma}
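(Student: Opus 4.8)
The plan is to imitate the proof of Proposition~\ref{prop:embed-lmu-finite}, but to replace its use of the finiteness of $l(\mu)$ by a direct exploitation of the rigid prefix $1^{4}$. Write $W:=1^{4}w$, so that $G_{W}$ has vertex set $\{-1,0,\dots,n+3\}$ and the five vertices $-1,0,1,2,3$ induce the path $-1-0-1-2-3$. Let $f$ be an embedding of $G_{W}$ into $G_{\mu}$. First I would reduce to the case $w_{n}=1$: by Remark~\ref{lem:comp} and Lemma~\ref{lem:module-complement}, replacing $(\mu,W)$ by $(\overline{\mu},\overline{W})$ turns the prefix $1^{4}$ into $0^{4}$, turns the conclusion ``$1w\in\fac(\mu)$'' into ``$0\overline{w}\in\fac(\overline{\mu})$'', and preserves embeddability, so it suffices to prove the statement simultaneously for the prefixes $0^{4}$ and $1^{4}$ while assuming $w_{n}=1$. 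Then the last vertex $n+3$ has degree~$1$ in $G_{W}$, with unique neighbour $n+2$. The finitely many small values of $n$ I would dispose of by hand.

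Next I would pin down the image of the last vertex. Since $W$ begins with four equal letters, the module classification of Proposition~\ref{lem:6'} (equivalently Proposition~\ref{lem:6}) shows that $W$ is none of the nearly constant words that can make a finite $0$-$1$ graph imprimitive; hence $G_{W}$ is prime, and the same reasoning shows $G_{W}\setminus\{n+3\}\cong G_{1^{4}w_{1}\dots w_{n-1}}$ is prime. Therefore $f(G_{W})$ is a prime induced subgraph of $G_{\mu}$, so by Corollary~\ref{lem:5} the set $I:=f(V(G_{W}))\setminus\{\min f(V(G_{W}))\}$ is an interval of $\NN$, and since $G_{W}\setminus\{n+3\}$ is prime, removing $f(n+3)$ leaves $f(G_{W})$ prime. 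Corollary~\ref{cor:module}, applied to $f(G_{W})$, then forces
\[
f(n+3)\in\{\min f(V(G_{W})),\ \min I,\ \max I\}.
\]

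The hard part will be to rule out the two degenerate placements $f(n+3)=\min f(V(G_{W}))=:j_{0}$ and $f(n+3)=\min I$, and this is exactly where the prefix is needed: the tool used for this step in Proposition~\ref{prop:embed-lmu-finite}, namely Lemma~\ref{lem:embed-left-right}, only produces a contradiction when $l(\mu)$ is finite, which we do not assume. Instead I would argue directly. If $f(n+3)=j_{0}$ then, since $n+3$ has degree~$1$ and is non-adjacent to every vertex of $G_{W}$ other than $n+2$, its image $j_{0}$ is non-adjacent to all of $I$ except $f(n+2)$; evaluating the edge rule of $G_{\mu}$ at the minimum $j_{0}$ shows that $\mu$ equals $1$ on $I$ except at at most two positions. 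Thus $G_{\mu}$ restricted to $I$ is a path up to a bounded defect, which by Lemma~\ref{lem:pk} forces $W$ itself to be constant equal to $1$ except for boundedly many letters; in that situation $\mu$ contains arbitrarily long blocks of consecutive $1$'s and $1w$ is visibly a factor of $\mu$. Equivalently, once $G_{\mu}$ restricted to $I$ is (almost) a path one may rechoose $f$ to be order preserving, reducing to $f(n+3)=\max I$. The placement $f(n+3)=\min I$ I would treat in the same way.

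There remains the principal case $f(n+3)=\max f(V(G_{W}))$. Here Lemma~\ref{lem:embed-left-left} applies verbatim: $f$ is strictly increasing on $\{1,2,\dots,n+3\}$, its image on $\{2,3,\dots,n+3\}$ is an interval of $\NN$, and $\mu$ read along that interval equals $W_{3}\cdots W_{n+4}=11w$. Hence $11w$, and a fortiori $1w$, is a factor of $\mu$, as required.
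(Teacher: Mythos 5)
Your overall strategy --- locate $f(n+3)$ among the three candidates $\{\min f(V(G_W)),\min I,\max I\}$ and then invoke Lemma \ref{lem:embed-left-left} --- is genuinely different from the paper's proof, which never tries to pin down the image of the last vertex: it lists the image in increasing order as $i_0<\dots<i_4<j_1<\dots<j_n$ and proves by induction on $n$, anchored on the rigid $P_5$/$P_6$ formed by the prefix (Lemma \ref{lem:pk}) and on the degree count of the vertex corresponding to a letter $0$, that $\mu$ restricted to $\{i_4,j_1,\dots,j_n\}$ spells $1w$. Your main case ($f(n+3)=\max$) is correct: Lemma \ref{lem:embed-left-left} needs no hypothesis on $l(\mu)$ and yields $11w\in\fac(\mu)$. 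The reduction of $f(n+3)$ to three candidates is also essentially sound (though Corollary \ref{cor:module} is stated only for graphs whose exceptional vertex is the immediate predecessor of the interval, so you should redo the one-line module argument for a minimum lying further left).

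The genuine gap is in the two degenerate placements, which you correctly identify as ``the hard part'' but then only sketch, and the sketch does not go through. First, these cases are not vacuous: already for $w=1^n$ the image is an induced path that $f$ may traverse in reverse, so $f(n+3)$ really can be the minimum. Second, the chain of claims there fails: (i) Lemma \ref{lem:pk} tells you that an induced $P_k$ in $G_W$ forces a factor $1^{k-3}$ of $W$ at specific positions, but it does not deliver ``$W$ is constant $1$ except for boundedly many letters'', and even granting that, knowing $\mu$ equals $1$ on $I$ except at two positions does not locate the non-$1$ letters of $w$ at the correct consecutive places in $\mu$, which is what ``$1w\in\fac(\mu)$'' requires; (ii) ``$\mu$ contains arbitrarily long blocks of consecutive $1$'s'' is meaningless here, since all the information you have about $\mu$ is confined to the fixed finite interval $I$ of length $n+4$, and after removing two exceptional positions the longest guaranteed block of $1$'s can be much shorter than $n+1$; (iii) ``one may rechoose $f$ to be order preserving'' is asserted without proof --- the image graph is only almost a path, need not admit an order-reversing automorphism, and the edge rule of $G_\mu$ is not symmetric under reversing the interval, so a reversed map is not automatically an embedding. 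Closing these cases is exactly the work the paper's left-to-right induction performs; as written, your proof establishes the lemma only under the unproved assumption $f(n+3)=\max f(V(G_W))$.
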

\begin{proof}We notice at once that it follows from Proposition \ref{lem:6'} that $G_{1^4 w}$ is prime. Let $f$ be an embedding of $G_{1^4 w}$ into $G_\mu$. Then the image of $G_{1^4 w}$ under $f$ is prime. We write $f(V(G_{1^4 w}))=\{i_0, i_1,i_2,i_3,i_4,j_1,\ldots,j_n\}$ so that $i_0<\ldots<i_4<j_1<\ldots<j_n$. It follows from Corollary \ref{lem:5} that $\{i_1,i_2,i_3,i_4,j_1,j_2,\ldots,j_n\}$ is an interval of $\NN$.

We use induction on the length $n\geq 1$ of $w$ to prove the following statement: $\mu_{\restriction \{i_4,j_1,\ldots,j_{n}\}}=1w_1\ldots w_{n}$ and if $w\neq 1^n$, then for all $i\in \{1,\ldots,n\}$, $f$ maps the vertex of $G_{1^4 w}$ corresponding to $w_i$ to the vertex $v_i$.

For the basis case suppose $w\in \{0,1\}$. If $w=1$, then $G_{1^4 w}=G_{1^5}$ is a path on six vertices. Since $f$ is an embedding we infer that $f(G_{{\bf 1}^4 w})$ is a path on six vertices. It follows from Lemma \ref{lem:pk} that $\mu(u_4)=\mu(v_1)=1$ and hence $1w=11$ is a factor of $\mu$ as required. Now suppose $w=0$ and note that $G_{1^4 w}$ has exactly one vertex of degree four.
We prove that $\mu(j_1)=0$. Suppose for a contradiction that $\mu(j_1)=1$. Then $\mu(i_4)=0$ because otherwise  $f(G_{1^4 w})$ wont have a vertex of degree four and since $f$ is an embedding neither will $G_{1^4 w}$ which is impossible. But then in $f(G_{1^4 w})$ the vertex $i_4$ which has degree four is adjacent to the vertex $j_1$ which has degree one and hence in $G_{1^4 w}$ the vertex of degree four is adjacent to a vertex of degree one and this is not possible. A contradiction. Hence, our supposition that $\mu(v_1)=1$ is false, that is $\mu(j_1)=0$ as required. Now since $f(G_{1^4 w})\setminus \{j_1\}$ is a path on five vertices it follows from Lemma \ref{lem:pk} that $\mu(i_4)=1$ and hence $1w=10$ is a factor of $\mu$ as required.

Next we consider the inductive case. We first note that if $w = 1^n$, then $G_{1^4 w}$ is a path on $n + 5$ vertices. We apply Lemma \ref{lem:pk} with $k=n+5$ and deduce that $\mu_{\restriction \{i_3,i_4,j_1,\ldots,j_{n}\}}=1^{n+2}$ and hence $1w$ is a factor of $\mu$. We now assume that $w\neq 1^n$. Suppose that $w_1\ldots w_{n-1}=1^{n-1}$. Then $G_{1^4 w_1\ldots w_{n-1}}$ is a path on $n+4$ vertices. It follows from  Lemma \ref{lem:pk} that $\mu_{\restriction \{i_3i_4,j_1,\ldots,j_{n-1}\}}= 1^{n+1}$. From our assumption that $w\neq 1^n$ we deduce that $w_n=0$. Hence, $G_{1^4 w}$ has a unique vertex of degree $n+4$ and this vertex is associated to $w_n$. Since $f$ is an embedding and $\mu_{\restriction \{i_3i_4,j_1,\ldots,j_{n-1}\}}= 1^{n+1}$ it follows that the image under $f$ of the vertex associated to $w_n$ must be $j_n$ and $j_n$ has degree $n+4$. This shows that $\mu(j_n)=0$ and hence $1w$ is a factor of $\mu$.

Next we suppose that $w_1\ldots w_{n-1}\neq 1^{n-1}$. By the induction hypothesis $\mu_{\restriction \{i_4,j_1,\ldots,j_{n-1}\}}=1w_1\ldots w_{n-1}$ and for all $i\in \{1,\ldots,n-1\}$, $f$ maps the vertex of $G_{1^4 w}$ corresponding to $w_i$ to the vertex $j_i$. We note that $j_{n-1}$ is the unique neighbour or the unique non neighbour of $j_n$ in $f(G_{1^4 w})$. Since $f$ is an embedding it follows that $j_n$ is the image under $f$ of the vertex of $G_{1^4 w}$ corresponding to $w_n$ and $\mu(j_n)=w_n$. This completes the proof of the lemma.
\end{proof}

\begin{corollary}\label{cor:embed-0000}Let $\mu$ be a word on an interval $I$ of $\NN$ and let $w:=w_1\ldots w_n$ be any finite word. If $G_{0^4 w}$ embeds into $G_\mu$, then $0w$ is a factor of $\mu$.
\end{corollary}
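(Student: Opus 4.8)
The plan is to deduce this from Lemma \ref{lem:embed-1111} by a routine complementation argument, using Remark \ref{lem:comp}. First I would record the two facts that make complementation interact well with the hypothesis. On the one hand, passing to complements is an involution that preserves embeddability: a graph $G$ embeds into a graph $H$ if and only if $\overline{G}$ embeds into $\overline{H}$, since any isomorphism onto an induced subgraph is simultaneously an isomorphism for the complementary edge relations. On the other hand, by Remark \ref{lem:comp} we have $\overline{G_\nu}=G_{\overline{\nu}}$ for every $0$-$1$ sequence $\nu$, where $\overline{\nu}(i):=\nu(i)\dot+1$ flips every letter. Because flipping letters distributes over concatenation, we get $\overline{0^4 w}=1^4\,\overline{w}$, where $\overline{w}:=\overline{w_1}\ldots\overline{w_n}$.

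With these in hand the argument is short. Suppose $G_{0^4 w}$ embeds into $G_\mu$. Applying the complementation involution, $\overline{G_{0^4 w}}$ embeds into $\overline{G_\mu}$; by Remark \ref{lem:comp} this says exactly that $G_{1^4\,\overline{w}}$ embeds into $G_{\overline{\mu}}$. Now I would apply Lemma \ref{lem:embed-1111} to the word $\overline{\mu}$ (defined on the same interval $I$ of $\NN$) and the finite word $\overline{w}$, concluding that $1\,\overline{w}$ is a factor of $\overline{\mu}$.

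Finally I would translate the conclusion back through the complement. Since flipping letters is an involution that commutes with the factor relation, $v$ is a factor of $\overline{\mu}$ if and only if $\overline{v}$ is a factor of $\mu$. Applying this to $v:=1\,\overline{w}$ and using $\overline{1\,\overline{w}}=0w$ yields that $0w$ is a factor of $\mu$, which is the desired conclusion. I do not anticipate a genuine obstacle here; the only thing to be careful about is the bookkeeping of complements of concatenations and of factors, i.e.\ verifying the identities $\overline{0^4 w}=1^4\,\overline{w}$ and $\overline{1\,\overline{w}}=0w$ and the fact that the factor ordering is respected by the bitwise complement, all of which are immediate from the definitions.
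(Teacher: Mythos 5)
Your proof is correct and follows exactly the paper's own argument: apply Lemma \ref{lem:embed-1111} to $\overline{\mu}$ and $\overline{w}$, using Remark \ref{lem:comp} together with the fact that complementation preserves embeddability, and then translate the conclusion back via the bitwise complement. The extra bookkeeping you spell out (that $\overline{0^4 w}=1^4\,\overline{w}$ and that complementation respects the factor ordering) is implicit in the paper's one-line proof and is indeed immediate.
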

\begin{proof}We apply Lemma \ref{lem:embed-1111} to $\overline{\mu}$ and $\overline{w}$ and recall that $G_{1^4 \overline{w}}$ embeds into $G_{\overline{\mu}}$ if and only if the complement of $G_{1^4 \overline{w}}$, which is $G_{0^4 w}$,  embeds into the complement of $G_{\overline{\mu}}$, which is $G_\mu$.
\end{proof}

\begin{corollary}\label{lem:1111-bound}
Let $\mu$ be a word on $\NN$.
\begin{enumerate}[$(1)$]
  \item If $w$ is a bound of $\mu$, then $G_{1^4 w}$ and $G_{0^4 w}$ do not embed into $G_\mu$.
  \item If $(w_i)_{i\in I}$, $I\subseteq \NN$,   is an antichain (with respect to the factor ordering) of finite words such that no $w_i$ starts with $1$, then $(G_{1^4 w_i})_{i\in I}$ is an antichain of (permutation) graphs.
\end{enumerate}
\end{corollary}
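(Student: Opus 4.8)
The plan is to read both parts off directly from Lemma~\ref{lem:embed-1111} and Corollary~\ref{cor:embed-0000}, which convert an embedding of a graph of the shape $G_{1^4w}$ (resp.\ $G_{0^4w}$) into a factor relation between the underlying words, together with the fact that $\fac(\mu)$ is hereditary for the factor ordering.

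For part $(1)$ I would argue by contradiction. Suppose $w$ is a bound of $\fac(\mu)$; in particular $w\notin \fac(\mu)$. If $G_{1^4w}$ embedded into $G_\mu$, then Lemma~\ref{lem:embed-1111} would give $1w\in \fac(\mu)$, and since $\fac(\mu)$ is closed under the factor ordering and $w$ is a factor of $1w$, this forces $w\in \fac(\mu)$, a contradiction. The identical argument with $0^4w$ in place of $1^4w$ and Corollary~\ref{cor:embed-0000} in place of Lemma~\ref{lem:embed-1111} rules out an embedding of $G_{0^4w}$.

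For part $(2)$ I would again argue by contradiction. If $\vert I\vert\leq 1$ there is nothing to prove, so assume $\vert I\vert\geq 2$; then no $w_i$ is the empty word (the empty word is a factor of every word, so it cannot sit in an antichain with another element), and by hypothesis every $w_i$ therefore begins with $0$. Suppose now that for some $i\neq j$ the graph $G_{1^4w_i}$ embeds into $G_{1^4w_j}$. Applying Lemma~\ref{lem:embed-1111} with the finite word $\mu:=1^4w_j$ shows that $1w_i$ is a factor of $1^4w_j=1111w_j$. The decisive point is a short positional analysis: since $w_i$ begins with $0$, the word $1w_i$ begins with the pattern $10$, whereas in $1111w_j$ the first four letters are $1$ and the fifth is the initial $0$ of $w_j$. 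Hence an occurrence of the pattern $10$ cannot start at any of the first three positions, so the occurrence of $1w_i$ must either start at the fourth letter, in which case $1w_i$ is a prefix of $1w_j$ and so $w_i$ is a prefix of $w_j$, or start inside $w_j$, in which case $1w_i$, and hence its suffix $w_i$, is a factor of $w_j$. In either case $w_i$ is a factor of $w_j$, contradicting that $(w_i)_{i\in I}$ is an antichain. As this holds for every ordered pair with $i\neq j$, the family $(G_{1^4w_i})_{i\in I}$ is an antichain under embeddability; its members are permutation graphs by Theorem~\ref{thm:permutation-graph}.

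The only genuine work is the positional bookkeeping in part $(2)$, and the subtlety to get right is exactly that the leading block $1^4$ forces any copy of the pattern $10$ (with which $1w_i$ must start) to sit either precisely at the junction between $1^4$ and $w_j$ or strictly inside $w_j$; it is the hypothesis that no $w_i$ begins with $1$ that guarantees this pattern and thereby excludes a ``shifted'' occurrence realizing only part of $w_i$ inside $w_j$. Beyond this case check I anticipate no obstacle, since the passage from graph embeddings to word factor relations is already supplied by Lemma~\ref{lem:embed-1111} and Corollary~\ref{cor:embed-0000}.
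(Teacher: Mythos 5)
Your proposal is correct and follows essentially the same route as the paper: both parts are read off from Lemma~\ref{lem:embed-1111} and Corollary~\ref{cor:embed-0000}, with the contradiction in $(1)$ coming from $w$ being a factor of $1w$ and the contradiction in $(2)$ coming from the observation that, since $w_i$ does not start with $1$, the occurrence of $1w_i$ in $1^4w_j$ forces $w_i$ to be a factor of $w_j$. Your positional bookkeeping and the handling of the empty word merely make explicit what the paper's one-line argument leaves implicit.
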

\begin{proof}
\begin{enumerate}[$(1)$]
  \item The fact that $G_{1^4 w}$ does not embed into $G_\mu$ follows from Lemma \ref{lem:embed-1111}. The fact that $G_{0^4 w}$ does not embed into $G_\mu$ follows from Corollary \ref{cor:embed-0000}.
  \item Suppose for a contradiction that there exists $i\neq j$ be such that $G_{1^4 w_i}$ embeds into $G_{1^4 w_j}$. It follows from Lemma \ref{lem:embed-1111} that $1w_i$ is a factor of $1^4 w_j$. Since $w_i$ does not start with $1$ we infer that $w_i$ is a factor of $w_j$. This is impossible since by assumption the sequence $(w_i)_{i\in I}$ is an antichain of words.
  \end{enumerate}
\end{proof}

\section{A proof of Theorem \ref{thm:recurrent-word}}\label{sec:proof-thm:recurrent-word}

We prove the following strengthening of Theorem \ref{thm:recurrent-word}. For that we introduce first the following notation: if $X$ is a set of finite $0$-$1$ words we set $G_{X}:= \{G_{w} : w\in X\}$ and
 \[\downarrow G_{X}:= \{ H: H\;  \text{embeds into some}\;  G_w\in G_{X}\}.\]

\begin{theorem}\label{thm:recurrent-word-2}Let $\mu$ be a recurrent word and $X$ be an initial segment of $\{0, 1\}^*$ for the factor ordering. If $\age (G_{\mu}) \subseteq  \downarrow G_{X}$ then $\fac (\mu)\subseteq X$.
\end{theorem}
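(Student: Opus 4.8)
The plan is to prove the containment $\fac(\mu)\subseteq X$ directly, by showing that every $u\in\fac(\mu)$ belongs to $X$. Since $X$ is an initial segment for the factor ordering, it suffices in each case to produce some $w'\in X$ having $u$ as a factor. The uniform mechanism is the following. By recurrence of $\mu$ I can prolong $u$ on the left to a longer factor $vu\in\fac(\mu)$ (choose an occurrence of the desired left block, then a later occurrence of $u$). Whenever $w\in\fac(\mu)$, the induced subgraph of $G_\mu$ on the $|w|+1$ consecutive vertices carrying $w$ is isomorphic to $G_w$, so $G_{vu}\in\age(G_\mu)\subseteq\downarrow G_X$; hence $G_{vu}$ embeds into $G_{w'}$ for some $w'\in X$. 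It then remains to read off from such an embedding that a suffix of $vu$ containing $u$ is a factor of $w'$. All the work lies in choosing the left part so that this last reading is forced, and I would split into the three (exhaustive) cases of Lemma~\ref{lem:prime-extension} according to the monochromatic blocks of $\mu$.

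Suppose first that $\mu$ has $1^4$ as a factor. By recurrence there is a word $\beta$ with $1^4\beta u\in\fac(\mu)$. As above, $G_{1^4\beta u}$ embeds into some $G_{w'}$ with $w'\in X$, and Lemma~\ref{lem:embed-1111}, applied with $w'$ playing the role of the ambient word, gives that $1\beta u$ is a factor of $w'$. In particular $u$ is a factor of $w'$, so $u\in X$. The case where $\mu$ has $0^4$ as a factor is symmetric, using Corollary~\ref{cor:embed-0000} in place of Lemma~\ref{lem:embed-1111}.

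The remaining case is $l(\mu)<4$, that is, $\mu$ has no monochromatic factor of length $4$. Using recurrence I would choose $v$ with $vu\in\fac(\mu)$, $|v|\geq 4$ and $n:=|vu|>l(\mu)+7$. Since $\fac(\mu)$ is infinite and $l(\fac(\mu))=l(\mu)$ is finite, Corollary~\ref{cor:large-prime-restr} shows $G_{vu}$ is prime, and, as $vu$ has no monochromatic factor of length $n-6>l(\mu)$, Corollary~\ref{cor:6'} shows that deleting any of the three extreme vertices of $G_{vu}$ again leaves a prime graph. Fix an embedding $f$ of $G_{vu}$ into $G_{w'}$, $w'\in X$, and rerun the argument of Proposition~\ref{prop:embed-lmu-finite} but now with the \emph{finite} target $G_{w'}$: the image $f(G_{vu})$ is prime, its non-minimal vertices form an interval (Corollary~\ref{lem:5}), and the image of the last vertex of $vu$ must lie among the smallest, the second smallest, or the largest image vertex (Corollary~\ref{cor:module}). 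The first two options are excluded, for by Lemma~\ref{lem:embed-left-right} either would force $vu$ itself to contain $0^{n-7}$ or $1^{n-7}$, a monochromatic factor of length $n-7>l(\mu)$, which is impossible. Hence the last vertex is sent to the maximum, and Lemma~\ref{lem:embed-left-left} recovers $w_3\ldots w_n$---the suffix of $vu$ of length $n-2$---as a factor of $w'$. As $|v|\geq 4>2$, this suffix contains $u$, so once more $u$ is a factor of $w'$ and $u\in X$.

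The main obstacle is this last case. Nothing prevents the target word $w'$ from containing long monochromatic blocks, so an unconstrained embedding $G_{vu}\hookrightarrow G_{w'}$ could in principle fold back and send the final vertex to a low image, which would sever the link between graph embedding and factor containment. The resolution is that control comes from the \emph{source}: being a factor of the block-light word $\mu$, $vu$ cannot contain long monochromatic blocks, and Lemma~\ref{lem:embed-left-right} is precisely the tool that turns ``the last vertex has a low image'' into ``the source has a long block,'' yielding the contradiction. In the two block-heavy cases this same control is instead pre-installed by the prefixes $1^4$ or $0^4$, which is exactly what makes Lemma~\ref{lem:embed-1111} and Corollary~\ref{cor:embed-0000} applicable. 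Finally, one should check at the start that, although the lemmas cited are phrased for words on an interval of $\NN$, each is used here only with a finite source $G_{vu}$ (or $G_{1^4\beta u}$) and a finite target $G_{w'}$, so the domain of $\mu$---whether $\NN$, $\NN^{*}$ or $\ZZ$---plays no role beyond supplying, through recurrence, the factors $1^4\beta u$ and $vu$.
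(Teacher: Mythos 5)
Your proposal is correct and follows essentially the same route as the paper: the same case split into ``$\mu$ contains $1^4$'', ``$\mu$ contains $0^4$'' and ``$l(\mu)<4$'' (the cases of Lemma~\ref{lem:prime-extension}), with Lemma~\ref{lem:embed-1111} and Corollary~\ref{cor:embed-0000} handling the first two and the embedding machinery of Proposition~\ref{prop:embed-lmu-finite} handling the third. The only (welcome) difference is that you unwind Corollary~\ref{cor:embed-lmu4} explicitly for a finite target $G_{w'}$ instead of citing it verbatim, which addresses the fact that it is stated with the recurrent word as the ambient graph.
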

\begin{proof}
Let $u\in \fac(\mu)$. We prove that $u\in X$. According to Lemma \ref{lem:prime-extension}, since $\mu$ is recurrent and $u\in \fac(\mu)$ there is some $v\in \{0,1\}^*$ with $\vert v\vert \geq 3$ such that $vu\in \fac (\mu)$  and $G_{vu}$ is prime.
Since $G_{vu}\in \age (G_{\mu})\subseteq  \downarrow G_{X}$, $G_{vu}$ embeds in  $G_{w}$ for some $w\in X$. If  $l(\mu)<4$ it follows from Corollary  \ref{cor:embed-lmu4} that $u$ is a factor of $w$. If $l(\mu)\geq 4$ then there is $u'\in \fac(\mu)$ such that $u$ is a factor of $u'$ and either $0^4 u'$ or $1^4 u'$ is a factor of $\mu$. It follows from Lemma \ref{lem:embed-1111} and  Corollary \ref{cor:embed-0000}  applied to either  $v=0^4$ or $v= 1^4$ that $u'$ is a factor of $w$, and so is $u$. Hence,  $u\in X$.
\end{proof}


Theorem \ref {thm:recurrent-word} now follows by observing  that $\age (G_{\mu'})=\downarrow G_{\fac ({\mu'}) }$  and then applying Theorem \ref {thm:recurrent-word-2} to $X:= \fac(\mu')$.
\section{A proof of Theorem \ref{thm:uniformly-ages}}\label{sec:proof-thm:uniformly-ages}
%
\begin{proof}
$(i)\Rightarrow (ii)$.
Let $\mu$ be uniformly recurrent.  Then trivially, $\mu$ is recurrent. Since for two infinite sequence $\tau$ and $\tau'$, the equality $\fac (\tau)= \fac(\tau')$ implies $\age (G_{\tau})=\age (G_{\tau'})$, it follows from Theorem \ref{thm:rec-age-word}  that we may assume that $\mu$ is a word on $\NN$. It follows from Corollary \ref{cor:prime-recurrent} that $G_\mu$ is prime. Hence, from  Theorem \ref{ille-theorem} it follows that the set of prime graphs in $\age(G_{\mu})$ is cofinal in $\age(G_{\mu})$ hence infinite. Now let $\mathcal{C}$ be a proper age of $\age(G_{\mu})$. We prove that $\mathcal{C}$ contains only a finite number of prime graphs. If $\mathcal{C}$ contains restrictions on intervals of $\NN$ of arbitrarily large length, then according to Corollary \ref{cor:large-prime-restr},  $\mathcal{C}$ contains finite prime graphs of arbitrarily large length and therefore $\mathcal{C}=\age(G_\mu)$. Else, $\mathcal{C}$ contains only restrictions to factors of $\mu$ of bounded length. It follows from Corollary \ref{lem:5} that every prime member of $\mathcal C$  of cardinality $m$ induces an interval of $\NN$ of cardinality $m-1$. Therefore prime members of $\mathcal{C}$ have bounded cardinality. That is, there are only finitely many prime members of  $\mathcal{C}$. \\
$(ii)\Rightarrow (i)$.  First $\fac(\mu)$ is infinite since $\mu$ is recurrent. Next, let $X$ be an infinite initial segment of $\fac(\mu)$. We claim  that $X= \fac(\mu)$.  Corollary \ref {cor:primeX} asserts that the set $X':= \{ u\in X: G_u\;  \text{is prime}\}$ is infinite.
Since $\downarrow G_{X}$ contains infinitely many prime and $\age (G_{\mu})$ is minimal prime, $\downarrow G_{X}= \age (G_{\mu})$.  Since $\age (G_{\mu})\subseteq \downarrow G_{X}$,    Theorem \ref {thm:recurrent-word-2} asserts that $\fac(u) \subseteq X$. This proves our claim.
\end{proof}



%


\section{Bounds of $0$-$1$ graphs: a proof of Theorem \ref{thm:bound-uniform}.}\label{sec:proof-thm:bound-uniform}



Let $\mu$ be a $0$-$1$ sequence. Then every bound of $\age(G_\mu)$ is one of the following types:
\begin{enumerate}[$(1)$]
  \item Finite graphs that are not comparability graphs and that are minimal  with this property.
  \item Finite comparability graphs of critical posets  of dimension three (see subsection \ref{subsubsection:posets}).
  \item Finite comparability graphs of posets of dimension two, that is finite permutation graphs.
\end{enumerate}

For example if $\mu=11111...$, then the bounds of $\age(G_\mu)$ listed according to their type are:
\begin{enumerate}[$(a)$]
  \item Odd cycles of length at least 5. These are of type (1).
  \item Even cycles of length at least 6. These are of type (2).
  \item The complete bipartite graph $K_{1,3}$ and the complete graph $K_3$. These are of type (3).
\end{enumerate}


Let $\mu$ be a $0$-$1$ sequence. If $\mu$ contains factors of 1's of arbitrarily length, then it follows from Lemma \ref{lem:pk} that $G_\mu$ embeds $P_k$ for infinitely many $k$'s, hence $\age(G_{\mu})$ contains the age of an  infinite path. The cycles $C_k$ are  bounds of the infinite path and form an infinite antichain. Since  cycles of length at least five are not permutation graphs, these cycles  are bounds of $\age(G_{\mu})$. Now suppose that neither $P_k$ nor $\overline{P_k}$ embed in $G_\mu$. In particular, $\mu$ has infinitely many 1's and 0's, that is $G_\mu$ has an infinite independent set and an infinite clique. If we put an upper bound on the length of paths and of complement of paths in members of the lists of  Gallai \cite{gallai} and Kelly \cite{kelly77}, there are only finitely many such members, hence $\age (G_\mu)$ has only finitely many bounds of type (1) and  finitely many bounds of type (2). Hence,
\begin{theorem}If the age of $G_\mu$ does not contain the age of the infinite path nor of its complement, then it has only finitely many bounds which are not permutation graphs.
\end{theorem}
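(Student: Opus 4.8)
The plan is to translate the hypothesis into a uniform bound on the lengths of the induced paths and induced co-paths occurring in $\age(G_\mu)$, and then to feed that bound into the explicit classifications of Gallai and Kelly.

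First I would observe that $\age(G_\mu)$ contains the age of the infinite one-way path if and only if it contains $P_k$ for every $k$: one direction is trivial, and for the other one notes that a sufficiently long induced $P_n$ already contains every disjoint union of shorter paths as an induced subgraph (take blocks of consecutive vertices separated by gaps of length two, so that the only edges are those within the blocks). By Lemma \ref{lem:pk} an induced $P_k$ in $G_\mu$ forces a factor $1^{k-3}$ in $\mu$, and conversely a factor $1^{k}$ produces an induced $P_{k+1}$; passing to $\overline{\mu}$ via $\overline{G_\mu}=G_{\overline{\mu}}$ (Remark \ref{lem:comp}) handles co-paths symmetrically. Consequently, under the hypothesis that $\age(G_\mu)$ contains neither the age of the infinite path nor that of its complement, there is an integer $m$ such that no $P_k$ and no $\overline{P_k}$ with $k>m$ lies in $\age(G_\mu)$.

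Next I would use the trichotomy of bounds recorded just above the statement: every bound $R$ of $\age(G_\mu)$ is either a minimal non-comparability graph (type $(1)$), the comparability graph of a critical poset of dimension three (type $(2)$), or a finite permutation graph (type $(3)$). Since $\age(G_\mu)$ consists of permutation graphs (Theorem \ref{thm:permutation-graph}), the bounds that are not permutation graphs are exactly those of types $(1)$ and $(2)$, that is, members of Gallai's list and of Kelly's list. The key reduction is that every such $R$ has all of its induced paths and induced co-paths of at most $m$ vertices: if an induced path used a proper subset $S\subsetneq V(R)$, then deleting a vertex outside $S$ yields $R-v\in\age(G_\mu)$ containing that path, so its length is $\le m$; and an induced path spanning all of $V(R)$ would force $R\cong P_{|V(R)|}$, hence a permutation graph, contradicting the assumption on $R$. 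The same argument applied to $\overline{R}$ bounds co-paths.

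Finally I would invoke the two classifications: for each fixed $m$, only finitely many members of Gallai's list of minimal non-comparability graphs, and only finitely many comparability graphs of critical three-dimensional posets in Kelly's list, have all induced paths and all induced co-paths of at most $m$ vertices. Combining this with the trichotomy reduction gives finitely many bounds of types $(1)$ and $(2)$, which is the assertion. The main obstacle is precisely this last step: it is not a formal deduction but a verification carried out by inspecting the lists of Gallai \cite{gallai} and Kelly \cite{kelly77} (see also \cite{trotter-moore}), checking that every infinite family appearing there is parametrised by an integer whose growth forces an induced path or an induced co-path of unbounded length — for instance the odd holes $C_{2n+1}$ contain an induced $P_{2n}$ and the odd antiholes contain an induced $\overline{P_{2n}}$ — so that capping both lengths at $m$ eliminates all but finitely many graphs, leaving only the finitely many sporadic members.
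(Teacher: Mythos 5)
Your proposal is correct and follows essentially the same route as the paper: reduce the non-permutation bounds to members of Gallai's and Kelly's lists via the type $(1)$/$(2)$/$(3)$ trichotomy, translate the hypothesis into a uniform cap on induced paths and co-paths, and observe by inspection that only finitely many members of those lists respect such a cap. Your version is in fact somewhat more careful than the paper's, since you spell out why the hypothesis bounds path lengths (the block argument inside $P_n$) and why a non-permutation bound cannot itself contain a long induced path (the spanning-path case forces $R\cong P_{|V(R)|}$), both of which the paper leaves implicit.
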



It is tempting to think that candidates for bounds of $\age(G_\mu)$ of type $(3)$ are graphs of the form $G_w$ where $w$ is a bound of $\fac(\mu)$. This is false.

\begin{lemma}\label{lem:bound-mu-not-bound-gmu}Let $\mu$ be a recurrent $0$-$1$ sequence on $\NN$ and $w:=w_1\ldots w_n$ be a finite word. If $w_2 \ldots w_n$ is a factor of $\mu$, then $G_w$ embeds into $G_\mu$.
\end{lemma}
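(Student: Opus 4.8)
The claim is about the graph $G_\mu$ attached to a recurrent $0$-$1$ word $\mu$ on $\NN$. We have a finite word $w:=w_1\ldots w_n$ and we assume that the *suffix* $w_2\ldots w_n$ (that is, $w$ with its first letter removed) is a factor of $\mu$. We must show that $G_w$ embeds into $G_\mu$. Recall that $G_w$ has vertex set $\{-1,0,\ldots,n-1\}$ and the edge rule reads off the letters $w_1,\ldots,w_n$ (with $\mu_j=1$ meaning $j$ is joined to its left neighbour $j-1$ and $\mu_j=0$ meaning $j$ is joined to everything to its left except $j-1$). Let me figure out the plan.

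**The plan.** The natural approach is to locate, inside $G_\mu$, an interval of $\NN$ that realizes the suffix $w_2\ldots w_n$ and then to supply the extra "first" vertex by hand. Concretely, since $w_2\ldots w_n\in\fac(\mu)$, I would pick a position $p$ in the domain of $\mu$ so that $\mu(p+1)\ldots\mu(p+n-1)=w_2\ldots w_n$; this identifies the vertices $p+1,\ldots,p+n-1$ of $G_\mu$ with the vertices $1,\ldots,n-1$ of $G_w$ (those governed by $w_2,\ldots,w_n$), and the vertex $p$ of $G_\mu$ with the vertex $0$ of $G_w$, because the edges among $\{0,1,\ldots,n-1\}$ in $G_w$ and among $\{p,p+1,\ldots,p+n-1\}$ in $G_\mu$ are determined by exactly the same letters $w_2\ldots w_n$. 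The only vertex of $G_w$ left to place is $-1$, the "first" vertex, whose adjacencies to $0,1,\ldots,n-1$ are dictated by $w_1$: if $w_1=1$ then $-1$ is joined only to $0$, while if $w_1=0$ then $-1$ is joined to all of $1,\ldots,n-1$ but not to $0$. I need to produce in $G_\mu$ a vertex to the left of $p$ with precisely this adjacency pattern to the block $\{p,\ldots,p+n-1\}$.

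**Supplying the extra vertex via recurrence.** This is where recurrence of $\mu$ enters and is the crux of the argument. The vertex I need is a position $q<p$ whose letter-determined adjacencies to the block reproduce $w_1$. If I want $w_1=1$, I want $q=p-1$ to exist with $\mu(p)$ giving the single edge to $p$ and nonadjacency to the rest — but this is automatic from the factor pattern only if $p-1$ lies in the domain and the letters line up. The clean way to guarantee the needed left-context is to use that $\mu$ is recurrent: by Theorem \ref{thm:rec-age-word} there is a word $\nu$ on $\ZZ$ (or on $\NN^*$) with $\fac(\mu)=\fac(\nu)$, so I may freely assume there is room to the left. Better still, recurrence lets me *choose* the occurrence of $w_2\ldots w_n$ so that it is preceded by whatever finite left-context I please among factors of $\mu$; in particular I would extend $w_2\ldots w_n$ to a longer factor $w_1 w_2\ldots w_n$ or to a factor exhibiting the correct preceding letter, provided $w_1 w_2\ldots w_n=w$ is itself a factor. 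The subtlety — and the reason the hypothesis is only about the suffix — is that I do *not* get to assume $w$ itself occurs; I must manufacture the vertex $-1$ out of an arbitrary suitable earlier position.

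**The main obstacle.** The genuine difficulty is the $w_1=0$ case. There the new vertex $-1$ must be adjacent to all of $1,\ldots,n-1$ and non-adjacent to $0$; translating the edge rule, a position $q<p$ in $G_\mu$ is joined to a later vertex $r$ iff ($\mu(r)=1$ and $r=q+1$) or ($\mu(r)=0$ and $r\neq q+1$). So taking $q=p-1$ makes $q$ adjacent to $p$ exactly when $\mu(p)=1$; to force non-adjacency to $0\leftrightarrow p$ while keeping adjacency to all later block vertices, I must arrange that $q$ is *not* consecutive with the block, i.e. choose $q$ strictly less than $p-1$, so that $q$ is adjacent to $p+1,\ldots,p+n-1$ precisely when those letters are $0$ — which is not automatic. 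Thus the honest route is to use recurrence to find an occurrence of $w_2\ldots w_n$ that is preceded, at the appropriate distances, by letters realizing $w_1$; equivalently, I would invoke that $\fac(\mu)$ is inexhaustible to insert a controllable gap before the located block, and then read off that the resulting earlier vertex has the adjacency pattern encoded by $w_1$. I expect the bookkeeping of "consecutive versus non-consecutive" in the two cases $w_1=0,1$, carried out against the $\ZZ$-model $\nu$ furnished by Theorem \ref{thm:rec-age-word}, to be the technical heart; once the correct occurrence is fixed, the embedding $G_w\hookrightarrow G_\mu$ is just the inclusion of the chosen $n+1$ vertices.
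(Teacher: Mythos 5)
Your opening move coincides with the paper's: realize the suffix $w_2\ldots w_n$ on an interval $\{p+1,\dots,p+n-1\}$ of $\NN$ and append one extra vertex to the left of $p$. The argument then goes astray because you misread the edge rule for the appended vertex, and you never complete the proof. With your indexing ($V(G_w)=\{-1,0,\dots,n-1\}$, letter $w_{j+1}$ sitting at position $j$), the vertex $-1$ is adjacent to $0$ iff $w_1=1$, but its adjacency to each $j\ge 1$ is governed by $w_{j+1}$, not by $w_1$: the pair $\{-1,j\}$ is an edge exactly when $w_{j+1}=0$, because $-1$ and $j$ are not consecutive. So it is false that $-1$ is ``joined only to $0$'' when $w_1=1$, and false that it is ``joined to all of $1,\dots,n-1$'' when $w_1=0$ (for $w=10$ the vertex $-1$ is adjacent to both $0$ and $1$). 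This misreading manufactures your ``main obstacle,'' which does not exist: for \emph{any} $q<p$ one has $q\sim p+j$ ($j\ge 1$) iff $\mu(p+j)=w_{j+1}=0$, which is precisely the pattern required of $-1$. The only adjacency that must be controlled is that between $q$ and $p$, and it is toggled purely by consecutiveness: $q\sim p$ iff ($\mu(p)=1$ and $q=p-1$) or ($\mu(p)=0$ and $q<p-1$). Hence one takes $q:=p-1$ when $\mu(p)=w_1$ (in which case $w$ itself is a factor of $\mu$) and $q:=p-3$, say, when $\mu(p)\neq w_1$; recurrence enters only to push the occurrence of $w_2\ldots w_n$ far enough to the right that such a $q\ge -1$ exists. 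This short case split is the paper's entire proof. Your closing paragraph instead defers the ``bookkeeping'' and proposes to hunt for an occurrence ``preceded at the appropriate distances by letters realizing $w_1$,'' which both leaves the statement unproved and points toward a construction (controlling several preceding letters via inexhaustibility) that is not needed.
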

\begin{proof}Suppose $w_2\ldots w_n$ is a factor of $\mu$. Let $\{j_2,\ldots,j_n\}\subseteq \NN$ be such that $\mu(j_k)=w_k$ for all $2\leq k\leq n$. Since $\mu$ is recurrent we may assume that there are at least three elements of $\NN\cup\{-1\}$ before $j_2$. Let $j_1:=j_2-1$ and $w'_1:=\mu(j_1)$. If $w'_1= w_1$, then $w$ is a factor of $\mu$ and hence $G_w$ embeds into $G_\mu$. Else if $w'_1\neq w_1$, then we set $j_0:=j_1-2$. It follows that $G_w$ is isomorphic to ${G_\mu}_{\restriction \{j_0,j_1\ldots,j_n\}}$.
\end{proof}

\begin{corollary}\label{cor:bound-mu-not-bound-gmu}Let $\mu$ be a recurrent $0$-$1$ sequence on $\NN$ and $w$ be a finite word. If $w$ is a bound of $\fac(\mu)$, then $G_w$ embeds into $G_\mu$.
\end{corollary}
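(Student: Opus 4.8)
The plan is to derive this corollary directly from Lemma \ref{lem:bound-mu-not-bound-gmu}, since the hypothesis of that lemma is exactly the kind of condition that the definition of a bound hands us for free. The only thing I need to do is unwind the definition of a bound of a hereditary class of words and extract the one particular proper factor of $w$ that the lemma requires, namely the suffix obtained by deleting the first letter.

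Concretely, write $w:=w_1\ldots w_n$. Since $w$ is a bound of $\fac(\mu)$, by definition $w\notin \fac(\mu)$ while \emph{every} proper factor of $w$ belongs to $\fac(\mu)$. In particular the suffix $w_2\ldots w_n$, obtained by removing the first letter $w_1$, is a proper factor of $w$ (its length is $n-1<n$), and hence $w_2\ldots w_n\in \fac(\mu)$. This is precisely the hypothesis of Lemma \ref{lem:bound-mu-not-bound-gmu} for the recurrent sequence $\mu$ and the word $w$, so the lemma immediately yields that $G_w$ embeds into $G_\mu$, as claimed.

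I do not expect a genuine obstacle at the level of the corollary: all the substantive work is carried out in Lemma \ref{lem:bound-mu-not-bound-gmu}, which I am assuming. It is worth stressing, though, where the interest of the statement lies, since this is the point of the surrounding discussion (the remark that bounds of $\fac(\mu)$ are \emph{not} in general bounds of $\age(G_\mu)$). Because $w$ is a bound, $w$ itself is \emph{not} a factor of $\mu$; nevertheless $G_w$ embeds into $G_\mu$. The reason, supplied by the lemma, is that the vertex corresponding to the first letter $w_1$ enjoys a placement freedom: realizing $w_2\ldots w_n$ at consecutive positions $j_2<\cdots<j_n$ of $\mu$ and setting $j_1:=j_2-1$ forces some value $w_1':=\mu(j_1)$; if $w_1'=w_1$ we are done, and if $w_1'\neq w_1$ one instead places the first vertex at a non-consecutive position $j_0:=j_1-2$, whose adjacency to the realized block flips (by the ``$\mu_j=0,\ j\neq i+1$'' edge rule) to realize the required value $w_1$. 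Thus the embedding exists regardless of which letter $w_1$ is, even though $w$ fails to be a factor, and no additional argument beyond citing the lemma is needed here.
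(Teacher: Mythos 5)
Your proof is correct and is exactly the intended argument: the paper states the corollary without proof as an immediate consequence of Lemma \ref{lem:bound-mu-not-bound-gmu}, and your observation that the definition of a bound of $\fac(\mu)$ supplies the hypothesis $w_2\ldots w_n\in\fac(\mu)$ is all that is needed. Your closing remarks on how the lemma achieves the embedding accurately reflect its proof, though they are not required for the corollary itself.
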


\subsection{Proof of $(1)$ of Theorem \ref{thm:bound-uniform}.}
We show first how to construct a bound of $\age(G_\mu)$ using a bound of $\mu$.

\begin{lemma}\label{lem:bound-gmu}Let $\mu$ be a recurrent $0$-$1$ sequence on $\NN$ with $l(\mu)$ finite.  Let $w=w_1\ldots w_n$ be a finite word such that $n>l(\mu)+7$.
\begin{enumerate} [$(1)$]
\item If $w:=w_1\ldots w_n$ is a bound of $\mu$ and  $w_0\in \{0,1\}$ is such that $w_0\ldots w_{n-1}$ is a factor of $\mu$ and  $w':=w_0w_1\ldots w_n$,   then $G_{w'}$ is a bound of $\age (G_\mu)$.

\item If  $G_{w}$ is a bound of $\age(G_\mu)$, then $w_2\ldots w_{n}$ is a bound of $\fac(\mu)$.
 \end{enumerate}
\end{lemma}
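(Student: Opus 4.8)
The plan is to prove both parts of Lemma \ref{lem:bound-gmu} by combining the embedding results of Section \ref{section:embeddings} (notably Proposition \ref{prop:embed-lmu-finite} and Lemma \ref{lem:bound-mu-not-bound-gmu}) with the primality bookkeeping of Section \ref{sec:modules}.

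\textbf{Part (1).} Suppose $w=w_1\ldots w_n$ is a bound of $\fac(\mu)$, so $w\notin\fac(\mu)$ but every proper factor of $w$ lies in $\fac(\mu)$; choose $w_0$ so that $w_0\ldots w_{n-1}\in\fac(\mu)$ and set $w':=w_0w_1\ldots w_n$. First I would show $G_{w'}\notin\age(G_\mu)$. Since $n>l(\mu)+7$, Corollary \ref{cor:large-prime-restr} gives that $G_{w'}$ is prime. If $G_{w'}$ embedded into $G_\mu$, then by Proposition \ref{prop:embed-lmu-finite} (applied with the word $w'$, whose length exceeds $l(\mu)+7$) the word $w_1\ldots w_n=w$ would be a factor of $\mu$, contradicting that $w$ is a bound. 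Hence $G_{w'}\notin\age(G_\mu)$. Next I must check that every proper induced subgraph of $G_{w'}$ lies in $\age(G_\mu)$. The vertex set of $G_{w'}$ is $\{-1,0,\ldots,n\}$; deleting any vertex leaves a graph on $n$ vertices. For an interior deletion the resulting graph is an induced subgraph of $G_{w''}$ where $w''$ is a proper factor of $w'$, hence of a proper factor of $w$ together with the extra letter $w_0$; each such factor is a proper factor of $w$ and so lies in $\fac(\mu)$, and then Lemma \ref{lem:bound-mu-not-bound-gmu} (or directly Corollary \ref{cor:bound-mu-not-bound-gmu}) ensures the corresponding $0$-$1$ graph embeds into $G_\mu$. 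The boundary deletions (removing $-1$ or $n$) are handled the same way since they yield $G_v$ for a proper factor $v$ of $w'$ whose associated word is a factor of $\mu$. Thus every proper induced subgraph of $G_{w'}$ belongs to $\age(G_\mu)$, so $G_{w'}$ is a bound.

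\textbf{Part (2).} Now suppose $G_w$ is a bound of $\age(G_\mu)$ with $w=w_1\ldots w_n$; I want to conclude that $w_2\ldots w_n$ is a bound of $\fac(\mu)$. Since $G_w\notin\age(G_\mu)$ and $n>l(\mu)+7$, the contrapositive of Lemma \ref{lem:bound-mu-not-bound-gmu} shows $w_2\ldots w_n\notin\fac(\mu)$: indeed if $w_2\ldots w_n$ were a factor of $\mu$ then $G_w$ would embed into $G_\mu$. So $w_2\ldots w_n\notin\fac(\mu)$. It remains to verify that every proper factor of $w_2\ldots w_n$ lies in $\fac(\mu)$. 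A proper factor is obtained by deleting the first or last letter, i.e.\ $w_3\ldots w_n$ or $w_2\ldots w_{n-1}$ (and then inductively shorter ones). Using that $G_w$ is a bound, the induced subgraphs ${G_w}_{\restriction F}$ for proper $F$ all lie in $\age(G_\mu)$; choosing $F$ to realise the graph $G_{w_2\ldots w_{n-1}}$ or $G_{w_3\ldots w_n}$ (each a proper induced subgraph of $G_w$, hence in $\age(G_\mu)$) and invoking Proposition \ref{prop:embed-lmu-finite} on these still-long prime subwords yields that the corresponding shortened word is a factor of $\mu$. Iterating gives that all proper factors of $w_2\ldots w_n$ belong to $\fac(\mu)$, so $w_2\ldots w_n$ is a bound of $\fac(\mu)$.

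\textbf{Main obstacle.} The delicate point in both directions is keeping the length bookkeeping consistent: Proposition \ref{prop:embed-lmu-finite} only controls embeddings once the word is longer than $l(\mu)+7$, and shaving letters off $w_2\ldots w_n$ eventually produces words too short for that proposition to apply. I expect the hard part of Part (2) to be confirming that all \emph{short} proper factors still lie in $\fac(\mu)$; this should follow because a bound of $\fac(\mu)$ is characterised by the membership of its two maximal proper factors (as noted in the definition of bounds of hereditary classes of words in Section \ref{sec:preq}), so it suffices to settle $w_3\ldots w_n$ and $w_2\ldots w_{n-1}$, both of which remain long enough, and the recurrence of $\mu$ propagates membership to all shorter factors. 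I would make this reduction explicit to avoid the short-word gap.
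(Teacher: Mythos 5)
Your overall architecture matches the paper's: part (1) splits into ``$G_{w'}$ does not embed into $G_\mu$'' plus ``every one-vertex-deleted subgraph of $G_{w'}$ does embed'', and part (2) splits into ``$w_2\ldots w_n\notin\fac(\mu)$'' (via Lemma \ref{lem:bound-mu-not-bound-gmu}, exactly as in the paper) plus ``the two maximal proper factors $w_3\ldots w_n$ and $w_2\ldots w_{n-1}$ lie in $\fac(\mu)$''. You also correctly reduce the second half of (2) to those two maximal factors. However, two steps do not go through as proposed.

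In part (1), deleting an \emph{interior} vertex $k\in\{0,\ldots,n-1\}$ from $G_{w'}$ does not yield an induced subgraph of $G_{w''}$ for a proper factor $w''$ of $w'$: the surviving vertex set $\{-1,\ldots,k-1\}\cup\{k+1,\ldots,n\}$ is no longer an interval plus a phantom point, and the resulting graph has $n+1$ vertices, so it could only be $G_{w_0\ldots w_{n-1}}$ or $G_{w_1\ldots w_n}$, which it is not in general. The paper instead splits $G_{w'}\setminus\{k\}$ into the two pieces ${G_{w'}}_{\restriction\{-1,\ldots,k-1\}}=G_{w_0\ldots w_{k-1}}$ and ${G_{w'}}_{\restriction\{k+1,\ldots,n\}}=G_{w_{k+2}\ldots w_n}$, both words being factors of $\mu$, and realizes the two pieces at well-separated occurrences inside $G_\mu$ so that the cross-adjacencies (which are all of the non-consecutive type, hence governed only by the letters of the right-hand piece) match. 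Some version of this two-occurrence argument is needed.

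The more serious gap is in the positive half of part (2). Proposition \ref{prop:embed-lmu-finite}, applied to an embedding of $G_v$ into $G_\mu$, never certifies that $v$ itself is a factor of $\mu$: all that Lemma \ref{lem:embed-left-left} delivers is the word obtained from $v$ by deleting its first letters, because the images of the two leftmost vertices are not pinned down (the remark after that lemma says it is best possible). So applying it to $G_{w_3\ldots w_n}$ and $G_{w_2\ldots w_{n-1}}$ yields only strictly shorter suffixes as factors of $\mu$, which does not show that $w_2\ldots w_n$ is a bound of $\fac(\mu)$; moreover these words have length $n-2>l(\mu)+5$, below the Proposition's threshold $l(\mu)+7$, so it does not even apply. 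The paper's route is different: since $G_w$ is a bound, the $n$-vertex graphs $G_w\setminus\{i_0\}$ and $G_w\setminus\{i_n\}$ embed into $G_\mu$, they are prime by Corollary \ref{cor:6'}, and Lemmas \ref{lem:embed-left-right} and \ref{lem:embed-left-left} applied to \emph{these} embeddings make the sacrificed letters land precisely outside $w_3\ldots w_n$ and $w_2\ldots w_{n-1}$. The same loss of leading letters is why your non-embedding step in part (1) needs care: the Proposition applied to $w'=w_0w_1\ldots w_n$ only forces $w_2\ldots w_n\in\fac(\mu)$, which is not yet a contradiction with $w$ being a bound; one must further locate the image of the vertex carrying $w_1$ (the paper does this through Corollary \ref{cor:6'} together with the hypothesis that $w_0\ldots w_{n-1}\in\fac(\mu)$).
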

\begin{proof}
$(1)$ We need to prove that $G_{w'}$ does not embed into $G_\mu$ and that deleting any vertex from $G_{w'}$ yields a graph that embeds into $G_\mu$. We notice at once that it follows from our assumption $n>l(\mu)+7$ that $G_{w'}$ is prime. We first prove that $G_{w'}$ does not embed into $G_\mu$. Suppose not and let $f$ be an embedding of $G_{w'}$ into $G_\mu$. Then $f(i_n)=\max(f(V(G_{w'})))$ because otherwise it follows from Corollary \ref{cor:6'} that $w'$ has $0^{n-6}$ or $1^{n-6}$. Hence, $w$ has $0^{n-7}$ or $1^{n-7}$ as a factor. Since $w_0\ldots w_{n-1}$ is a factor of $\mu$ we infer that $n-7<l(\mu)$ contradicting our assumption that $n>l(\mu)+7$. This proves that $f(i_n)=\max(f(V(G_{w'})))$. It follows then for Lemma \ref{lem:embed-left-left} that $w$ is a factor of $\mu$ contradicting our assumption that $w$ is a bound of $\fac(\mu)$. This proves that $G_{w'}$ does not embed into $G_\mu$.\\
Next we prove that deleting any vertex from $G_{w'}$ yields a graph that embeds into $G_\mu$. Set $V(G_{w'})=\{-1,0,\ldots,n\}$. First we consider the graph $G_{w'}\setminus \{-1\}$ and observe that it is isomorphic to $G_w$. It follows from Corollary \ref{cor:bound-mu-not-bound-gmu} that $G_w$ embeds into $G_\mu$. We now consider the graph $G_{w'}\setminus \{n\}$ and observe that it is isomorphic to $G_{w_0\ldots w_{n-1}}$. Since $w_0\ldots w_{n-1}$ is a factor of $\mu$ we infer  that $G_{w_0\ldots w_{n-1}}$ is an induced subgraph of $G_\mu$. Let $k\not \in \{-1,n\}$ and consider the graph $G_{w'}\setminus \{k\}$. Then ${G_{w'}}_{\restriction \{-1,\ldots,k-1\}}$ is the graph $G_{w_{0}\ldots w_{k-1}}$ and  ${G_{w'}}_{\restriction \{k+1,\ldots,n\}}$ is the graph $G_{w_{k+2}\ldots w_{n}}$. Since $w_{0}\ldots w_{k-1}$ and $w_{k+2}\ldots w_{n}$ are factors of $\mu$ the graphs $G_{w_{0}\ldots w_{k-1}}$ and $G_{w_{k+2}\ldots w_{n}}$ are induced subgraphs of $G_\mu$, and hence, so is $G_{w'}\setminus \{k\}$. This completes the proof of  $(1)$.\\

\noindent $(2)$ Suppose $G_{w}$ is a bound of $\age(G_\mu)$. Then $w$ cannot be a factor of $\mu$ and it follows from Lemma \ref{lem:bound-mu-not-bound-gmu} that $w$ is not a bound of $\fac(\mu)$. Hence, $w$ has a factor which is a bound of $\fac(\mu)$. We prove that $w_2\ldots w_{n}$ is a bound of $\fac(\mu)$, that is $w_2\ldots w_{n}$ is a not a factor of $\mu$ and both words $w_3\ldots w_{n}$ and $w_2\ldots w_{n-1}$ are factors of $\mu$. The fact that $w_2\ldots w_{n}$ is not a factor of $\mu$ follows from Lemma \ref{lem:bound-mu-not-bound-gmu} and the fact that $G_w$ does not embed in $G_\mu$. Next we prove that $w_3\ldots w_{n}$ and $w_2\ldots w_{n-1}$ are factors of $\mu$. It follows from our assumption $n>l(\mu)+7$ and Corollary \ref{cor:large-prime-restr} that $G_w$ is prime. Next we set $V(G_w):=\{i_0,i_1,\ldots,i_n\}$ so that $w$ is a word on $\{i_1,\ldots,i_n\}$. It follows from Corollary \ref{cor:6'} that $G_w \setminus \{i_0\}$ and  $G_w \setminus \{i_n\}$ are prime. It follows from our assumption that $G_w$ is a bound of $\age(G_\mu)$ that $G_w \setminus \{i_0\}$ and $G_w \setminus \{i_n\}$ embed in $G_\mu$. It follows from Lemma \ref{lem:embed-left-right} and our assumption $n>l(\mu)+7$ that if $f$ and $g$ are such embeddings then $f(i_n)=\max(f(G_w \setminus \{i_0\}))$ and $g(i_{n-1})=\max(g(G_w \setminus \{i_n\}))$. Lemma \ref{lem:embed-left-left} yields that $\mu_{\restriction f(\{i_3,\ldots, i_n\})}=w_3\ldots w_n$ and $\mu_{\restriction g(\{i_2,\ldots, i_{n-1}\})}=w_2\ldots w_{n-1}$. This proves that $w_3\ldots w_{n}$ and $w_2\ldots w_{n-1}$ are factors of $\mu$ as required.
\end{proof}


The proof of $(1)$ of Theorem \ref{thm:bound-uniform} follows from Theorem \ref{thm:inf-bound-uniform-seq} and $(1)$ of Lemma \ref{lem:bound-gmu}.

\subsection{Proof of $(2)$ of Theorem \ref{thm:bound-uniform}}

We notice at once that if $\mu$ is periodic and $u$ is a period, then $\overline{\mu}$ is periodic and $\overline{u}$ is a period.

\begin{lemma}\label{lem:constant-final-interval}Let $\mu$ be a $0$-$1$ word on $\NN$, let $I:=\{i_0,i_1,\dots,i_{n-1}\}\subseteq \NN\cup \{-1\}$ so that $i_0<i_1<\dots <i_{n-1}$ and $H:= G_{\mu \restriction I}$ be an induced subgraph of $G_\mu$. Let $j<k<n-1$. If $i_j$ is adjacent to all vertices in $\{i_k,\ldots,i_{n-1}\}$, then $\mu$ is constant on $\{i_{k+1},\ldots,i_{n-1}\}$ and takes the value $0$. In particular, if $l(\mu)$ is finite and $\{i_{k+1},\ldots,i_{n-1}\}$  is an interval of $\NN$, then $n-l(\mu)-1\leq k$.
\end{lemma}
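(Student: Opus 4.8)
The plan is to read off both assertions directly from the adjacency rule defining $G_\mu$, so the argument is essentially a matter of unwinding definitions rather than of any hard combinatorics.

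First I would handle the constancy statement. Since $j<k$, the vertex $i_j$ lies strictly below each vertex $i_m$ with $m\geq k$, so in every pair $\{i_j,i_m\}$ the larger index is $i_m$. The key observation is that for $m\in\{k+1,\dots,n-1\}$ one has $i_m>i_k>i_j$, whence $i_m\geq i_j+2$ and in particular $i_m\neq i_j+1$. By the definition of $G_\mu$, an edge $\{i_j,i_m\}$ whose larger endpoint satisfies $i_m\neq i_j+1$ can occur only when $\mu_{i_m}=0$. Since $i_j$ is adjacent to every vertex of $\{i_k,\dots,i_{n-1}\}$, and in particular to each $i_m$ with $m\geq k+1$, this forces $\mu_{i_m}=0$ for all such $m$. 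Thus $\mu$ is constant equal to $0$ on $\{i_{k+1},\dots,i_{n-1}\}$.

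Next I would derive the ``in particular'' clause. Note that because $j<k$ and indices start at $0$ we have $k\geq 1$, so $i_{k+1}>i_1\geq i_0+1\geq 0$, and hence all the indices $i_{k+1},\dots,i_{n-1}$ genuinely lie in $\NN$, where $\mu$ is defined. If $\{i_{k+1},\dots,i_{n-1}\}$ is an interval of $\NN$, then the values $\mu_{i_{k+1}}\mu_{i_{k+1}+1}\cdots\mu_{i_{n-1}}$ form a factor of $\mu$, which by the previous paragraph is a block of consecutive $0$'s of length $(n-1)-(k+1)+1=n-k-1$. By definition of $l_0(\mu)$ this length is at most $l_0(\mu)\leq l(\mu)$, whence $n-k-1\leq l(\mu)$, i.e. $n-l(\mu)-1\leq k$, as claimed.

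The only points requiring care---and these are the closest thing to an obstacle---are the two elementary checks that the relevant indices lie in $\NN$ (so that $\mu$ is defined on them) and that $i_m\neq i_j+1$ for every $m>k$; both follow from the strict monotonicity of the $i_m$ together with $j<k$. No appeal to primality, modules, or the heavier machinery of the preceding sections is needed.
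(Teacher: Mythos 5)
Your proof is correct and is exactly the argument the paper intends: the paper's proof is simply the word ``Straightforward,'' and your unwinding of the adjacency rule for $G_\mu$ (edges $\{i_j,i_m\}$ with $i_m\geq i_j+2$ force $\mu_{i_m}=0$), followed by reading off a factor of $0$'s of length $n-k-1$ when the tail is an interval, fills in the omitted details correctly.
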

\begin{proof}
Straightforward.
\end{proof}

\begin{lemma}\label{lem:embed-indec}
Let $\mu$ be a $0$-$1$ word on $\NN$ such $l(\mu)$ is finite. Let $J:= \{j_0,j_1,\ldots,j_{k}\}\subseteq \NN$ be such that $j_0<j_1<\ldots<i_{k}$ and $\{j_1,\ldots,j_{k}\}$ is an interval of $\NN$ and $k>l(\mu)+5$. Then $G:=G_{\mu}{\restriction J}$ is prime.
\end{lemma}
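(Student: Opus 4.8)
The plan is to identify $G:=G_{\mu}\restriction J$, up to isomorphism, with a single $0$-$1$ graph $G_v$ associated to a word $v$ of length $k$, and then to invoke the run-length primality criterion underlying Proposition \ref{lem:6'} and Corollary \ref{cor:large-prime-restr}. Writing out the edge structure of $G$ is the first step: since $\{j_1,\ldots,j_k\}$ is an interval of $\NN$, the consecutive vertices $j_t=j_1+(t-1)$ make $G_{\mu}\restriction\{j_1,\ldots,j_k\}$ coincide with the interval part of the graph of the factor $w:=\mu_{j_1}\cdots\mu_{j_k}$ of $\mu$, whose edges depend only on $\mu_{j_2},\ldots,\mu_{j_k}$. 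The only extra vertex is $j_0$, lying strictly below $j_1$; its adjacency to $j_t$ for $t\ge 2$ is forced to be ``edge iff $\mu_{j_t}=0$'' because $j_t>j_1\ge j_0+1$ gives $j_t\neq j_0+1$, while its adjacency to $j_1$ depends on whether $j_0$ is the immediate predecessor $j_1-1$ or not.

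Next I would define the word $v:=v_1\cdots v_k$ by $v_t:=\mu_{j_t}$ for $t\ge 2$, together with $v_1:=\mu_{j_1}$ when $j_0=j_1-1$ and $v_1:=\overline{\mu_{j_1}}$ (the flipped letter) when $j_0\neq j_1-1$. I claim the bijection sending $j_0$ to the extra vertex of $G_v$ and $j_t$ to the $t$-th interval vertex is a graph isomorphism $G\to G_v$. The internal edges on $\{j_1,\ldots,j_k\}$ match because they depend only on $v_2,\ldots,v_k=\mu_{j_2},\ldots,\mu_{j_k}$; the edges from $j_0$ to $j_t$ with $t\ge 2$ match since in both graphs they are present exactly when $\mu_{j_t}=0$; and the edge from $j_0$ to $j_1$ matches by the choice of $v_1$, checking both cases (when $j_1=j_0+1$ the edge is present iff $\mu_{j_1}=1=v_1$, and when $j_1\neq j_0+1$ it is present iff $\mu_{j_1}=0$, i.e.\ iff $\overline{\mu_{j_1}}=1=v_1$). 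This verification is routine but requires care, and I would carry out these adjacency comparisons explicitly.

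Finally I would control $l(v)$ and conclude. The word $v$ differs from the factor $w$ of $\mu$ in at most its first letter, and flipping a single letter changes the longest constant run by at most one, so $l(v)\le l(w)+1\le l(\mu)+1$. Combining this with the hypothesis gives $|v|=k>l(\mu)+5=(l(\mu)+1)+4\ge l(v)+4$. By the content of Proposition \ref{lem:6'}, used exactly as in the proof of Corollary \ref{cor:large-prime-restr} (if $G_v$ were not prime, a nontrivial module would force $v$ to contain a factor $0^{m}$ or $1^{m}$ with $m\ge |v|-4$, whence $l(v)\ge |v|-4$), the inequality $|v|>l(v)+4$ shows $G_v$ is prime, and therefore so is $G\cong G_v$.

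The main obstacle is the bookkeeping in the isomorphism claim, specifically the first-letter flip in the case $j_0\neq j_1-1$ and checking that it correctly reconciles the adjacency of $j_0$ to $j_1$ with the definition of $G_v$; once $G$ is recognized as a $0$-$1$ graph of a length-$k$ word of bounded run length, the primality follows immediately from the already-established criterion. I would also note in passing (via Remark \ref{lem:comp}) that complementation could be used to reduce the number of subcases, but the direct verification above suffices.
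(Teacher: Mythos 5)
Your proof is correct, but it takes a genuinely different route from the paper's. The paper argues by contradiction with a direct module analysis: it takes a nontrivial module $M$ of $G$, notes that $M\cap\{j_1,\ldots,j_k\}$ is a module of $G\setminus\{j_0\}$, invokes Corollary \ref{cor:large-prime-restr} to see that $G\setminus\{j_0\}$ is prime (so this intersection is trivial or everything), and then runs a case analysis ($M=\{j_1,\ldots,j_k\}$, or $M=\{j_0,j_k\}$) each branch of which forces a constant run of length at least $k-3$ in $\mu$, contradicting $k>l(\mu)+5$ --- in effect partially replaying the argument of Proposition \ref{lem:6'} in the new configuration. You instead observe that the ``detached point plus interval to its right'' configuration is itself isomorphic to a $0$-$1$ graph $G_v$, where $v$ agrees with the factor $\mu_{j_1}\cdots\mu_{j_k}$ except possibly for a flip of the first letter (according to whether $j_0=j_1-1$), and your verification of this isomorphism and of the bound $l(v)\le l(\mu)+1$ is accurate; the primality then drops out of the run-length criterion of Proposition \ref{lem:6'} with no further module analysis. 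Your reduction is cleaner and more reusable (it shows once and for all that every prime-candidate restriction of $G_\mu$ of this shape is again a $0$-$1$ graph of a word with essentially the same run lengths), at the modest cost of the adjacency bookkeeping for the first-letter flip; the paper's version avoids introducing the auxiliary word but pays for it with an extra round of case checking. One cosmetic caution: Corollary \ref{cor:large-prime-restr} is stated for an infinite set $X$ of words, so as you note you should cite the underlying content of Proposition \ref{lem:6'} (a nontrivial module of $G_v$ forces a run of length at least $|v|-4$) rather than the corollary verbatim.
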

\begin{proof}

Suppose for a contradiction that $G$ is not prime.  Let $M$ be a nontrivial module of $G$. Then $M\cap \{j_1,\ldots,j_{k}\}$ is a module of $G\setminus \{j_0\}$. It follows from our assumption that $k>l(\mu)+5$ and Corollary \ref{cor:large-prime-restr} that $G\setminus \{i_0\}$ is prime. Hence, $M\cap \{j_1,\ldots,j_{k}\}$ is either empty, reduced to a singleton or is equal to $\{j_1,\ldots,j_{k}\}$. Since $M$ is nontrivial  we infer that  $M=\{j_1,\ldots,j_{k}\}$ or $M\cap \{j_1,\ldots,j_{k}\}$ is a singleton. If $M=\{j_1,\ldots,j_{k}\}$, then $j_0$ must be either adjacent to all elements of $M$ or adjacent to none. Thus $\mu$ is constant on $M$, that is $k\leq l(\mu)<k-5$. A contradiction. Else if $M\cap \{j_1,\ldots,j_{k}\}$ is a singleton, then $M=\{j_0,j_m\}$ for some $1\leq m\leq k$. Necessarily $m=k$, because otherwise $j_{m+1}$ separates $j_m$ from $j_0$. That is $M=\{j_0,j_k\}$. Suppose $\mu(j_k)=1$. Then no vertex in $\{j_2,\ldots,j_{k-2}\}$ is adjacent to $j_k$. Since $M$ is a module, no vertex in $\{j_2,\ldots,j_{k-2}\}$ is adjacent to $j_0$ and therefore $\mu$ is constant on $\{j_2,\ldots,j_{k-2}\}$ and takes the value $1$. Thus $\mu$ has $1^{k-3}$ as factor. If $\mu(j_k)=0$, then we obtain that $\mu$ has $0^{k-3}$ as factor. Therefore, $k-3\leq l(\mu)$ and from our assumption $k>l(\mu)+5$ we get $k-3<k-5$ which is impossible.
\end{proof}

\begin{corollary}\label{cor:threevertices} Let $\mu$ be a $0$-$1$ word on $\NN$ such $l(\mu)$ is finite. Let $J:= \{j_0,j_1,\ldots,j_{k}\}\subseteq \NN$ be such that $j_0<j_1<\ldots<i_{k}$ and $\{j_1,\ldots,j_{k}\}$ is an interval of $\NN$ and $k>l(\mu)+6$. Let  $G:=G_{\mu}{\restriction J}$ and  $x\in J$. Then  $G\setminus \{x\}$ is prime if and only if $x\in \{j_0,j_1,j_k\}$.
\end{corollary}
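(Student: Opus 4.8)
The plan is to prove the two implications separately, in each case reducing to Lemma \ref{lem:embed-indec}; note at the outset that the hypothesis $k>l(\mu)+6$ is exactly what guarantees that after deleting one vertex the remaining interval part still has size $k-1>l(\mu)+5$. For the implication ``$x\in\{j_0,j_1,j_k\}\Rightarrow G\setminus\{x\}$ is prime'', I would observe that in each of the three cases $G\setminus\{x\}=G_\mu\restriction(J\setminus\{x\})$ is the restriction of $G_\mu$ to a set of the form $\{p_0\}\cup P$, where $P$ is an interval of $\NN$ with $|P|=k-1$ and $p_0<\min P$. Indeed, deleting $j_0$ leaves $\{j_1\}\cup\{j_2,\dots,j_k\}$, deleting $j_1$ leaves $\{j_0\}\cup\{j_2,\dots,j_k\}$, and deleting $j_k$ leaves $\{j_0\}\cup\{j_1,\dots,j_{k-1}\}$; in every case the interval part has $k-1>l(\mu)+5$ elements and the remaining vertex lies strictly below it. Lemma \ref{lem:embed-indec} then applies verbatim (it imposes no constraint on the position of its bottom vertex below the interval), giving that $G\setminus\{x\}$ is prime.

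For the converse I would argue by contraposition: assume $x=j_m$ with $2\le m\le k-1$ and exhibit a nontrivial module of $G\setminus\{j_m\}$. The natural candidate is the initial segment $A:=\{j_0,j_1,\dots,j_{m-1}\}$. The vertices of $G\setminus\{j_m\}$ lying outside $A$ are exactly $j_{m+1},\dots,j_k$, and for such a $j_l$ and any $j_a\in A$ one checks $j_l\ne j_a+1$: for $a\ge 1$ this is because $\{j_1,\dots,j_k\}$ is an interval and $l\ge m+1>a+1$, while for $a=0$ it is because $j_0+1\le j_1<j_l$. Hence, by the definition of $G_\mu$, the pair $\{j_a,j_l\}$ is an edge if and only if $\mu_{j_l}=0$, a condition not depending on $a$; thus $j_l$ is joined to all of $A$ or to none of $A$, so $A$ is a module of $G\setminus\{j_m\}$. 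Since $2\le|A|=m\le k-1$ while $G\setminus\{j_m\}$ has $k$ vertices, $A$ is neither a singleton nor the whole vertex set, so it is a nontrivial module and $G\setminus\{j_m\}$ is not prime. Combining the two implications yields the stated equivalence.

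The only point requiring care, and the sole place where the gap $j_0<j_1$ (as opposed to $j_0=j_1-1$) intervenes, is the uniform treatment of the bottom vertex $j_0$. In the forward direction this is harmless because Lemma \ref{lem:embed-indec} allows its bottom vertex to sit anywhere below the interval; in the converse the only property of $j_0$ used is $j_0+1\le j_1<j_l$, which forces $j_l\ne j_0+1$ just as in the interior case. Everything else is a direct unwinding of the adjacency rule defining $G_\mu$, so no genuine obstacle remains beyond this bookkeeping. Alternatively, the converse could be deduced from Corollary \ref{cor:module} after identifying $G$ with a graph $G_v$ associated to a word $v$ of length $k$, but the module exhibited above makes that detour unnecessary.
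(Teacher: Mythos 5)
Your proof is correct and follows the route the paper intends: the paper states this as an unproved corollary of Lemma \ref{lem:embed-indec}, and your forward direction is exactly the observation that deleting $j_0$, $j_1$ or $j_k$ leaves a set of the form covered by that lemma with parameter $k-1>l(\mu)+5$. Your converse, exhibiting the nontrivial module $\{j_0,\dots,j_{m-1}\}$ of $G\setminus\{j_m\}$ for $2\le m\le k-1$, is the same initial-segment argument the paper uses in Lemma \ref{lem:intial-interval-module} and Lemma \ref{lem:module}, correctly adapted to the case where $j_0$ need not be $j_1-1$.
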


 \begin{lemma}\label{lem:embed-bound-periodic}
Let $\mu$ be a $0$-$1$ word on $\NN$ such that $l(\mu)$ is finite. Let $\{i_0,i_1,\ldots,i_{n-1}\}\subseteq \NN$ be such that $i_0<i_1<\ldots<i_{n-1}$ and $\{i_1,\ldots,i_{n-1}\}$ is an interval of $\NN$ and $n>l(\mu)+8$. Let $x\not \in \NN$ and $H$ be the graph whose vertex set is $\{i_0,i_1,\ldots,i_{n-1}\}\cup \{x\}$ and edge set $E:=E({G_\mu}_{\restriction \{i_0,i_1,\ldots,i_{n-1}\}})\cup \{\{i_1,x\},\{i_2,x\},\ldots,\{i_{n-1},x\}\}$. Then $H$ does not embed into $G_\mu$.
\end{lemma}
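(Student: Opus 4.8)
The plan is to argue by contradiction. Suppose $H$ embeds into $G_\mu$ via an embedding $f$, and write $a_j := f(i_j)$ for $0\le j\le n-1$, $y := f(x)$, and $A := \{a_0,\dots,a_{n-1}\}$; note $y\notin A$ since $f$ is injective. The first goal is to pin down the shape of the image $A$. Because $\{i_1,\dots,i_{n-1}\}$ is an interval of $\NN$ of size $n-1>l(\mu)+5$, Lemma \ref{lem:embed-indec} shows that $G:={G_\mu}_{\restriction\{i_0,\dots,i_{n-1}\}}$ is prime; as an abstract graph it is isomorphic to some $G_{w'}$ with $|w'|=n-1$ (the first letter being $\mu(i_1)$ or its complement according to whether $i_0=i_1-1$). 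Since $f$ restricts to an embedding of the prime graph $G$, the graph ${G_\mu}_{\restriction A}$ is prime, so Corollary \ref{lem:5} gives that $A\setminus\{\min A\}$ is an interval of $\NN$.

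Next I would locate $a_{n-1}=f(i_{n-1})$. Writing the image in increasing order $j_0<j_1<\dots<j_{n-1}$, I would apply Lemma \ref{lem:embed-left-right} to the embedding $G_{w'}\hookrightarrow G_\mu$: if $f(i_{n-1})\in\{j_0,j_1\}$ then $\mu$ would contain a block of $0$'s or of $1$'s of length $(n-1)-7=n-8>l(\mu)$, contradicting the very definition of $l(\mu)$; hence $a_{n-1}\notin\{j_0,j_1\}$. On the other hand $G\setminus\{i_{n-1}\}$ is again prime (Corollary \ref{cor:threevertices}, as $i_{n-1}$ is the top of the interval), so ${G_\mu}_{\restriction(A\setminus\{a_{n-1}\})}$ is prime, and Corollary \ref{cor:threevertices} applied to $A$ (with $k=n-1>l(\mu)+6$) forces $a_{n-1}\in\{j_0,j_1,j_{n-1}\}$. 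Combining the two facts yields $a_{n-1}=j_{n-1}=\max A$. Then Lemma \ref{lem:embed-left-left} applies and shows that $f$ is strictly increasing on $\{i_2,\dots,i_{n-1}\}$ and that $T:=\{a_2,\dots,a_{n-1}\}$ is an interval; since $T$ ends at $\max A$ and has $n-2$ elements while $A\setminus\{\min A\}$ is an interval of size $n-1$, a short counting argument gives $\{a_0,a_1\}=\{\min A,\ a_2-1\}$. In particular both $a_0$ and $a_1$ lie below $a_2$, with $a_0,a_1\le a_2-1\le \max A-1$.

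The endgame is to rule out every position of $y=f(x)$ relative to $T=\{a_2,a_2+1,\dots,\max A\}$, using that $y$ is adjacent in $G_\mu$ to $a_1,\dots,a_{n-1}$ (hence to all of $T$) but not to $a_0$. First, $y$ cannot lie in $[a_2,\max A]$, since that interval equals $T\subseteq A$ whereas $y\notin A$. If $y<a_2$, then $y\notin\{\min A,a_2-1\}=\{a_0,a_1\}$ forces $y\le a_2-2$, so every $a_k\in T$ satisfies $a_k>y$ and $a_k\ne y+1$; the edge rule of $G_\mu$ then makes adjacency of $y$ to $a_k$ equivalent to $\mu(a_k)=0$, so $\mu$ would be constant $0$ on the interval $T$ of length $n-2>l(\mu)$, a contradiction. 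Finally, if $y>\max A$, then since $y$ is adjacent to the $\ge 2$ elements of $T$ we must have $\mu(y)=0$ (a vertex with $\mu(y)=1$ has a single smaller neighbour); moreover $y\ne\max A+1$, for otherwise $y-1=\max A\in T$ would be the unique non-neighbour of $y$ below it, contradicting $y\sim\max A$. Hence $y\ge\max A+2$, so $y-1\ge\max A+1>\max A-1\ge a_0$, giving $a_0<y$ and $a_0\ne y-1$; by the edge rule $y$ is then adjacent to $a_0$, contradicting $y\not\sim a_0$. As every case is impossible, no embedding $f$ exists.

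The main obstacle is the middle step: showing that any embedding of the large prime graph $G$ into $G_\mu$ is rigid, sending $i_{n-1}$ to the top of its image and being order-preserving on the tail $\{i_2,\dots,i_{n-1}\}$, so that $A$ is an interval together with one low point. This rigidity is precisely what Lemmas \ref{lem:embed-left-left}, \ref{lem:embed-left-right} and Corollary \ref{cor:threevertices} supply, with the finiteness of $l(\mu)$ serving to convert any deviant embedding into a forbidden long constant block. Once the image is understood, the contradiction is extracted cheaply from the single non-edge $\{x,i_0\}$ and the observation that a high vertex $y$ with $\mu(y)=0$ is forced to be adjacent to almost everything below it.
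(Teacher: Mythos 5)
Your proof is correct and follows essentially the same route as the paper's: it establishes the rigidity of the embedding of $H\setminus\{x\}$ via Lemma \ref{lem:embed-indec}, Corollaries \ref{lem:5} and \ref{cor:threevertices}, and Lemmas \ref{lem:embed-left-right} and \ref{lem:embed-left-left}, and then rules out $f(x)$ lying to the left or to the right of the image interval by forcing either a constant factor of length exceeding $l(\mu)$ or an adjacency to $f(i_0)$. The only (harmless) deviations are that you verify the adjacency computations directly instead of invoking Lemma \ref{lem:constant-final-interval}, and that you treat the boundary case $f(x)=\max A+1$ explicitly, which the paper leaves implicit.
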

\begin{proof}
Suppose for a contradiction that $H$ embeds into $G_\mu$ and let $f$ be such an embedding. Then $f$ induces an embedding of $H\setminus\{x\}$ into $G_\mu$.
It follows from Lemma \ref {lem:embed-indec} that $H\setminus\{x\}$ is prime. According to Corollary \ref{lem:5} the image of $H\setminus\{x\}$ under $f$ decomposes into a point $y$ and an interval $J$ to its right. It follows from Corollary \ref{cor:threevertices} that $f(\{i_0, i_1, i_n-1\})= \{y, \min (J), \max (J)\}$. It follows from Lemma \ref{lem:embed-left-right} that $f(i_{n-1})= \max (J)$. Hence, $f(\{i_0, i_1\})= \{y, \min (J)\}$.
Now, we argue on the possible position of $f(x)$. Suppose that $f(x)$ is to the left of $f(i_{n-1})$. Since $\{f(i_2),\ldots,f(i_{n-1})\}$ is an interval of $\NN$ we infer that $f(x)$ is to the left of $f(i_2)$. Since $f(x)$ is adjacent to all vertices in $\{f(i_1),f(i_2),\ldots,f(i_{n-1})\}$ it follows from Lemma \ref{lem:constant-final-interval} that $\mu$ is constant on $\{f(i_2),\ldots,f(i_{n-1})\}$. Hence, $n-2\leq l(\mu)$. From our assumption that $n>l(\mu)+8$ we get $n<n-8$ which is impossible. Now suppose that $f(x)$ is to the right of $f(i_{n-1})$. Since $f(x)$ is adjacent to all vertices in $\{f(1),\ldots,f(n-1)\}$ it is adjacent to $f(i_{n-2})$ and $f(i_{n-1})$. It follows that  $\mu(f(x))=0$. Thus   $f(x)$ is adjacent to $f(i_0)$, hence $x$ is adjacent to $i_0$ in $H$.  A contradiction. This proves that our supposition $H$ embeds into $G_\mu$ is false.
\end{proof}

A vertex $x$ of a graph $G$ is $-1$-\emph{extremal} if either $x$ is not adjacent to at most one vertex of $V(G)\setminus \{x\}$ or if $x$ is  adjacent to at most one vertex of $V(G)\setminus \{x\}$. Note that if $x$ is $-1$-extremal in $G$, then $x$ is also $-1$-extremal in $\overline{G}$.

\begin{lemma}\label{lem:argument-pouzet}Let $\mathcal{C}$ be a hereditary class of finite graphs which is $1^-$-well-quasi-ordered. Then $\mathcal{C}$ has only finitely many bounds having a $-1$-extremal vertex.
\end{lemma}
\begin{proof}
Since $\mathcal{C}$ is w.q.o. there are only finitely many bounds of $\mathcal{C}$ having a vertex adjacent to all other vertices. Let $(G_n)_{n\in \NN}$ be a sequence of bounds of $\mathcal{C}$ such that each $G_n$ has a  $-1$-extremal vertex $x_n$. We may suppose  that there is a  unique vertex $y_n$ distinct from $x_n$ and not adjacent to $x_n$. Let $H_n:={G_n}_{\restriction V(G_n)\setminus \{x_n\}}$. Since $\mathcal{C}$ is $1^-$-well-quasi-ordered from the sequence $(H_n, y_n)$ we can extract an increasing subsequence. Clearly, if $(H_n, y_n)$ embeds into $(H_m, y_m)$, then $G_n$ embeds into $G_m$. This contradicts the fact that $\{G_n: n\in \NN\}$ forms an antichain.
\end{proof}

\begin{corollary}\label{cor:argument-pouzet}
Let $\mu$ be a periodic $0$-$1$ sequence on $\NN$. Then there are only finitely many bounds of $\age(G_\mu)$ having a $-1$-extremal vertex.
\end{corollary}
\begin{proof}Follows from Lemma \ref{lem:argument-pouzet} and the fact that $\age(G_\mu)$ is $1^-$-well-quasi-ordered.
\end{proof}

\begin{lemma}\label{lem:bound-prime-periodic}
Let $\mu$ be a periodic $0$-$1$ sequence on $\NN$. Then the number of non prime bounds of $\age(G_\mu)$ is finite.
\end{lemma}
\begin{proof}Let $(G_n)_{n\in \NN}$ be a sequence of bounds of  $\age(G_\mu)$. Suppose $G_n$ is not prime.  Let $M_n$ be a nontrivial module of $G_n$ and  $x_n$ any vertex  of $M_n$. Let $H_n:={G_n}_{\restriction (V(G_n)\setminus M_n)\cup \{x_n\}}$. Since $M_n$ is nontrivial and $G_n$ is a bound we infer that $H_n$ and $M_n$ are elements of $\age(G_\mu)$. Since $\age(G_\mu)$ is w.q.o there exists an infinite subset $I$ of $\NN$ so that the sequence $({G_n}_{\restriction M_n})_{n\in I}$ is increasing with respect to embeddability. Since $\age(G_\mu)$ is $1^-$-well-quasi-ordered we infer that we can extract from the sequence $(H_n,x_n)_{n\in I}$ an increasing subsequence. Then note that if $(H_n,x_n)$ embeds into $(H_m,x_m)$ and $M_n$ embeds into $M_m$, then $G_n$ embeds into $G_m$.
\end{proof}

We now prove $(2)$ of Theorem \ref{thm:bound-uniform}. Let $\mu$ be a non constant and periodic $0$-$1$ sequence on $\NN$ and let $H$ be a bound of $G_\mu$. It follows from Lemma \ref{lem:bound-prime-periodic} that we may assume that $H$ is prime. Since the examples of critically prime graphs of Schmerl and Trotter \cite{S-T} split into two totally ordered sets with respect to embeddability we may assume that $H$ is not critically prime. There exists then $x\in V(H)$ such that $H\setminus \{x\}$ is prime. Since $H$ is a bound of $G_\mu$ we infer that $H\setminus \{x\}$ embeds into $G_\mu$.  Let $f_x$ be such an embedding. We write $f_x(V(H\setminus \{x\}) :=\{i_0,i_1,\ldots,i_n\}$ so that $i_0<i_1<\ldots<i_n$. Since $H\setminus \{x\}$ is prime it follows from Corollary \ref{lem:5} that $\{i_1,\ldots,i_n\}$ is an interval of $\NN$. Since $\mu$ is periodic $\l(\mu)$ is finite. For $n>l(\mu)+5$,  ${G_{\mu}}{\restriction \{ i_1<\ldots<i_n \}}$ is prime, hence $H\setminus \{x, f_x^{-1}(i_0)\}$ is prime. We may  assume that  $\mu(i_n)=0$ (if not consider $\overline{G_\mu}=G_{\overline{\mu}}$ and $\overline{H}$ and note that $\overline{\mu}$ is also periodic). By Lemma \ref{lem:argument-pouzet} we may assume that $H$ has no $-1$-extremal vertices. It follows that  $\{x, f_x^{-1}(i_n)\}$ is not an edge of $H$ (otherwise $f_x^{-1} (i_n)$ is $-1$-extremal in $H$). We now consider the graph $H\setminus \{f_x^{-1}(i_0)\}$. Let  $g_{i_0}$ be an embedding of $H\setminus \{f_x^{-1}(i_0)\}$ into $G_\mu$. For $1\leq k\leq n$, we define $i'_k:= g_{i_0}(f^{-1}_{x}(i_k))$.

Suppose $n>l(\mu)+7$.
It follows then from Proposition \ref{prop:embed-lmu-finite} that every embedding of $H\setminus \{f_x^{-1}(i_0)\}$ in $G_\mu$ maps
$\{i'_2,\ldots,i'_n\}$ into an interval and in that order. Hence, such an embedding agree with $f_x$ and $g_{i_0}$. From our assumption that $\mu(i_n)=0$ and $\{x,f_x^{-1}(i_n)\}$ is not an edge of $H$ we deduce that $g_{i_0}(x)$ is to the right of $i'_n$. Indeed, if $g_{i_0}(x)$ is to the left of $i'_n$, then since $\mu(i'_n)=\mu(i_n)=0$ we infer that $g_{i_0}(x)$ is on the left of $i'_3$. But then $\{g_{i_0}(x), i'_{n}\}$ is an edge, therefore $\{x, f_x^{-1}(i_0) \}$ is an edge  of $H$ hence $x$ is $-1$-extremal, which is not possible . Thus, $g_{i_0}(x)$ is to the right of $i'_n$. It follows then that $g_{i_0}(x)$ is either adjacent to all vertices in $\{i'_1,\ldots,i'_{n-1}\}$ or adjacent to none. This last case is not possible, otherwise $x$ would be $-1$-extremal. So we are left with the case that $g_{i_0}(x)$ is  adjacent to all vertices in $\{i'_1,\ldots,i'_{n-1}\}$. Since $x$ is not $-1$-extremal,   $x$ is adjacent to all vertices of $H \setminus \{f_x^{-1}(i_0),  f_x^{-1}(i_n)\}$ and not adjacent to either $f^{-1}(i_0)$ or   $f^{-1}(i_n)$. It follows from Lemma \ref{lem:embed-bound-periodic} that $H\setminus \{i_n\}$ does not embed into $G_\mu$. This contradicts our assumption that $H$ is a bound of $\age(G_\mu)$.

\section{Conclusion}

This work on hereditary classes of finite graphs containing relatively few primes put a light on hereditary classes which are  well-quasi-ordered and also on those made  of  permutation graphs. The result of \cite{chudnovsky} was crucial in proving that our list of hereditary classes of graphs which are minimal prime was complete. Kim \cite{kim} obtained for tournaments a result similar to Chudnovski and al \cite{chudnovsky}. It remains to see if
similar results to ours can be obtained in the case of tournaments;  and also, if  they shed light on the case of binary relations and binary relational structures and allow to solve the problems mentioned in the text about minimal prime hereditary classes. Among question which interest us are first the rank  of minimal prime classes of permutation graphs; in this respect note that  it is unknown if there are hereditary well-quasi-ordered classes of graphs with arbitrary countable rank (see \cite{pouzet-sobrani-SP}). Next, the question to know wether or not well-quasi-
ordered hereditary classes of finite graphs are better-quasi-ordered. \\
A consequence of our study is the existence  of an uncountable antichain of well-quasi-ordered ages of permutation graphs. The existence of uncountably many  well-quasi-ordered ages of binary structures was obtained in 1978 \cite {pouzettr}. This was obtained by means of a coding via uniformly recurrent sequences. The same existence  for graphs, permutation graphs or posets, is a non trivial fact which requires work.  The same coding than the one we use in this paper was used first in 1992 \cite{sobranithesis} and in 2002 \cite{sobranietat}. In Chapter 5 of \cite{oudrar} the first author proved with a simpler coding the existence of uncountably many hereditary classes of oriented graphs which are  minimal prime.  We conclude by mentioning the existence of uncountably many  well-quasi-ordered ages of permutation graphs with distinct enumeration functions (alias profile) due to Brignall and Vatter \cite{brignall-vatter2}.

\section*{Acknowledgements}
The authors would like to sincerely thank  Robert Brignall for bringing to their attention several informations, notably on pin sequences and labelled classes of permutations.

\end{document}